\documentclass[a4paper,12pt]{article}
\usepackage[utf8]{inputenc}
\usepackage{amsthm}
\usepackage{amsfonts}
\usepackage{amssymb}
\usepackage{graphicx}
\usepackage{rotating}
\usepackage{wrapfig}
\usepackage[all]{xy}
\usepackage{pgf,tikz}
\usetikzlibrary{arrows}
\usetikzlibrary[patterns]

\newcommand{\N}{\mathbb{N}}
\newcommand{\Z}{\mathbb{Z}}
\newcommand{\Q}{\mathbb{Q}}
\newcommand{\R}{\mathbb{R}}

\newcommand{\K}{\mathbb{K}}

\newcommand{\U}{\mathfrak{U}}

\newtheorem{prop}{Proposition}[section]
\newtheorem{cor}{Corollary}[section]
\newtheorem{obs}{Remark}[section]
\newtheorem{ex}{Example}[section]
\newtheorem{defi}{Definition}[section]

\newtheorem{teo}{Theorem}[section]

\newtheorem{lema}{Lemma}[section]

\title{A generalization of convergence actions}

\author{Lucas H. R. de Souza}

\begin{document}

\maketitle

\def\eod{\hfill$\square$}

\begin{abstract}Let a group $G$ act properly discontinuously and cocompactly on a locally compact space $X$. A Hausdorff compact space $Z$ that contains $X$ as an open subspace has the perspectivity property if the action $G\curvearrowright X$ extends to an action $G\curvearrowright Z$, by homeomorphisms, such that for every compact $K\subseteq X$ and every element $u$ of the unique uniform structure compatible with the topology of $Z$, the set $\{gK: g \in G\}$ has finitely many non $u$-small sets. We describe a correspondence between the compact spaces with the perspectivity property with respect to $X$ (and the fixed action of $G$ on it) and the compact spaces with the perspectivity property with respect to $G$ (and the left multiplication on itself). This generalizes a similar result for convergence group actions.
\end{abstract}

\tableofcontents

\section*{Introduction}

Let $G$ be a group that acts by homeomorphisms on a Hausdorff compact space $Y$ and properly discontinuously and cocompactly on a Hausdorff locally compact space $X$. Let $Z = X\dot{\cup} Y$  be a Hausdorff compact space (with canonical uniform structure $\U$) such that extends the topologies of $X$ and $Y$ and the induced action of $G$ in $Z$ is by homeomorphisms. We say that $Z$ is perspective if for every $u \in \U$ and every compact $K\subseteq X$, the set $\{g \in G: gK \notin Small(u)\}$ is finite. In other words, whenever $K$ goes closer to the boundary of $X$ (by elements of $G$), it becomes smaller.

In the case when $G = X$, with discrete topology, there is an equivalent definition of perspectivity: the right multiplication action on $G$ extends continuously to the identity on $Y$. This notion was used by Specker in \cite{Sp} and by Abels in \cite{Ab} (where $Y$ is totally disconnected) and by Toromanoff in \cite{To} (quasi-Specker compactification of $G$, where $Y$ does not need to be totally disconnected) where he proved that this notion is equivalent to the convergence property on the totally disconnected boundary and compactly generated locally compact group.

Some examples of perspectivity are:

\begin{ex}If $Z$ has the convergence property, then $Z$ has the perspective property \cite{Ge2}.
\end{ex}

\begin{ex}The visual compactification of a $CAT(0)$ space $X$ is perspective, if $G$ acts on $X$ by isometries. In particular, the compactification of $\Z^{n}$, for $n \geqslant 2$, with the visual boundary of $\R^{n}$ is perspective but it does not have the convergence property since the action on the boundary is trivial.
\end{ex}

\begin{ex}The Martin compactification of every finitely generated relatively hyperbolic group with virtually abelian parabolic subgroups is perspective (but does not have the convergence property unless all parabolic subgroups are virtually cyclic) \cite{DGGP}.
\end{ex}

A reasonable question to ask is if the Martin compactification of every finitely generated group is perspective but is not our objective on this paper.

\begin{ex}Gerasimov and Potyagailo showed in \cite{GP2} that the pullback problem for convergence actions does not have a solution in general. A pullback may not exist even when both actions are relatively hyperbolic and the group is not finitely generated. However, we show that it is not the case for perspective property. So, families of convergence actions that do not have pullback convergence actions give rises to spaces that do have the perspective property but its associate actions must not have the convergence property.
\end{ex}

Our goal is to present a behavior on spaces with the perspectivity property that already happens on convergence actions: if the action of $G$ on $Y$ has the convergence property and $Z = G\dot{\cup} Y$ and $Z'= X\dot{\cup} Y$ are the unique topological spaces that the induced actions have the convergence property, then the topology of $Z'$ can be constructed from the topology of $Z$ \cite{Ge2}.

If we consider the category $Pers(G)$ (respec. $Pers(\varphi)$, where $\varphi: G \curvearrowright X$ is the properly discontinuous cocompact action), whose objects are of the form $G\dot{\cup} Y$  (respec. $X\dot{\cup} Y$) that are perspective and the morphisms are the equivariant continuous maps whose restriction is the identity in $G$ (respec. identity in $X$), we are able to state our main theorem:

\

\textbf{Theorem \ref{main}} Let $G$ be a group, $X$ a Hausdorff locally compact space and $\varphi: G \curvearrowright X$ a properly discontinuous cocompact action. Then, the categories $Pers(G)$ and $Pers(\varphi)$ are isomorphic.

\

So, there is a correspondence between those classes of compact spaces and the functor from $Pers(G)$ to $Pers(\varphi)$ sends objects of the form $Z = G\dot{\cup} Y$ to objects of the form $Z'= X\dot{\cup} Y$ that are constructed the same way as on the compactifications that have the convergence property.

This paper is structured in four main sections. The first one contains some preliminary results in topology that are used on the other sections.

The second section is devoted to the development of the theory of sum of spaces. If $X$ and $Y$ are topological spaces, we construct from a map $f: Closed(X) \rightarrow Closed(Y)$ (with certain properties) a topological space $X+_{f}Y$ where the set is $X\dot{\cup} Y$, extends both topologies and $X$ is open. This construction appeared in \cite{Ge2} in the proof of the existence of the Attractor-Sum. It seems to be a convenient tool to work with compactifications of locally compact spaces. As an example, which is important on a later section, it is constructed in the end of this section the Freudenthal compactification of a locally compact, connected and locally connected space. All subsequent constructions on this paper take advantage of this theory

The third section is the main. It is divided into three subsections:

\begin{enumerate}
    \item The first subsection presents the general notions of boundaries of groups and of spaces where a group acts properly discontinuously and cocompactly. It also presents its respective categories, the way to interchange between this two notions functorially and an example showing why those functors do not work really well on the whole categories.
    \item The second subsection presents subcategories of the categories presented on the first one that allow the restrictions of those functors (also presented on the first part) to be isomorphisms of categories. The objects of these categories are called quasi-perspectives. These are not the best places to work, since it is allowed spaces that are not Hausdorff.
    \item In the third subsection, we restrict the categories to the objects that are Hausdorff. They are called perspective. It is stated the main theorem.
\end{enumerate}

In the fourth  section are established some general properties of those categories:

\begin{enumerate}
    \item The first part shows that the good behavior of the functors do not happen outside the categories of quasi-perspectivity.
    \item The second part shows a process that turns a compact space that contains $X$ and extends continuously the action by $G$ to an element of the category of quasi-perspectives. However, this process withdraw every possibility of a space to be Hausdorff, except on the trivial case: already perspective.
    \item The third one presents some considerations about subspaces and quotients that works as in the convergence case.
    \item The fourth one presents a proof that the category of perspectivities is closed under small limits.
    \item The fifth one presents a proof that if a group $G$ acts by isometries, properly discontinuously and cocompactly on a proper $CAT(0)$ space $X$, then the compactification of $X$ with its visual boundary is perspective. So, the boundaries of such $CAT(0)$ spaces can be transferred to be boundaries of the respective isometry groups.
    \item The sixth one presents an already known result that sum-attractor compactification for the convergence case is perspective. So, the perspectivity property actually generalizes the convergence property. Using the fact that there exist limits in the perspectivity category, there exists examples \cite{GP2} of two convergence actions that do not have a product in the category of convergence actions of a fixed group but do have a product in the category of perspective compactifications.
    \item The seventh part is just an application of the theory of perspectivities to the theory of spaces of ends. It shows how this theory seems to be a good tool to work with boundaries, doing a simple proof of one of Hopf's observations.
    \item The last part presents two examples: the first one is a non finitely generated group that admits any compact Hausdorff space with countable basis as its boundary in a compactification with the perspective property and the second one is a finitely generated group that admits any Peano space as its boundary in a compactification with the perspective property.
\end{enumerate}

\section{Preliminaries}

This section contains some well known results that are used through this paper.

We use the symbol $\Box$, besides its usual propose, on the end of a proposition, lemma or theorem to say that its proof follows immediately from the previous considerations.

\begin{defi}Let $G_{i}$ be groups, $\alpha: G_{1} \rightarrow G_{2}$ a homomorphism, $X_{i}$ sets, and $\varphi_{i}: G \curvearrowright X_{i}$ actions. We say that a map $f: X_{1} \rightarrow X_{2}$ is $\alpha$-equivariant if $\forall g \in G_{1}, \ \forall x \in X_{1}, \ f(\varphi_{1}(g,x)) = \varphi_{2}(\alpha(g),f(x))$. In the case $G_{1} = G_{2} = G$, and $\alpha = id_{G}$, we say that $f$ is $G$-equivariant or just equivariant, if there is no doubt of the meaning.
\end{defi}

\begin{prop}(RAPL - Right Adjoints Preserves Limits, Proposition 3.2.2 of \cite{Bo}) Let $F: \mathcal{C} \rightarrow \mathcal{D}$ and $G: \mathcal{D} \rightarrow \mathcal{C}$ be two functors with $G$ adjoint to $F$. If $H: \mathcal{E} \rightarrow \mathcal{D}$ is a functor that possesses a limit, then $\lim\limits_{\longleftarrow} (G\circ H)$ exists and is equal to $G(\lim\limits_{\longleftarrow} H)$.
\end{prop}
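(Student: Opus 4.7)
The plan is to show directly that $G(L)$, where $L=\lim H$ in $\mathcal{D}$, together with the images of the limit projections under $G$, forms a universal cone over $G\circ H$ in $\mathcal{C}$. The entire argument is powered by transporting cones across the adjunction isomorphism $\varphi_{X,Y}\colon \mathrm{Hom}_{\mathcal{C}}(X,G(Y))\xrightarrow{\sim}\mathrm{Hom}_{\mathcal{D}}(F(X),Y)$, natural in both variables.

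First I would take the limit cone $\pi_{e}\colon L\to H(e)$ in $\mathcal{D}$ and form the candidate cone $G(\pi_{e})\colon G(L)\to G(H(e))$ in $\mathcal{C}$. Functoriality of $G$ applied to the cone identities $H(u)\circ\pi_{e}=\pi_{e'}$ for morphisms $u\colon e\to e'$ in $\mathcal{E}$ immediately makes this a cone over $G\circ H$.

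For universality, consider an arbitrary cone $(f_{e}\colon X\to G(H(e)))_{e\in\mathcal{E}}$. Transposing via the adjunction gives $\tilde{f}_{e}:=\varphi_{X,H(e)}(f_{e})\colon F(X)\to H(e)$. Using naturality of $\varphi$ in its second variable with respect to each $H(u)$, together with the fact that $(f_{e})$ is a cone in $\mathcal{C}$, I would verify that $(\tilde{f}_{e})$ is a cone over $H$ in $\mathcal{D}$. The universal property of $L$ then produces a unique $\tilde{f}\colon F(X)\to L$ satisfying $\pi_{e}\circ\tilde{f}=\tilde{f}_{e}$ for every $e$. Setting $f:=\varphi_{X,L}^{-1}(\tilde{f})\colon X\to G(L)$ and invoking naturality of $\varphi$ once more, this time applied to $\pi_{e}\colon L\to H(e)$, yields $G(\pi_{e})\circ f=f_{e}$. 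Uniqueness of $f$ follows from the uniqueness of $\tilde{f}$ combined with the bijectivity of $\varphi_{X,L}$.

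The substantive content of the proof is therefore concentrated in a single point: the naturality squares of $\varphi$ in the $Y$ variable convert the cone condition on one side of the adjunction into the cone condition on the other side, and the correspondence of universal factorizations is preserved. The only point requiring care is keeping these naturality diagrams organized, choosing the correct variable of $\varphi$ at each application; once that bookkeeping is set up cleanly, there is no further obstacle.
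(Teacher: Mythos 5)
Your proof is correct: it is the standard argument that transports cones back and forth across the adjunction isomorphism and matches universal factorizations. The paper itself gives no proof of this proposition (it is quoted from Borceux, Proposition 3.2.2 of \cite{Bo}), and your argument is essentially the canonical one found there, so there is nothing further to compare.
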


\begin{defi}Let $(X,\mathcal{U})$ be a uniform space, $Y \subseteq X$ and $u \in \mathcal{U}$. We define the $u$-neighbourhood of $Y$ by $\mathfrak{B}(Y,u) = \{x \in X: \exists y \in Y: \ (x,y)\in u\}$.
\end{defi}

\begin{prop}\label{small}Let $f:(X_{1},\U_{1}) \rightarrow (X_{2},\U_{2})$ be a uniformly continuous map, $u \in \U_{2}$ and $Y \subseteq X_{2}$. If $Y \in Small(u)$, then $f^{-1}(Y) \in Small((f^{2})^{-1}(u))$. If $f$ is surjective, then the converse is also true.
\end{prop}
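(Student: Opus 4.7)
The plan is to unwind the definitions: a set $Y$ lies in $Small(u)$ precisely when $Y\times Y \subseteq u$, and $(f^{2})^{-1}(u)$ should be read as $(f\times f)^{-1}(u)=\{(x_{1},x_{2})\in X_{1}\times X_{1}: (f(x_{1}),f(x_{2}))\in u\}$, which is an entourage of $\U_{1}$ by the uniform continuity of $f$, so both sides of the claim make sense.

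For the forward implication I would take an arbitrary pair $(x_{1},x_{2}) \in f^{-1}(Y)\times f^{-1}(Y)$, observe that $(f(x_{1}),f(x_{2}))\in Y\times Y\subseteq u$ by hypothesis, and conclude $(x_{1},x_{2})\in (f\times f)^{-1}(u)$. This shows $f^{-1}(Y)\times f^{-1}(Y)\subseteq (f^{2})^{-1}(u)$, i.e.\ $f^{-1}(Y)\in Small((f^{2})^{-1}(u))$. No surjectivity is used here, and no deeper property of uniform spaces than the definition of an inverse image of an entourage.

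For the converse, assume $f$ surjective and pick any $(y_{1},y_{2})\in Y\times Y$. Surjectivity furnishes preimages $x_{i}\in X_{1}$ with $f(x_{i})=y_{i}$; since $y_{i}\in Y$, we actually have $x_{i}\in f^{-1}(Y)$. The hypothesis $f^{-1}(Y)\in Small((f^{2})^{-1}(u))$ then gives $(x_{1},x_{2})\in (f\times f)^{-1}(u)$, which by the very definition of the inverse entourage means $(y_{1},y_{2})=(f(x_{1}),f(x_{2}))\in u$. Hence $Y\times Y\subseteq u$.

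There is really no hard step here; the only point worth flagging is the necessity of surjectivity (or at least $Y\subseteq f(X_{1})$) in the converse: without it, elements of $Y\setminus f(X_{1})$ contribute no preimages and so smallness of $f^{-1}(Y)$ says nothing about them, and the implication fails.
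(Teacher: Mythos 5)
Your proof is correct and follows essentially the same argument as the paper: the forward direction is the direct unwinding of definitions, and your converse is just the contrapositive-free version of the paper's proof by contradiction (the paper picks a non-$u$-close pair in $Y$ and lifts it via surjectivity; you push an arbitrary pair of $Y$ down from its preimages). Your closing remark that surjectivity can be weakened to $Y \subseteq f(X_{1})$ is a correct observation not made in the paper, but the substance of the argument is identical.
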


\begin{proof}($\Rightarrow$) Let $x,y\in f^{-1}(Y)$. We have that $f(x),f(y) \in Y$, which implies that $(f(x),f(y))\in u$ and then $(x,y)\in(f^{-1}(f(x)),f^{-1}(f(y))) \subseteq f^{-1}(u)$. Thus, $f^{-1}(Y) \in Small((f^{2})^{-1}(u))$.

($\Leftarrow$) Let's suppose that $Y \notin Small(u)$. Then, there exists $x,y \in Y$ such that $(x,y) \notin u$. Let $x' \in f^{-1}(x)$ and $y' \in f^{-1}(y)$ (since $f$ is surjective). If $(x',y') \in (f^{2})^{-1}(u)$, then $(x,y) = (f(x'),f(y')) \in f^{2}((f^{2})^{-1}(u)) = u$, contradicting our assumption. Thus, $(x',y') \notin (f^{2})^{-1}(u)$, which implies that $f^{-1}(Y) \notin Small((f^{2})^{-1}(u))$.
\end{proof}

\begin{prop}(Proposition 10, $\S2.7$, Chapter 2 of \cite{Bou}) Let $\Gamma$ be a directed set, $\{(X_{\alpha},\U_{\alpha}), f_{\alpha_{1}\alpha_{2}}\}_{\alpha,\alpha_{1},\alpha_{2} \in \Gamma}$ an inverse system of uniform spaces, $B_{\alpha}$ a basis for $\U_{\alpha}, \ (X,\U) = \lim\limits_{\longleftarrow} X_{\alpha}$ and $\pi_{\alpha}: X \rightarrow X_{\alpha}$ the projection maps. Then, the set $\{(\pi_{\alpha}^{2})^{-1}(b): \alpha \in \Gamma, \ b \in B_{\alpha}\}$ is a basis for $\U$.
\end{prop}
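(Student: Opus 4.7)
The plan is to use the universal characterization of the inverse limit uniformity: $\U$ is the initial uniform structure on $X$ making every projection $\pi_{\alpha}$ uniformly continuous, hence the family $\mathcal{S} = \{(\pi_{\alpha}^{2})^{-1}(u) : \alpha \in \Gamma, \ u \in \U_{\alpha}\}$ is a subbasis for $\U$. A general basis element of $\U$ therefore has the shape of a finite intersection $\bigcap_{i=1}^{n}(\pi_{\alpha_{i}}^{2})^{-1}(u_{i})$ with $u_{i} \in \U_{\alpha_{i}}$. The first step is to show that such a finite intersection can always be rewritten as a single preimage $(\pi_{\beta}^{2})^{-1}(v)$ for some $\beta \in \Gamma$ and $v \in \U_{\beta}$; the second step is to refine this from arbitrary entourages to entourages of the prescribed bases $B_{\alpha}$.

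For the first step I would invoke directedness: pick $\beta \in \Gamma$ with $\beta \geqslant \alpha_{i}$ for every $i \in \{1,\dots,n\}$, and set $v_{i} = (f_{\alpha_{i}\beta}^{2})^{-1}(u_{i})$. Since each transition map $f_{\alpha_{i}\beta}$ is uniformly continuous, $v_{i} \in \U_{\beta}$; and since $\pi_{\alpha_{i}} = f_{\alpha_{i}\beta}\circ \pi_{\beta}$ on $X$, I get $(\pi_{\alpha_{i}}^{2})^{-1}(u_{i}) = (\pi_{\beta}^{2})^{-1}(v_{i})$. Setting $v := \bigcap_{i=1}^{n} v_{i} \in \U_{\beta}$ yields
\[
 \bigcap_{i=1}^{n}(\pi_{\alpha_{i}}^{2})^{-1}(u_{i}) = (\pi_{\beta}^{2})^{-1}(v),
\]
so the family $\mathcal{S}$ is already closed under finite intersections (up to the reparametrisation above), and hence is itself a basis for $\U$.

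For the second step, given any $\alpha \in \Gamma$ and $u \in \U_{\alpha}$, choose $b \in B_{\alpha}$ with $b \subseteq u$; then $(\pi_{\alpha}^{2})^{-1}(b) \subseteq (\pi_{\alpha}^{2})^{-1}(u)$. Consequently, inside every element of the basis $\mathcal{S}$ there sits an element of $\{(\pi_{\alpha}^{2})^{-1}(b) : \alpha \in \Gamma, \ b \in B_{\alpha}\}$, which finishes the argument.

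The only delicate point is the collapsing of a finite intersection into a single preimage; it relies crucially on the directedness of $\Gamma$ and on the compatibility relation $\pi_{\alpha} = f_{\alpha\beta}\circ \pi_{\beta}$ built into the definition of the inverse limit. Once that compatibility is invoked, the rest is a routine manipulation of entourages and bases, so I do not expect any genuine obstacle.
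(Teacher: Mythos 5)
Your argument is correct and is the standard one: pass to a common upper index $\beta$ by directedness, use the compatibility $\pi_{\alpha_{i}} = f_{\alpha_{i}\beta}\circ\pi_{\beta}$ to collapse a finite intersection of subbasic entourages into a single preimage over $\beta$, then refine within $\U_{\beta}$ using the basis $B_{\beta}$. The paper itself gives no proof of this statement --- it is quoted directly from Bourbaki as a preliminary --- so there is no in-paper argument to compare against; your write-up fills that gap correctly and I see no missing step.
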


\begin{defi}Let $X$ be a topological space. A family $\{F_{\alpha}\}_{\alpha \in \Gamma}$ of subsets of $X$ is locally finite if $\forall x \in X, \ \exists U$ a neighbourhood of $x$ such that $U\cap F_{\alpha} \neq \emptyset$ only for a finite subset of $\Gamma$.
\end{defi}

\begin{prop}(Proposition 4, $\S2.5$, Chapter 1 of \cite{Bou}) Let $X$ be a topological space and $\{F_{\alpha}\}_{\alpha \in \Gamma}$ a locally finite family of closed sets of $X$. Then, $\bigcup_{\alpha \in \Gamma} F_{\alpha}$ is closed.
\end{prop}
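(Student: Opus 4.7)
The plan is to verify that the complement $X\setminus\bigcup_{\alpha\in\Gamma}F_{\alpha}$ is open by producing, for each point outside the union, an open neighborhood disjoint from every $F_{\alpha}$. The only tools needed are the hypothesis of local finiteness and the standard fact that any \emph{finite} union of closed sets is closed.

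First I would fix an arbitrary $x\in X\setminus\bigcup_{\alpha\in\Gamma}F_{\alpha}$ and invoke local finiteness to obtain an open neighborhood $U$ of $x$ together with a finite subset $\Gamma_{0}\subseteq\Gamma$ such that $U\cap F_{\alpha}=\emptyset$ for every $\alpha\in\Gamma\setminus\Gamma_{0}$. This deals with the ``distant'' members of the family automatically; what remains is to shrink $U$ so as to also avoid the finitely many $F_{\alpha}$ with $\alpha\in\Gamma_{0}$.

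For this, I would observe that $C:=\bigcup_{\alpha\in\Gamma_{0}}F_{\alpha}$ is a finite union of closed sets and is therefore closed, so $V:=X\setminus C$ is open. Because $x$ lies in no $F_{\alpha}$, in particular $x\in V$. Taking $W:=U\cap V$ then produces an open neighborhood of $x$ satisfying $W\cap F_{\alpha}\subseteq V\cap F_{\alpha}=\emptyset$ for $\alpha\in\Gamma_{0}$ and $W\cap F_{\alpha}\subseteq U\cap F_{\alpha}=\emptyset$ for $\alpha\notin\Gamma_{0}$. Hence $W$ is contained in the complement of the full union, which proves that the complement is open.

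There is essentially no obstacle; the argument is a one-step application of local finiteness. The only conceptual subtlety is the combination of two sources of disjointness — passing to the complement of the finite union $C$ to handle the $F_{\alpha}$ that meet $U$, and using the neighborhood $U$ itself to handle the remaining ones — which is precisely what the local finiteness hypothesis is tailored for.
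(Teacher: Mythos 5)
Your argument is correct: it is the standard proof that a locally finite union of closed sets is closed, and the only point worth noting is that the paper itself gives no proof of this statement, citing it directly from Bourbaki (Proposition 4, \S 2.5, Chapter 1 of \cite{Bou}). Your use of local finiteness to reduce to a finite subfamily, combined with the closedness of a finite union, is exactly the intended argument, so there is nothing to compare against and nothing to correct.
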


\begin{prop}(Proposition 9, $\S4.4$, Chapter 1 of \cite{Bou}) Let $\Gamma$ be a directed set, $\{X_{\alpha}, f_{\alpha_{1}\alpha_{2}}\}_{\alpha,\alpha_{1},\alpha_{2} \in \Gamma}$ an inverse system of topological spaces, $B_{\alpha}$ a basis for $X_{\alpha}, \ X = \lim\limits_{\longleftarrow}X_{\alpha}$ and $\pi_{\alpha}: X \rightarrow X_{\alpha}$ the projection maps. Then, the set $\{\pi_{\alpha}^{-1}(b): \alpha \in \Gamma, \ b \in B_{\alpha}\}$ is a basis for $X$.
\end{prop}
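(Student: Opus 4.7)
The plan is to use the standard description of the inverse limit $X$ as a subspace of the product $\prod_{\alpha\in\Gamma} X_\alpha$ endowed with the subspace topology inherited from the product topology, and then exploit the directedness of $\Gamma$ to simplify the canonical basis of the product topology. Concretely, I would first recall that basic open sets of $\prod X_\alpha$ have the form $\prod_{\alpha} U_\alpha$ with $U_\alpha$ open in $X_\alpha$ and $U_\alpha = X_\alpha$ for all but finitely many indices $\alpha_1,\dots,\alpha_n$; intersecting with $X$ yields a basis consisting of finite intersections $\pi_{\alpha_1}^{-1}(U_{\alpha_1}) \cap \cdots \cap \pi_{\alpha_n}^{-1}(U_{\alpha_n})$.

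The next step is the key use of directedness. Given such an $n$-fold intersection around a point $x \in X$, I would choose $\beta \in \Gamma$ with $\beta \geq \alpha_i$ for every $i$; the compatibility condition $f_{\alpha_i\beta} \circ \pi_\beta = \pi_{\alpha_i}$ lets me rewrite each factor as $\pi_{\alpha_i}^{-1}(U_{\alpha_i}) = \pi_\beta^{-1}\bigl(f_{\alpha_i\beta}^{-1}(U_{\alpha_i})\bigr)$. Setting $U_\beta := \bigcap_i f_{\alpha_i\beta}^{-1}(U_{\alpha_i})$, which is open in $X_\beta$ by continuity of the bonding maps, the whole intersection collapses to $\pi_\beta^{-1}(U_\beta)$, and $\pi_\beta(x)\in U_\beta$ by construction.

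Finally, using that $B_\beta$ is a basis for $X_\beta$, I pick $b \in B_\beta$ with $\pi_\beta(x) \in b \subseteq U_\beta$; then $x \in \pi_\beta^{-1}(b) \subseteq \pi_\beta^{-1}(U_\beta)$, which is contained in the original open neighborhood of $x$. Since every $\pi_\alpha^{-1}(b)$ is open (as $\pi_\alpha$ is continuous), this proves that $\{\pi_\alpha^{-1}(b) : \alpha \in \Gamma,\ b\in B_\alpha\}$ is a basis for the topology of $X$.

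The only subtle point, and the main obstacle, is making sure that directedness is actually strong enough to replace every finite intersection by a single preimage; this is exactly where it is essential that the indexing set be directed, not merely a poset, and where one must verify that the resulting $U_\beta$ is open (which follows from continuity of the bonding maps $f_{\alpha_i\beta}$). Once this reduction is in place, the rest is a routine application of the definition of a basis.
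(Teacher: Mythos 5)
Your proof is correct and is the standard argument; the paper itself gives no proof of this statement, simply citing it from Bourbaki, and your reduction of a finite intersection $\pi_{\alpha_1}^{-1}(U_{\alpha_1})\cap\cdots\cap\pi_{\alpha_n}^{-1}(U_{\alpha_n})$ to a single $\pi_\beta^{-1}(U_\beta)$ via an upper bound $\beta$ for the $\alpha_i$ is exactly how the cited result is proved there. The one point worth stating explicitly is the compatibility convention $f_{\alpha_i\beta}\circ\pi_\beta=\pi_{\alpha_i}$ for $\alpha_i\leqslant\beta$, which you do use correctly.
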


\begin{prop}(Corollary 3.1.20 of \cite{En}) Let $X$ be a Hausdorff compact space, $m$ an infinite cardinal and $\{X_{\alpha}\}_{\alpha \in \Gamma}$ a family of subspaces of $X$ such that $X = \bigcup_{\alpha \in \Gamma} X_{\alpha}, \ \# \Gamma \leqslant m$ and $\forall \alpha \in \Gamma, \ \omega(X_{\alpha}) \leqslant m$ (where $\omega(Y)$ is the lowest cardinality of a basis of $Y$). Then, $\omega(X) \leqslant m$.
\end{prop}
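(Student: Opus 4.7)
The plan is to bound the network weight $nw(X)$ first, and then invoke the classical identity $w(X) = nw(X)$ valid for compact Hausdorff spaces. Recall that a \emph{network} for $X$ is a family $\mathcal{N}$ of subsets of $X$ such that for every $x \in X$ and every open neighbourhood $U$ of $x$ there exists $N \in \mathcal{N}$ with $x \in N \subseteq U$; its smallest cardinality is $nw(X)$.

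For each $\alpha \in \Gamma$ I would choose a basis $\mathcal{B}_{\alpha}$ of $X_{\alpha}$ with $|\mathcal{B}_{\alpha}| \leqslant m$, and set $\mathcal{N} := \bigcup_{\alpha \in \Gamma} \mathcal{B}_{\alpha}$, viewed as a family of subsets of $X$. Since $m$ is infinite and $|\Gamma| \leqslant m$, we obtain $|\mathcal{N}| \leqslant m \cdot m = m$. The family $\mathcal{N}$ is a network for $X$: given $x \in X$ and an open $U \subseteq X$ with $x \in U$, pick $\alpha$ such that $x \in X_{\alpha}$; then $U \cap X_{\alpha}$ is open in $X_{\alpha}$ and contains $x$, so some $B \in \mathcal{B}_{\alpha}$ satisfies $x \in B \subseteq U \cap X_{\alpha} \subseteq U$. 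Hence $nw(X) \leqslant m$.

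For the passage from network weight to weight, one uses that $X$ is compact Hausdorff, hence normal. Starting from a network $\mathcal{M}$ of size $\kappa$, regularity allows one to replace it by a closed network $\{\overline{N} : N \in \mathcal{M}\}$ of the same cardinality (for $x \in U$ open, pick $V$ with $x \in V \subseteq \overline{V} \subseteq U$ and then $N \in \mathcal{M}$ with $x \in N \subseteq V$, whence $\overline{N} \subseteq U$). Normality supplies, for each disjoint pair $(F_{1},F_{2})$ of members of the closed network, disjoint open sets $U_{F_{1},F_{2}} \supseteq F_{1}$ and $V_{F_{1},F_{2}} \supseteq F_{2}$. A compactness argument then shows that the finite intersections of the sets $U_{F_{1},F_{2}}$ form a basis of $X$: given $x$ in an open $U$, pick $F_{1}$ in the closed network with $x \in F_{1} \subseteq U$, for each $y \in X \setminus U$ pick $F_{2}(y)$ in the network disjoint from $F_{1}$ and containing $y$, extract a finite subcover $V_{F_{1},F_{2}(y_{1})},\dots,V_{F_{1},F_{2}(y_{n})}$ of $X \setminus U$ by compactness, and take the intersection of the corresponding $U_{F_{1},F_{2}(y_{i})}$. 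This yields $w(X) \leqslant \kappa^{<\omega} = \kappa = nw(X) \leqslant m$.

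The main obstacle is this final step, that is, establishing $w = nw$ on compact Hausdorff spaces; the construction of the network from the bases of the $X_{\alpha}$ is a bookkeeping exercise resting only on the elementary arithmetic $m \cdot m = m$ for infinite $m$. Everything else unfolds from the definitions of weight and network weight together with normality and compactness of $X$.
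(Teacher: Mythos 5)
Your proof is correct and is precisely the standard argument behind the cited Corollary 3.1.20 of Engelking; the paper itself gives no proof, only the citation, so there is nothing to diverge from. Both of your steps --- assembling a network of size $\leqslant m$ from the bases of the $X_{\alpha}$, and the closed-network/normality/compactness argument establishing $w(X) \leqslant nw(X)$ for compact Hausdorff $X$ --- are carried out correctly.
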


\begin{cor}\label{uniaometrizavel}Let $X$ be a Hausdorff compact space where there exists a family of subspaces $\{X_{n}\}_{n\in \N}$ such that each one has a countable basis and $X = \bigcup_{n\in \N}X_{n}$. Then, $X$ is metrizable.
\end{cor}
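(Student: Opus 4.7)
The plan is a direct application of the immediately preceding proposition (Corollary 3.1.20 of Engelking). I would take $m = \aleph_0$ and verify the two cardinality hypotheses: the index set is $\mathbb{N}$, so $\#\Gamma \leqslant \aleph_0$, and each subspace $X_n$ has, by assumption, a countable basis, so $\omega(X_n) \leqslant \aleph_0$. The proposition then yields $\omega(X) \leqslant \aleph_0$, that is, $X$ admits a countable basis.

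Having reduced the problem to showing that a Hausdorff compact space with a countable basis is metrizable, I would invoke the Urysohn metrization theorem: a compact Hausdorff space is normal, hence regular, and a regular space with a countable basis is metrizable. Equivalently, one could cite that every second countable compact Hausdorff space embeds in the Hilbert cube and thus inherits a metric.

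There is no real obstacle here; the only thing to watch is that the preceding proposition is stated for an arbitrary infinite cardinal $m$ and one must check that the case $m = \aleph_0$ is admissible (it is, since $\aleph_0$ is infinite) and that the hypothesis $\omega(X_\alpha)\leqslant m$ is literally the hypothesis of the corollary. The proof is therefore essentially a single line plus the citation of Urysohn.
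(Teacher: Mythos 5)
Your proposal is correct and follows exactly the same route as the paper: apply the preceding proposition with $m=\aleph_0$ to get a countable basis for $X$, then use the fact that a compact Hausdorff second countable space is metrizable (via Urysohn). The only difference is that you spell out the Urysohn step, which the paper leaves implicit.
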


\begin{proof}By the last proposition, we have that $X$ has a countable basis and, since the space is compact and Hausdorff, it follows that it is metrizable.
\end{proof}

\begin{defi}Let $X$ be a topological space, $\{U_{\alpha}\}_{\alpha \in \Gamma}$ an open cover of $X$ and $a,b \in X$. A simple chain from $a$ to $b$ by elements of the cover is a set $\{U_{1},...,U_{n}\} \subseteq \{U_{\alpha}\}_{\alpha \in \Gamma}$ such that $a \in U_{1}$, $b \in U_{2}$ and $U_{i}\cap U_{j} \neq \emptyset$ if and only if $|i-j| < 2$.
\end{defi}

\begin{prop}(Theorem 26.15 of \cite{SW}) A topological space $X$ is connected if and only if for every open cover and every $a,b \in X$, there exists a simple chain from $a$ to $b$ by elements of the cover.
\end{prop}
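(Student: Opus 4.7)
The plan is to prove both implications, treating the reverse direction briefly and then focusing on the forward one. For the reverse direction I would argue by contrapositive: if $X$ is disconnected, write $X = U \cup V$ with $U, V$ disjoint, nonempty open sets, pick $a \in U$ and $b \in V$, and consider the open cover $\{U, V\}$. Any simple chain requires consecutive elements to intersect, but $U \cap V = \emptyset$, so every simple chain built from this cover lies entirely in $U$ or entirely in $V$ and therefore cannot contain both $a$ and $b$. This already shows that the chain condition fails for some cover, giving the contrapositive.

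For the forward direction I would fix an open cover $\{U_{\alpha}\}_{\alpha \in \Gamma}$ of the connected space $X$ and a base point $a$, and let $A \subseteq X$ be the set of all $y$ for which a simple chain from $a$ to $y$ exists. The strategy is to show that $A$ is nonempty, open, and closed, so that connectedness forces $A = X$, hence $b \in A$. Nonemptiness is immediate, since picking any cover element $U_{1}$ with $a \in U_{1}$ yields a length-one chain witnessing $a \in A$. Openness is also routine: if $U_{1}, \ldots, U_{n}$ is a chain terminating in $U_{n}$, then every point of $U_{n}$ is reached by the same chain, so $U_{n} \subseteq A$.

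The main obstacle, and the only step with real content, is closedness of $A$. Suppose $y \in \overline{A}$ and choose any $U_{\alpha}$ from the cover with $y \in U_{\alpha}$. Then $U_{\alpha}$ meets $A$, so we can pick $z \in U_{\alpha} \cap A$ with an associated simple chain $U_{1}, \ldots, U_{n}$ from $a$ to $z$. The naive idea of just appending $U_{\alpha}$ fails, because the \emph{only if} part of the definition of a simple chain forbids $U_{\alpha}$ from meeting any $U_{i}$ with $i < n-1$, and there is no reason this should hold. The fix I would use is a truncation argument: let $k$ be the smallest index with $U_{k} \cap U_{\alpha} \neq \emptyset$, which exists because $z \in U_{n} \cap U_{\alpha}$, and consider the chain $U_{1}, \ldots, U_{k}, U_{\alpha}$. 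Minimality of $k$ guarantees $U_{i} \cap U_{\alpha} = \emptyset$ for $i < k$, so the restrictive intersection condition is preserved on the new chain, which therefore witnesses $y \in A$. The delicate point is precisely this minimality argument; everything else is essentially bookkeeping.
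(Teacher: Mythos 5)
The paper states this result as a quoted theorem (Theorem 26.15 of \cite{SW}) and supplies no proof of its own, and your argument is the standard clopen-set proof from that reference; it is correct in both directions. The truncation at the minimal index $k$ is exactly the right fix for the ``only if'' clause of the chain condition; the only degeneracy you leave unmentioned is $U_{\alpha} = U_{k}$, in which case $y \in U_{k}$ and the truncated chain $U_{1},\dots,U_{k}$ already witnesses $y \in A$ without appending, so nothing is lost.
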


\begin{prop}Let $X$ be a connected, locally connected and locally compact space $X$. Then, $\forall K \subseteq X$ compact there exists $K' \subseteq X$ compact and connected such that $K \subseteq K'$.
\end{prop}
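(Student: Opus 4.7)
The plan is to build a compact connected superset of $K$ piece by piece, using the simple chain characterization of connectedness stated just above the theorem, together with the hypotheses on $X$.

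First, I would verify that $X$ admits a basis $\mathcal{B}$ of open sets whose closures are both compact and connected. Given $x \in X$, local compactness provides an open neighbourhood $U$ of $x$ with $\overline{U}$ compact, and inside $U$ local connectedness provides a connected open neighbourhood $V$ of $x$. Then $\overline{V}$ is compact (as a closed subset of $\overline{U}$) and connected (as the closure of a connected set), so such a basis exists.

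Next, compactness of $K$ yields a finite subcover $V_{1},\ldots,V_{n} \in \mathcal{B}$ of $K$; pick a point $x_{i} \in V_{i}$ for each $i$. For each $i=1,\ldots,n-1$, I would apply the previous proposition to the open cover $\mathcal{B}$ of the connected space $X$ and the pair $(x_{i}, x_{i+1})$ to obtain a simple chain $W_{i,1},\ldots,W_{i,m_{i}} \in \mathcal{B}$ from $x_{i}$ to $x_{i+1}$. Finally I would set
\[
K' \;=\; \bigcup_{i=1}^{n} \overline{V_{i}} \;\cup\; \bigcup_{i=1}^{n-1}\bigcup_{j=1}^{m_{i}} \overline{W_{i,j}},
\]
which is a finite union of compact sets, hence compact, and evidently contains $K$. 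For connectedness: each piece is connected; consecutive links of each chain overlap by the definition of simple chain, so their closures intersect; moreover $x_{i} \in \overline{V_{i}} \cap \overline{W_{i,1}}$ and $x_{i+1} \in \overline{W_{i,m_{i}}} \cap \overline{V_{i+1}}$ join the $V$-pieces to the chains. A finite union of connected sets arranged in such an overlapping sequence is connected, so $K'$ is connected.

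I do not anticipate a genuine obstacle: once the basis of relatively compact connected open sets is produced, the simple chain proposition does all the work, and the only point requiring care is keeping track of the intersection points between consecutive members of the chain to certify connectedness of $K'$.
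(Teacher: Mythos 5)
Your proof is correct and follows essentially the same strategy as the paper's: cover $K$ by finitely many connected open sets with compact closures, use the simple chain proposition to link the pieces, and take closures to get a compact connected $K'$. The only cosmetic difference is that you chain the chosen points sequentially $x_{i} \to x_{i+1}$, whereas the paper first groups the finite subcover into its connected components and chains one representative of each component back to a fixed base point; both work.
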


\begin{proof}Let $x \in X$ and $U_{x}$ be an open neighbourhood of $x$ such that $U_{x}$ is connected and $Cl_{X}(U_{x})$ is compact (it exists since $X$ is locally compact and locally connected). We have that $\{U_{x}\}_{x\in X}$ is an open cover of $X$. Since $K$ is compact, there exists $x_{1},...,x_{n} \in X$ such that $K \subseteq U_{x_{1}} \cup ... \cup U_{x_{n}}$. Let $V_{1},...,V_{k}$ be the connected components of this union (it is finite since each $U_{i}$ is connected) and $y_{i} \in V_{i}$. Since $X$ is connected, $\forall i \in \{1,...,k\}$ there exists a simple chain consisting of elements of $\{U_{x}\}_{x\in X}$ connecting $y_{1}$ and $y_{i}$. Let $W_{i}$ be the union of this chain (it is connected since each $U_{x}$ is connected and in a chain consecutive sets are not disjoint). We have that $V_{1}\cup...\cup V_{k} \cup W_{1}\cup...\cup W_{k}$ is connected since each of then is connected and $W_{i}$ connects $V_{1}$ to $V_{i}$. Let $K' = Cl_{X}(V_{1}\cup...\cup V_{k} \cup W_{1}\cup...\cup W_{k})$. We have that $K'$ is connected, since $V_{1}\cup...\cup V_{k} \cup W_{1}\cup...\cup W_{k}$ is connected. Since $V_{i}$ and $W_{i}$ are finite unions of $\{U_{x}\}_{x \in X}$, their closures are compact, which implies that $K'$ is compact. Thus, $K \subseteq K'$ and $K'$ is connected and compact.
\end{proof}

\begin{prop}(Theorem 4.18 of \cite{Ke}) Let $M$ be a compact Hausdorff space with countable basis. Then, there exists a quotient map $\pi: K \rightarrow M$, where $K$ is the Cantor set.
\end{prop}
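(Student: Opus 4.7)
The plan is to reduce to the compact metric case, construct a continuous surjection from a totally disconnected auxiliary space $D$ onto $M$ via a nested-cover scheme, and finally realize $D$ itself as a continuous image of the Cantor set.

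First, by Urysohn's metrization theorem a compact Hausdorff space with countable basis is metrizable, so I would fix a compatible metric $d$ on $M$. Inductively in $n \geqslant 1$ I would then choose a finite non-empty closed cover $\mathcal{F}_{n} = \{F_{n,1},\ldots,F_{n,k_{n}}\}$ of $M$ of mesh at most $1/2^{n}$, arranging (by passing to a common refinement with the previous level) that each $F_{n+1,j}$ is contained in some $F_{n,\rho_{n}(j)}$. Giving each $S_{n} = \{1,\ldots,k_{n}\}$ the discrete topology, the inverse limit $D = \lim\limits_{\longleftarrow}(S_{n},\rho_{n})$ is a compact metrizable totally disconnected space, and I would define $\varphi: D \to M$ by sending $(a_{n})$ to the unique point of $\bigcap_{n} F_{n,a_{n}}$. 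Existence and uniqueness of this intersection follow from nestedness, compactness and the mesh tending to $0$; continuity of $\varphi$ follows because a cylinder of depth $n$ maps into a set of diameter $\leqslant 1/2^{n}$; and surjectivity follows because every $x \in M$ lies in some element of every $\mathcal{F}_{n}$, with compatibility of the chosen indices forced by the refinement property.

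To conclude I would exhibit a continuous surjection $K \to D$, since composing with $\varphi$ yields a continuous surjection $K \to M$ that is automatically a quotient map ($K$ compact, $M$ Hausdorff). For this final step I would embed $D$ as a closed subset of $K = \{0,1\}^{\N}$ (available because $D$ is compact, metrizable and totally disconnected, hence has a countable clopen basis) and then build a retraction $r: K \to D$ by a greedy, level-by-level recipe: having chosen $r(\xi)_{1},\ldots,r(\xi)_{n-1}$, set $r(\xi)_{n} = \xi_{n}$ if the corresponding cylinder still meets $D$, and otherwise take the opposite bit (which must then work by the inductive hypothesis). Continuity of $r$ is immediate because $r(\xi)_{n}$ depends only on $\xi_{1},\ldots,\xi_{n}$, and $r(\xi) \in D$ follows from the closedness of $D$ together with the construction producing a decreasing sequence of cylinders each meeting $D$. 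The genuine content here is this last step, namely the universality of the Cantor set among compact totally disconnected metric spaces; the preceding tree-of-covers construction is routine bookkeeping, and this universality is exactly what Kelley's Theorem 4.18 packages.
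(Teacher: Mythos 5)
Your argument is correct, but note that the paper does not prove this statement at all: it is quoted verbatim as Theorem 4.18 of Kechris's book, so there is no in-paper proof to compare against. What you have written is a complete, self-contained proof of the cited fact. Your route (metrize via Urysohn, build a tree of finite closed covers with shrinking mesh and the refinement property, map the inverse limit $D$ of index sets onto $M$, then retract $\{0,1\}^{\N}$ onto the embedded copy of $D$) is the classical ``Cantor scheme'' proof; the textbook alternative embeds $M$ in the Hilbert cube, surjects $\mathcal{C}\cong\mathcal{C}^{\N}$ onto $[0,1]^{\N}$ by binary expansions, and then retracts $\mathcal{C}$ onto the closed preimage of $M$ --- your version avoids the Hilbert cube at the cost of the bookkeeping with common refinements. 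Two small points you should make explicit if you write this out in full: (i) surjectivity of $\varphi$ is not just ``$x$ lies in some element of every $\mathcal{F}_{n}$''; you need a \emph{compatible} thread of indices, which exists because the sets $T_{n}=\{j: x\in F_{n,j}\}$ form an inverse system of nonempty finite sets under the $\rho_{n}$ (the refinement property gives $\rho_{n}(T_{n+1})\subseteq T_{n}$), and such a system has nonempty limit; (ii) the statement as printed does not exclude $M=\emptyset$, for which no surjection exists, so you are implicitly (and reasonably, as does Kechris) assuming $M$ nonempty, which also guarantees $D\neq\emptyset$ and makes the greedy retraction's base case valid.
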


\begin{defi}A Peano space is a topological space that is compact, connected, locally connected and metrizable.
\end{defi}

\begin{prop}(Hahn-Mazurkiewicz - Theorem 31.5 of \cite{SW}) A space $X$ is a Peano space if and only if it is Hausdorff and there is a quotient map $f: [0,1] \rightarrow X$.
\end{prop}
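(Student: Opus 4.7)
The plan is to prove both directions of Hahn-Mazurkiewicz. For the easy direction ($\Leftarrow$), suppose $f:[0,1]\to X$ is a quotient map onto a Hausdorff space $X$. Since $[0,1]$ is compact and $X$ is Hausdorff, $f$ is automatically closed, and $X = f([0,1])$ is compact, connected, and Hausdorff. Metrizability follows because $X$ is compact Hausdorff and second countable: indeed, the countable family of open sets of the form $X\setminus f(F)$, where $F$ ranges over a countable base of closed sets of $[0,1]$, separates points and closed sets, and Urysohn's metrization theorem applies. For local connectedness, I would use the standard criterion that $X$ is locally connected iff components of open sets are open. Given an open $U\subseteq X$ and a component $C$ of $U$, pull back to $f^{-1}(U)$, which is open in $[0,1]$, hence a disjoint union of open intervals; $f^{-1}(C)$ is a union of some of those intervals (because $f$ maps each interval into a connected subset of $U$, hence into one component), so $f^{-1}(C)$ is open and therefore $C$ is open by the quotient condition.

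The hard direction ($\Rightarrow$) requires constructing a continuous surjection $[0,1]\to X$ when $X$ is Peano. The central metric notion is \emph{uniform local connectedness}: in a compact metric locally connected space, for every $\varepsilon>0$ there is $\delta>0$ such that any two points at distance less than $\delta$ are contained in a connected set of diameter less than $\varepsilon$. I would first prove this by a standard compactness argument from local connectedness, and then build a sequence of finite covers $\mathcal{U}_n$ of $X$ by connected open sets of diameter less than $2^{-n}$ whose nerves refine each other and are chain-connected, using uniform local connectedness to link consecutive refinements.

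Next I would combine the previous cover data with the Keller-type surjection $\pi:K\to X$ from the Cantor set $K$ onto the compact metric space $X$ (the proposition cited from \cite{Ke} just above gives this). The task is to extend $\pi$ continuously across the ``gaps'' of $K\subseteq[0,1]$. For each complementary open interval $(a,b)$ of $K$, the endpoints $\pi(a)$ and $\pi(b)$ can be joined by a path in $X$ of controlled diameter, chosen inside one of the connected sets of $\mathcal{U}_n$ where $n$ depends on $b-a$; this uses uniform local connectedness to make the diameters of the filling paths go to zero as $b-a\to 0$. Defining $f$ to equal $\pi$ on $K$ and the chosen path on each gap, continuity at the endpoints of $K$ is exactly the diameter estimate, and surjectivity is inherited from $\pi$. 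Finally, $f$ is a quotient map because $[0,1]$ is compact and $X$ is Hausdorff, so $f$ is closed.

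The main obstacle is the diameter-controlled path-filling step: one must show that the connected open sets joining nearby points can be replaced by actual \emph{arcs} (or at least by continuous curves) with small diameter, consistently as $n$ varies. The clean way is to prove the auxiliary theorem that a Peano space is arcwise connected and locally arcwise connected (Mazurkiewicz-Moore-Menger), which I would take as a black box if available; otherwise I would give an inductive construction using uniform local connectedness that produces a continuous surjection of a dyadic subdivision of $[0,1]$ onto an $\varepsilon$-net of $X$ and passes to the uniform limit via completeness of $C([0,1],X)$ with the sup metric.
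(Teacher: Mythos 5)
The paper does not prove this statement at all: it is quoted as Theorem 31.5 of Willard's \emph{General Topology} and used as a black box, so there is no in-paper argument to compare yours against. Judged on its own terms, your backward direction is complete and correct: $f$ is closed because $[0,1]$ is compact and $X$ is Hausdorff, metrizability follows from second countability plus Urysohn (or simply from the standard fact that a Hausdorff continuous image of a compact metrizable space is compact metrizable), and your component argument for local connectedness --- pull back a component $C$ of an open $U$ to a union of the open intervals constituting $f^{-1}(U)$ and invoke the quotient property --- is the standard one and is sound.

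The forward direction, however, is a plan rather than a proof, and the gap sits exactly at the step you flag yourself. Uniform local connectedness gives you, for nearby points, a \emph{connected} set of small diameter containing both; to fill the complementary gaps of the Cantor set you need a \emph{path} (a continuous image of an interval) of small diameter, and passing from small connected sets to small paths is precisely the content of the arcwise connectivity theorem for Peano continua (Mazurkiewicz--Moore--Menger). That theorem is of essentially the same depth as Hahn--Mazurkiewicz itself, so taking it as a black box leaves the hardest part of the statement unproved; and the alternative you mention --- an inductive construction converging in $C([0,1],X)$ --- is the right idea but is exactly where all the work lives (one must arrange that the successive modifications are uniformly Cauchy and that surjectivity survives the limit). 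Since the paper itself only cites the result, relying on the textbook is perfectly appropriate in context; but as a self-contained argument the forward direction is incomplete at its central step.
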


As a consequence, we have that if $X,Y$ are Peano spaces with $\# X > 1$, then there exists a quotient map $f: X \rightarrow Y$.

\begin{prop}\label{quotientaction}Let $G$ be a Hausdorff locally compact group and $\varphi$ a continuous action on a compact Hausdorff space $X$. If $\sim$ is a closed equivalence relation on $X$ compatible with the action $\varphi$, then the induced action $\tilde{\varphi}: G \curvearrowright X/\sim$ is continuous.
\end{prop}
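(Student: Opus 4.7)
The plan is to obtain $\tilde{\varphi}$ as the unique map making the square
$$\begin{array}{ccc} G\times X & \xrightarrow{\ \varphi\ } & X \\ {\scriptstyle id_{G}\times \pi}\downarrow & & \downarrow{\scriptstyle \pi} \\ G\times (X/\!\sim) & \xrightarrow{\tilde{\varphi}} & X/\!\sim \end{array}$$
commute, where $\pi:X\to X/\!\sim$ is the quotient projection, and then to deduce continuity of $\tilde{\varphi}$ from a universal property. Compatibility of $\sim$ with $\varphi$ is exactly what is needed for $\tilde{\varphi}$ to be well defined as a set-theoretic map, and the composition $\pi\circ\varphi: G\times X \to X/\!\sim$ is continuous because $\pi$ and $\varphi$ are.

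To promote well-definedness to continuity, I would invoke the universal property of quotient maps: if $id_{G}\times \pi$ is itself a quotient map, then any map out of its codomain whose pre-composition is continuous is automatically continuous. The key input here is the classical fact (essentially due to Whitehead) that the product of a quotient map with the identity of a locally compact Hausdorff space is again a quotient map. Since the hypothesis gives us exactly that $G$ is locally compact Hausdorff, this applies to $id_{G}\times \pi$.

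Before invoking that fact I need to know that $\pi$ really is a quotient map between reasonable spaces. Since $\sim$ is closed in $X\times X$ and $X$ is compact Hausdorff, a standard argument (saturating compact sets by the closed relation) shows that $X/\!\sim$ is compact Hausdorff and $\pi$ is a closed continuous surjection, hence a quotient map. With this in hand, the two-step reduction above finishes the proof: continuity of $\pi\circ\varphi$ plus quotient-ness of $id_{G}\times\pi$ force continuity of $\tilde{\varphi}$.

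The only non-routine step is the appeal to Whitehead's theorem that $id_{G}\times \pi$ is a quotient map; everything else is bookkeeping. If one wanted to avoid citing it, one could instead argue directly: given an open $V\subseteq X/\!\sim$, one shows that $\tilde{\varphi}^{-1}(V)$ is open at a point $(g_{0},[x_{0}])$ by using local compactness of $G$ to choose a compact neighborhood $L$ of $g_{0}$ and then using compactness of $L\cdot\pi^{-1}([x_{0}])$ together with the tube lemma in $\pi^{-1}(V)$ to produce a saturated neighborhood of $\pi^{-1}([x_{0}])$ whose $L$-translate still lies in $\pi^{-1}(V)$. Either route reaches the same conclusion.
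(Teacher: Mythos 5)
Your proposal is correct and follows essentially the same route as the paper: both define $\tilde{\varphi}$ via the commuting square, use local compactness of the Hausdorff group $G$ to conclude that $id_{G}\times\pi$ is a quotient map (Whitehead's theorem), and then deduce continuity of $\tilde{\varphi}$ from the continuity of $\pi\circ\varphi$ and the universal property of quotients. Your additional remarks on why $\pi$ is a closed quotient map and the alternative tube-lemma argument go beyond what the paper writes, but the core argument is identical.
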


\begin{proof}Let $\pi: X \rightarrow X /\sim$ be the quotient map. Since $G$ is Hausdorff and locally compact, we have that $id\times \pi: G \times X \rightarrow G\times (X/\sim)$ is a quotient map. We define $\tilde{\varphi}: G \curvearrowright X/\sim$ such that the diagram commutes:

$$ \xymatrix{ G\times X \ar[r]^{\varphi} \ar[d]^{id\times\pi} & X \ar[d]^{\pi} \\
            G\times (X/\sim) \ar[r]^{\tilde{\varphi}} & X/\sim  } $$

Since $\pi\circ \varphi$ is continuous and $id\times \pi$ is a quotient map, we have that $\tilde{\varphi}$ is continuous.
\end{proof}

\begin{prop}(Stallings, $\S5.A.9$ and $\S5.A.10$ of \cite{St} ) Let $G$ be a finitely generated group. Then, $Ends(G)$ is infinite if and only if $G$ splits to an amalgamated product $G_{1}\ast_{H} G_{2}$, where $[G:G_{1}] \geqslant 2$, $[G:G_{2}] > 2$ and $H$ is finite, or to an HNN extension $G_{1}\ast_{H}$, where $H$ is a proper finite subgroup of $G_{1}$.
\end{prop}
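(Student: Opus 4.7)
The theorem is Stallings' classical theorem on ends of groups. The proof splits into two directions: the easy $(\Leftarrow)$ direction, producing infinitely many ends from a splitting, is handled by Bass-Serre theory; the substantive $(\Rightarrow)$ direction, constructing a splitting from the hypothesis of infinitely many ends, is the heart of Stallings' work and is where the main obstacle lies.

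For the easy direction, assume $G$ splits as $G_{1}\ast_{H}G_{2}$ or as an HNN extension $G_{1}\ast_{H}$ with $H$ finite satisfying the stated index conditions. The associated Bass-Serre tree $T$ is infinite and locally finite, $G$ acts on $T$ with finite edge stabilizers, and the index hypotheses force $T$ to contain a vertex of valence at least $3$ (so $T$ is not a line). Fix a finite symmetric generating set $S$ of $G$, form the Cayley graph $\Gamma=Cay(G,S)$, and take an equivariant orbit map $\Gamma\to T$. Each edge $e$ of $T$ pulls back to a finite separator of $\Gamma$ (its preimage is contained in a $G$-translate of a fixed finite set determined by $S$), and the two infinite components of $\Gamma$ arising from this separator correspond to the two half-trees of $T\setminus\{e\}$. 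Since $T$ branches, removing all separators coming from edges in a sufficiently large finite sub-tree of $T$ leaves arbitrarily many infinite components of $\Gamma$, so $Ends(G)$ is infinite.

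For the hard direction, suppose $Ends(G)$ is infinite. By the definition of ends in the Cayley graph, there is a finite set $F\subseteq G$ so that $\Gamma\setminus F$ has at least three infinite components; each such component gives a subset $A\subseteq G$ with $A$ and $G\setminus A$ infinite and $A\triangle gA$ finite for every $g\in G$ (an \emph{almost invariant set}). The key step is to promote a single almost invariant set into a $G$-invariant \emph{nested} family of almost invariant subsets. This is Dunwoody's track theorem, which uses finite generation crucially to produce a finite $2$-complex $K$ with $\pi_{1}(K)=G$ and then runs the track-cutting argument on the universal cover $\widetilde{K}$; equivalently, it is Stallings' original bipolar structure construction. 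A nested $G$-invariant family of cuts canonically determines a simplicial tree on which $G$ acts without inversions, and finiteness of the original separator $F$ translates to finite edge stabilizers. The fundamental theorem of Bass-Serre theory applied to the quotient graph of groups then yields the splitting, and the infinite-ends hypothesis ensures the action is not on a line, producing the strict index inequality in the statement.

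The main obstacle is clearly the promotion of a single almost invariant set to a $G$-invariant nested family of such sets; this is the combinatorial core of Stallings' theorem (classically via bipolar structures, and later via Dunwoody's tracks on a finite $2$-complex). Everything else is standard Bass-Serre bookkeeping combined with the routine separation of the $2$-ended case (virtually $\Z$, with all relevant indices equal to $2$ or $H=G_{1}$) from the infinite-ended case, which is what rules out the degenerate splittings and forces the strict inequalities stated.
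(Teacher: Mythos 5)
The paper does not prove this statement at all: it is quoted verbatim from Stallings' book as a preliminary (\S5.A.9--5.A.10 of \cite{St}) and used later only as a black box, so there is no proof in the paper to compare yours against. Your outline does correctly describe the architecture of the classical argument --- Bass--Serre theory for the $(\Leftarrow)$ direction, almost invariant sets plus a nesting/structure theorem for $(\Rightarrow)$ --- and you correctly locate where the difficulty sits.

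However, as a proof it has a genuine gap, and you have in effect acknowledged it: the step from a single almost invariant set to a $G$-invariant nested family (equivalently, Stallings' bipolar structure or Dunwoody's track argument on a finite $2$-complex) is only named, never carried out. That step \emph{is} Stallings' theorem --- everything else in your sketch is routine --- so invoking ``Dunwoody's track theorem'' or ``Stallings' original bipolar structure construction'' as an external ingredient makes the proposal an annotated citation rather than a proof; it is not logically circular, but it defers exactly the content being asserted. Two smaller points deserve care if you were to write this out: in the easy direction the preimage of an edge midpoint under the orbit map separates $\Gamma$, but one must check that at least two of the resulting components are infinite (the index conditions $[G:G_{1}]\geqslant 2$, $[G:G_{2}]>2$ are precisely what excludes the two-ended degenerate splittings such as $\Z/2\ast\Z/2$, and this is where they enter); and in the hard direction the passage from ``at least three infinite components of $\Gamma\setminus F$'' to the strict index inequalities in the conclusion requires explicitly ruling out that the resulting tree action is on a line, which you mention only in passing.
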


\section{Sum of spaces}

\subsection{Construction}

\begin{defi}Let $X$ and $Y$ be topological spaces. We say that an application $f: Closed(X) \rightarrow Closed (Y)$ is admissible if  $\forall A,B \in Closed(X)$, $f(A\cup B) = f(A)\cup f(B)$ and $f(\emptyset) = \emptyset$. Let's fix an admissible map $f$. Let's declare $A \subseteq X\dot{\cup} Y$ as a closed set if $A \cap X \in Closed(X), \ A \cap Y \in Closed(Y)$ and $f(A\cap X) \subseteq A$. Therefore, let's denote by $\tau_{f}$ the set of the complements of this closed sets and $X+_{f}Y = (X\dot{\cup} Y,\tau_{f})$.
\end{defi}

\begin{prop}Actually, $\tau_{f}$ is a topology.
\end{prop}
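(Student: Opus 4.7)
The cleanest route is to verify the closed-set axioms dual to those of a topology: namely, that the family $\mathcal{F}$ of subsets $A \subseteq X\dot\cup Y$ satisfying the three conditions (i) $A\cap X \in \mathrm{Closed}(X)$, (ii) $A\cap Y \in \mathrm{Closed}(Y)$, (iii) $f(A\cap X) \subseteq A$ contains $\emptyset$ and $X\dot\cup Y$, is closed under finite unions, and is closed under arbitrary intersections. Taking complements then gives the required topology.

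The first step is to observe that admissibility implicitly carries a monotonicity statement. If $A \subseteq B$ in $\mathrm{Closed}(X)$, then $B = A\cup B$, so $f(B) = f(A)\cup f(B)$, and hence $f(A) \subseteq f(B)$. This will be the essential ingredient for the intersection axiom, since the hypothesis only gives behaviour on binary unions.

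Second, I would check the trivial cases and finite unions directly from admissibility. For $\emptyset$, the conditions reduce to $f(\emptyset) = \emptyset$, which is explicit. For $X\dot\cup Y$ the intersection with $X$ is $X$ (closed), with $Y$ is $Y$ (closed), and $f(X) \subseteq Y \subseteq X\dot\cup Y$. For a finite union $A\cup B$ with $A,B \in \mathcal{F}$, conditions (i) and (ii) hold because $(A\cup B)\cap X = (A\cap X)\cup (B\cap X)$ is a finite union of closed sets, and likewise on $Y$; condition (iii) follows from
\[
f\bigl((A\cup B)\cap X\bigr) = f(A\cap X)\cup f(B\cap X) \subseteq A\cup B,
\]
where the equality is exactly the admissibility hypothesis.

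The only step requiring a small argument is arbitrary intersections. Given $\{A_\alpha\}_{\alpha \in \Gamma} \subseteq \mathcal{F}$, conditions (i) and (ii) hold because arbitrary intersections of closed sets in $X$ (resp.\ $Y$) are closed. For condition (iii), set $I = \bigcap_\alpha A_\alpha$ and fix $\beta \in \Gamma$; since $I\cap X \subseteq A_\beta\cap X$, monotonicity (the first step) gives
\[
f(I\cap X) \subseteq f(A_\beta \cap X) \subseteq A_\beta.
\]
As $\beta$ was arbitrary, $f(I\cap X) \subseteq \bigcap_\alpha A_\alpha = I$, finishing the verification. I do not anticipate any genuine obstacle here — the whole point of the proposition is that the axioms on $f$ are tailored precisely to make $\tau_f$ a topology, and the only non-obvious move is extracting monotonicity from the union-preservation axiom.
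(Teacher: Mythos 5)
Your proof is correct and follows essentially the same route as the paper: verify the closed-set axioms directly, with finite unions handled by the admissibility identity and arbitrary intersections by monotonicity of $f$. The only difference is that you make explicit the derivation of monotonicity from $f(A\cup B)=f(A)\cup f(B)$, a step the paper uses implicitly in the inclusion $f\bigl((\bigcap_{i}A_{i})\cap X\bigr)\subseteq f(A_{i}\cap X)$.
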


\begin{proof}We have that $(X\cup Y)\cap X = X \in Closed(X), \ (X\cup Y)\cap Y = Y \in Closed(Y)$ and $f((X\cup Y)\cap X) = f(X) \subseteq X\cup Y$. So, $X\cup Y \in Closed(X+_{f}Y)$.

We have also that $\emptyset \cap X = \emptyset \in Closed(X), \ \emptyset\cap \emptyset = Y \in Closed(Y)$ and $f(\emptyset \cap X) = f(\emptyset) = \emptyset$. So, $\emptyset \in Closed(X+_{f}Y)$.

If $A,B \in Closed(X+_{f}Y)$, then $(A\cup B)\cap X = (A\cap X)\cup (B \cap X) \in Closed(X)$, $(A\cup B)\cap Y = (A\cap Y)\cup (B \cap Y) \in Closed(Y)$ and $f((A\cup B)\cap X) = f((A\cap X)\cup (B \cap X)) = f(A\cap X)\cup f(B \cap X) \subseteq A \cup B$ (because $f(A\cap X)\subseteq A$ and $f(B \cap X) \subseteq B$). So, $A\cup B \in Closed(X+_{f}Y)$.

Finally, let $\{A_{i}\}_{i\in \Gamma}$ be a family of closed sets. Then $(\bigcap\limits_{i\in \Gamma}A_{i}) \cap X = \bigcap\limits_{i\in \Gamma}(A_{i}\cap X) \in Closed(X)$, because each $A_{i}\cap X \in Closed(X)$. Analogously, $(\bigcap\limits_{i\in \Gamma}A_{i}) \cap Y \in Closed(Y)$. And $\forall i \in \Gamma, \ f((\bigcap\limits_{i\in \Gamma}A_{i}) \cap X) \subseteq f(A_{i}\cap X) \subseteq A_{i}$, which implies that $f((\bigcap\limits_{i\in \Gamma}A_{i}) \cap X)\subseteq \bigcap\limits_{i\in \Gamma}A_{i}$. So, $\bigcap\limits_{i\in \Gamma}A_{i} \in Closed(X+_{f}Y)$. \end{proof}

\begin{prop}Let $A \in Closed (X)$. Then $Cl_{X+_{f}Y}A = A\cup f(A)$.
\end{prop}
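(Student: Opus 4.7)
The plan is to prove both inclusions of sets.

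For $Cl_{X+_{f}Y}A \subseteq A\cup f(A)$, I would verify that $A\cup f(A)$ is a closed subset of $X+_{f}Y$ that contains $A$. Using that $X$ and $Y$ are disjoint in the underlying set, I have $(A\cup f(A))\cap X = A$, which is closed in $X$ by hypothesis, and $(A\cup f(A))\cap Y = f(A)$, which is closed in $Y$ because $f$ takes values in $Closed(Y)$. The third condition is immediate: $f((A\cup f(A))\cap X) = f(A) \subseteq A\cup f(A)$. Hence $A\cup f(A)$ is closed and contains $A$, so the closure is contained in it.

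For the reverse inclusion $A\cup f(A) \subseteq Cl_{X+_{f}Y}A$, the key observation I would record first is that admissibility forces $f$ to be monotone: if $B_{1}\subseteq B_{2}$ are closed in $X$, then $f(B_{2}) = f(B_{1}\cup B_{2}) = f(B_{1})\cup f(B_{2})$, so $f(B_{1})\subseteq f(B_{2})$. Now let $C$ be any closed set of $X+_{f}Y$ with $A\subseteq C$. Then $A\subseteq C\cap X$, so by monotonicity $f(A) \subseteq f(C\cap X) \subseteq C$ (the last inclusion being part of the definition of closed sets in $X+_{f}Y$). Therefore $A\cup f(A)\subseteq C$, and intersecting over all such $C$ yields $A\cup f(A)\subseteq Cl_{X+_{f}Y}A$.

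The argument should be quite short; the only point that requires a moment's thought is extracting monotonicity of $f$ from the union-preservation property in the definition of admissibility, which is the small technical step that makes the second inclusion work cleanly.
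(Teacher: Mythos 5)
Your proof is correct and follows essentially the same route as the paper: you verify that $A\cup f(A)$ is closed using the three defining conditions, and for the reverse inclusion you extract monotonicity of $f$ from admissibility, which is exactly the computation $f(B\cap X)=f(A)\cup f(B\cap X)$ that the paper performs inline for an arbitrary closed $B\supseteq A$.
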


\begin{proof}We have that $(A \cup f(A)) \cap X = A \in Closed (X)$, $(A \cup f(A)) \cap Y = f(A) \in Closed (Y)$ and $f((A\cup f(A))\cap X) = f(A) \subseteq A \cup f(A)$. So, $A\cup f(A) \in Closed(X+_{f}Y)$.

Let $B\in Closed(X+_{f}Y)$ such that $A \subseteq B$. We have that $f(B\cap X)\subseteq B$. But $f(B\cap X) = f((A\cup B)\cap X) = f(A\cap X) \cup f(B\cap X) = f(A) \cup f(B\cap X)$, which implies that $f(A) \subseteq B$. So, $A \cup f(A) \subseteq B$.

Thus, $Cl_{X+_{f}Y}A = A\cup f(A)$.
\end{proof}

\begin{cor}$X$ is dense in $X+_{f}Y$ if and only if $f(X) = Y$.
\end{cor}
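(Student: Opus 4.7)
The plan is to apply the preceding proposition directly with $A = X$, which is a legitimate choice since $X$ is closed in itself. The proposition then yields
\[
Cl_{X+_{f}Y}(X) = X \cup f(X).
\]
By definition, $X$ is dense in $X+_{f}Y$ exactly when $Cl_{X+_{f}Y}(X) = X \dot{\cup} Y$, so the question reduces to when $X \cup f(X) = X \cup Y$.

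Since the underlying set of $X+_{f}Y$ is the disjoint union $X \dot{\cup} Y$ and $f$ takes values in $Closed(Y)$, we automatically have $f(X) \subseteq Y$ and $X \cap f(X) = \emptyset$. Consequently, $X \cup f(X) = X \cup Y$ is equivalent to $f(X) = Y$, which is the desired equivalence.

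There is no real obstacle here; the corollary is a formal consequence of the computation of the closure, and the only thing one has to be careful about is the disjointness of $X$ and $Y$ in the underlying set, which comes straight from the definition of $X+_{f}Y$. I would present the argument as a two-line chain of equivalences without invoking any further machinery. $\square$
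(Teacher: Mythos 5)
Your proof is correct and follows essentially the same route as the paper: both apply the closure formula $Cl_{X+_{f}Y}(X) = X \cup f(X)$ with $A = X$ and then use the disjointness of $X$ and $Y$ to conclude that this equals $X \dot{\cup} Y$ precisely when $f(X) = Y$. The only cosmetic difference is that the paper phrases the argument as two separate implications while you give a single chain of equivalences.
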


\begin{proof}If $f(X) = Y$, then $Cl_{X+_{f}Y}(X) = X \cup f(X) = X \cup Y$, which implies that $X$ is dense in $X+_{f}Y$. If $f(X) = Y_{1} \subsetneq Y$, then $Cl_{X+_{f}Y}(X) = X \cup f(X) = X \cup Y_{1} \subsetneq X \cup Y$, which implies that $X$ is not dense in $X+_{f}Y$.
\end{proof}

\begin{prop}$Y$ is closed in $X+_{f}Y$.
\end{prop}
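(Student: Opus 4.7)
The proof is essentially a direct verification against the definition of $\tau_f$-closed sets, so there is no real obstacle. I simply need to check the three defining conditions for the set $Y \subseteq X \dot\cup Y$.

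First, $Y \cap X = \emptyset$, which belongs to $Closed(X)$. Second, $Y \cap Y = Y$, which is trivially in $Closed(Y)$. Third, I need $f(Y \cap X) \subseteq Y$, and indeed $f(Y \cap X) = f(\emptyset) = \emptyset \subseteq Y$ by admissibility of $f$. Hence $Y$ is closed in $X +_f Y$ by definition, and the proposition qualifies for a $\Box$ in the sense of the convention stated in the Preliminaries section. The whole argument is a two-line check; no lemma from earlier in the excerpt beyond the definitions of admissibility and of $\tau_f$ is needed.
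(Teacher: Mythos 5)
Your proof is correct and is identical to the paper's own argument: both verify directly that $Y\cap X=\emptyset\in Closed(X)$, $Y\cap Y=Y\in Closed(Y)$, and $f(Y\cap X)=f(\emptyset)=\emptyset\subseteq Y$, using admissibility of $f$. Nothing further is needed.
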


\begin{proof}We have that $Y\cap X = \emptyset \in Closed(X), \ Y\cap Y = Y \in Closed(Y)$ and $f(Y\cap X) = f(\emptyset) = \emptyset \subseteq Y$. Thus, $Y \in Closed(X+_{f}Y)$.
\end{proof}

\begin{prop}The maps $id_{X}:X \rightarrow X+_{f}Y$ and $id_{Y}:Y \rightarrow X+_{f}Y$ are embeddings.
\end{prop}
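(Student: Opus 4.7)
The plan is to check, for each of $id_X$ and $id_Y$, that the subspace topology induced from $X+_f Y$ on the image coincides with the original topology; injectivity and the set-theoretic identification are immediate. I will phrase everything in terms of closed sets, since the topology $\tau_f$ was defined via its closed sets.

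For $id_Y$, I would begin by noting that for any closed set $A$ of $X+_f Y$, by definition $A\cap Y\in Closed(Y)$, so the subspace topology on $Y$ is coarser than its own topology (equivalently, $id_Y$ is continuous). For the reverse inclusion, take any $B\in Closed(Y)$ and view $B$ as a subset of $X\dot\cup Y$; then $B\cap X=\emptyset\in Closed(X)$, $B\cap Y=B\in Closed(Y)$, and $f(B\cap X)=f(\emptyset)=\emptyset\subseteq B$, so $B\in Closed(X+_fY)$. Thus $B=B\cap Y$ is closed in the subspace topology, giving the equality.

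For $id_X$, continuity again follows directly from the definition of $\tau_f$, since $A\cap X\in Closed(X)$ for every closed $A$ of $X+_fY$. For the other direction, given $B\in Closed(X)$, the earlier proposition shows that $Cl_{X+_fY}B=B\cup f(B)$ is closed in $X+_fY$, and since $f(B)\subseteq Y$ we have $(B\cup f(B))\cap X=B$. Hence $B$ arises as the intersection with $X$ of a closed set of $X+_fY$, so it is closed in the subspace topology.

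There is no real obstacle here: the entire statement is a bookkeeping exercise that follows immediately from the definition of $\tau_f$ and the closure formula $Cl_{X+_fY}A=A\cup f(A)$ established in the preceding proposition. The only point requiring a little care is remembering, in the argument for $id_Y$, that $f(\emptyset)=\emptyset$ is part of the admissibility of $f$, which is what lets an arbitrary $B\in Closed(Y)$ be lifted to a closed set of $X+_fY$ without having to enlarge it.
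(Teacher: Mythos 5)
Your proof is correct and follows essentially the same route as the paper: continuity of both inclusions from the definition of $\tau_f$, the closure formula $Cl_{X+_fY}(B)=B\cup f(B)$ to realize a closed $B\subseteq X$ as a trace of a closed set of $X+_fY$, and a direct verification that any $B\in Closed(Y)$ is already closed in $X+_fY$. The only (harmless) difference is that for $id_Y$ you check the three defining conditions explicitly, whereas the paper invokes the previously proved fact that $Y$ is closed in $X+_fY$; your version is if anything slightly more self-contained.
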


\begin{proof} Let $F \in Closed(X+_{f}Y)$. Then $F\cap X \in Closed(X)$. However, $F \cap X = id_{X}^{-1}(F)$. So, $id_{X}$ is continuous. Let $F \in Closed(X)$. We have that $Cl_{X+_{f}Y}(F) = F \cup(f(F))$ and $(F \cup(f(F)))\cap X = F$, which implies that $F$ is closed in $X$ as a subspace of $X+_{f}Y$. Thus, $id_{X}$ is an embedding.

We have that $Y$ is closed in $X+_{f}Y$, so $\forall F \subseteq Y, \ F \in Closed (Y)$ if and only if $F \in Closed(X+_{f}Y)$. Thus, $id_{Y}$ is an embedding.
\end{proof}

\begin{prop}\label{bomcomp}Let $Z$ be a topological space such that $Z = X\dot{\cup} Y$ and $X$ is open. We define $f: Closed(X) \rightarrow Closed(Y)$ as $f(A) = Cl_{Z}(A) \cap Y$. So, $Z$ and $X+_{f}Y$ have the same topology.
\end{prop}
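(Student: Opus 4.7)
The plan is to show that a subset $F\subseteq X\dot{\cup}Y$ is closed in $Z$ if and only if it is closed in $X+_{f}Y$; since the two topologies share the same underlying set, agreement of closed sets is what I need to establish.

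First I would check that $f$ is admissible, so that $X+_{f}Y$ is actually defined: $f(\emptyset)=Cl_{Z}(\emptyset)\cap Y=\emptyset$; for $A\in Closed(X)$ the set $f(A)=Cl_{Z}(A)\cap Y$ is the intersection of a $Z$-closed set with $Y$, hence closed in $Y$; and $f(A\cup B)=Cl_{Z}(A\cup B)\cap Y=(Cl_{Z}(A)\cup Cl_{Z}(B))\cap Y=f(A)\cup f(B)$. I would also record the key consequence of the hypothesis that $X$ is open in $Z$: namely, $Y$ is closed in $Z$.

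For the forward direction, suppose $F$ is closed in $Z$. Then $F\cap X$ is closed in the subspace $X$ and $F\cap Y$ is closed in the subspace $Y$. Moreover $f(F\cap X)=Cl_{Z}(F\cap X)\cap Y\subseteq Cl_{Z}(F)\cap Y=F\cap Y\subseteq F$, so $F$ satisfies the three conditions defining a closed set in $X+_{f}Y$. For the reverse direction, suppose $F\cap X\in Closed(X)$, $F\cap Y\in Closed(Y)$ and $f(F\cap X)\subseteq F$. I would compute $Cl_{Z}(F)=Cl_{Z}(F\cap X)\cup Cl_{Z}(F\cap Y)$. Since $F\cap Y$ is closed in $Y$ and $Y$ is closed in $Z$, the second term equals $F\cap Y$. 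For the first term I split $Cl_{Z}(F\cap X)=(Cl_{Z}(F\cap X)\cap X)\cup(Cl_{Z}(F\cap X)\cap Y)$: the piece in $X$ equals $Cl_{X}(F\cap X)=F\cap X$ by the standard subspace closure identity, while the piece in $Y$ is exactly $f(F\cap X)$ by the definition of $f$. Combining everything gives $Cl_{Z}(F)=(F\cap X)\cup f(F\cap X)\cup(F\cap Y)=F\cup f(F\cap X)=F$, where the last equality uses the hypothesis $f(F\cap X)\subseteq F$. Hence $F$ is closed in $Z$.

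I do not expect a genuine obstacle here; the proof is bookkeeping built on two routine facts (closure distributes over finite unions, and closure in a subspace equals the ambient closure intersected with that subspace). The sole role of the openness of $X$ is to make $Y$ closed in $Z$, which is precisely what collapses $Cl_{Z}(F\cap Y)$ back to $F\cap Y$ in the computation above; without it the two topologies need not coincide.
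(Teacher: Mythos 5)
Your proof is correct and follows essentially the same route as the paper's: both directions compare closed sets, the forward direction uses $Cl_{Z}(A\cap X)\subseteq Cl_{Z}(A)=A$, and the reverse direction decomposes $Cl_{Z}(F)$ as $Cl_{Z}(F\cap X)\cup Cl_{Z}(F\cap Y)$, handling the first term via the subspace closure identity together with $f(F\cap X)\subseteq F$ and the second via the closedness of $Y$ in $Z$. No substantive differences.
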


\begin{proof}Let $A,B \in Closed(X)$. So, $f(A\cup B) = Cl_{Z}(A\cup B) \cap Y =$ $\\ (Cl_{Z}(A)\cup Cl_{Z}(B)) \cap Y = (Cl_{Z}(A)\cap Y)\cup (Cl_{Z}(B) \cap Y) = f(A)\cup f(B)$ and $f(\emptyset) = Cl_{Z}(\emptyset) \cap Y = \emptyset \cap Y = \emptyset$. So, $f$ is admissible.

Let $A \in Closed (Z)$. We have that $A\cap X \in Closed (X), \ A\cap Y \in Closed (Y)$ and $f(A \cap X) = Cl_{Z}(A\cap X) \cap Y \subseteq Cl_{Z}(A\cap X) \subseteq Cl_{Z}(A) = A$. So, $A \in Closed(X+_{f}Y)$. Let $A \in Closed(X+_{f}Y)$. We have that $A\cap X \in Closed(X)$, which implies that $Cl_{X}(A\cap X) = A\cap X \subseteq A$. But $Cl_{X}(A\cap X) = Cl_{Z}(A\cap X)\cap X$, which implies that $Cl_{Z}(A\cap X)\cap X \subseteq A$. For the other hand, we have that $f(A\cap X) \subseteq A$. But $f(A \cap X) = Cl_{Z}(A\cap X) \cap Y$, which implies that $Cl_{Z}(A\cap X) \cap Y \subseteq A$. So, $Cl_{Z}(A\cap X) \subseteq A$. But $A = (A\cap X)\cup(A \cap Y)$, which implies that $Cl_{Z}(A) = Cl_{Z}(A\cap X) \cup Cl_{Z}(A\cap Y)$. Since $Y\in Closed (Z)$ and $A \cap Y \in Closed(Y)$, it follows that $A \cap Y  \in Closed (Z)$ which implies that $Cl_{Z}(A\cap Y) = A \cap Y \subseteq A$. So, $Cl_{Z}(A) \subseteq A$ and follows that $A \in Closed(Z)$.

Thus, $Closed (Z) = Closed(X+_{f}Y)$.

\end{proof}

As a simple example, we have:

\begin{prop}Let $X,Y$ be topological spaces. Then $X+_{\emptyset}Y$ is the coproduct of $X$ and $Y$ (where $\emptyset$ means the constant map equal to $\emptyset$).
\end{prop}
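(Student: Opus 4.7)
The plan is to show that $\tau_{\emptyset}$ coincides with the standard coproduct (disjoint-union) topology on $X \dot{\cup} Y$, after which the coproduct universal property follows automatically. First, I would unpack the definition of closed sets in $\tau_{\emptyset}$: a set $B \subseteq X \dot{\cup} Y$ is closed in $X +_{\emptyset} Y$ iff $B \cap X \in Closed(X)$, $B \cap Y \in Closed(Y)$, and $f(B \cap X) \subseteq B$. Since $f \equiv \emptyset$, the third condition reads $\emptyset \subseteq B$, which is vacuous. Hence the closed sets of $X +_{\emptyset} Y$ are precisely the sets of the form $C_X \dot{\cup} C_Y$ with $C_X \in Closed(X)$ and $C_Y \in Closed(Y)$, which is exactly the description of closed sets in the coproduct topology on $X \dot{\cup} Y$.

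Having identified the two topologies, it remains to read off the coproduct universal property. By the previous proposition, the inclusions $id_X : X \to X +_{\emptyset} Y$ and $id_Y : Y \to X +_{\emptyset} Y$ are embeddings, hence continuous. Given continuous maps $g : X \to Z$ and $h : Y \to Z$ into any topological space $Z$, the unique set-theoretic extension $\phi : X \dot{\cup} Y \to Z$ with $\phi|_X = g$ and $\phi|_Y = h$ is continuous, since for every closed $F \subseteq Z$ the preimage $\phi^{-1}(F) = g^{-1}(F) \dot{\cup} h^{-1}(F)$ has closed intersection with each of $X$ and $Y$, and is therefore closed by the description above. No real obstacle is expected here; the proposition is essentially a sanity check that the sum-of-spaces construction degenerates to the categorical coproduct in the trivial case $f \equiv \emptyset$.
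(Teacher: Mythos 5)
Your proposal is correct and follows essentially the same route as the paper: the paper simply observes that $X$ and $Y$ are both closed (since $Cl_{X+_{\emptyset}Y}(X) = X \cup \emptyset(X) = X$ and $Y$ is always closed), disjoint, and cover the space, which is exactly your identification of $\tau_{\emptyset}$ with the disjoint-union topology. Your explicit verification of the universal property is a harmless elaboration of what the paper leaves implicit.
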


\begin{proof}We have that $X\cup f(X) = X \in Closed(X+_{\emptyset}Y)$. So, $X$ and $Y$ are closed, disjoint and $X\cup Y = X+_{\emptyset}Y$, which implies that $X+_{\emptyset}Y$ is the coproduct of $X$ and $Y$.
\end{proof}

\subsection{Separation}

\begin{prop}Let $X,Y$ be topological spaces and $X+_{f}Y$ Hausdorff. So, $\forall K \subseteq X$ compact, $f(K) = \emptyset$.
\end{prop}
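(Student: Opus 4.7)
The plan is to exploit the fact that in a Hausdorff space, compact subsets are closed, and combine this with the closure formula $Cl_{X+_{f}Y}(A) = A\cup f(A)$ established earlier for any $A \in Closed(X)$.

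First I would observe that since $X+_{f}Y$ is Hausdorff, its subspace $X$ is also Hausdorff, and therefore any compact $K\subseteq X$ is closed in $X$, so that $K\in Closed(X)$ and the earlier closure formula applies to give $Cl_{X+_{f}Y}(K) = K\cup f(K)$. Next, since the inclusion $id_{X}:X\rightarrow X+_{f}Y$ is an embedding (proved above), $K$ is compact as a subset of $X+_{f}Y$. In a Hausdorff space, compact subsets are closed, so $K$ is closed in $X+_{f}Y$, which means $Cl_{X+_{f}Y}(K) = K$.

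Combining the two expressions for $Cl_{X+_{f}Y}(K)$ yields $K = K\cup f(K)$, i.e.\ $f(K)\subseteq K$. But $f(K)\subseteq Y$ by definition of $f$, while $K\subseteq X$ and $X\cap Y = \emptyset$ in the disjoint union $X\dot{\cup} Y$. Hence $f(K)\subseteq K\cap Y = \emptyset$, giving $f(K) = \emptyset$ as required.

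There is no real obstacle here; the statement is essentially a direct unpacking of the two facts (embedding preserves compactness; compact implies closed in Hausdorff) against the explicit description of closures in $X+_{f}Y$. The only thing to be careful about is justifying that $K$ is closed in $X$ so that the closure formula can be invoked, which is immediate once one notes that $X$ inherits the Hausdorff property from $X+_{f}Y$.
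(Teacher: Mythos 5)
Your proof is correct and follows essentially the same route as the paper: apply the closure formula $Cl_{X+_{f}Y}(K)=K\cup f(K)$ and the fact that a compact subset of a Hausdorff space is closed, then compare. You merely fill in the detail (which the paper leaves implicit) that $K$ is closed in $X$ so the closure formula applies.
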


\begin{proof}Let $K \subseteq X$ be a compact. We have that $Cl_{X+_{f}Y}(K) = K \cup f(K)$. Since $X+_{f}Y$ is Hausdorff, it follows that $K$ is closed, which means that $K \cup f(K) = K$, which implies that $f(K) = \emptyset$.
\end{proof}

\begin{prop}Let $X,Y$ be Hausdorff spaces, with $X$ locally compact. Then $X+_{f}Y$ is Hausdorff if and only if $\forall K \subseteq X$ compact, $f(K) = \emptyset$ and $\forall a,b \in Y, \ \exists A,B \in Closed(X): \ A \cup B = X, \ b \notin f(A)$ e $a \notin f(B)$.
\end{prop}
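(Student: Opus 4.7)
The plan is to use the description of closed sets in $X+_{f}Y$ throughout and to argue the two implications separately. For the forward direction, condition (i) is precisely the content of the previous proposition. For condition (ii), given distinct $a,b\in Y$ I would pick, by Hausdorffness of $X+_{f}Y$, disjoint open neighbourhoods $U_{a}\ni a$ and $U_{b}\ni b$, form the closed sets $F_{a}=(X+_{f}Y)\setminus U_{b}$ and $F_{b}=(X+_{f}Y)\setminus U_{a}$ (whose union is all of $X+_{f}Y$, with $b\notin F_{a}$ and $a\notin F_{b}$), and set $A=F_{a}\cap X$ and $B=F_{b}\cap X$. These are closed in $X$ with $A\cup B=X$; and since $F_{a},F_{b}$ are closed in $X+_{f}Y$ one has $f(A)\subseteq F_{a}$ and $f(B)\subseteq F_{b}$, so $b\notin f(A)$ and $a\notin f(B)$.

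For the converse I would separate two distinct points $p,q$ of $X+_{f}Y$, using that $X$ is open in $X+_{f}Y$ (because $Y$ is closed). If $p,q\in X$, Hausdorffness of $X$ together with openness of $X$ in $X+_{f}Y$ suffices. If $p\in X$ and $q\in Y$, local compactness of $X$ furnishes a compact neighbourhood $K$ of $p$ in $X$; condition (i) yields $f(K)=\emptyset$, so $K$ is closed in $X+_{f}Y$, and $\mathrm{int}_{X}(K)$ together with $(X+_{f}Y)\setminus K$ separate $p$ and $q$.

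The delicate case is $p=a,\ q=b$ with $a,b\in Y$. I would invoke (ii) to obtain $A,B\in Closed(X)$ with $A\cup B=X$, $b\notin f(A)$, $a\notin f(B)$, and use Hausdorffness of $Y$ to get closed sets $C_{a},C_{b}\subseteq Y$ with $C_{a}\cup C_{b}=Y$, $a\notin C_{a}$, $b\notin C_{b}$ (take the $Y$-complements of disjoint open $Y$-neighbourhoods of $a$ and $b$). I would then put $G_{a}=B\cup f(B)\cup C_{a}$ and $G_{b}=A\cup f(A)\cup C_{b}$. Each is closed in $X+_{f}Y$: the $X$-trace is closed in $X$, the $Y$-trace is a finite union of closed subsets of $Y$, and the required $f$-compatibility $f(G_{a}\cap X)=f(B)\subseteq G_{a}$ is built in (similarly for $G_{b}$). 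By construction $a\notin G_{a}$ and $b\notin G_{b}$, while $G_{a}\cup G_{b}=X\cup f(X)\cup Y=X+_{f}Y$, so their complements are the desired disjoint open separators.

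The only subtle step is this last case: the naive choice $F_{1}=A\cup f(A)$ and $F_{2}=B\cup f(B)$ only covers $X\cup f(X)$ and a priori leaves the points of $Y\setminus f(X)$ in the intersection of the complements. Absorbing those leftover points into $C_{a}$ and $C_{b}$, which come for free from Hausdorffness of $Y$, is the key trick.
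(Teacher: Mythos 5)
Your proposal is correct and follows essentially the same route as the paper's proof: the forward direction extracts $A$ and $B$ as the $X$-traces of a closed cover separating $a$ and $b$, and the converse handles the three cases ($X$--$X$, $X$--$Y$, $Y$--$Y$) identically, with the $Y$--$Y$ case using exactly the sets $A\cup f(A)\cup C$ and $B\cup f(B)\cup D$ that the paper uses. The "key trick" you highlight at the end — absorbing $Y\setminus f(X)$ via the closed cover of $Y$ coming from its Hausdorffness — is precisely what the paper does as well.
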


\begin{proof}($\Rightarrow$) Let $a,b \in Y$. Since $X+_{f}Y$ is a Hausdorff space, $\exists U,V \in Closed(X+_{f}Y): \ U \cup V = X+_{f}Y, \ a \notin V$ and $b \notin U$. Take $A = U \cap X$ and $B = V \cap X$. We have that $A,B \in Closed(X)$ and $A \cup B = (U\cup V) \cap X = X$. Since $U$ and $V$ are  closed, $f(A) = f(U \cap X) \subseteq U$ and $f(B) = f(V \cap X) \subseteq V$. Thus, $a \notin f(B)$ and $b \notin f(A)$. We already saw on this case that $f(K) = \emptyset$ for every compact $K \subseteq X$.

($\Leftarrow$) Let $a,b \in X$. Since $X$ is Hausdorff, there exists $U,V$ open and disjoint neighbourhoods of $a$ and $b$. But $X$ is open in $X+_{f}Y$, which implies that $U$ and $V$ are open and disjoint sets of $X+_{f}Y$ that separate $a\in U$ and $b\in V$.

Let $a\in X$ and $b \in Y$. Since $X$ is locally compact, there exists an open neighbourhood $U$ of $a$ in $X$ such that $Cl_{X}(U)$ is compact. Since $X$ is open in $X+_{f}Y$, we have that $U$ is an open neighbourhood of $a$ in $X+_{f}Y$ and, since $Cl_{X}(U)$ is compact, we have $f(Cl_{X}(U)) = \emptyset$, which implies that $Cl_{X}(U)$ is closed in $X+_{f}Y$. It follows that $U$ and $(X+_{f}Y) - Cl_{X}(U)$ separate $a$ and $b$.

Let $a,b \in Y$. So, there exists $A,B \in Closed(X)$ such that  $A \cup B = X$, $a \notin f(B)$ and $b \notin f(A)$. Since, $Y$ is Hausdorff, there exists $C,D \in Closed(Y)$ such that $C\cup D = Y, \ a \notin D$ and $b \notin C$. We have that $A\cup f(A) \cup C$ and $B\cup f(B) \cup D$ are closed sets in $X+_{f}Y$ such that $A\cup f(A)\cup C\cup B\cup f(B) \cup D = (A\cup B) \cup (f(A) \cup f(B) \cup C \cup D) = X \cup Y = X+_{f}Y, \ a \notin B \cup f(B) \cup D$ and $b \notin A \cup f(A) \cup C$. So, $(X+_{f}Y) - (B \cup f(B)\cup D)$ and $(X+_{f}Y) - (A \cup f(A)\cup C)$ are open sets that separate $a$ and $b$.

Thus, $X+_{f}Y$ is Hausdorff.
\end{proof}

\subsection{Compactness}

\begin{prop}Let $X,Y$ be topological spaces with $Y$ compact and $f$ an admissible map. Then $X+_{f}Y$ is compact if and only if $\forall A \in Closed(X)$ non compact, $f(A) \neq \emptyset$.
\end{prop}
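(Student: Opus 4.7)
The plan is to exploit compactness of $Y$ together with the definition of closed sets in $X+_{f}Y$ to reduce everything to a compactness question about subsets of $X$. The two facts I will lean on from earlier in the excerpt are that $Y$ is closed in $X+_{f}Y$ and that $id_{X}: X \to X+_{f}Y$ is an embedding (so in particular $X$ is open and the subspace topology on any $A \subseteq X$ inherited from $X$ agrees with the one inherited from $X+_{f}Y$).

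For the forward implication I argue by contrapositive. Suppose $A \in Closed(X)$ is non-compact and $f(A) = \emptyset$. Then $A \cap X = A \in Closed(X)$, $A \cap Y = \emptyset \in Closed(Y)$ and $f(A \cap X) = \emptyset \subseteq A$, which are precisely the three conditions that make $A$ closed in $X+_{f}Y$. If $X+_{f}Y$ were compact, $A$ would be compact as a closed subspace, contradicting non-compactness of $A$ inside $X$ (subspace topologies coincide because $id_{X}$ is an embedding).

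For the converse, let $\{U_{\alpha}\}_{\alpha \in \Lambda}$ be an open cover of $X+_{f}Y$. Since $Y$ is compact and embeds, finitely many $U_{\alpha_{1}}, \ldots, U_{\alpha_{n}}$ cover $Y$. Set $W = U_{\alpha_{1}} \cup \cdots \cup U_{\alpha_{n}}$ and $C = (X+_{f}Y) \setminus W$. Then $C$ is closed in $X+_{f}Y$ with $C \cap Y = \emptyset$, so $C \subseteq X$, and the definition of $\tau_{f}$ gives both $C = C \cap X \in Closed(X)$ and $f(C) \subseteq C \cap Y = \emptyset$. The hypothesis (in contrapositive) then forces $C$ to be compact in $X$, hence compact as a subspace of $X+_{f}Y$. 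Finitely many of the $U_{\alpha}$ therefore cover $C$, and together with $U_{\alpha_{1}}, \ldots, U_{\alpha_{n}}$ they yield a finite subcover of $X+_{f}Y$.

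I do not foresee a real obstacle: the proof is essentially a bookkeeping argument built on the characterization of closed sets in $\tau_{f}$. The only thing worth double-checking is the identification of subspace topologies of $A$ and $C$ inside $X$ versus inside $X+_{f}Y$, which is immediate from transitivity of the subspace construction and the fact that $id_{X}$ is an embedding.
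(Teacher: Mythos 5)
Your proof is correct. The forward implication is the same contrapositive the paper uses: a non-compact $A \in Closed(X)$ with $f(A) = \emptyset$ satisfies the three conditions defining $Closed(X+_{f}Y)$, so it would be a closed, hence compact, subspace of a compact space. For the converse, however, you take a genuinely different route. The paper argues with filters: given a filter $\mathcal{F}$ on $X+_{f}Y$, either some member is compact (and one restricts to it), or every member's closure meets $Y$, so the traces $\{Cl(A)\cap Y\}_{A\in\mathcal{F}}$ have the finite intersection property in the compact set $Y$ and a common point is a cluster point of $\mathcal{F}$. You instead run a direct open-cover argument: cover $Y$ by finitely many members of the cover, observe that the complement $C$ of their union is a closed subset of $X+_{f}Y$ contained in $X$ with $f(C) \subseteq C \cap Y = \emptyset$, invoke the hypothesis to conclude $C$ is compact, and assemble a finite subcover. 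Both arguments are sound in the absence of any Hausdorff assumption, since cover-compactness and the cluster-point criterion for filters are equivalent in general. Your version is the more elementary and self-contained of the two, and it isolates cleanly the one place the hypothesis is used (the set $C$); the paper's filter argument generalizes more readily to situations where one prefers to reason about convergence, but here it buys nothing extra and in fact applies the closure formula $Cl_{X+_{f}Y}(S) = S \cup f(S)$ slightly loosely to a set $S$ that need not be closed in $X$. Your identification of the subspace topologies on $C$ inside $X$ versus inside $X+_{f}Y$ is justified exactly as you say, by the fact that $id_{X}$ is an embedding.
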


\begin{proof}($\Rightarrow$) Let $A \in Closed(X)$ be non  compact. Since $X+_{f}Y$ is compact, we have that $A$ is not closed, which implies that $f(A)\neq \emptyset$.

($\Leftarrow$) Let $\mathcal{F}$ be a filter in $X+_{f}Y$. If $\exists K \in \mathcal{F}$ compact, then $\mathcal{F}\cap K = \{A\cap K: A \in \mathcal{F}\}$ is a filter in $K$ which has a cluster point $x$ (because $K$ is compact). Since $K \in \mathcal{F}$, we have that $\mathcal{F}\cap K$ is a basis for $\mathcal{F}$, which implies that $x$ is a cluster point of $\mathcal{F}$.

Let's suppose that $\nexists K \in \mathcal{F}: K$ is compact. Let $S \in \mathcal{F}: Cl_{X+_{f}Y}(S) \subseteq X$. Since $Cl_{X+_{f}Y}(S) = S \cup f(S)$, we have that $f(S) = \emptyset$, which implies that $S$ is compact, a contradiction. So, $\forall A \in \mathcal{F}, Cl_{X+_{f}Y}(A) \cap Y \neq \emptyset$.

So, we have that the set $\{Cl_{X+_{f}Y}(A)\cap Y\}_{A \in \mathcal{F}}$ has the finite intersection property (if $A_{1},...,A_{n} \in \mathcal{F}$, then $A_{1}\cap...\cap A_{n} \in \mathcal{F}$, which implies that $Cl_{X+_{f}Y}(A_{1} \cap ...\cap A_{n}) \cap Y \neq \emptyset$ and then $Cl_{X+_{f}Y}(A_{1}) \cap ...\cap Cl_{X+_{f}Y}(A_{n}) \cap Y \neq \emptyset$ because $Cl_{X+_{f}Y}(A_{1} \cap ...\cap A_{n}) \subseteq Cl_{X+_{f}Y}(A_{1}) \cap ...\cap Cl_{X+_{f}Y}(A_{n})$). Since $Y$ is compact, $\exists x \in \bigcap_{A \in \mathcal{F}}Cl_{X+_{f}Y}(A)\cap Y$, which implies that $\forall A \in \mathcal{F}$, $x \in Cl_{X+_{f}Y}(A)$. So, $x$ is a cluster point of $\mathcal{F}$.

Thus, every filter has a cluster point, which implies that  $X+_{f}Y$ is compact.

\end{proof}

\subsection{Continuous maps between sums of spaces}

\begin{defi}Let $X+_{f}Y$ and $Z+_{h}W$ be topological spaces and $\psi: X \rightarrow Z$ and $\phi: Y \rightarrow W$ continuous maps. So, we define $\psi + \phi: X+_{f}Y \rightarrow Z+_{h}W$ by $(\psi + \phi)(x) = \psi(x)$ if $x \in X$ and $\phi(x)$ if $x \in Y$. If $G$ is a group, $\psi: G \curvearrowright X$ and $\phi: G \curvearrowright Y$, then we define $\psi+\phi: G \curvearrowright X+_{f}Y$ by $(\psi+\phi)(g,x) = \psi(g,x)$ if $x \in X$ and $\phi(g,x)$ if $x \in Y$.
\end{defi}

Our next concern is to decide when those maps are continuous.

\begin{prop}Let $X+_{f}Y$ and $Z+_{h}W$ be topological spaces and $\\ \psi: X \rightarrow Z$ and $\phi: Y \rightarrow W$ continuous maps. Then, the application $\psi + \phi: X+_{f}Y \rightarrow Z+_{h}W$ is continuous if and only if $\forall A \in Closed(Z)$,  $f(\psi^{-1}(A)) \subseteq \phi^{-1}(h(A))$. In another words, we have the diagram:

$$ \xymatrix{ Closed(X) \ar[r]^{f} & Closed(Y) \\
            Closed(Z) \ar[r]^{h} \ar[u]^{\psi^{-1}} \ar@{}[ur]|{\subseteq} & Closed(W) \ar[u]^{\phi^{-1}} } $$

\end{prop}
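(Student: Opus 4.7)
The plan is to unwind the definition of continuity in terms of preimages of closed sets, apply the explicit description of closed sets in $Z+_{h}W$, and reduce the continuity of $\psi+\phi$ to a single inclusion that we can then compare with the hypothesis. Let me write $F = \psi + \phi$. For any $C \subseteq Z+_{h}W$ one has the disjoint decomposition $F^{-1}(C) = \psi^{-1}(C\cap Z) \cup \phi^{-1}(C\cap W)$, and in particular $F^{-1}(C)\cap X = \psi^{-1}(C\cap Z)$ and $F^{-1}(C)\cap Y = \phi^{-1}(C\cap W)$.

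Now suppose $C \in Closed(Z+_{h}Y)$. Then $C\cap Z \in Closed(Z)$ and $C\cap W \in Closed(W)$, so by continuity of $\psi$ and $\phi$ the first two closed-set conditions for $F^{-1}(C)$ to belong to $Closed(X+_{f}Y)$ are automatic. The only remaining condition is $f(F^{-1}(C)\cap X)\subseteq F^{-1}(C)$, i.e.\ $f(\psi^{-1}(C\cap Z)) \subseteq \psi^{-1}(C\cap Z) \cup \phi^{-1}(C\cap W)$; since $f$ takes values in $Closed(Y)$, this is equivalent to $f(\psi^{-1}(C\cap Z))\subseteq \phi^{-1}(C\cap W)$. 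Thus $F$ is continuous if and only if this last inclusion holds for every $C\in Closed(Z+_{h}W)$.

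For the implication ($\Leftarrow$), assume the hypothesis holds and let $C\in Closed(Z+_{h}W)$. Setting $A = C\cap Z \in Closed(Z)$, the hypothesis gives $f(\psi^{-1}(A)) \subseteq \phi^{-1}(h(A))$. Because $C$ is closed we have $h(A) = h(C\cap Z)\subseteq C\cap W$ (recall $h(A) \subseteq W$ and $h(C\cap Z)\subseteq C$), hence $\phi^{-1}(h(A))\subseteq \phi^{-1}(C\cap W)$, which chains to the required inclusion.

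For ($\Rightarrow$), the key trick is to test continuity on the smallest possible closed set containing a given $A\in Closed(Z)$: by the earlier proposition computing closures in a sum, $C := A\cup h(A) = Cl_{Z+_{h}W}(A)$ is closed in $Z+_{h}W$, with $C\cap Z = A$ and $C\cap W = h(A)$. Applying continuity of $F$ to this $C$ yields $f(\psi^{-1}(A))\subseteq \phi^{-1}(h(A))$, which is exactly the hypothesis. The only real subtlety in the whole argument is recognizing that the correct test sets for the forward direction are the closures $A\cup h(A)$; once that is in hand, everything else is bookkeeping with the definition of $Closed(X+_{f}Y)$.
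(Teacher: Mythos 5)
Your proof is correct and follows essentially the same route as the paper: both directions hinge on the decomposition $(\psi+\phi)^{-1}(C)=\psi^{-1}(C\cap Z)\cup\phi^{-1}(C\cap W)$, and the forward direction uses the same key test sets $A\cup h(A)=Cl_{Z+_{h}W}(A)$. The only cosmetic difference is that you verify the three defining conditions for membership in $Closed(X+_{f}Y)$ directly, whereas the paper phrases the same computation via closures $Cl_{X+_{f}Y}(\cdot)=(\cdot)\cup f(\cdot)$.
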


\begin{proof}$(\Rightarrow)$ Let $A$ be a closed set in $Z+_{h}W$. We have that $(\psi + \phi)^{-1}(A) =$ $\psi^{-1}(A \cap Z) \cup \phi^{-1}(A \cap W)$. Let's prove that this set is closed, showing that it is equal to its closure. We have that $Cl_{X+_{f}Y}(\psi^{-1}(A \cap Z) \cup \phi^{-1}(A \cap W)) = Cl_{X+_{f}Y}(\psi^{-1}(A \cap Z)) \cup Cl_{X+_{f}Y}(\phi^{-1}(A \cap W))$. But $Cl_{X+_{f}Y}(\psi^{-1}(A \cap Z))= \psi^{-1}(A \cap Z) \cup f(\psi^{-1}(A \cap Z))$. We have that $f(\psi^{-1}(A \cap Z)) \subseteq \phi^{-1}(h(A \cap Z))$ by hypothesis and $\phi^{-1}(h(A \cap Z)) \subseteq \phi^{-1}(A \cap W)$, because $A$ is closed in $Z+_{h}W$. So, $Cl_{X+_{f}Y}(\psi^{-1}(A \cap Z)) \subseteq \psi^{-1}(A \cap Z) \cup \phi^{-1}(A \cap W)$. We have that $Cl_{X+_{f}Y}(\phi^{-1}(A \cap W)) =  \phi^{-1}(A \cap W)$ (because $A \cap W \in Closed(W)$ and $\phi$ is continuous) which implies that  $Cl_{X+_{f}Y}(\psi^{-1}(A \cap Z) \cup \phi^{-1}(A \cap W)) \subseteq \psi^{-1}(A \cap Z) \cup \phi^{-1}(A \cap W)$ and follows the equality. Thus, $\psi + \phi$ is continuous.

$(\Leftarrow)$ Let's suppose that $\psi + \phi$ is continuous. Let $A$ be a closed set in $Z$. We have that $A \cup h(A)$ is closed in $Z+_{h}W$. By continuity of the map $\psi + \phi$, we have that $(\psi + \phi)^{-1}(A\cup h(A))\in Closed(X+_{f}Y)$. But $(\psi + \phi)^{-1}(A\cup h(A)) = \psi^{-1}(A) \cup \phi^{-1}(h(A)) = Cl_{X+_{f}Y}(\psi^{-1}(A) \cup \phi^{-1}(h(A))) =$ $Cl_{X+_{f}Y}(\psi^{-1}(A)) \cup Cl_{X+_{f}Y}(\phi^{-1}(h(A)))$. So, we have that $Cl_{X+_{f}Y}(\psi^{-1}(A)) \subseteq$ $\psi^{-1}(A) \cup \phi^{-1}(h(A))$. But $Cl_{X+_{f}Y}(\psi^{-1}(A)) = \psi^{-1}(A) \cup f(\psi^{-1}(A))$ and $f(\psi^{-1}(A) \cap \psi^{-1}(A) = \emptyset$, because $\psi^{-1}(A) \subseteq X$. Thus, $f(\psi^{-1}(A)) \subseteq \phi^{-1}(h(A))$, as we wish to proof.
\end{proof}

\begin{cor}Let $X+_{f}Y$, $X+_{f'}Y$ be topological spaces. Then, the map $id: X+_{f}Y \rightarrow X+_{f'}Y$ is continuous if and only if $\forall A \in Closed (X)$, $f(A) \subseteq f'(A)$.
\eod\end{cor}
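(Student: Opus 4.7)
The plan is simply to invoke the preceding proposition with a suitable choice of parameters. Take $Z = X$, $W = Y$, $h = f'$, and let $\psi = id_X: X \to X$ and $\phi = id_Y: Y \to Y$. Both $\psi$ and $\phi$ are continuous, so the proposition applies and characterizes the continuity of $\psi + \phi$.

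The first step is to observe that $\psi + \phi: X +_f Y \to X +_{f'} Y$ coincides, as a set-theoretic function on $X \dot{\cup} Y$, with the identity map: on $X$ it acts as $id_X$ and on $Y$ it acts as $id_Y$. Thus the continuity of $id: X+_f Y \to X+_{f'} Y$ is equivalent to the continuity of $\psi+\phi$.

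Next, I would specialize the criterion from the preceding proposition. The condition ``$\forall A \in Closed(Z), \ f(\psi^{-1}(A)) \subseteq \phi^{-1}(h(A))$'' becomes, under this substitution, ``$\forall A \in Closed(X), \ f(id_X^{-1}(A)) \subseteq id_Y^{-1}(f'(A))$'', which simplifies immediately to ``$\forall A \in Closed(X), \ f(A) \subseteq f'(A)$'', since preimages under identity maps do nothing. This yields both implications of the corollary at once.

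There is no real obstacle here: the statement is a direct instantiation of the previous proposition, and indeed the author's marker $\eod$ already signals that no additional argument is needed beyond pointing out this specialization.
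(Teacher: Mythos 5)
Your proposal is correct and is exactly the argument the paper intends: the corollary is marked with $\square$ precisely because it is the specialization of the preceding proposition to $Z=X$, $W=Y$, $h=f'$, $\psi=id_X$, $\phi=id_Y$, under which the criterion $f(\psi^{-1}(A))\subseteq\phi^{-1}(h(A))$ reduces to $f(A)\subseteq f'(A)$. No further comment is needed.
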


\subsection{Composition of sums of spaces}

\begin{prop}Let $X+_{f}W, \ Y$ and $Z$ be topological spaces and let $\Pi: Closed(Y) \rightarrow Closed (X)$ and $\Sigma: Closed(W) \rightarrow Closed(Z)$ be admissible maps. We define $f_{\Sigma\Pi}: Closed(Y) \rightarrow Closed(Z)$ as $f_{\Sigma\Pi} = \Sigma \circ f \circ \Pi$. Then, $f_{\Sigma\Pi}$ is admissible.
\eod\end{prop}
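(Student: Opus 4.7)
The plan is simply to unwind the definition of admissibility and check the two required properties by direct substitution, using that each of $\Sigma$, $f$, and $\Pi$ preserves finite unions and sends $\emptyset$ to $\emptyset$.

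First I would verify the empty-set condition: computing $f_{\Sigma\Pi}(\emptyset) = \Sigma(f(\Pi(\emptyset)))$ and applying the admissibility of $\Pi$, then of $f$, then of $\Sigma$ in turn reduces this expression to $\emptyset$. Next, for the union condition, given $A,B \in Closed(Y)$, I would start with $f_{\Sigma\Pi}(A \cup B) = \Sigma(f(\Pi(A \cup B)))$ and push the union outward in three successive steps: admissibility of $\Pi$ gives $\Pi(A\cup B) = \Pi(A)\cup \Pi(B)$, then admissibility of $f$ gives $f(\Pi(A)\cup \Pi(B)) = f(\Pi(A))\cup f(\Pi(B))$, and finally admissibility of $\Sigma$ splits the outer application, yielding $f_{\Sigma\Pi}(A)\cup f_{\Sigma\Pi}(B)$.

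There is no real obstacle here; the statement is essentially the observation that the class of admissible maps is closed under composition, and the verification is purely formal. The only thing worth noting is that admissibility requires the range to land in $Closed$ of the appropriate space, which is automatic since each factor map is assumed to have this property by hypothesis. Hence the proof is immediate and matches the $\square$ annotation used in the statement.
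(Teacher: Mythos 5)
Your proposal is correct and is exactly the routine verification the paper intends by leaving this proposition with only the $\square$ annotation: admissibility of $\Pi$, $f$, and $\Sigma$ each preserve the empty set and finite unions, so their composite does too. Nothing further is needed.
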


\begin{prop}Let $X+_{f}W, \ Y$ and $Z$ be topological spaces and consider the admissible maps:
\begin{enumerate}
    \item $\Pi: Closed(Y) \rightarrow Closed (X)$,
    \item $\Sigma: Closed(W) \rightarrow Closed(Z)$,
    \item $\Lambda: Closed(X) \rightarrow Closed(Y)$,
    \item $\Omega: Closed(Z) \rightarrow Closed(W)$.
\end{enumerate}

If $\Omega \circ \Sigma \subseteq id_{Closed(W)}$ (respec. $\supseteq id_{Closed(W)}$ or $=  id_{Closed(W)}$) and $\Pi \circ \Lambda \subseteq id_{Closed (X)}$ (respec. $\supseteq id_{Closed(X)}$ or $=  id_{Closed(X)}$) then $(f_{\Sigma \Pi})_{\Omega \Lambda} \subseteq f$ (respec. $\supseteq f$ or $= f$).
\end{prop}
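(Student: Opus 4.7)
The key observation is that every admissible map is automatically monotone with respect to inclusion: if $A \subseteq B$ in $Closed(X)$, then $B = A \cup B$, so by the admissibility identity $f(B) = f(A\cup B) = f(A) \cup f(B)$, which forces $f(A) \subseteq f(B)$. The plan is to reduce everything to this monotonicity, applied along the chain of compositions defining $(f_{\Sigma\Pi})_{\Omega\Lambda}$.

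Unpacking the notation, for each $A \in Closed(X)$ we have
\[
(f_{\Sigma\Pi})_{\Omega\Lambda}(A) \;=\; \Omega\bigl(\Sigma\bigl(f\bigl(\Pi(\Lambda(A))\bigr)\bigr)\bigr),
\]
so I need to compare this with $f(A)$. Under the hypothesis $\Pi\circ\Lambda \subseteq id_{Closed(X)}$, I have $\Pi(\Lambda(A)) \subseteq A$; applying monotonicity of $f$, then of $\Sigma$, then of $\Omega$, this yields
\[
\Omega(\Sigma(f(\Pi(\Lambda(A))))) \;\subseteq\; \Omega(\Sigma(f(A))).
\]
Now the hypothesis $\Omega\circ\Sigma \subseteq id_{Closed(W)}$ applied to the element $f(A) \in Closed(W)$ gives $\Omega(\Sigma(f(A))) \subseteq f(A)$, and composing the two inclusions produces the desired $(f_{\Sigma\Pi})_{\Omega\Lambda}(A) \subseteq f(A)$.

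The $\supseteq$ case is the same argument with every inclusion reversed: $\Pi(\Lambda(A)) \supseteq A$ together with monotonicity of $f$, $\Sigma$, $\Omega$ yields $\Omega(\Sigma(f(\Pi(\Lambda(A))))) \supseteq \Omega(\Sigma(f(A)))$, and $\Omega\circ\Sigma \supseteq id_{Closed(W)}$ finishes. The equality case is the conjunction of the two one-sided cases. There is no real obstacle here; the only thing to verify carefully is that admissibility implies monotonicity, which I do once at the start and then invoke three times along the composition. The whole proof is essentially a diagram chase, and the display I would include is the inclusion chain
\[
\Omega\Sigma f\Pi\Lambda(A) \;\subseteq\; \Omega\Sigma f(A) \;\subseteq\; f(A),
\]
with the two inclusions justified respectively by monotonicity (from $\Pi\Lambda \subseteq id$) and by the hypothesis on $\Omega\Sigma$.
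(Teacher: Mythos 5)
Your proof is correct and follows essentially the same route as the paper: unpack $(f_{\Sigma\Pi})_{\Omega\Lambda} = \Omega\circ\Sigma\circ f\circ\Pi\circ\Lambda$ and chain the two hypotheses through the composition. The only difference is that you make explicit the monotonicity of admissible maps (which the paper uses silently in the step $\Omega\Sigma f\Pi\Lambda(A) \subseteq \Omega\Sigma f(A)$), and that is a worthwhile clarification rather than a deviation.
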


\begin{proof}We have that $(f_{\Sigma \Pi})_{\Omega \Lambda}(A) =  \Omega \circ f_{\Sigma \Pi}\circ \Lambda(A) = \Omega \circ \Sigma \circ f \circ \Pi \circ \Lambda(A)$. If $\Omega \circ \Sigma \subseteq id_{Closed(W)}$ and $\Pi \circ \Lambda \subseteq id_{Closed (X)}$, then $\Omega \circ \Sigma \circ f \circ \Pi \circ \Lambda(A) \subseteq \Omega \circ \Sigma \circ f(A) \subseteq f(A)$. The other cases are analogous.
\end{proof}

\begin{prop}(Cube Lemma) Let $X_{i}+_{f_{i}}W_{i}$, $Y_{i}$ and $Z_{i}$ be topological spaces, $\Pi_{i}: Closed(Y_{i}) \rightarrow Closed(X_{i})$ and $\Sigma_{i}: Closed(W_{i}) \rightarrow Closed(Z_{i})$ admissible maps. Take $f_{i\Sigma_{i}\Pi_{i}}: Closed(Y_{i}) \rightarrow Closed(Z_{i})$ the respective induced maps. If $\mu+\nu: X_{1}+_{f_{1}}W_{1} \rightarrow X_{2}+_{f_{2}}W_{2}$, $\psi: Y_{1} \rightarrow Y_{2}$ and $\phi: Z_{1} \rightarrow Z_{2}$ are continuous maps that form the diagrams:

$$ \xymatrix{   Closed(X_{2}) \ar[r]^{\mu^{-1}} \ar@{}[dr]|{\supseteq} & Closed(X_{1}) & & Closed(W_{2}) \ar[r]^{\nu^{-1}} \ar[d]^{\Sigma_{2}} \ar@{}[dr]|{\supseteq} & Closed(W_{1}) \ar[d]_{\Sigma_{1}} \\
                Closed(Y_{2}) \ar[r]^{\psi^{-1}} \ar[u]^{\Pi_{2}} & Closed(Y_{1}) \ar[u]_{\Pi_{1}} & & Closed(Z_{2}) \ar[r]^{\phi^{-1}} & Closed(Z_{1}) } $$

Then, $\psi+\phi: Y_{1}+_{f_{1\Sigma_{1}\Pi_{1}}}Z_{1}\rightarrow Y_{2}+_{f_{2\Sigma_{2}\Pi_{2}}}Z_{2}$ is continuous.
\end{prop}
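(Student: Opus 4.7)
The plan is to reduce the continuity of $\psi+\phi$ to its diagrammatic criterion and then chase the two hypothesized squares together with the continuity of $\mu+\nu$. By the criterion established in the previous proposition on continuous maps between sums, $\psi+\phi$ is continuous if and only if for every $A \in Closed(Y_{2})$,
\[
f_{1\Sigma_{1}\Pi_{1}}(\psi^{-1}(A)) \subseteq \phi^{-1}(f_{2\Sigma_{2}\Pi_{2}}(A)),
\]
which, after unfolding $f_{i\Sigma_{i}\Pi_{i}} = \Sigma_{i}\circ f_{i}\circ \Pi_{i}$, becomes
\[
\Sigma_{1}(f_{1}(\Pi_{1}(\psi^{-1}(A)))) \subseteq \phi^{-1}(\Sigma_{2}(f_{2}(\Pi_{2}(A)))).
\]
The whole proof will consist of establishing this one inclusion.

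First I would record the elementary but essential fact that every admissible map is monotone with respect to inclusion: if $A\subseteq B$ are closed, then $f(B) = f(A\cup B) = f(A)\cup f(B)$, so $f(A)\subseteq f(B)$. I will apply this to $f_{1}$ and to $\Sigma_{1}$ in order to propagate inclusions along the composition.

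Next, from the two hypothesis squares I would read off
\[
\Pi_{1}\circ \psi^{-1} \subseteq \mu^{-1}\circ \Pi_{2}, \qquad \Sigma_{1}\circ \nu^{-1} \subseteq \phi^{-1}\circ \Sigma_{2},
\]
and from the continuity of $\mu+\nu$ (via the same continuity criterion, applied now to $f_{1}$ and $f_{2}$) the inclusion $f_{1}(\mu^{-1}(B)) \subseteq \nu^{-1}(f_{2}(B))$ for every $B \in Closed(X_{2})$. Fixing $A \in Closed(Y_{2})$, I would then chain these inputs in order: start from $\Pi_{1}(\psi^{-1}(A)) \subseteq \mu^{-1}(\Pi_{2}(A))$; apply the monotone map $f_{1}$; apply the continuity of $\mu+\nu$ with $B := \Pi_{2}(A)$, landing inside $\nu^{-1}(f_{2}(\Pi_{2}(A)))$; apply the monotone map $\Sigma_{1}$; and finally apply the second square with $C := f_{2}(\Pi_{2}(A))$, landing inside $\phi^{-1}(\Sigma_{2}(f_{2}(\Pi_{2}(A))))$, which is exactly what was required.

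This is a short and purely formal diagram chase, so I do not anticipate any conceptual obstacle. The only place where care is needed is in reading off the two $\supseteq$ symbols in the correct direction, so that the composed inclusion points the way demanded by the continuity criterion; this is a matter of convention, resolved by matching the orientation of each square against the analogous $\subseteq$ square in the previous proposition on continuous maps between sums.
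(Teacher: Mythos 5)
Your proposal is correct and follows essentially the same route as the paper: both reduce to the inclusion criterion for continuity of maps between sums and then chase the chain $\Sigma_{1}\circ f_{1}\circ\Pi_{1}\circ\psi^{-1}\subseteq\Sigma_{1}\circ f_{1}\circ\mu^{-1}\circ\Pi_{2}\subseteq\Sigma_{1}\circ\nu^{-1}\circ f_{2}\circ\Pi_{2}\subseteq\phi^{-1}\circ\Sigma_{2}\circ f_{2}\circ\Pi_{2}$. The only difference is that you make explicit the monotonicity of admissible maps, which the paper uses implicitly; that is a welcome clarification rather than a divergence.
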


\begin{proof}Consider the diagram:

$$ \xymatrix{Closed(X_{1}) \ar[rr]^{f_{1}} & & Closed(W_{1}) \ar[dd]_<<{\Sigma_{1}} \\
            & Closed(X_{2}) \ar[rr]^<<{ \ \ \ \ \ \ \ \ \ \ \ \ \ f_{2}} \ar[lu]^{\mu^{-1}} & & Closed(W_{2}) \ar[lu]_{\nu^{-1}} \ar[dd]_{\Sigma_{2}} \\
             Closed(Y_{1}) \ar[rr]_<<{ \ \ \ \ \ \ \ \ \ \ \ f_{1\Sigma_{1}\Pi_{1}}} \ar[uu]^{\Pi_{1}} & & Closed(Z_{1}) & \\
            & Closed(Y_{2}) \ar[rr]_{f_{2\Sigma_{2}\Pi_{2}}} \ar[uu]^>>{\Pi_{2}} \ar[lu]^{\psi^{-1}} & & Closed(Z_{2}) \ar[lu]_{\phi^{-1}} &} $$

We have that $f_{1\Sigma_{1}\Pi_{1}}\circ\psi^{-1} = \Sigma_{1}\circ f_{1} \circ \Pi_{1}\circ\psi^{-1} \subseteq \Sigma_{1}\circ f_{1}\circ \mu^{-1} \circ \Pi_{2} \subseteq \Sigma_{1}\circ \nu^{-1} \circ f_{2} \circ \Pi_{2} \subseteq \phi^{-1} \circ \Sigma_{2} \circ f_{2} \circ \Pi_{2} = \phi^{-1} \circ f_{2\Sigma_{2}\Pi_{2}}$. Thus, $\psi+\phi$ is continuous.

\end{proof}

\begin{cor}\label{action} Let $G_{1},G_{2}$ be groups, $X+_{f}W$, $Y$ and $Z$ topological spaces, $\alpha: G_{1} \rightarrow G_{2}$ a group homomorphism and $\Pi: Closed(Y) \rightarrow Closed(X)$ and $\Sigma: Closed(W) \rightarrow Closed(Z)$ admissible maps. Take the induced map $f_{\Sigma\Pi}: Closed(Y) \rightarrow Closed(Z)$. If $\mu+\nu: G_{2} \curvearrowright X+_{f}W$, $\psi: G_{1} \curvearrowright Y$ and $\phi: G_{1} \curvearrowright Z$ are actions by homeomorphisms (where $\mu+\nu$ is defined as the disjoint union of the pair of actions $\mu: G_{2} \curvearrowright X$ and $\nu: G_{2} \curvearrowright W$) such that form the following diagrams for each $g \in G_{1}$:

$$ \xymatrix{   Closed(X) \ar[r]^*+{\labelstyle \ \ \mu(\alpha(g),\_)^{-1}} \ar@{}[dr]|{\supseteq} & Closed(X) & & Closed(W) \ar[r]^*+{\labelstyle \ \ \nu(\alpha(g),\_)^{-1}} \ar[d]^{\Sigma} \ar@{}[dr]|{\supseteq} & Closed(W) \ar[d]_{\Sigma} \\
                Closed(Y) \ar[r]^{ \ \psi(g,\_)^{-1}} \ar[u]^{\Pi} & Closed(Y) \ar[u]_{\Pi} & & Closed(Z) \ar[r]^{ \ \phi(g,\_)^{-1}} & Closed(Z) } $$

Then, $\psi+\phi: G_{1} \curvearrowright Y+_{f_{\Sigma \Pi}}Z$ is an action by homeomorphisms.
\end{cor}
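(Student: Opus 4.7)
The plan is to apply the Cube Lemma pointwise, one group element at a time. Fix $g \in G_{1}$ and specialise the Cube Lemma by setting $X_{1} = X_{2} = X$, $W_{1} = W_{2} = W$, $Y_{1} = Y_{2} = Y$, $Z_{1} = Z_{2} = Z$, $f_{1} = f_{2} = f$, $\Pi_{1} = \Pi_{2} = \Pi$, $\Sigma_{1} = \Sigma_{2} = \Sigma$, and taking for the horizontal maps the homeomorphisms $\mu(\alpha(g),\_)$, $\nu(\alpha(g),\_)$, $\psi(g,\_)$, $\phi(g,\_)$. Under this substitution the two squares hypothesised in the corollary collapse to exactly the two squares demanded by the Cube Lemma, so the combinatorial content of the hypothesis is already the Cube Lemma hypothesis.

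Next I would verify the remaining analytic input of the Cube Lemma, namely that $\mu(\alpha(g),\_)+\nu(\alpha(g),\_)\colon X+_{f}W \to X+_{f}W$ is continuous. By definition this map is precisely $(\mu+\nu)(\alpha(g),\_)$, and $\mu+\nu$ is by hypothesis an action of $G_{2}$ on $X+_{f}W$ by homeomorphisms, hence this self-map is a homeomorphism. The Cube Lemma then delivers the continuity of the self-map $\psi(g,\_)+\phi(g,\_)$ on $Y+_{f_{\Sigma\Pi}}Z$.

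To promote continuity to homeomorphy I would rerun the same argument with $g^{-1}$ in place of $g$; this is allowed because the hypothesised diagrams are assumed to hold for every element of $G_{1}$. Since $\psi$ and $\phi$ are already actions, $\psi(g^{-1},\_)+\phi(g^{-1},\_)$ is a two-sided inverse of $\psi(g,\_)+\phi(g,\_)$, so each is a continuous bijection with continuous inverse, i.e., a homeomorphism of $Y+_{f_{\Sigma\Pi}}Z$.

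Finally, the group-action identities for $\psi+\phi$ are automatic: the equalities $(\psi+\phi)(e,\_)=\mathrm{id}$ and $(\psi+\phi)(g_{1}g_{2},\_)=(\psi+\phi)(g_{1},\_)\circ(\psi+\phi)(g_{2},\_)$ can be checked separately on $Y$ and on $Z$, where they reduce to the corresponding identities for the actions $\psi$ and $\phi$. There is no real obstacle in this corollary beyond matching diagrams; all the substantive topology has been absorbed into the Cube Lemma, and the only thing to be careful about is running the argument for both $g$ and $g^{-1}$ in order to extract homeomorphisms rather than merely continuous maps.
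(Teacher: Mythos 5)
Your proposal is correct and follows essentially the same route as the paper: for each fixed $g \in G_{1}$ the hypothesised squares are exactly the Cube Lemma hypotheses for the horizontal maps $\mu(\alpha(g),\_)$, $\nu(\alpha(g),\_)$, $\psi(g,\_)$, $\phi(g,\_)$, giving continuity of $(\psi+\phi)(g,\_)$, and the inverse is the continuous map $(\psi+\phi)(g^{-1},\_)$, hence each is a homeomorphism. Your extra remarks on the group-action identities are fine but not needed beyond what the paper records.
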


\begin{proof}It follows by the last proposition that $\forall g \in G_{1}, \ (\psi+\phi)(g,\_)$ is continuous (and then a homeomorphism, since its inverse is $(\psi+\phi)(g^{-1},\_)$, which is also continuous). Thus, $\psi+\phi: G_{1} \curvearrowright Y+_{f_{\Sigma \Pi}}Z$ is an action by homeomorphisms.
\end{proof}

And a useful proposition about separation:

\begin{prop}Let $X+_{f}W, \ Y$ and $Z$ be Hausdorff topological spaces such that $Y$ and $X$ are locally compact and $\Pi: Closed(Y) \rightarrow Closed (X)$, $\Sigma: \ Closed(W) \ \rightarrow \ Closed(Z)$, $\ \Lambda: \ Closed(X) \ \rightarrow \ Closed(Y)$ and $\Omega: Closed(Z) \rightarrow Closed(W)$ are admissible maps. If $\forall C \subseteq Y$ compact, $\Pi(C)$ is compact, $\{\{z\}: z \in Z\} \subseteq Im \ \Sigma$, $\forall w \in W$, $w \in \Omega \circ \Sigma(\{w\})$, $\Lambda(X) = Y$ and $(f_{\Sigma\Pi})_{\Omega\Lambda} = f$, then $Y+_{f_{\Sigma\Pi}}Z$ is Hausdorff.
\end{prop}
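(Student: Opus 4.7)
The plan is to apply the earlier characterization of Hausdorffness for $Y+_{f_{\Sigma\Pi}}Z$: it suffices to check that (a) $f_{\Sigma\Pi}(K)=\emptyset$ for every compact $K\subseteq Y$, and (b) for every pair $a,b\in Z$ there exist $A,B\in Closed(Y)$ with $A\cup B=Y$, $a\notin f_{\Sigma\Pi}(B)$ and $b\notin f_{\Sigma\Pi}(A)$. Throughout I will freely use the fact that every admissible map is monotone (since $A\subseteq B$ forces $h(B)=h(A)\cup h(B)$).

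For (a), take $K\subseteq Y$ compact. By hypothesis $\Pi(K)$ is a compact subset of $X$, and since $X+_f W$ is Hausdorff the earlier proposition gives $f(\Pi(K))=\emptyset$; hence $f_{\Sigma\Pi}(K)=\Sigma(f(\Pi(K)))=\Sigma(\emptyset)=\emptyset$.

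For (b), fix distinct $a,b\in Z$. Using $\{\{z\}:z\in Z\}\subseteq Im\,\Sigma$, pick $C_a,C_b\in Closed(W)$ with $\Sigma(C_a)=\{a\}$ and $\Sigma(C_b)=\{b\}$; these are nonempty because $\Sigma(\emptyset)=\emptyset$. Choose any $w_a\in C_a$ and $w_b\in C_b$. By monotonicity $\Sigma(\{w_a\})\subseteq\{a\}$, and the condition $w_a\in\Omega(\Sigma(\{w_a\}))$ forces $\Sigma(\{w_a\})\neq\emptyset$ (else $\Omega(\emptyset)=\emptyset$), so $\Sigma(\{w_a\})=\{a\}$; similarly $\Sigma(\{w_b\})=\{b\}$. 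Since $w_a\neq w_b$ (otherwise $\{a\}=\{b\}$), and $X+_f W$ is Hausdorff with $X$ locally compact, apply the separation proposition to get $A',B'\in Closed(X)$ with $A'\cup B'=X$, $w_b\notin f(A')$ and $w_a\notin f(B')$. Then set $A:=\Lambda(A')$ and $B:=\Lambda(B')$. By admissibility of $\Lambda$ and $\Lambda(X)=Y$, $A\cup B=\Lambda(A'\cup B')=\Lambda(X)=Y$.

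It remains to verify $b\notin f_{\Sigma\Pi}(A)$ (the other inequality is symmetric). Suppose for contradiction that $b\in f_{\Sigma\Pi}(A)$, i.e.\ $\{b\}\subseteq f_{\Sigma\Pi}(\Lambda(A'))$. Applying the monotone map $\Omega$ gives $\Omega(\{b\})\subseteq\Omega(f_{\Sigma\Pi}(\Lambda(A')))=(f_{\Sigma\Pi})_{\Omega\Lambda}(A')=f(A')$, where the last equality is the hypothesis $(f_{\Sigma\Pi})_{\Omega\Lambda}=f$. But $\Sigma(\{w_b\})=\{b\}$ combined with the hypothesis $w_b\in\Omega(\Sigma(\{w_b\}))$ yields $w_b\in\Omega(\{b\})\subseteq f(A')$, contradicting the choice of $A'$. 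Hence $b\notin f_{\Sigma\Pi}(A)$, and the proof is complete. The main subtlety is the pivot in this last step: the argument has to convert the set-level identity $(f_{\Sigma\Pi})_{\Omega\Lambda}=f$ into a pointwise separation statement, and this is exactly what condition 3 supplies by guaranteeing that each chosen $w_b$ survives inside $\Omega(\{b\})$.
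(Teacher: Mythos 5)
Your proof is correct and follows essentially the same route as the paper's: verify the two conditions of the Hausdorff criterion for sums, pull the separation of $w_a,w_b$ in $X+_fW$ through $\Lambda$, and use $\Omega$ together with $(f_{\Sigma\Pi})_{\Omega\Lambda}=f$ and $w\in\Omega(\Sigma(\{w\}))$ to derive the contradiction. Your extra step showing $\Sigma(\{w_a\})=\{a\}$ and hence $w_a\neq w_b$ is a small refinement the paper glosses over, but it does not change the argument.
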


\begin{proof}Let $C \subseteq Y$ be a compact. We have that $\Pi(C)$ is compact, which implies that $\Sigma \circ f \circ \Pi(C) = \Sigma (\emptyset) = \emptyset$, because $X+_{f}Y$ is Hausdorff.

Let $a,b \in Z$. There exists $C,D \in Closed(W): \ \Sigma (C) = \{a\}$ and $\Sigma(D) = \{b\}$. Take $c \in C$ and $d \in D$. Since $X+_{f}Y$ is Hausdorff, there exists $A,B \in Closed(X): \ A \cup B = X, \ d \notin f(A)$ and $c \notin f(B)$. We have that $\Lambda(A), \Lambda(B) \in Closed(Y)$ and $\Lambda(A)\cup \Lambda(B) = \Lambda(A\cup B) = \Lambda(X) = Y$. If $a \in f_{\Sigma\Pi}(\Lambda(B))$, then $\Omega(\{a\}) \subseteq \Omega \circ f_{\Sigma\Pi}(\Lambda(B)) = f(B)$. But $\Omega(\{a\}) = \Omega \circ \Sigma(C) \supseteq \Omega \circ \Sigma(\{c\}) \supseteq \{c\}$, which implies that $c \in f(B)$, a contradiction. So, $a \notin f_{\Sigma\Pi}(\Lambda(B))$ and, analogously, $b \notin f_{\Sigma\Pi}(\Lambda(A))$.

Thus, $Y+_{f_{\Sigma\Pi}}Z$ is Hausdorff.
\end{proof}

\begin{cor}\label{functorausdorff}Let $X+_{f}W$ and $Y$ be Hausdorff topological spaces such that $Y$ and $X$ are locally compact and $\Pi: Closed(Y) \rightarrow Closed (X)$ and  $\Lambda: Closed(X) \rightarrow Closed(Y)$ are admissible maps. If $\forall C \subseteq Y$ compact, $\Pi(C)$ is compact, $\Lambda(X) = Y$ and $(f_{id_{W}\Pi})_{id_{W}\Lambda} = f$, then $Y+_{f_{id_{W}\Pi}}W$ is Hausdorff.
\eod\end{cor}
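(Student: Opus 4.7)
The plan is to deduce the corollary directly from the preceding proposition by taking $Z = W$ and specializing both $\Sigma$ and $\Omega$ to the identity map on $Closed(W)$.

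First I would verify that $id_{Closed(W)}$ is an admissible map: this is immediate, since $id(\emptyset) = \emptyset$ and $id(A\cup B) = A\cup B = id(A)\cup id(B)$. With this choice, the induced composition $f_{id_W\Pi}$ coincides with the map appearing in the statement of the corollary, and the condition $(f_{\Sigma\Pi})_{\Omega\Lambda}=f$ of the proposition becomes exactly the hypothesis $(f_{id_W\Pi})_{id_W\Lambda}=f$ assumed in the corollary.

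Next I would check the remaining hypotheses of the proposition. The Hausdorffness of $Z = W$ follows from the fact that $W$ embeds as a closed subspace of $X+_fW$ (shown earlier in the excerpt), which is Hausdorff by assumption. The condition $\{\{z\}:z\in Z\}\subseteq Im\,\Sigma$ becomes the statement that singletons of $W$ lie in the image of $id_{Closed(W)}$, which is just the statement that points of $W$ are closed; this holds since $W$ is Hausdorff. The condition $w \in \Omega\circ\Sigma(\{w\})$ reduces to $w \in \{w\}$, which is trivial. The hypotheses that $Y$ and $X$ are locally compact and Hausdorff, that $\Pi$ preserves compactness, and that $\Lambda(X) = Y$ are directly assumed in the corollary.

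With all hypotheses of the preceding proposition verified, the conclusion $Y +_{f_{\Sigma\Pi}} Z$ is Hausdorff specializes to $Y +_{f_{id_W\Pi}} W$ being Hausdorff, which is exactly what the corollary asserts. There is no real obstacle here; the corollary is essentially a notational simplification of the proposition, and all work is in recognizing that the identity map on $Closed(W)$ satisfies the required admissibility and image hypotheses automatically once $W$ is known to be Hausdorff.
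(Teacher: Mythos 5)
Your proposal is correct and matches the paper's intent exactly: the corollary carries the $\square$ symbol precisely because it is the specialization $Z = W$, $\Sigma = \Omega = id_{Closed(W)}$ of the preceding separation proposition, and the only non-trivial checks are the ones you perform (singletons of $W$ are closed since $W$ inherits Hausdorffness as a subspace of $X+_fW$, and $w \in \{w\}$ is trivial). Nothing is missing.
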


Let's consider two special cases of composition of sums of spaces that shall be useful:

\subsubsection{Pullbacks}

\begin{defi}Let $X+_{f}W$, $Y$ and $Z$ be topological spaces, $\pi: Y \rightarrow X$ and $\varpi: Z \rightarrow W$ continuous maps. Let's consider $\Pi: Closed(Y) \rightarrow Closed(X)$ and $\Sigma: Closed(W) \rightarrow Closed(Z)$ as $\Pi(A) = Cl_{X}(\pi(A))$ and $\Sigma(A) = \varpi^{-1}(A)$. It is clear that those maps are admissible. We define the pullback of $f$ by the maps $\pi$ and $\varpi$ by $f^{\ast}(A) = f_{\Sigma \Pi}(A) = \varpi^{-1}(f(Cl_{X}(\pi(A))))$.
\end{defi}

\begin{prop}$\pi+\varpi: Y+_{f^{\ast}}Z \rightarrow X+_{f}W$ is continuous.
\end{prop}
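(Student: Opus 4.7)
The plan is to invoke the earlier characterization of continuity for maps between sums of spaces: the proposition stating that $\psi + \phi: X+_f Y \to Z +_h W$ is continuous if and only if $f(\psi^{-1}(A)) \subseteq \phi^{-1}(h(A))$ for every $A \in Closed(Z)$. Specializing with $\psi = \pi$, $\phi = \varpi$, domain-side admissible map $f^{\ast}$, and codomain-side admissible map $f$, continuity of $\pi + \varpi$ reduces to verifying that $f^{\ast}(\pi^{-1}(A)) \subseteq \varpi^{-1}(f(A))$ for every $A \in Closed(X)$.

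The verification is a short inclusion chase. Unfolding the definition of the pullback yields $f^{\ast}(\pi^{-1}(A)) = \varpi^{-1}(f(Cl_X(\pi(\pi^{-1}(A)))))$. Since $\pi(\pi^{-1}(A)) \subseteq A$ and $A$ is closed in $X$, we have $Cl_X(\pi(\pi^{-1}(A))) \subseteq A$. At this point I would record as a one-line auxiliary observation that every admissible map is monotone: if $B \subseteq C$ are closed, then $f(C) = f(B \cup C) = f(B) \cup f(C)$, forcing $f(B) \subseteq f(C)$. Applying this monotonicity to the containment above, and then the preimage under $\varpi$ (which preserves inclusions), gives $f^{\ast}(\pi^{-1}(A)) \subseteq \varpi^{-1}(f(A))$, which is precisely the criterion needed.

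I do not anticipate any real obstacle: the argument is essentially a direct unpacking of definitions combined with the continuity criterion already established in this subsection. The only step worth flagging is the monotonicity of admissible maps, which is used tacitly elsewhere in this section and which could alternatively be cited from the Cube Lemma framework (taking $\Lambda = id$ and $\Omega = id$) rather than re-derived here.
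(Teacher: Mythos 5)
Your proposal is correct and follows essentially the same route as the paper: both reduce to the continuity criterion $f^{\ast}(\pi^{-1}(A)) \subseteq \varpi^{-1}(f(A))$ and unwind the definition of the pullback. In fact your version is slightly more careful, since the paper asserts the equality $Cl_X(\pi(\pi^{-1}(A))) = Cl_X(A)$ (which requires $\pi$ surjective), whereas you correctly observe that only the inclusion $Cl_X(\pi(\pi^{-1}(A))) \subseteq A$ together with monotonicity of admissible maps is needed.
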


\begin{proof}If $A \in Closed(X)$, then $f^{\ast}(\pi^{-1}(A)) = \varpi^{-1}(f(Cl_{X}(\pi(\pi^{-1}(A))))) = \varpi^{-1}(f(Cl_{X}(A))) = \varpi^{-1}(f(A))$. In another words, the diagram commutes:

$$ \xymatrix{   Closed(Y) \ar[r]^{f^{\ast}} & Closed(Z) \\
                Closed(X) \ar[r]_{f} \ar[u]^{\pi^{-1}} & Closed(W) \ar[u]_{\varpi^{-1}} } $$

Thus, $(\pi + \varpi): Y+_{f^{\ast}}Z \rightarrow X+_{f}W$ is continuous.
\end{proof}

\begin{prop}Let $Y+_{f'}Z$, for some admissible map $f'$, such that $\pi+\varpi: Y+_{f'}Z \rightarrow X+_{f}W$ is continuous. Then, $id_{Y}+id_{Z}: Y+_{f'}Z \rightarrow Y+_{f^{\ast}}Z$ is continuous.
\end{prop}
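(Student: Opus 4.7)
The plan is to recognize this statement as a universal property of the pullback: among all admissible maps $f'$ making $\pi + \varpi: Y+_{f'}Z \to X+_f W$ continuous, $f^{\ast}$ is the largest. By the corollary at the end of the subsection on continuous maps between sums of spaces, the continuity of $id_Y + id_Z: Y+_{f'}Z \to Y+_{f^{\ast}}Z$ is equivalent to the pointwise inclusion $f'(B) \subseteq f^{\ast}(B)$ for every $B \in Closed(Y)$. So the entire goal reduces to proving this inclusion.

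To translate the hypothesis into a usable form, I would apply the proposition characterizing continuity of $\psi + \phi$ between two sum-spaces. Applied with source $Y+_{f'}Z$ and target $X+_f W$, it says that continuity of $\pi + \varpi$ is equivalent to the inclusion $f'(\pi^{-1}(A)) \subseteq \varpi^{-1}(f(A))$ for every $A \in Closed(X)$. This is the only concrete information available from the hypothesis.

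Given an arbitrary $B \in Closed(Y)$, I would set $A = Cl_X(\pi(B)) \in Closed(X)$. Since $\pi(B) \subseteq A$, we have $B \subseteq \pi^{-1}(A)$, and the latter is closed in $Y$ by continuity of $\pi$. The crucial auxiliary fact is that $f'$ is monotone with respect to inclusion: this follows from admissibility, because $C_{1} \subseteq C_{2}$ implies $f'(C_{2}) = f'(C_{1} \cup C_{2}) = f'(C_{1}) \cup f'(C_{2}) \supseteq f'(C_{1})$. Applying monotonicity and then the translated hypothesis yields
\[
f'(B) \subseteq f'(\pi^{-1}(A)) \subseteq \varpi^{-1}(f(A)) = \varpi^{-1}(f(Cl_X(\pi(B)))) = f^{\ast}(B),
\]
and invoking the identity-continuity corollary closes the argument.

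There is no real obstacle here; the proof is a direct unpacking of the two characterizations combined with monotonicity of admissible maps. The one subtlety worth flagging is that monotonicity is not part of the definition of admissibility but a consequence of it, so it should be invoked explicitly rather than taken for granted.
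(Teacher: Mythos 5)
Your proof is correct and follows essentially the same route as the paper: reduce to $f'(B)\subseteq f^{\ast}(B)$ via the identity-continuity corollary, apply the continuity criterion to the hypothesis, and combine monotonicity of $f'$ (for $B\subseteq \pi^{-1}(Cl_X(\pi(B)))$) with the definition of $f^{\ast}$. Your version is marginally more direct, since you identify $\varpi^{-1}(f(Cl_X(\pi(B))))$ with $f^{\ast}(B)$ immediately instead of passing through the paper's intermediate identity $f^{\ast}(\pi^{-1}(Cl_X(\pi(A)))) = f^{\ast}(A)$, but the substance is identical.
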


\begin{proof}Since the map $\pi+\varpi$ is continuous, it follows that $\forall B \in Closed(X)$, $f'(\pi^{-1}(B)) \subseteq \varpi^{-1}(f(B)) = f^{\ast}(\pi^{-1}(B))$. We have that $\forall A \in Closed(Y)$,  $f^{\ast}(\pi^{-1}(Cl_{X}(\pi(A)))) = \varpi^{-1}(f(Cl_{X}(\pi(\pi^{-1}(Cl_{X}(\pi(A))))))) =$ $\\ \varpi^{-1}(f(Cl_{X}(Cl_{X}(\pi(A))))) = \varpi^{-1}(f(Cl_{X}(\pi(A)))) = f^{\ast}(A)$. Consider $B = Cl_{X}(\pi(A))$. Then, $f'(\pi^{-1}(Cl_{X}(\pi(A)))) \subseteq f^{\ast}(\pi^{-1}(Cl_{X}(\pi(A)))) = f^{\ast}(A)$. But $A \subseteq \pi^{-1}(Cl_{X}(\pi(A)))$, which implies that $f'(A) \subseteq f'(\pi^{-1}(Cl_{X}(\pi(A))))$. Thus, $f'(A) \subseteq f^{\ast}(A)$, which implies that $id_{Y}+id_{Z}$ is continuous.

\end{proof}

In another words, $f^{\ast}$ induces the coarsest topology (between the topologies that extend the topologies of $Y$ and $Z$) such that the map $\pi+\varpi$ is continuous.

\begin{prop}(Cube Lemma for Pullbacks) Let $X_{i}+_{f_{i}}W_{i},  Y_{i}, Z_{i}$ be topological spaces, $\pi_{i}: Y_{i} \rightarrow X_{i}$ and $\varpi_{i}: Z_{i} \rightarrow W_{i}$ be continuous maps and  $f_{i}^{\ast}: Closed(Y_{i}) \rightarrow Closed(Z_{i})$ the respective pullbacks. Let's suppose that $\mu+\nu: X_{1}+_{f_{1}}W_{1} \rightarrow X_{2}+_{f_{2}}W_{2}$, $\psi: Y_{1} \rightarrow Y_{2}$ and $\phi: Z_{1} \rightarrow Z_{2}$ are continuous maps that commute the diagrams:

$$ \xymatrix{   Y_{1} \ar[r]^{\psi} \ar[d]^{\pi_{1}} & Y_{2} \ar[d]_{\pi_{2}} & & Z_{1} \ar[r]^{\phi} \ar[d]^{\varpi_{1}} & Z_{2} \ar[d]_{\varpi_{2}} \\
                X_{1} \ar[r]^{\mu} & X_{2} & & W_{1} \ar[r]^{\nu} & W_{2} } $$

Then, $\psi+\phi: Y_{1}+_{f_{1}^{\ast}}Z_{1}\rightarrow Y_{2}+_{f_{2}^{\ast}}Z_{2}$ is continuous.
\end{prop}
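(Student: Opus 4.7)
The plan is to reduce this directly to the general Cube Lemma proved earlier in this subsection. In the pullback setup we have the admissible maps $\Pi_i(A) = \mathrm{Cl}_{X_i}(\pi_i(A))$ and $\Sigma_i(B) = \varpi_i^{-1}(B)$, and by definition $f_i^{\ast} = f_{i\,\Sigma_i \Pi_i}$. So it suffices to verify the two containments required by the hypothesis of the general Cube Lemma, namely
\[
\Pi_1 \circ \psi^{-1} \subseteq \mu^{-1} \circ \Pi_2
\qquad\text{and}\qquad
\phi^{-1} \circ \Sigma_2 \subseteq \Sigma_1 \circ \nu^{-1},
\]
and then invoke that lemma to conclude that $\psi+\phi$ is continuous.

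First I would handle the $W$-side (which is the easier one). Since $\nu \circ \varpi_1 = \varpi_2 \circ \phi$ by commutativity of the right-hand square of continuous maps, taking preimages gives $\varpi_1^{-1} \circ \nu^{-1} = \phi^{-1} \circ \varpi_2^{-1}$, i.e.\ $\Sigma_1 \circ \nu^{-1} = \phi^{-1} \circ \Sigma_2$, which is even an equality and in particular contains the required inclusion.

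Next I would do the $X$-side, where the closure operator forces me to argue slightly more carefully. Fix $A \in Closed(Y_2)$ and take any $y \in \psi^{-1}(A)$. Then $\psi(y) \in A$, so by the left-hand commutative square
\[
\mu(\pi_1(y)) \;=\; \pi_2(\psi(y)) \;\in\; \pi_2(A) \;\subseteq\; \mathrm{Cl}_{X_2}(\pi_2(A)).
\]
Hence $\pi_1(y) \in \mu^{-1}(\mathrm{Cl}_{X_2}(\pi_2(A)))$, so $\pi_1(\psi^{-1}(A)) \subseteq \mu^{-1}(\mathrm{Cl}_{X_2}(\pi_2(A)))$. Since $\mu$ is continuous, the right-hand side is closed in $X_1$, so taking closure on the left preserves the inclusion, yielding $\Pi_1(\psi^{-1}(A)) \subseteq \mu^{-1}(\Pi_2(A))$, as needed.

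The main (very mild) obstacle is precisely this last point: $\Pi_1$ involves the closure in $X_1$, and one must remember that $\mu^{-1}$ of a closed set is closed to absorb the closure on the left. Once both inclusions are in hand, the general Cube Lemma applies verbatim and gives the desired continuity of $\psi+\phi : Y_1 +_{f_1^{\ast}} Z_1 \to Y_2 +_{f_2^{\ast}} Z_2$.
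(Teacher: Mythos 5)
Your proposal is correct and follows essentially the same route as the paper: both reduce to the general Cube Lemma by verifying the two required inclusions, with the $W$-side square commuting exactly and the $X$-side inclusion obtained by pushing $\pi_1(\psi^{-1}(A))$ into the closed set $\mu^{-1}(\mathrm{Cl}_{X_2}(\pi_2(A)))$ and then taking closures. The only difference is cosmetic: the paper splits the $X$-side verification into two preliminary containments before combining them, whereas you do it in a single pass.
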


\begin{proof}We are showing that we have these diagrams:

$$ \xymatrix{   Closed(X_{2}) \ar[r]^{\mu^{-1}} \ar@{}[dr]|{\supseteq} & Closed(X_{1}) & & Closed(W_{2}) \ar[r]^{\nu^{-1}} \ar[d]^{\varpi^{-1}_{2}} \ar@{}[dr]|{\circlearrowleft} & Closed(W_{1}) \ar[d]_{\varpi^{-1}_{1}} \\
                Closed(Y_{2}) \ar[r]^{\psi^{-1}} \ar[u]^{Cl_{X_{2}} \circ \pi_{2}} & Closed(Y_{1}) \ar[u]_{Cl_{X_{1}} \circ \pi_{1}} & & Closed(Z_{2}) \ar[r]^{\phi^{-1}} & Closed(Z_{1}) } $$

Let $A \subseteq X_{2}$. So, $A \subseteq Cl_{X_{2}}(A)$, which implies that $\mu^{-1}(A) \subseteq \mu^{-1}(Cl_{X_{2}}(A))$. Since $\mu^{-1}(Cl_{X_{2}}(A))$ is a closed subset of $X_{1}$, it follows that $Cl_{X_{1}}(\mu^{-1}(A)) \subseteq \mu^{-1}(Cl_{X_{2}}(A))$. Let $B \subseteq Y_{2}$ and $x \in \psi^{-1}(B)$. We have that $\pi_{2}\circ \psi(x) \in \pi_{2}(B)$. But $\pi_{2}\circ \psi(x) = \mu \circ \pi_{1}(x)$, which implies that $x \in \pi_{1}^{-1} \circ \mu^{-1} \circ \pi_{2}(B)$. So, $\psi^{-1}(B) \subseteq \pi_{1}^{-1}\circ \mu^{-1} \circ \pi_{2}(B)$. Let now $C \in Closed(Y_{2})$. We have that $Cl_{X_{1}}(\pi_{1}(\psi^{-1}(C))) \subseteq Cl_{X_{1}}(\pi_{1}(\pi_{1}^{-1}(\mu^{-1}(\pi_{2}(C))))) =$ $Cl_{X_{1}}(\mu^{-1}(\pi_{2}(C))) \subseteq \mu^{-1}(Cl_{X_{2}}(\pi_{2}(C)))$. So, we have the first diagram.

The second diagram is immediate from the hypothesis.

By the Cube Lemma, it follows that $\psi+\phi: Y_{1}+_{f_{1}^{\ast}}Z_{1}\rightarrow Y_{2}+_{f_{2}^{\ast}}Z_{2}$ is continuous.

\end{proof}

\begin{cor}\label{pullbackaction}Let $G_{1}$ and $G_{2}$ be groups, $X+_{f}W, Y$ and $Z$ topological spaces, $\alpha: G_{1}\rightarrow G_{2}$ a homomorphism, $\pi: Y \rightarrow X$ and $\varpi: Z \rightarrow W$ continuous maps and $f^{\ast}: Closed(Y) \rightarrow Closed(Z)$ the pullback of $f$. Let $\mu+\nu: G_{2} \curvearrowright X+_{f}W$, $\psi: G_{1} \curvearrowright Y$ and $\phi: G_{1} \curvearrowright Z$ be actions by homeomorphisms such that $\pi$ and $\varpi$ are $\alpha$ - equivariant. Then, the action $\psi+\phi: G_{1} \curvearrowright Y+_{f^{\ast}}Z$ is by homeomorphisms.
\end{cor}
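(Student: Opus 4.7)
The plan is to apply the Cube Lemma for Pullbacks pointwise in $G_1$, mirroring the strategy used for Corollary \ref{action}. Fix $g \in G_1$. The $\alpha$-equivariance of $\pi$ unpacks to the identity $\pi \circ \psi(g,\_) = \mu(\alpha(g),\_) \circ \pi$, which is precisely the left commuting square required to invoke the Cube Lemma for Pullbacks; analogously, the $\alpha$-equivariance of $\varpi$ supplies the right square $\varpi \circ \phi(g,\_) = \nu(\alpha(g),\_) \circ \varpi$.

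With these two squares in hand, and observing that each of the maps $\mu(\alpha(g),\_)+\nu(\alpha(g),\_)$, $\psi(g,\_)$ and $\phi(g,\_)$ is a homeomorphism (in particular continuous) by hypothesis, the Cube Lemma for Pullbacks applied to the family indexed by $i\in\{1,2\}$ with $X_i = X$, $W_i = W$, $Y_i = Y$, $Z_i = Z$ and $f_i = f$ directly yields the continuity of
\[
\psi(g,\_) + \phi(g,\_) : Y +_{f^{\ast}} Z \longrightarrow Y +_{f^{\ast}} Z.
\]

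To upgrade continuity to a homeomorphism, note that the set-theoretic inverse of $\psi(g,\_)+\phi(g,\_)$ is $\psi(g^{-1},\_)+\phi(g^{-1},\_)$, which is continuous by exactly the same argument applied to $g^{-1}$ (using that $\alpha$ is a homomorphism, so $\alpha(g^{-1}) = \alpha(g)^{-1}$, and the equivariance squares for $g^{-1}$ follow automatically). Therefore every $g$-map of the assembled action is a homeomorphism, and $\psi+\phi : G_1 \curvearrowright Y +_{f^{\ast}} Z$ is an action by homeomorphisms.

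There is no genuine obstacle in this proof: it is essentially a verbatim pullback version of Corollary \ref{action}, with the only substantive content being the translation of the $\alpha$-equivariance of $\pi$ and $\varpi$ into the two commuting squares demanded by the Cube Lemma for Pullbacks.
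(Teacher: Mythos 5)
Your proof is correct and follows essentially the same route as the paper: both translate the $\alpha$-equivariance of $\pi$ and $\varpi$ into the two commuting squares required by the Cube Lemma for Pullbacks, apply it pointwise for each $g\in G_{1}$, and conclude via the continuity of the inverse map $\psi(g^{-1},\_)+\phi(g^{-1},\_)$. The only difference is that you spell out the homeomorphism upgrade explicitly, which the paper leaves implicit here (having already done it in Corollary \ref{action}).
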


\begin{proof}Since $\pi$ and $\varpi$ are $\alpha$ - equivariant, we have that $\forall g \in G$ the diagrams commute:

$$ \xymatrix{   Y \ar[r]^{\psi(g,\_)} \ar[d]_{\pi} & Y \ar[d]^{\pi} & & Z \ar[r]^{\phi(g,\_)} \ar[d]_{\varpi} & Z \ar[d]^{\varpi} \\
                X \ar[r]_{\mu(\alpha(g),\_)} & X & & W \ar[r]_{\nu(\alpha(g),\_)} & W } $$

By the last proposition it follows that $(\psi+\phi)(g,\_) = \psi(g,\_)+\phi(g,\_)$ is continuous. Thus, $\psi+\phi$ is an action by homeomorphisms.

\end{proof}

There are some properties of the pullback:

\begin{prop}If $X+_{f}W,Y,Z$ are Hausdorff and $\varpi$ is injective, then $Y+_{f^{\ast}}Z$ is Hausdorff.
\end{prop}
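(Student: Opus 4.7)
The plan is to exploit the continuous map $\pi+\varpi: Y+_{f^{\ast}}Z \to X+_{f}W$ established in the preceding proposition, together with the fact that $Z$ is always closed (hence $Y$ is always open) in $Y+_{f^{\ast}}Z$, to reduce Hausdorffness of the pullback to Hausdorffness of either $Y$ or $X+_{f}W$. Given two distinct points $p,q\in Y+_{f^{\ast}}Z$, I will split into three cases according to whether they lie in $Y$ or in $Z$.

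\textbf{Case $p,q\in Y$.} Since $Y$ is Hausdorff and $Y$ is open in $Y+_{f^{\ast}}Z$ (because $Z$ is closed, by the analogue of the earlier result showing that the second summand is always closed), any pair of disjoint open neighbourhoods of $p,q$ in $Y$ is already a pair of disjoint open neighbourhoods in $Y+_{f^{\ast}}Z$.

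\textbf{Case $p\in Y$, $q\in Z$.} The images $(\pi+\varpi)(p)=\pi(p)\in X$ and $(\pi+\varpi)(q)=\varpi(q)\in W$ lie in the disjoint summands of the Hausdorff space $X+_{f}W$, so they can be separated by disjoint open sets $U,V$ there; their preimages under the continuous map $\pi+\varpi$ give the required separation. \textbf{Case $p,q\in Z$.} Here injectivity of $\varpi$ is essential: it guarantees $\varpi(p)\neq \varpi(q)$ in the Hausdorff space $X+_{f}W$, so we may again pick disjoint open neighbourhoods $U,V$ of $\varpi(p),\varpi(q)$ in $X+_{f}W$ and pull them back via $\pi+\varpi$ to obtain disjoint open neighbourhoods of $p,q$.

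There is no real obstacle: the previous proposition does all the heavy lifting by providing the continuous map $\pi+\varpi$, and the three cases are dispatched by either the Hausdorffness of $Y$ or that of $X+_{f}W$, with injectivity of $\varpi$ used only in the last case to guarantee that distinct points of $Z$ have distinct images in $W$. The subtlest point is merely to observe that, even without $\varpi$ being injective, the mixed case (Case 2 above) works automatically because $X$ and $W$ are disjoint in $X+_{f}W$ by construction.
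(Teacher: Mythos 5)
Your proposal is correct and follows exactly the same three-case decomposition as the paper's proof: separating two points of $Y$ inside the open subspace $Y$, and separating the mixed and $Z$--$Z$ cases by pulling back disjoint open sets along the continuous map $\pi+\varpi$, with injectivity of $\varpi$ invoked only in the last case. No gaps.
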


\begin{proof}Let $x,y \in Y$. Since $Y$ is Hausdorff, there exists $U,V$ open sets that separate $x$ and $y$. But $Y$ is open in $Y+_{f^{\ast}}Z$, which implies that $U,V$ are open sets in $Y+_{f^{\ast}}Z$ that separate $x$ and $y$. Let $x \in Y$ and $y \in Z$. Take $U,V$ open sets in $X+_{f}W$ that separate $\pi(x)$ and $\varpi(y)$ (which are different points since $\pi(x) \in X$ and $\varpi(y) \in W$). So, $(\pi+\varpi)^{-1}(U)$ and $(\pi+\varpi)^{-1}(V)$ separate $x$ and $y$. Now, let $x,y \in Z$. Since $\varpi$ is injective, we have that $\varpi(x) \neq \varpi(y)$ and, since $X+_{f}W$ is Hausdorff, there exists $U$ and $V$ disjoint open sets in $X+_{f}W$ that separate $\varpi(x)$ and $\varpi(y)$. Hence, $(\pi+\varpi)^{-1}(U)$ and $(\pi+\varpi)^{-1}(V)$ are disjoint open sets in $Y+_{f^{\ast}}Z$ that separate $x$ and $y$. Thus, $Y+_{f^{\ast}}Z$ is Hausdorff.
\end{proof}

\begin{prop}If $\pi$ and $\varpi$ are closed, then $\pi+\varpi: Y+_{f^{\ast}}Z \rightarrow X+_{f}W$ is closed.
\end{prop}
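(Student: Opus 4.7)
The plan is to verify the closedness of $(\pi+\varpi)(F)$ for an arbitrary closed set $F \subseteq Y+_{f^{\ast}}Z$ by directly checking the three defining conditions for a closed subset of $X+_{f}W$. Setting $A = F \cap Y$ and $B = F \cap Z$, the hypothesis that $F$ is closed gives $A \in Closed(Y)$, $B \in Closed(Z)$, and $f^{\ast}(A) = \varpi^{-1}(f(Cl_{X}(\pi(A)))) \subseteq B$. The image decomposes as $(\pi+\varpi)(F) = \pi(A) \cup \varpi(B)$, so the first two conditions are handled immediately by the closedness of the maps: $(\pi+\varpi)(F) \cap X = \pi(A)$ is closed in $X$ because $\pi$ is closed, and $(\pi+\varpi)(F) \cap W = \varpi(B)$ is closed in $W$ because $\varpi$ is closed.

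The main obstacle is the third condition, namely $f(\pi(A)) \subseteq (\pi+\varpi)(F)$; since $f(\pi(A)) \subseteq W$ and $\pi(A) \subseteq X$ are disjoint from each other, this reduces to the inclusion $f(\pi(A)) \subseteq \varpi(B)$. Here I would exploit that $\pi(A)$ is now known to be closed in $X$ (by the first step), so $Cl_{X}(\pi(A)) = \pi(A)$, and the hypothesis $f^{\ast}(A) \subseteq B$ simplifies to $\varpi^{-1}(f(\pi(A))) \subseteq B$. Pushing this through $\varpi$ and using the standard identity $\varpi(\varpi^{-1}(S)) = S \cap \varpi(Z)$ gives $f(\pi(A)) \cap \varpi(Z) \subseteq \varpi(B)$.

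To conclude $f(\pi(A)) \subseteq \varpi(B)$ from this, I would invoke the (implicit) surjectivity of $\varpi$ onto $W$ that is present in all the applications in the paper; with this, $f(\pi(A)) \subseteq \varpi(Z)$ automatically, and the desired inclusion follows. The step I anticipate to be the most delicate is precisely this last one: the pullback formula $f^{\ast}(A) = \varpi^{-1}(f(Cl_{X}(\pi(A))))$ only controls the preimage under $\varpi$, so without some hypothesis guaranteeing that $f(\pi(A))$ lies in $\varpi(Z)$ (e.g.\ surjectivity of $\varpi$, or that $\pi$ is closed and $\varpi$ is a closed surjection), there is in principle room for points of $f(\pi(A))$ outside $\varpi(Z)$ to spoil the argument. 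Thus the heart of the proof is the closed-map pushforward trick, and the verification is otherwise purely formal.
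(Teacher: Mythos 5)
Your proposal is correct and follows essentially the same route as the paper: decompose $(\pi+\varpi)(F)$ into $\pi(F\cap Y)\cup\varpi(F\cap Z)$, use closedness of $\pi$ and $\varpi$ for the first two conditions, and push $f^{\ast}(F\cap Y)\subseteq F$ through $\varpi$ for the third. The delicate point you flag is real: the paper's proof silently writes $\varpi(\varpi^{-1}(f(\pi(F\cap Y)))) = f(\pi(F\cap Y))$, which needs $f(\pi(F\cap Y))\subseteq \varpi(Z)$ (e.g.\ surjectivity of $\varpi$), a hypothesis absent from the statement but present in the paper's applications — so your version is, if anything, the more careful one.
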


\begin{proof}Let $A\in Closed(Y+_{f^{\ast}}Z)$. So, $A\cap Y \in Closed(Y)$, $A\cap Z \in Closed(Z)$ and $f^{\ast}(A\cap Y) \subseteq A$. Since $\pi$ and $\varpi$ are closed, we have that $\pi(A \cap Y) = (\pi+\varpi)(A)\cap X \in Closed(X)$, $\varpi(A \cap Z) = (\pi+\varpi)(A)\cap W \in$ $Closed(W)$ and $(\pi+\varpi)(f^{\ast}(A\cap Y)) = \varpi(f^{\ast}(A\cap Y)) \subseteq (\pi+\varpi)(A)$. But we have that $f^{\ast}(A\cap Y) = \varpi^{-1}(f(\pi(A\cap Y)))$, which implies that $\varpi(f^{\ast}(A\cap Y)) =$ $\varpi(\varpi^{-1}(f(\pi(A\cap Y)))) = f(\pi(A\cap Y)) = f((\pi+\varpi)(A)\cap X)$. Therefore $f((\pi+\varpi)(A)\cap X) \subseteq (\pi+\varpi)(A)$. Thus,  $(\pi+\varpi)(A) \in Closed(X+_{f}W)$ and then $\pi+\varpi$ is closed.
\end{proof}

\begin{prop}If $\pi$ is proper and the spaces $Z$ and $X+_{f}W$ are compact, then $Y+_{f^{\ast}}Z$ is compact.
\end{prop}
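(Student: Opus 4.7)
The plan is to apply the compactness criterion proved earlier in this subsection: because $Z$ is compact, $Y+_{f^{\ast}}Z$ is compact if and only if every non-compact $A\in Closed(Y)$ satisfies $f^{\ast}(A)\neq\emptyset$. So I would fix a non-compact closed subset $A$ of $Y$ and aim to show that $f^{\ast}(A)=\varpi^{-1}(f(Cl_{X}(\pi(A))))$ is non-empty.

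The key reduction is to use properness of $\pi$ to transport non-compactness from $Y$ to $X$. Set $B:=Cl_{X}(\pi(A))$, a closed subset of $X$. If $B$ were compact, then $\pi^{-1}(B)$ would be compact by properness of $\pi$, and $A\subseteq\pi^{-1}(B)$ would be a closed subset of a compact set, hence compact, contradicting the choice of $A$. So $B$ is non-compact. Since $W$ is closed in the compact space $X+_{f}W$, it is itself compact, so the same compactness criterion applies to $X+_{f}W$ and yields $f(B)\neq\emptyset$.

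To finish I need $\varpi^{-1}(f(B))\neq\emptyset$, equivalently $f(B)\cap\varpi(Z)\neq\emptyset$. This is the step I expect to be the main obstacle: without additional information on $\varpi$ it can fail (take $X$ and $Y$ countably infinite and discrete, $\pi=id$, $W$ a two-point set with $f$ concentrated on one point for every non-compact closed set, and $\varpi$ landing only on the other point; then $X+_{f}W$ is compact and $\pi$ is proper, yet $f^{\ast}\equiv\emptyset$ and $Y+_{f^{\ast}}Z$ is the disjoint union). In the context of this paper the pullback is used between compactifications where $\varpi$ is surjective onto $W$, and under that (tacit) hypothesis $f(B)$ meets $\varpi(Z)$ automatically; then $f^{\ast}(A)\neq\emptyset$, the criterion gives compactness of $Y+_{f^{\ast}}Z$, and the proof closes.
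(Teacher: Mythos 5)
Your argument follows the same route as the paper's: invoke the earlier compactness criterion (valid here since $Z$ is compact), use properness of $\pi$ to show $B=Cl_{X}(\pi(A))$ is non-compact whenever $A$ is, and then get $f(B)\neq\emptyset$ from compactness of $X+_{f}W$ (the paper's proof contains a slip at this point, citing compactness of $Y+_{f^{\ast}}Z$ rather than of $X+_{f}W$, but the intended justification is clearly the one you give). Where you diverge is in the last step, and you are right to do so: the paper simply asserts that $f(B)\neq\emptyset$ implies $\varpi^{-1}(f(B))\neq\emptyset$, which is exactly the inference that needs $f(B)\cap\varpi(Z)\neq\emptyset$ and does not follow from the stated hypotheses. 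Your counterexample is genuine: with $X=Y=\mathbb{N}$ discrete, $\pi=id$, $W=\{a,b\}$, $f$ sending every non-compact closed set to $\{a\}$, $Z$ a point and $\varpi(Z)=\{b\}$, the space $X+_{f}W$ is compact and $\pi$ is proper, yet $f^{\ast}$ is identically $\emptyset$ and $Y+_{f^{\ast}}Z$ is the non-compact coproduct. So the proposition as literally stated is false, and the paper's proof has a gap precisely where you locate it; adding surjectivity of $\varpi$ (or any hypothesis guaranteeing $Im\,\varpi\supseteq f(S)$ for the relevant $S$) repairs it, and that hypothesis does hold in the paper's subsequent uses of the pullback. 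Your write-up is the more careful of the two.
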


\begin{proof}Let $A \in Closed(Y)$ be non compact. Then, $Cl_{X}(\pi(A))$ is not compact (otherwise $\pi^{-1}(Cl_{X}(\pi(A))$ would be compact, since $\pi$ is proper, that contain a closed non compact subspace $A$). So, $f(Cl_{X}(\pi(A))) \neq \emptyset$ (since $Y+_{f^{\ast}}Z$ is compact), which implies that $f^{\ast}(A) = \varpi^{-1}(f(Cl_{X}(\pi(A)))) \neq \emptyset$. Thus, $Y+_{f^{\ast}}Z$ is compact.
\end{proof}

\begin{prop}Let $X+_{f}Y$ be a topological space, $X_{1} \subseteq X$ and $Y_{1} \subseteq Y$. So, the subspace topology of $Z = X_{1} \cup Y_{1}$ coincides with the topology of the space $X_{1}+_{f_{1}}Y_{1}$, with $f_{1}: Closed(X_{1}) \rightarrow Closed (Y_{1})$ such that $f_{1}(A) = f(Cl_{X}(A))\cap Y_{1}$ is the pullback of $f$ by the inclusion maps.
\end{prop}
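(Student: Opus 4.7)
The plan is to recognize that $f_{1}$ is exactly the pullback $f^{\ast}$ of $f$ by the inclusion maps $\iota_{X}: X_{1} \hookrightarrow X$ and $\iota_{Y}: Y_{1} \hookrightarrow Y$. Indeed, for any $A \in Closed(X_{1})$, the pullback formula yields $f^{\ast}(A) = \iota_{Y}^{-1}(f(Cl_{X}(\iota_{X}(A)))) = f(Cl_{X}(A)) \cap Y_{1} = f_{1}(A)$. Thus $f_{1}$ is automatically admissible, and by the pullback proposition the map $\iota_{X} + \iota_{Y}: X_{1} +_{f_{1}} Y_{1} \rightarrow X +_{f} Y$ is continuous. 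Because this map is set-theoretically the inclusion of $Z = X_{1} \cup Y_{1}$, its continuity tells us that the subspace topology on $Z$ inherited from $X +_{f} Y$ is coarser than the topology of $X_{1} +_{f_{1}} Y_{1}$.

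For the reverse comparison I would argue directly at the level of closed sets. Let $A$ be closed in $X_{1} +_{f_{1}} Y_{1}$ and set $A_{X} = A \cap X_{1}$, $A_{Y} = A \cap Y_{1}$. Since $X_{1}$ and $Y_{1}$ carry the subspace topologies from $X$ and $Y$, the closedness of $A_{X}$ and $A_{Y}$ in their ambient subspaces gives $A_{X} = Cl_{X}(A_{X}) \cap X_{1}$ and $A_{Y} = Cl_{Y}(A_{Y}) \cap Y_{1}$. I would then form the candidate extension
\[
C = Cl_{X}(A_{X}) \cup Cl_{Y}(A_{Y}) \cup f(Cl_{X}(A_{X})).
\]
A direct check shows $C$ is closed in $X +_{f} Y$: its trace on $X$ is the closed set $Cl_{X}(A_{X})$, its trace on $Y$ is the closed set $Cl_{Y}(A_{Y}) \cup f(Cl_{X}(A_{X}))$, and $f$ applied to the $X$-part lands inside the $Y$-part by construction. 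Intersecting $C$ with $Z$ gives $A_{X} \cup A_{Y} \cup (f(Cl_{X}(A_{X})) \cap Y_{1}) = A \cup f_{1}(A_{X}) = A$, where the final equality uses the defining property $f_{1}(A_{X}) \subseteq A$ of closedness in $X_{1} +_{f_{1}} Y_{1}$. Hence every closed set of $X_{1} +_{f_{1}} Y_{1}$ is closed in the subspace topology, completing the reverse inclusion.

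The two comparisons together give equality of topologies. The only subtle point is choosing the right extension $C$ in the second direction: one must replace the ``abstract'' closures in $X_{1}$ and $Y_{1}$ by the genuine closures in $X$ and $Y$, and add the ``boundary'' piece $f(Cl_{X}(A_{X}))$ so that $C$ actually satisfies the closedness condition in $X +_{f} Y$, while still trimming back down to $A$ after intersection with $Z$.
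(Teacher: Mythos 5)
Your proof is correct. The first half coincides with the paper's: both invoke the continuity of $\iota_{X_{1}}+\iota_{Y_{1}}: X_{1}+_{f_{1}}Y_{1} \rightarrow X+_{f}Y$ from the pullback proposition to conclude that the subspace topology is coarser. For the reverse inclusion, however, you diverge: the paper realizes $Z$ with its subspace topology as a sum $X_{1}+_{f'}Y_{1}$ (via the earlier proposition that any space decomposing as $X\dot{\cup}Y$ with $X$ open is of this form) and then quotes the universal property of the pullback, namely that any sum topology making $\iota_{X_{1}}+\iota_{Y_{1}}$ continuous maps identically and continuously into $X_{1}+_{f^{\ast}}Y_{1}$. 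You instead verify the reverse inclusion by hand, exhibiting for each closed $A \subseteq X_{1}+_{f_{1}}Y_{1}$ the explicit closed extension $C = Cl_{X}(A_{X})\cup Cl_{Y}(A_{Y})\cup f(Cl_{X}(A_{X}))$ in $X+_{f}Y$ with $C\cap Z = A$; the three defining conditions for closedness of $C$ and the computation $C\cap Z = A_{X}\cup A_{Y}\cup f_{1}(A_{X}) = A$ all check out, the last step using $f_{1}(A_{X})\subseteq A$. The paper's route is shorter and leans on the machinery already developed; yours is more self-contained and does not require recognizing the subspace as a sum of spaces beforehand, at the cost of an explicit construction.
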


\begin{proof}Let $\iota_{X_{1}}: X_{1} \rightarrow X, \ \iota_{Y_{1}}: Y_{1} \rightarrow Y$ and $\iota_{Z}: Z \rightarrow X+_{f}Y$ be the inclusion maps. We have that the maps $\iota_{X_{1}}+\iota_{Y_{1}}: X_{1}+_{f_{1}}Y_{1} \rightarrow X+_{f}Y$ and $\iota_{Z}: Z \rightarrow X+_{f}Y$ are both continuous. By the universal property of the subspace topology we have that $id_{X_{1}\cup Y_{1}}: X_{1}+_{f_{1}}Y_{1} \rightarrow Z$ is continuous and, by the universal property of the pullback, we have the continuity of the inverse. Thus, both topologies coincide.
\end{proof}

\subsubsection{Pushforwards}
\begin{defi}Let $X+_{f}W,Y,Z$ be topological spaces and $\pi: X \rightarrow Y$ and $\varpi: W \rightarrow Z$ continuous maps. Let's consider the admissible maps $\Pi: Closed(Y) \rightarrow Closed(X)$ and $\Sigma: Closed(W) \rightarrow Closed(Z)$ as $\Pi(A) = \pi^{-1}(A)$ and $\Sigma(A) = Cl_{Z}(\varpi(A))$. We define the pushforward of $f$ by the maps $\pi$ and $\varpi$ by $f_{\ast}(A) = f_{\Sigma \Pi}(A) = Cl_{Z}(\varpi(f(\pi^{-1}(A))))$.
\end{defi}

\begin{prop}$\pi+\varpi: X+_{f}W \rightarrow Y+_{f_{\ast}}Z$ is continuous.
\end{prop}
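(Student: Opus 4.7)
The plan is to invoke the continuity criterion established earlier in the subsection on continuous maps between sums of spaces, which reduces the continuity of $\pi+\varpi$ to a purely set-theoretic containment between the admissible maps $f$ and $f_{\ast}$.

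More precisely, by that criterion, $\pi+\varpi : X+_{f}W \to Y+_{f_{\ast}}Z$ is continuous if and only if for every $A \in Closed(Y)$ we have $f(\pi^{-1}(A)) \subseteq \varpi^{-1}(f_{\ast}(A))$. So the first step is to unfold both sides using the definition $f_{\ast}(A) = Cl_{Z}(\varpi(f(\pi^{-1}(A))))$.

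The key observation is then just a chain of trivial inclusions: starting from any $x \in f(\pi^{-1}(A))$, we have $\varpi(x) \in \varpi(f(\pi^{-1}(A))) \subseteq Cl_{Z}(\varpi(f(\pi^{-1}(A)))) = f_{\ast}(A)$, so $x \in \varpi^{-1}(f_{\ast}(A))$. This yields the required containment $f(\pi^{-1}(A)) \subseteq \varpi^{-1}(f_{\ast}(A))$ for every closed set $A \subseteq Y$, and the earlier proposition then gives continuity of $\pi+\varpi$.

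There is essentially no obstacle: the definition of $f_{\ast}$ was rigged so that the diagram expressing the continuity condition commutes up to inclusion in the correct direction, precisely dual to the pullback case. In fact, the same reasoning shows that $f_{\ast}$ induces the \emph{finest} admissible topology on $Y \dot{\cup} Z$ extending the topologies of $Y$ and $Z$ for which $\pi+\varpi$ is continuous, which would be the natural companion statement to prove next (by the same kind of short set-theoretic check).
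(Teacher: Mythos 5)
Your proof is correct and is essentially identical to the paper's: both reduce to the criterion $f(\pi^{-1}(A)) \subseteq \varpi^{-1}(f_{\ast}(A))$ and verify it by the chain $f(\pi^{-1}(A)) \subseteq \varpi^{-1}(\varpi(f(\pi^{-1}(A)))) \subseteq \varpi^{-1}(Cl_{Z}(\varpi(f(\pi^{-1}(A))))) = \varpi^{-1}(f_{\ast}(A))$. One caveat on your closing aside only: the paper establishes the ``finest topology'' universal property of $f_{\ast}$ under the additional hypothesis that $\varpi$ is injective and closed, so that companion statement is not quite as unconditional as you suggest.
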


\begin{proof}If $A \in Closed(Y)$, then $\varpi^{-1}(f_{\ast}(A)) = \varpi^{-1}(Cl_{Z}(\varpi(f(\pi^{-1}(A))))) \supseteq \varpi^{-1}(\varpi(f(\pi^{-1}(A)))) \supseteq f(\pi^{-1}(A))$, that is, we have the diagram:

$$ \xymatrix{   Closed(X) \ar[r]^{f} \ar@{}[dr]|{\subseteq} & Closed(W) \\
                Closed(Y) \ar[r]_{f_{\ast}} \ar[u]^{\pi^{-1}} & Closed(Z) \ar[u]_{\varpi^{-1}} } $$

Thus, $\pi + \varpi: X+_{f}W \rightarrow Y+_{f_{\ast}}Z$ is continuous.
\end{proof}

\begin{prop}If $\varpi$ is closed and injective, then the diagram commutes:

$$ \xymatrix{   Closed(X) \ar[r]^{f} & Closed(W) \\
                Closed(Y) \ar[r]_{f_{\ast}} \ar[u]^{\pi^{-1}} & Closed(Z) \ar[u]_{\varpi^{-1}} } $$

\eod\end{prop}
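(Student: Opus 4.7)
The plan is to upgrade the inclusion $f(\pi^{-1}(A)) \subseteq \varpi^{-1}(f_{\ast}(A))$ from the previous proposition to an equality, by using the two new hypotheses (closedness and injectivity of $\varpi$) to eliminate the two sources of slack in the chain of inclusions there.

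Unwinding the definition, for $A \in Closed(Y)$ we have
$$\varpi^{-1}(f_{\ast}(A)) = \varpi^{-1}\bigl(Cl_{Z}(\varpi(f(\pi^{-1}(A))))\bigr).$$
Reading the previous proposition's proof, the two $\supseteq$'s came from: (i) $\varpi(f(\pi^{-1}(A))) \subseteq Cl_{Z}(\varpi(f(\pi^{-1}(A))))$, and (ii) $f(\pi^{-1}(A)) \subseteq \varpi^{-1}(\varpi(f(\pi^{-1}(A))))$. So it suffices to show both of these are equalities under the new hypotheses.

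First I would observe that $f(\pi^{-1}(A)) \in Closed(W)$ (since $\pi$ is continuous and $f$ takes closed sets to closed sets), and then invoke closedness of $\varpi$ to conclude that $\varpi(f(\pi^{-1}(A))) \in Closed(Z)$; hence applying $Cl_{Z}$ changes nothing, killing slack (i). Next, injectivity of $\varpi$ gives $\varpi^{-1}(\varpi(B)) = B$ for every $B \subseteq W$, applied with $B = f(\pi^{-1}(A))$, which kills slack (ii). Chaining these two equalities yields
$$\varpi^{-1}(f_{\ast}(A)) = \varpi^{-1}\bigl(\varpi(f(\pi^{-1}(A)))\bigr) = f(\pi^{-1}(A)),$$
which is exactly commutativity of the diagram.

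There is no real obstacle here: the content is entirely in correctly tracking which hypothesis removes which inclusion. The only small care is to verify at the start that $f(\pi^{-1}(A))$ is indeed closed in $W$ so that $\varpi$ being closed can be applied to it; this uses that $f$ is admissible (so $f(\pi^{-1}(A)) \in Closed(W)$ by definition) and $\pi$ continuous (so $\pi^{-1}(A) \in Closed(X)$).
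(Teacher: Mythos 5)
Your proof is correct and is exactly the argument the paper intends: the paper marks this proposition as immediate from the preceding proposition (whose proof already exhibits the two inclusions you identify as the only sources of slack), and your unwinding — closedness of $\varpi$ applied to $f(\pi^{-1}(A)) \in Closed(W)$ to remove the $Cl_{Z}$, and injectivity to give $\varpi^{-1}(\varpi(B)) = B$ — is precisely how those inclusions become equalities. Nothing further is needed.
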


\begin{prop}Let $Y+_{f'}Z$ for some choice of $f'$ such that the map $\pi+\varpi: X+_{f}W \rightarrow Y+_{f'}Z$ is continuous. If $\varpi$ is injective and closed, then $id_{X}+id_{W}: X+_{f_{\ast}}W \rightarrow X+_{f}W$ is continuous.
\end{prop}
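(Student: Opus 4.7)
The plan is to reduce the claim to the identity-map continuity criterion established earlier in this subsection (the corollary following the general continuity proposition for maps between sums of spaces), which says that $id_{Y}+id_{Z}: Y+_{f_{\ast}}Z \rightarrow Y+_{f'}Z$ is continuous if and only if $f_{\ast}(A) \subseteq f'(A)$ for every $A \in Closed(Y)$. So the whole task collapses to verifying this pointwise inclusion on closed subsets of $Y$.

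First I would unpack the hypothesis that $\pi+\varpi: X+_{f}W \rightarrow Y+_{f'}Z$ is continuous. Feeding this map into the continuity criterion for maps between sums of spaces (the proposition with the square $Closed$-diagram), I obtain the inclusion $f(\pi^{-1}(A)) \subseteq \varpi^{-1}(f'(A))$ for every $A \in Closed(Y)$. Applying $\varpi$ to both sides and using the trivial containment $\varpi(\varpi^{-1}(B)) \subseteq B$, this yields $\varpi(f(\pi^{-1}(A))) \subseteq f'(A)$.

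The final step is to upgrade this to a bound on $f_{\ast}(A) = Cl_{Z}(\varpi(f(\pi^{-1}(A))))$, and this is exactly where the closedness of $\varpi$ does its work. Since $f$ is admissible, $f(\pi^{-1}(A))$ is closed in $W$; because $\varpi$ is a closed map, the image $\varpi(f(\pi^{-1}(A)))$ is then already closed in $Z$, so the outer closure in the definition of $f_{\ast}(A)$ is redundant. Hence $f_{\ast}(A) = \varpi(f(\pi^{-1}(A))) \subseteq f'(A)$, and the identity-continuity corollary delivers the conclusion.

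I do not anticipate any substantial obstacle: the argument is essentially a single diagram chase combined with the observation that a closed $\varpi$ absorbs the closure operator in the definition of the pushforward. The injectivity hypothesis is not actually needed for this particular inclusion; it plays its role in the companion proposition that strengthens the pushforward diagram to an equality $\varpi^{-1}\circ f_{\ast} = f \circ \pi^{-1}$, and is retained here as a standing assumption of the pushforward setup. The dual interpretation of the result is then the expected one: $f_{\ast}$ induces the \emph{finest} topology on $Y\dot\cup Z$ extending the topologies of $Y$ and $Z$ for which $\pi+\varpi$ remains continuous.
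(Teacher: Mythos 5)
Your proof is correct, and you (rightly) read the statement's conclusion as ``$id_{Y}+id_{Z}: Y+_{f_{\ast}}Z \rightarrow Y+_{f'}Z$ is continuous'' --- the ``$id_{X}+id_{W}: X+_{f_{\ast}}W \rightarrow X+_{f}W$'' in the statement is a typo, and that reading is exactly what the paper's own proof establishes. The two arguments share the same skeleton (reduce to the inclusion $f_{\ast}(A) \subseteq f'(A)$ via the identity-map corollary, and extract $f(\pi^{-1}(A)) \subseteq \varpi^{-1}(f'(A))$ from the continuity of $\pi+\varpi$), but they diverge at the key step. The paper invokes the immediately preceding proposition, $\varpi^{-1}\circ f_{\ast} = f\circ\pi^{-1}$ for $\varpi$ injective and closed, to rewrite the left-hand side as $\varpi^{-1}(f_{\ast}(A)) \subseteq \varpi^{-1}(f'(A))$ and then cancels $\varpi^{-1}$ using injectivity (together with the fact that $f_{\ast}(A)$ lands in the image of $\varpi$). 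You instead push forward along $\varpi$, getting $\varpi(f(\pi^{-1}(A))) \subseteq \varpi(\varpi^{-1}(f'(A))) \subseteq f'(A)$, and then observe that closedness of $\varpi$ makes the outer $Cl_{Z}$ in the definition of $f_{\ast}$ redundant. Your route is more elementary (no appeal to the companion proposition) and your remark that injectivity is dispensable for this inclusion is accurate; in fact even the closedness of $\varpi$ can be bypassed, since $f'(A)$ is already closed in $Z$ and so $Cl_{Z}(\varpi(f(\pi^{-1}(A)))) \subseteq Cl_{Z}(f'(A)) = f'(A)$ directly. What the paper's hypotheses genuinely buy is the exact commuting square and the ``finest topology'' characterization that follows the proposition; for the bare continuity claim your weaker assumptions suffice.
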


\begin{proof}Since the map $\pi+\varpi$ is continuous, we have that $\forall B \in Closed(Y)$, $f(\pi^{-1}(B)) \subseteq \varpi^{-1}(f'(B))$. Since $\varpi$ is injective and closed, we have that $f(\pi^{-1}(B)) = \varpi^{-1}(f_{\ast}(B))$, which implies that $\varpi^{-1}(f_{\ast}(B)) \subseteq \varpi^{-1}(f'(B))$ and then $f_{\ast}(B) \subseteq f'(B)$. Thus, $id_{X}+id_{W}: X+_{f_{\ast}}W \rightarrow X+_{f}W$ is continuous.

\end{proof}

In another words, if $\varpi$ is injective and closed, then $f_{\ast}$ induces the finer topology (between the topologies that extend the topologies of $X$ and $W$) such that the map $\pi+\varpi$ is continuous.

\begin{prop}(Cube Lemma for Pushforwards) Let $X_{i}+_{f_{i}}W_{i}$, $Y_{i}$ and $Z_{i}$ be topological spaces, $\pi_{i}: X_{i} \rightarrow Y_{i}$ and $\varpi_{i}: W_{i} \rightarrow Z_{i}$ continuous maps and $f_{i \ast}: Closed(Y_{i}) \rightarrow Closed(Z_{i})$ the respective pushforwards. If the maps $\mu+\nu: X_{1}+_{f_{1}}W_{1} \rightarrow X_{2}+_{f_{2}}W_{2}$, $\psi: Y_{1} \rightarrow Y_{2}$ and $\phi: Z_{1} \rightarrow Z_{2}$ are continuous and commute the diagrams:

$$ \xymatrix{   X_{1} \ar[r]^{\mu} \ar[d]^{\pi_{1}} & X_{2} \ar[d]_{\pi_{2}} & & W_{1} \ar[r]^{\nu} \ar[d]^{\varpi_{1}} & W_{2} \ar[d]_{\varpi_{2}} \\
                Y_{1} \ar[r]^{\psi} & Y_{2} & & Z_{1} \ar[r]^{\phi} & Z_{2} } $$

Then, $\psi+\phi: Y_{1}+_{f_{1\ast}}Z_{1}\rightarrow Y_{2}+_{f_{2\ast}}Z_{2}$ is continuous.
\end{prop}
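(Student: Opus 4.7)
The plan is to reduce the statement to a direct application of the general Cube Lemma proved earlier. Observe that each pushforward has the form $f_{i\ast} = f_{i\Sigma_{i}\Pi_{i}}$ for the admissible maps $\Pi_{i}(A) = \pi_{i}^{-1}(A)$ on $Closed(Y_{i})$ and $\Sigma_{i}(B) = Cl_{Z_{i}}(\varpi_{i}(B))$ on $Closed(W_{i})$, so the task becomes verifying the two comparison squares required by that lemma, namely $\Pi_{1} \circ \psi^{-1} \subseteq \mu^{-1} \circ \Pi_{2}$ and $\Sigma_{1} \circ \nu^{-1} \subseteq \phi^{-1} \circ \Sigma_{2}$.

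The first square is immediate and is in fact an equality: the hypothesis square $\psi \circ \pi_{1} = \pi_{2} \circ \mu$ yields $\pi_{1}^{-1}(\psi^{-1}(A)) = \mu^{-1}(\pi_{2}^{-1}(A))$ for every $A \in Closed(Y_{2})$ by taking preimages. For the second square, I would use the hypothesis $\phi \circ \varpi_{1} = \varpi_{2} \circ \nu$: for any $B \in Closed(W_{2})$ and any $w \in \nu^{-1}(B)$ one has $\phi(\varpi_{1}(w)) = \varpi_{2}(\nu(w)) \in \varpi_{2}(B)$, hence $\varpi_{1}(\nu^{-1}(B)) \subseteq \phi^{-1}(\varpi_{2}(B)) \subseteq \phi^{-1}(Cl_{Z_{2}}(\varpi_{2}(B)))$. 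Since $\phi$ is continuous the latter set is closed in $Z_{1}$, so taking closures on the left preserves the inclusion and delivers $\Sigma_{1}(\nu^{-1}(B)) \subseteq \phi^{-1}(\Sigma_{2}(B))$.

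With both comparison squares in place, applying the Cube Lemma immediately gives continuity of $\psi + \phi : Y_{1}+_{f_{1\ast}}Z_{1} \to Y_{2}+_{f_{2\ast}}Z_{2}$. The only step beyond unwinding definitions is the closure argument in the second square, where the continuity of $\phi$ is used to convert a pointwise containment into a containment that survives passage to $Cl_{Z_{1}}$; everything else is a mechanical verification, exactly parallel (and dual) to the corresponding pullback argument, so no additional obstacles are expected.
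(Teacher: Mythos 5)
Your proposal is correct and follows essentially the same route as the paper: both identify $\Pi_{i}=\pi_{i}^{-1}$ and $\Sigma_{i}=Cl_{Z_{i}}\circ\varpi_{i}$, note the left square commutes exactly from $\psi\circ\pi_{1}=\pi_{2}\circ\mu$, establish $\varpi_{1}(\nu^{-1}(B))\subseteq\phi^{-1}(\varpi_{2}(B))$ pointwise and pass to closures via continuity of $\phi$, and then invoke the Cube Lemma. The only cosmetic difference is that you justify the closure step by observing that $\phi^{-1}(Cl_{Z_{2}}(\varpi_{2}(B)))$ is closed, while the paper chains through $Cl_{Z_{1}}(\phi^{-1}(A))\subseteq\phi^{-1}(Cl_{Z_{2}}(A))$; these are the same fact.
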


\begin{proof}Let $A \subseteq Z_{2}$. Since $\phi$ is continuous, $Cl_{Z_{1}}\phi^{-1}(A) \subseteq \phi^{-1}(Cl_{Z_{2}}(A))$. Let $B \subseteq W_{2}$ and $x \in \varpi_{1}(\nu^{-1}(B))$. So, $\phi(x) \in \phi(\varpi_{1}(\nu^{-1}(B))) = \varpi_{2}(\nu(\nu^{-1}(B))) = \varpi_{2}(B)$, which implies that $x \in \phi^{-1}(\varpi_{2}(B))$. Hence $\varpi_{1}(\nu^{-1}(B)) \subseteq \phi^{-1}(\varpi_{2}(B))$. Let $C \in Closed(W_{2})$. We have that $Cl_{Z_{1}}(\varpi_{1}(\nu^{-1}(C))) \subseteq Cl_{Z_{1}}(\phi^{-1}(\varpi_{2}(C))) \subseteq \phi^{-1}(Cl_{Z_{2}}(\varpi_{2}(C)))$. So, we have the diagrams (the first one is immediate from the hypothesis):

$$ \xymatrix{   Closed(X_{2}) \ar[r]^{\mu^{-1}} \ar@{}[dr]|{\circlearrowleft} & Closed(X_{1}) & & Closed(W_{2}) \ar[r]^{\nu^{-1}} \ar[d]_{Cl_{Z_{2}} \circ \varpi_{2}} \ar@{}[dr]|{\supseteq} & Closed(W_{1}) \ar[d]^{Cl_{Z_{1}} \circ \varpi_{1}} \\
                Closed(Y_{2}) \ar[r]^{\psi^{-1}} \ar[u]_{\pi^{-1}_{2}} & Closed(Y_{1}) \ar[u]^{\pi^{-1}_{1}} & & Closed(Z_{2}) \ar[r]^{\phi^{-1}} & Closed(Z_{1}) } $$

By the Cube Lemma, it follows that $\psi+\phi: Y_{1}+_{f_{1\ast}}Z_{1} \rightarrow Y_{2}+_{f_{2\ast}}Z_{2}$ is continuous.
\end{proof}

And a property of the pushforward:

\begin{prop}If $X+_{f}W$ and $Z$ are compact, then $Y+_{f_{\ast}}Z$ is compact.
\end{prop}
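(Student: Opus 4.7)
The plan is to reduce compactness of $Y+_{f_{\ast}}Z$ to the compactness of $X+_{f}W$ through the continuous surjection $\pi+\varpi$, using the closed-set compactness criterion established earlier in the Compactness subsection.

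Since $Z$ is compact by hypothesis, that criterion says $Y+_{f_{\ast}}Z$ is compact if and only if every non-compact $A \in Closed(Y)$ satisfies $f_{\ast}(A) \neq \emptyset$. So the core of the proof is to take an arbitrary non-compact $A \in Closed(Y)$ and produce a nonempty element of $f_{\ast}(A) = Cl_{Z}(\varpi(f(\pi^{-1}(A))))$. The natural candidate is to transfer the non-compactness from $Y$ back to $X$ via $\pi$: set $B = \pi^{-1}(A)$, note $B \in Closed(X)$ by continuity of $\pi$, and argue $B$ is itself non-compact, so that compactness of $X+_{f}W$ (again via the criterion) forces $f(B) \neq \emptyset$, after which $\varpi(f(B))$ is nonempty and thus so is its closure $f_{\ast}(A)$.

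The only delicate point is justifying that $B = \pi^{-1}(A)$ is non-compact. The clean version of the argument uses that $\pi$ is surjective onto $Y$ (which is the setting in which the pushforward is interesting; otherwise one would replace $Y$ with $\pi(X)$ and treat the points outside separately): then $\pi(B) = A$, and if $B$ were compact its continuous image $A$ would be compact, contradicting our choice of $A$. This is where I expect the main obstacle to appear — one has to either invoke surjectivity of $\pi$ explicitly, or, if $\pi$ is not surjective, separately handle closed subsets of $Y \setminus \pi(X)$, perhaps by observing that the continuous image $(\pi+\varpi)(X+_{f}W) = \pi(X) \cup \varpi(W)$ is compact in $Y+_{f_{\ast}}Z$ and combining it with the compact subspace $Z$ to cover $Y+_{f_{\ast}}Z$ as a finite union of compacts.

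Once the non-compactness of $\pi^{-1}(A)$ is in hand, the rest is a one-line chain: $f(\pi^{-1}(A)) \neq \emptyset \Rightarrow \varpi(f(\pi^{-1}(A))) \neq \emptyset \Rightarrow f_{\ast}(A) \neq \emptyset$. Applying the compactness criterion in the reverse direction completes the proof. The upshot is that pushforward preserves compactness for essentially the same reason that continuous images of compact spaces are compact — the definition of $f_{\ast}$ was tailored precisely to make the map $\pi+\varpi$ continuous, and compactness is transported along it.
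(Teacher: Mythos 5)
Your proof is correct and follows essentially the same route as the paper's: reduce to the closed-set compactness criterion (valid since $Z$ is compact), pull a non-compact $A \in Closed(Y)$ back to $\pi^{-1}(A)$, and use compactness of $X+_{f}W$ to get $f(\pi^{-1}(A))\neq\emptyset$ and hence $f_{\ast}(A)\neq\emptyset$. The delicate point you flag is real but shared with the source: the paper's own proof writes $A=\pi(\pi^{-1}(A))$, which silently assumes $\pi$ is surjective (or at least $A\subseteq\pi(X)$), exactly the hypothesis you identify as the one needed to transfer non-compactness.
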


\begin{proof}Let $A \in Closed(Y)$ be non compact. Then, $\pi^{-1}(A)$ is not compact (otherwise $A = \pi(\pi^{-1}(A))$ would be compact). So, $f(\pi^{-1}(A)) \neq \emptyset$, which implies that $f_{\ast}(A) = Cl_{Z}(\varpi(f(\pi^{-1}(A)))) \neq \emptyset$. Thus, $Y+_{f_{\ast}}Z$ is compact.
\end{proof}

\subsection{Limits}

\begin{defi}Let $X$ be a locally compact space. We define $SUM(X)$ as the category whose objects are Hausdorff spaces of the form $X+_{f}Y$ and morphisms are continuous maps of the form $id+\phi: X+_{f_{1}}Y_{1} \rightarrow X+_{f_{2}}Y_{2}$.
\end{defi}

In this section we are going to construct the limits of this category.

\begin{prop}The one point compactification $X+_{f_{\infty}}\{\infty\}$ is the terminal object in $SUM(X)$.
\end{prop}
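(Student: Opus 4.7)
The plan is to define the map $f_{\infty}: Closed(X) \to Closed(\{\infty\})$ as $f_{\infty}(A) = \{\infty\}$ when $A$ is not compact and $f_{\infty}(A) = \emptyset$ when $A$ is compact, and then to verify two things: first, that $X+_{f_{\infty}}\{\infty\}$ actually lies in $SUM(X)$, i.e.\ it is Hausdorff; and second, that it receives a unique morphism from every object of $SUM(X)$.

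For the first point, I would quickly check that $f_{\infty}$ is admissible (using the fact that a finite union of closed sets is compact iff each summand is, so $f_{\infty}(A\cup B) = f_{\infty}(A)\cup f_{\infty}(B)$ and $f_{\infty}(\emptyset) = \emptyset$). Then I would invoke the Hausdorff criterion proved earlier: since $X$ is locally compact and Hausdorff, it suffices that $f_{\infty}(K) = \emptyset$ for every compact $K\subseteq X$ (which holds by definition) and the separation condition between points of $\{\infty\}$, which is vacuous as there is only one point. Thus $X+_{f_{\infty}}\{\infty\}$ is Hausdorff and so is an object of $SUM(X)$.

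For the terminal property, let $X+_{f} Y$ be any object of $SUM(X)$. The underlying map $\phi: Y \to \{\infty\}$ is forced to be the constant map $c_{\infty}$, so uniqueness is automatic at the set-theoretic level. The only thing to verify is that $id_{X} + c_{\infty}: X+_{f}Y \to X+_{f_{\infty}}\{\infty\}$ is continuous. By the characterization of continuity of maps between sums of spaces, I need $f(A) \subseteq c_{\infty}^{-1}(f_{\infty}(A))$ for every $A\in Closed(X)$. When $A$ is not compact, the right side equals $Y$ and the inclusion is trivial; when $A$ is compact, the right side is empty, so I need $f(A) = \emptyset$. This is exactly the step where Hausdorffness of $X+_{f}Y$ is used: by the proposition stating that in a Hausdorff sum $f$ vanishes on compacts of $X$, the condition holds. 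Hence $id + c_{\infty}$ is continuous, and it is the unique morphism in $SUM(X)$ from $X+_{f}Y$ to $X+_{f_{\infty}}\{\infty\}$, which proves the latter is terminal. The only subtlety in the argument, and what I would highlight as the main point, is that the Hausdorff hypothesis is exactly what makes the compatibility square close; without it, $f$ could send compact sets into $Y$ and the constant map would fail to be continuous.
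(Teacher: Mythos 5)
Your proof is correct and follows essentially the same route as the paper: uniqueness is forced because $\phi$ must be constant, and continuity reduces to $f(F)\subseteq\phi^{-1}(f_{\infty}(F))$, which holds trivially for non-compact $F$ and via the Hausdorff-implies-$f$-vanishes-on-compacts proposition for compact $F$. The only difference is that you also verify that $X+_{f_{\infty}}\{\infty\}$ is itself an object of $SUM(X)$ (admissibility of $f_{\infty}$ and Hausdorffness), a step the paper leaves implicit; that is a reasonable addition, not a divergence.
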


\begin{proof}Let $X+_{f}Y \in SUM(X)$. It is clear that if there exists a morphism $id+\phi: X+_{f}Y \rightarrow X+_{f_{\infty}}\{\infty\}$, it must be unique ($\phi$ must be the constant map). Let's check that such map is continuous (and then a morphism). Let $F \in Closed(X)$. If $F$ is compact, then $f(F) = f_{\infty}(F) = \emptyset$, which implies that $f \circ id^{-1}(F) = \emptyset = \phi^{-1} \circ f_{\infty}(F)$. If $F$ is not compact, then $f_{\infty}(F) = \{\infty\}$, which implies that $f \circ id^{-1}(F) = f(F) \subseteq Y = \phi^{-1}(\infty) = \phi^{-1} \circ f_{\infty}(F)$. Thus, $id+\phi$ is continuous and then $X+_{f_{\infty}}\{\infty\}$ is the terminal object in $Sum(X)$.
\end{proof}

\begin{prop}Let $\mathcal{C}$ be a category. We define a new category $\hat{\mathcal{C}}$ whose objects are the same as $\mathcal{C}$ and one new object $\infty$ and the morphisms are the same as $\mathcal{C}$ and for each $c \in \hat{\mathcal{C}}$, a new morphism $e_{c}: c \rightarrow \infty$. For morphisms in $\mathcal{C}$, the new composition is the same. For a morphism $\alpha: c_{1} \rightarrow c_{2}$ of $\mathcal{C}$, we define $e_{c_{2}} \circ \alpha = e_{c_{1}}$. And, finally, we define, for $c \in \mathcal{C}, \ e_{\infty} \circ e_{c} = e_{c}$. This becomes actually a category.
\end{prop}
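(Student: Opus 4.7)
The proposition is the verification of the category axioms for $\hat{\mathcal{C}}$: associativity of composition, existence of identities, and unit laws. My approach is a case analysis driven by the single structural observation that every new morphism $e_c$ has codomain $\infty$, so any morphism with codomain in $\mathcal{C}$ automatically lies in $\mathcal{C}$; in particular the only morphism with domain $\infty$ is $e_\infty$ itself. This immediately organises the composable pairs into four mutually exclusive types: (i) pairs already in $\mathcal{C}$, composed as in $\mathcal{C}$; (ii) $e_{c_2} \circ \alpha$ with $\alpha : c_1 \to c_2$ in $\mathcal{C}$, equal to $e_{c_1}$; (iii) $e_\infty \circ e_c$ with $c \in \mathcal{C}$, equal to $e_c$; and (iv) $e_\infty \circ e_\infty$, which is not made explicit in the statement but must be declared equal to $e_\infty$, the natural extension of rule (iii) to $c = \infty$. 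The first step of the write-up is to check that every composable pair in $\hat{\mathcal{C}}$ matches exactly one of (i)-(iv), so composition is unambiguously defined.

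I would next identify the identities. For $c \in \mathcal{C}$ I would use $\mathrm{id}_c$ from $\mathcal{C}$, and check the extra unit law $e_c \circ \mathrm{id}_c = e_c$ by rule (ii) with $\alpha = \mathrm{id}_c$. For $\infty$ I would take $e_\infty$ as the identity, noting that every morphism into $\infty$ is some $e_c$, whose post-composition with $e_\infty$ is again $e_c$ by (iii)-(iv), and that the only morphism out of $\infty$ is $e_\infty$, for which $e_\infty \circ e_\infty = e_\infty$ by (iv).

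The remaining task is associativity for a composable triple $f : a \to b$, $g : b \to c$, $h : c \to d$. If $d \in \mathcal{C}$, then $h$ lies in $\mathcal{C}$, and the same observation cascading down to $c$ and $b$ forces $g$ and $f$ to lie in $\mathcal{C}$, so the axiom is inherited from $\mathcal{C}$. If $d = \infty$, then $h = e_c$, and I would show directly that both $(h \circ g) \circ f$ and $h \circ (g \circ f)$ equal $e_a$ by one or two applications of rules (ii)-(iv), splitting according to whether $b$ and $c$ lie in $\mathcal{C}$ or equal $\infty$. No case presents any real difficulty; the only obstacle is the bookkeeping, and the governing slogan is that once a composite lands at $\infty$, its value is determined entirely by the leftmost domain of the chain.
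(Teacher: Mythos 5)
Your proposal is correct and follows essentially the same route as the paper: take $\mathrm{id}_{\infty} = e_{\infty}$, keep the identities of $\mathcal{C}$, and verify associativity by the case analysis on how many of the three morphisms land at $\infty$ (including the composite $e_{\infty}\circ e_{\infty}=e_{\infty}$, which the paper likewise supplies implicitly via $\mathrm{id}_{\infty}=e_{\infty}$). Your organising observation that the value of any composite reaching $\infty$ is determined by the leftmost domain is exactly what makes each of the paper's enumerated cases work.
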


\begin{proof}We have that the identity of an object in $\mathcal{C}$ continuous to be its identity on the new category and $id_{\infty} = e_{\infty}$. Let $\alpha_{i}: c_{i} \rightarrow c_{i+1}$ be morphisms in $\mathcal{C}$. We have that $(\alpha_{3} \circ \alpha_{2}) \circ \alpha_{1} = \alpha_{3} \circ (\alpha_{2} \circ \alpha_{1})$, since this compositions are just the same as in $\mathcal{C}, \ (e_{c_{3}}\circ \alpha_{2}) \circ \alpha_{1} = e_{c_{2}} \circ \alpha_{1} = e_{c_{1}} = e_{c_{3}} \circ (\alpha_{2} \circ \alpha_{1}), \ (e_{\infty} \circ e_{c_{2}}) \circ \alpha_{1} = e_{c_{2}} \circ \alpha_{1} = e_{c_{1}} = e _{\infty} \circ e_{c_{1}} = e_{\infty} \circ (e_{c_{2}} \circ \alpha_{1}), \ (e_{\infty} \circ e_{\infty}) \circ e_{c_{1}} = e_{\infty} \circ e_{c_{1}} = e_{\infty} \circ (e_{\infty} \circ e_{c_{1}})$ and $(e_{\infty} \circ e_{\infty}) \circ e_{\infty} = e_{\infty} = e_{\infty} \circ (e_{\infty} \circ e_{\infty})$. Thus, the composition is associative, which implies that $\hat{\mathcal{C}}$ is a category.
\end{proof}

Let $F: \mathcal{C} \rightarrow SUM(X)$ be a covariant functor, where $\mathcal{C}$ is a small category. We have that $\forall c \in \mathcal{C}, \ F(c) = X+_{f_{c}}Y_{c}$.

\begin{prop}There exists only one extension $\hat{F}: \hat{\mathcal{C}} \rightarrow SUM(X)$ of $F$ such that $\hat{F}(\infty) = X+_{f_{\infty}}\{\infty\}$.
\end{prop}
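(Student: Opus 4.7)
The plan is to use the fact, established in the preceding proposition, that $X+_{f_\infty}\{\infty\}$ is the terminal object of $SUM(X)$. Since $\hat{F}(\infty)$ is required by hypothesis to equal this terminal object, for every $c \in \mathcal{C}$ there is a unique morphism $F(c) \to X+_{f_\infty}\{\infty\}$ in $SUM(X)$, and this forces the definition of $\hat{F}(e_c)$. Note also that the construction of $\hat{\mathcal{C}}$ gives $id_\infty = e_\infty$, so $\hat{F}(e_\infty)$ is forced to be the unique endomorphism of the terminal object, namely $id_{X+_{f_\infty}\{\infty\}}$.

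First I would address uniqueness. Any extension $\hat{F}'$ with $\hat{F}'(\infty) = X+_{f_\infty}\{\infty\}$ must assign to each $e_c$ a morphism $F(c) \to X+_{f_\infty}\{\infty\}$; by terminality this morphism is unique, so $\hat{F}'(e_c)$ is completely determined. Since $\hat{F}'$ agrees with $F$ on $\mathcal{C}$, this pins down $\hat{F}'$ entirely.

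Next I would construct $\hat{F}$ by these forced assignments and verify functoriality. Preservation of identities is immediate for objects of $\mathcal{C}$ (since $F$ is a functor) and for $\infty$ follows from uniqueness of the morphism $X+_{f_\infty}\{\infty\} \to X+_{f_\infty}\{\infty\}$. For composition there are three cases. Compositions of morphisms already in $\mathcal{C}$ are handled by functoriality of $F$. For a composite $e_{c_2}\circ\alpha = e_{c_1}$ with $\alpha: c_1 \to c_2$ in $\mathcal{C}$, both $\hat{F}(e_{c_2})\circ F(\alpha)$ and $\hat{F}(e_{c_1})$ are morphisms $F(c_1)\to X+_{f_\infty}\{\infty\}$, hence coincide by terminality. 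For $e_\infty \circ e_c = e_c$, the left side equals $id_{X+_{f_\infty}\{\infty\}} \circ \hat{F}(e_c) = \hat{F}(e_c)$.

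There is essentially no technical obstacle here; the entire argument is a direct application of the universal property of the terminal object. The only care needed is to correctly identify $e_\infty$ with $id_\infty$ when checking identity preservation, and to treat each of the three kinds of new composites in $\hat{\mathcal{C}}$ separately when verifying functoriality.
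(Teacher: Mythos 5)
Your proposal is correct and follows essentially the same route as the paper: uniqueness is forced because the extension is already defined on $\mathcal{C}$ and each $\hat{F}(e_{c})$ must be the unique morphism into the terminal object $X+_{f_{\infty}}\{\infty\}$, and functoriality is verified by the same case analysis on compositions. Your explicit treatment of the case $e_{\infty}\circ e_{c}$ and of the identification $id_{\infty}=e_{\infty}$ is slightly more careful than the paper's, but it is the same argument.
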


\begin{proof}It is clear that, if such extension exists, it must be unique, since it is already defined on the objects, on the morphisms in $\mathcal{C}$ and for the morphisms $e_{c}: c \rightarrow \infty$, it must be the unique morphism of the form $\hat{F}(c) \rightarrow \infty$. Let's check that $\hat{F}$ is actually a functor. If $c\in \mathcal{C}$, then $\hat{F}(id_{c}) = F_{id_{c}} = id_{F(c)}$ and $\hat{F}(id_{\infty}) = id_{X+_{f_{\infty}}\{\infty\}}$, by definition of $\hat{F}$. Let $\alpha_{i}: \ c_{i} \ \rightarrow \ c_{i+1}$ and $e_{c_{i}}: c_{i} \rightarrow \infty$ be morphisms in $\hat{\mathcal{C}}$, with $c_{i} \in \mathcal{C}$ (there is no morphism of the form $\gamma: \infty \rightarrow c_{i}$ and the only morphism $\gamma: \infty \rightarrow \infty$ is the identity). We have that $\hat{F}(\alpha_{2}\circ \alpha_{1}) = F(\alpha_{2}\circ \alpha_{1}) = F(\alpha_{2}) \circ F(\alpha_{1}) = \hat{F}(\alpha_{2}) \circ \hat{F}(\alpha_{1})$, and $\hat{F}(e_{c_{2}} \circ \alpha_{1}) = \hat{F}(e_{c_{1}}) = \hat{F}(e_{c_{2}}) \circ \hat{F}(\alpha_{1})$, since both are the unique morphism $F(c_{1}) \rightarrow X+_{f_{\infty}}\{\infty\}$. Thus, $\hat{F}$ is a functor.
\end{proof}

Let $\iota_{c}: X \rightarrow X+_{f_{c}}Y_{c}$ be the inclusion map. It is clear that $(X,\{\iota_{c}\}_{c\in \mathcal{C}})$ is a cone of the functor $\tilde{F}: \tilde{\mathcal{C}} \rightarrow Top$  that does the same as $\hat{F}$. So, it induces the diagonal map $\Delta: X \rightarrow \lim\limits_{\longleftarrow} \tilde{F}$.

\begin{prop}$\Delta$ is an open embedding.
\end{prop}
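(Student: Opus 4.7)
The plan is to identify $\Delta(X)$ with $\pi_\infty^{-1}(X)$, where $\pi_\infty: \lim_{\longleftarrow}\tilde{F} \to X+_{f_\infty}\{\infty\}$ is the projection associated with the new terminal object. This identification will give openness of the image for free (since $X$ is open in its one-point compactification) and will simultaneously supply a continuous inverse of $\Delta$ on its image, coming from $\pi_\infty$ itself.

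First, $\Delta$ is injective: if $\Delta(x)=\Delta(y)$ then in particular $\iota_c(x)=\iota_c(y)$ for any $c$, and since $\iota_c$ is an embedding, $x=y$. Continuity of $\Delta$ is automatic from the universal property of the limit, because $(X,\{\iota_c\}_{c\in\tilde{\mathcal{C}}})$ is a cone over $\tilde{F}$ (the morphisms in $\mathcal{C}$ have the form $id+\phi$, and the morphisms $e_c$ are characterized by being the identity on $X$ and constant $\infty$ on $Y_c$).

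Next, I would prove that $\Delta(X) = \pi_\infty^{-1}(X)$. The inclusion $\Delta(X)\subseteq \pi_\infty^{-1}(X)$ holds since $\pi_\infty\circ \Delta = \iota_\infty$. For the reverse inclusion, let $(z_c)_{c\in\tilde{\mathcal{C}}}$ be a point of the limit with $z_\infty \in X$. For each $c \in \mathcal{C}$ the compatibility relation with $e_c$ reads $\tilde{F}(e_c)(z_c)=z_\infty$; but $\tilde{F}(e_c)$ is the unique morphism $X+_{f_c}Y_c \to X+_{f_\infty}\{\infty\}$, which is the identity on $X$ and sends $Y_c$ to $\infty$. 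Since $z_\infty \in X$, this forces $z_c \in X$ and $z_c = z_\infty$. The remaining compatibility conditions, coming from morphisms $id+\phi$ of $\mathcal{C}$, are then automatic because such morphisms restrict to the identity on $X$. Hence $(z_c)_c = \Delta(z_\infty)$, as desired.

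From $\Delta(X)=\pi_\infty^{-1}(X)$ and continuity of $\pi_\infty$, the set $\Delta(X)$ is open in $\lim_{\longleftarrow}\tilde{F}$ because $X$ is open in $X+_{f_\infty}\{\infty\}$. Finally, $\pi_\infty|_{\Delta(X)}:\Delta(X)\to X$ is continuous (since $X$ carries the subspace topology from $X+_{f_\infty}\{\infty\}$), and the relation $\pi_\infty\circ \Delta = \iota_\infty = id_X$ on $X$ shows that this restriction is precisely $\Delta^{-1}$. Therefore $\Delta$ is a homeomorphism onto an open subspace. The only subtle point is the reverse inclusion $\pi_\infty^{-1}(X)\subseteq \Delta(X)$, which hinges on the explicit description of the morphisms to the terminal object established in the preceding proposition; everything else is a direct unwinding of the limit's universal property.
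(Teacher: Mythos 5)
Your proof is correct. Both you and the paper reduce everything to the projection $\pi_{\infty}$ onto the terminal object $X+_{f_{\infty}}\{\infty\}$ and to the fact that the morphisms $\tilde{F}(e_{c})$ are the identity on $X$ and collapse $Y_{c}$ to $\infty$; openness is then obtained by pulling back open subsets of $X$ along $\pi_{\infty}$. The difference is in how the crucial fact --- that $\pi_{\infty}$ is injective over $X$, equivalently that $\pi_{\infty}^{-1}(X)=\Delta(X)$ --- is established. The paper argues indirectly: it introduces the equivalence relation identifying points of the limit having the same image in $X$ under $\pi_{\infty}$, checks that the quotient still satisfies the universal property of the limit, and concludes that the quotient map is a homeomorphism, so the relation is trivial. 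You instead compute the fibre directly: a compatible family $(z_{c})_{c}$ with $z_{\infty}\in X$ must satisfy $\tilde{F}(e_{c})(z_{c})=z_{\infty}$, which forces $z_{c}\in X$ and $z_{c}=z_{\infty}$ for every $c$, so the family is diagonal. Your computation is more elementary and shorter, relying only on the concrete description of limits in $Top$ as compatible families in the product, and it yields as a bonus the clean identity $\Delta(X)=\pi_{\infty}^{-1}(X)$ together with an explicit continuous inverse $\pi_{\infty}|_{\Delta(X)}$; the paper's quotient argument accomplishes the same thing with more categorical machinery. Your treatment of injectivity and continuity of $\Delta$ is also simpler than the paper's and equally valid.
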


\begin{proof}Let $\pi_{c}: \lim\limits_{\longleftarrow} \tilde{F} \rightarrow X+_{f_{c}}Y_{c}$ be the projections and the equivalence relation $x\sim y$, for $x,y \in \lim\limits_{\longleftarrow} \tilde{F}$ if $x = y$ or $\pi_{\infty}(x) = \pi_{\infty}(y) \in X$. In the case, $\forall c \in \hat{\mathcal{C}}, \ \hat{F}(e_{c}) \circ \pi_{c}(x) = \pi_{\infty}(x) = \pi_{\infty}(y) = \hat{F}(e_{c}) \circ \pi_{c}(y)$, which implies that $\pi_{c}(x) = \pi_{c}(y)$, since $\hat{F}(e_{c})|_{X}$ is injective (in a fact it is the identity in $X$). So, the diagram commutes $\forall c,c' \in \hat{\mathcal{C}}$ and $\alpha: c \rightarrow c'$ morphism:

$$ \xymatrix{   & \lim\limits_{\longleftarrow} \tilde{F} \ar[ldd]_{\pi_{c}} \ar[rdd]^{\pi_{c'}} \ar[d]^{\omega} &  \\
                  & (\lim\limits_{\longleftarrow} \tilde{F})/ \!\! \sim \ar[ld]^{\omega_{c}} \ar[rd]_{\omega_{c'}} & \\ \tilde{F}(c) \ar[rr]^{\alpha} & & \tilde{F}(c') } $$

Where $\omega: \lim\limits_{\longleftarrow} \tilde{F} \rightarrow (\lim\limits_{\longleftarrow} \tilde{F})/\sim$ is the quotient map and $\omega_{c}$ and $\omega_{c'}$ are the maps that commutes the upper triangles (they are continuous because of the quotient topology). So, the whole diagram commutes, which implies, by the universal property of the limit, that $\omega$ is an homeomorphism, which implies that $\sim$ is trivial and then $\pi_{\infty}|_{\pi_{\infty}^{-1}(X)}$ is injective.

Since $\pi_{\infty} \circ \Delta = \iota_{\infty}$ and $\pi_{\infty}$ is injective, it follows that $\Delta$ is injective. Let $U \subseteq X$ be an open set. We have that $\pi_{\infty}^{-1}(U)$ is open in $\lim\limits_{\longleftarrow} \tilde{F}$. But $\pi_{\infty}^{-1}(U) = \Delta(U)$, since $\pi_{\infty} \circ \Delta(U) = U$ and $\pi_{\infty}|_{\pi_{\infty}^{-1}(X)}$ is injective. So, $\Delta(U)$ is open. Thus, $\Delta$ is open.

\end{proof}

So, $\lim\limits_{\longleftarrow} \tilde{F} \cong X+_{f}Y$ for some topological space $Y$ and an admissible map $f$. Thus, $\lim\limits_{\longleftarrow} \tilde{F}$ can be seen as an object of $SUM(X)$.

\begin{prop}\label{limitesegundotermo}Let $\acute{F}: \hat{C} \rightarrow Top$ defined by $\acute{F}(c) = Y_{c}$ and, for $\alpha: c \rightarrow d$ a morphism, $\acute{F}(\alpha) = \tilde{F}(\alpha)|_{Y_{c}}: Y_{c} \rightarrow Y_{d}$. Then, $Y \cong \lim\limits_{\longleftarrow} \acute{F}$.
\end{prop}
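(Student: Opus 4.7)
The plan is to identify $Y$ with $\lim\limits_{\longleftarrow}\acute{F}$ by exhibiting mutually inverse canonical maps between them. First, I would verify that the projections $\pi_{c}$ send $Y$ into $Y_{c}$. The proof that $\Delta$ is an open embedding established that $\pi_{\infty}|_{\pi_{\infty}^{-1}(X)}$ is injective; combined with $\pi_{\infty}\circ\Delta=\iota_{\infty}$, this yields $\pi_{\infty}^{-1}(X)=\Delta(X)$, so under the identification $\lim\limits_{\longleftarrow}\tilde{F}\cong X+_{f}Y$ we have $Y=\pi_{\infty}^{-1}(\infty)$. Since the morphism $e_{c}:c\to\infty$ gives $\hat{F}(e_{c})\circ\pi_{c}=\pi_{\infty}$, and $\hat{F}(e_{c})^{-1}(\infty)=Y_{c}$, each $\pi_{c}$ restricts to a continuous map $Y\to Y_{c}$. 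These assemble into a cone over $\acute{F}$, inducing a canonical continuous map $\Xi: Y\to\lim\limits_{\longleftarrow}\acute{F}$.

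For the inverse, I would use the inclusions $Y_{c}\hookrightarrow X+_{f_{c}}Y_{c}$, which form a natural transformation from $\acute{F}$ into $\tilde{F}$. The compatibility with each morphism $e_{c}$ holds because both $\hat{F}(e_{c})|_{Y_{c}}$ and $\acute{F}(e_{c})$ are constant with value $\infty$. The universal property of the limit then produces a continuous map $\Psi:\lim\limits_{\longleftarrow}\acute{F}\to\lim\limits_{\longleftarrow}\tilde{F}$, whose image lies in $Y$ because the $\infty$-coordinate of every point in $\lim\limits_{\longleftarrow}\acute{F}$ is $\infty$ (outside $X$). Chasing projections, each composition $\Xi\circ\Psi$ and $\Psi\circ\Xi$ is the identity, so $\Xi$ is a continuous bijection onto $\lim\limits_{\longleftarrow}\acute{F}$.

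The main subtle point I anticipate is promoting $\Xi$ from a continuous bijection to a homeomorphism. The way to handle this is to view both spaces as subspaces of the same ambient product. On one side, $Y$ inherits its topology as a subspace of $\lim\limits_{\longleftarrow}\tilde{F}\subseteq\prod_{c\in\hat{\mathcal{C}}}\tilde{F}(c)$. On the other, $\lim\limits_{\longleftarrow}\acute{F}\subseteq\prod_{c\in\hat{\mathcal{C}}}Y_{c}$ embeds into $\prod_{c\in\hat{\mathcal{C}}}\tilde{F}(c)$ via the product of the inclusions $Y_{c}\hookrightarrow X+_{f_{c}}Y_{c}$, which is itself a topological embedding since each factor is. Since the images of $Y$ and $\lim\limits_{\longleftarrow}\acute{F}$ in $\prod_{c\in\hat{\mathcal{C}}}\tilde{F}(c)$ coincide as sets (by the bijection argument above, noting that the $\infty$-coordinate is always $\infty$ and contributes nothing), their subspace topologies coincide, so $\Xi$ is a homeomorphism.
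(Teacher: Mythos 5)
Your proposal is correct, and its set-theoretic core coincides with the paper's: both identify $Y$ with $\lim\limits_{\longleftarrow}\acute{F}$ through the map induced by the inclusions $\nu_{c}\colon Y_{c}\hookrightarrow \tilde{F}(c)$ (your $\Psi$ is the paper's $\nu$), and both verify that its image is exactly $(\lim\limits_{\longleftarrow}\tilde{F})-Im \ \Delta\cong Y$ by observing that a point outside $Im \ \Delta$ has all of its coordinates in the spaces $Y_{c}$. Where you genuinely diverge is in upgrading the continuous bijection to a homeomorphism. The paper invokes compactness of $\lim\limits_{\longleftarrow}\acute{F}$ together with the Hausdorff property of the target, so that a continuous injection is automatically a closed embedding. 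You instead realize both spaces as the same subset of the ambient product $\prod_{c\in\hat{\mathcal{C}}}\tilde{F}(c)$, using that a product of embeddings is an embedding and that the subspace-of-a-subspace topology agrees with the subspace topology; you also construct the inverse $\Xi$ explicitly from the cone of restricted projections $\pi_{c}|_{Y}$ (justified by $Y=\pi_{\infty}^{-1}(\infty)$ and $\hat{F}(e_{c})^{-1}(\infty)=Y_{c}$), which the paper never writes down. Your route is slightly longer but buys something real: it does not require the spaces $Y_{c}$ to be compact, a hypothesis the paper's final step uses but which is not literally part of the definition of $SUM(X)$; so your argument also covers non-compact objects, while the paper's is shorter whenever compactness is available.
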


\begin{proof}Let, for $c \in \hat{\mathcal{C}}, \ \nu_{c}: \acute{F}(c) \rightarrow \tilde{F}(c)$ be the inclusion map. By the definition of $\acute{F}$, we have that $\{v_{c}\}_{c\in \hat{\mathcal{C}}}$ is a natural transformation, which implies that it induces a continuous map $\nu: \lim\limits_{\longleftarrow} \acute{F} \rightarrow \lim\limits_{\longleftarrow} \tilde{F}$.

Let $x,y \in \lim\limits_{\longleftarrow} \acute{F}$ such that $\nu(x) = \nu(y)$. Then, $\forall c \in \hat{\mathcal{C}}, \ \pi_{c} \circ \nu(x) = \pi_{c} \circ \nu(y)$. But $\pi_{c} \circ \nu = \nu_{c} \circ \varpi_{c}$, where $\varpi_{c}: \lim\limits_{\longleftarrow} \acute{F} \rightarrow \acute{F}(c)$ is the projection map. So,  $\forall c \in \hat{\mathcal{C}}, \ \nu_{c} \circ \varpi_{c}(x) =  \nu_{c} \circ \varpi_{c}(y)$, which implies that $\forall c \in \hat{\mathcal{C}}, \ \varpi_{c}(x) = \varpi_{c}(y)$, since $\nu_{c}$ is injective. So, $x = y$ and then $\nu$ is injective.

Let $x \notin Im \ \Delta$. If there exists $c_{0} \in \hat{\mathcal{C}}: \pi_{c_{0}}(x) \in X$, then $\pi_{\infty}(x) =  \pi_{c_{0}}(x) \in X$, which implies that $\forall c \in \hat{\mathcal{C}}, \ \pi_{c}(x) \in X$, contradicting the fact that $x \notin Im \ \Delta$. So, $\forall c \in \hat{\mathcal{C}}, \ \pi_{c}(x) \in Y_{c}$. Since $\forall \alpha: c \rightarrow d, \ \acute{F}(\alpha)(\pi_{c}(x)) = \tilde{F}(\alpha)(\pi_{c}(x)) = \pi_{d}(x)$, there exists $y \in \lim\limits_{\longleftarrow} \acute{F}$, such that $\forall c \in \hat{\mathcal{C}}, \ \varpi_{c}(y) = \pi_{c}(x)$. So, $\forall c \in \hat{\mathcal{C}}, \ \pi_{c} \circ \nu(y) = \nu_{c} \circ \varpi_{c}(y) = \varpi_{c}(y) = \pi_{c}(x)$, which implies that $\nu(y) = x$ and then $(\lim\limits_{\longleftarrow} \tilde{F}) - Im \ \Delta \subseteq Im \ \nu$. Let $x \in Im \ \nu$ and $y \in \lim\limits_{\longleftarrow} \acute{F}: \nu(y) = x$. Then, $\forall c \in \hat{\mathcal{C}}, \ \pi_{c}(x) = \pi_{c} \circ \nu(y) = \nu_{c} \circ \varpi_{c}(y) = \varpi_{c}(y) \in Y_{c}$, which implies that $x \notin Im \ \Delta$. So, $Im \ \nu \subseteq (\lim\limits_{\longleftarrow} \tilde{F}) - Im \ \Delta$ and then $Im \ \nu = (\lim\limits_{\longleftarrow} \tilde{F}) - Im \ \Delta$.

Since $\lim\limits_{\longleftarrow} \acute{F}$ is compact and $\nu$ is injective, we have that $\lim\limits_{\longleftarrow} \acute{F} \cong Im \ \nu = (\lim\limits_{\longleftarrow} \tilde{F}) - Im \ \Delta \cong Y$.
\end{proof}

\begin{prop}$\lim\limits_{\longleftarrow} \hat{F}$ exists and $\lim\limits_{\longleftarrow} \tilde{F} \cong \lim\limits_{\longleftarrow} \hat{F}$.
\end{prop}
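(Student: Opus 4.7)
The plan is to verify that $\lim\limits_{\longleftarrow}\tilde{F}$, already exhibited as an object of $SUM(X)$ by the preceding propositions, realizes the universal property of the limit of $\hat{F}$ \emph{inside} $SUM(X)$. Concretely, I must show (i) that the projection cone $\{\pi_c\}_{c\in\hat{\mathcal{C}}}$ consists of $SUM(X)$-morphisms, and (ii) that any cone in $SUM(X)$ factors uniquely through it by a $SUM(X)$-morphism.

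For (i), fix $c\in\hat{\mathcal{C}}$ and write $\lim\limits_{\longleftarrow}\tilde{F}\cong X+_{f}Y$, where $Y\cong\lim\limits_{\longleftarrow}\acute{F}$ via Proposition \ref{limitesegundotermo}. On $X$ we have $\pi_c\circ\Delta=\iota_c=id_X$, so $\pi_c$ restricts to the identity on $X$. If $y\in Y$ satisfied $\pi_c(y)\in X$, then $\pi_{\infty}(y)=\hat{F}(e_c)\circ\pi_c(y)=\pi_c(y)\in X$, contradicting $\pi_{\infty}(y)=\infty$; hence $\pi_c(Y)\subseteq Y_c$, and $\pi_c$ has the form $id_X+\phi_c$, making it a morphism of $SUM(X)$.

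For (ii), take any cone $\{\psi_c: W\to \hat{F}(c)\}_{c\in\hat{\mathcal{C}}}$ in $SUM(X)$, with $W=X+_{g}V$, so that $\psi_c=id_X+\phi_c$ with $\phi_c: V\to Y_c$ (and $\phi_{\infty}$ the constant map to $\infty$). Since each $\psi_c$ is continuous, this is also a cone in $Top$, and the universal property of $\lim\limits_{\longleftarrow}\tilde{F}$ produces a unique continuous $\psi: W\to\lim\limits_{\longleftarrow}\tilde{F}$ with $\pi_c\circ\psi=\psi_c$. For $x\in X\subseteq W$ and every $c$, $\pi_c(\psi(x))=\psi_c(x)=x=\pi_c(\Delta(x))$, so $\psi(x)=\Delta(x)\in X$ since a point of $\lim\limits_{\longleftarrow}\tilde{F}$ is determined by its coordinates. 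For $v\in V$, $\pi_c(\psi(v))=\phi_c(v)\in Y_c$ for every $c\in\mathcal{C}$ and $\pi_{\infty}(\psi(v))=\infty$, so $\psi(v)\notin Im\,\Delta$; by the identification $Y\cong\lim\limits_{\longleftarrow}\tilde{F}\setminus Im\,\Delta$ from Proposition \ref{limitesegundotermo}, $\psi(v)\in Y$. Thus $\psi=id_X+\phi$ for a map $\phi: V\to Y$, which makes $\psi$ a $SUM(X)$-morphism.

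Uniqueness of $\psi$ as a $SUM(X)$-morphism is immediate from its uniqueness as a continuous map, since $SUM(X)$-morphisms are in particular continuous. The only delicate point is correctly placing $\psi(v)$ in the right piece of the decomposition $X+_{f}Y$, and this is dispatched by Proposition \ref{limitesegundotermo}, so no real obstacle remains.
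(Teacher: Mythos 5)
Your proof is correct and follows the same idea as the paper's (which is compressed into one sentence: the $Top$-limit already satisfies the universal property against the smaller supply of cones in $SUM(X)$); you simply spell out the two details the paper leaves implicit, namely that the projections $\pi_c$ and the induced map $\psi$ are genuinely of the form $id_X+\phi$, using Proposition \ref{limitesegundotermo} to locate $\psi(v)$ in $Y$. No gaps.
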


\begin{proof}We have that $\lim\limits_{\longleftarrow} \tilde{F}$ satisfies the limit conditions since it satisfies the conditions in $Top$ with more morphisms.
\end{proof}

\begin{prop}$\lim\limits_{\longleftarrow} F$ exists and $\lim\limits_{\longleftarrow} \hat{F} \cong \lim\limits_{\longleftarrow} F$.
\end{prop}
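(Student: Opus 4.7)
The plan is to establish a natural bijection between cones in $SUM(X)$ over $F$ and cones over $\hat{F}$, from which both the existence of $\lim\limits_{\longleftarrow} F$ and the isomorphism with $\lim\limits_{\longleftarrow} \hat{F}$ will follow immediately. The previous proposition provides $L = \lim\limits_{\longleftarrow} \hat{F}$ as an object of $SUM(X)$ with projections $\{\mu_{c}\}_{c\in\hat{\mathcal{C}}}$, so the task reduces to showing that $(L, \{\mu_{c}\}_{c\in\mathcal{C}})$ is the universal cone over $F$ in $SUM(X)$.

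For the extension of cones, I would take a cone $(Z, \{\lambda_{c}\}_{c\in\mathcal{C}})$ over $F$ in $SUM(X)$ and define $\lambda_{\infty}: Z \rightarrow X+_{f_{\infty}}\{\infty\}$ to be the unique morphism into the terminal object of $SUM(X)$ (established earlier in this subsection). The only new compatibility conditions of $\hat{\mathcal{C}}$ involve the morphisms $e_{c}: c \rightarrow \infty$ and $e_{\infty} = id_{\infty}$; the required equations $\hat{F}(e_{c}) \circ \lambda_{c} = \lambda_{\infty}$ hold automatically because both sides are morphisms from $Z$ into the terminal object and therefore coincide by uniqueness. Restriction to $\mathcal{C}$ is obviously inverse to this extension, giving a bijection between cones.

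To conclude, I would verify that this bijection preserves universality. The universal property of $L$ as $\lim\limits_{\longleftarrow} \hat{F}$ supplies, for each such extended cone, a unique $u: Z \rightarrow L$ in $SUM(X)$ with $\mu_{c} \circ u = \lambda_{c}$ for every $c \in \hat{\mathcal{C}}$, in particular for every $c \in \mathcal{C}$. Conversely, any $u': Z \rightarrow L$ with $\mu_{c} \circ u' = \lambda_{c}$ for $c \in \mathcal{C}$ automatically satisfies $\mu_{\infty} \circ u' = \lambda_{\infty}$ by terminality, so $u' = u$ by the universal property of $L$. Hence $(L, \{\mu_{c}\}_{c\in\mathcal{C}})$ is the universal cone over $F$, yielding both the existence of $\lim\limits_{\longleftarrow} F$ in $SUM(X)$ and the isomorphism $\lim\limits_{\longleftarrow} F \cong \lim\limits_{\longleftarrow} \hat{F}$. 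There is no substantial obstacle here: the category $\hat{\mathcal{C}}$ and the functor $\hat{F}$ were built so that all data adjoined over $\mathcal{C}$ is forced by terminality, making the correspondence between cones purely formal.
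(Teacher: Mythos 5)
Your proposal is correct and follows essentially the same route as the paper: the paper's own (one-line) proof observes that $F$ and $\hat{F}$ have the same cones because they agree on $\mathcal{C}$ and $\hat{F}(\infty)$ is the terminal object of $SUM(X)$, hence the same limit. You have simply spelled out the cone bijection and the preservation of universality that the paper leaves implicit.
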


\begin{proof}Both functors have the same cones because they agree in $\mathcal{C}$ and $\hat{F}(\infty)$ is the terminal object in $SUM(X)$. So, they have the same limit.
\end{proof}

Observe that $X$ does not need to be dense on the limit, even when $X$ is dense in $F(c)$, $\forall c \in \mathcal{C}$:

\begin{ex}Consider two copies of the two point compactification space $\R+_{f}\{-\infty,\infty\}$. The product is a compact of the form $\R+_{g}Y$, with $\# Y = 4$, since $Y \cong \{-\infty,\infty\} \times \{-\infty,\infty\}$. But there is no Hausdorff compactification of $\R$ with four points. Thus, $\R$ is not dense in $\R+_{g}Y$.
\end{ex}

So, let's consider $Sum(X)$ the full subcategory of $SUM(X)$ whose objects are the spaces where $X$ is dense.

\begin{prop}Let $I: Sum(X) \rightarrow SUM(X)$ be the inclusion functor and $J: SUM(X) \rightarrow Sum(X)$ the functor that sends a space $X+_{f}Y$ to $Cl_{X+_{f}Y}(X)$ and a map $id+\phi: X+_{f_{1}}Y_{1} \rightarrow X+_{f_{2}}Y_{2}$ to its restriction $id+\phi|_{f_{1}(X_{1})}: Cl_{X+_{f_{1}}Y_{1}}(X) \rightarrow Cl_{X+_{f_{2}}Y_{2}}(X)$. Then, $J$ is right adjoint to $I$.
\end{prop}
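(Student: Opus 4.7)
The plan is to exhibit a natural bijection
\[
\Phi_{A,B}:\mathrm{Hom}_{SUM(X)}(I(A),B)\;\xrightarrow{\ \cong\ }\;\mathrm{Hom}_{Sum(X)}(A,J(B))
\]
for $A=X+_{f_1}Y_1\in Sum(X)$ and $B=X+_{f_2}Y_2\in SUM(X)$, and then check naturality in both variables. The only nontrivial ingredient is the standard fact that a continuous map defined on a space containing $X$ as a dense subspace is determined, together with its image, by its restriction to $X$; since every morphism in either category restricts to the identity on $X$, this is exactly the mechanism that supplies the bijection.

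The construction of $\Phi_{A,B}$ goes as follows. Given a morphism $id+\phi:A\to B$ in $SUM(X)$, I would note that $(id+\phi)(A)=(id+\phi)(Cl_A(X))\subseteq Cl_B((id+\phi)(X))=Cl_B(X)=J(B)$ by continuity and density of $X$ in $A$. Hence $id+\phi$ factors uniquely through the closed subspace $J(B)\hookrightarrow B$, giving a morphism $\Phi_{A,B}(id+\phi):A\to J(B)$ in $Sum(X)$. Conversely, given $id+\psi:A\to J(B)$ in $Sum(X)$, postcomposition with the inclusion $J(B)\hookrightarrow B$ (which is a morphism in $SUM(X)$, since $J(B)=X\cup f_2(X)$ with the subspace topology agrees with $X+_{f_2|}f_2(X)$ by the subspace proposition for sums, and the inclusion fixes $X$) yields a morphism in $SUM(X)$; call this $\Psi_{A,B}$. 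The maps $\Phi$ and $\Psi$ are visibly mutually inverse because in both cases the resulting map agrees with the original on $X$ and on the boundary component, and both categories use maps of the form $id+(\cdot)$.

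Naturality amounts to checking that for $\alpha:A'\to A$ in $Sum(X)$ and $\beta:B\to B'$ in $SUM(X)$, the square
\[
\Phi_{A',B'}(\beta\circ(id+\phi)\circ\alpha)=J(\beta)\circ\Phi_{A,B}(id+\phi)\circ\alpha
\]
holds. This is immediate from the definitions, since $J(\beta)$ is by construction the unique restriction of $\beta$ to $J(B)\to J(B')$ and precomposition with $\alpha$ and postcomposition with $\beta$ commute with the operation of restricting the codomain to a closed subspace containing the image. I would also briefly verify that $J$ is indeed a functor, i.e.\ that the restriction $id+\phi|_{Cl_{A}(X)}$ really lands in $Cl_B(X)$ and preserves composition and identities; both facts follow from the same density argument used above.

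The only place where I expect to pause and think carefully is the verification that $J(B)$ really belongs to $Sum(X)$ as a Hausdorff object of the sum category, rather than just as a topological subspace. This requires showing that $J(B)$ has the form $X+_{f_2'}f_2(X)$ for some admissible $f_2'$, which is handled by the subspace-of-a-sum proposition proved earlier (taking $X_1=X$, $Y_1=f_2(X)$), and that it inherits Hausdorffness as a subspace of a Hausdorff space. Once this is granted, the natural bijection above is the adjunction, so $I\dashv J$.
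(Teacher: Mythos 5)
Your argument is correct and rests on exactly the same mechanism as the paper's proof: since $X$ is dense in any object of $Sum(X)$ and every morphism restricts to the identity on $X$, continuity forces the image of such a morphism into $Cl_B(X)=J(B)$, so morphisms $A\to B$ in $SUM(X)$ correspond bijectively to morphisms $A\to J(B)$ in $Sum(X)$. The only difference is packaging: the paper phrases this as the statement that $\bigl(X+_{f}Y,\ id\bigr)$ is a reflection of $X+_{f}Y$ along $J$ (a universal arrow), while you spell out the equivalent hom-set bijection together with its naturality.
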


\begin{proof}Let $X+_{f}Y \in Sum(X)$, $X+_{g}Z \in SUM(X)$ and an application $id+\phi: X+_{f}Y \rightarrow J(X+_{g}Z) = Cl_{X+_{g}Z}(X)$. If $\iota:  Cl_{X+_{g}Z}(X) \rightarrow  X+_{g}Z$ is the inclusion map, then $\iota \circ (id+\phi): I(X+_{f}Y) = X+_{f}Y \rightarrow X+_{g}Z$ is the only morphism that commutes the diagram:

$$ \xymatrix{ X+_{f}Y \ar[r]^-{id}  \ar[d]^{id+\phi} & J \circ I(X+_{f}Y)  \ar[ld]^{ \ J(\iota\circ(id+\phi))} \\
            J(X+_{g}Z) & } $$

So, $X+_{f}Y$ and the map $id+id: X+_{f}Y \rightarrow X+_{f}Y$ form a reflection of $X+_{f}Y$ along $J$. Thus, $I$ is left adjoint to $J$.

 \end{proof}

\begin{prop}Let $F: \mathcal{C} \rightarrow Sum(X)$ be a functor, $\breve{F}: \mathcal{C} \rightarrow SUM(X)$ a functor that do the same thing as $F$ and $Z = \lim\limits_{\longleftarrow} \breve{F}$. Then,  $\lim\limits_{\longleftarrow} F$ exists and $Cl_{Z}X = \lim\limits_{\longleftarrow} F$.
\end{prop}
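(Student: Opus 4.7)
The plan is to apply the RAPL proposition (stated in the Preliminaries) to the adjunction $I\dashv J$ established just above. Since $J:SUM(X)\to Sum(X)$ is a right adjoint, RAPL guarantees that for any functor $H:\mathcal{C}\to SUM(X)$ whose limit exists, $\lim\limits_{\longleftarrow}(J\circ H)$ exists in $Sum(X)$ and equals $J(\lim\limits_{\longleftarrow} H)$. I will take $H=\breve{F}$, whose limit is $Z$ by hypothesis (its existence follows from the previous propositions constructing $\lim\limits_{\longleftarrow}\breve{F}$ in $SUM(X)$).

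The only thing to verify to close the argument is that $J\circ\breve{F}=F$ as functors $\mathcal{C}\to Sum(X)$. On objects this is immediate: for every $c\in\mathcal{C}$, $F(c)=X+_{f_{c}}Y_{c}$ lies in $Sum(X)$, so $X$ is dense in $\breve{F}(c)=F(c)$, hence $J(\breve{F}(c))=Cl_{\breve{F}(c)}(X)=\breve{F}(c)=F(c)$. On morphisms one checks similarly: given $\alpha:c\to d$ in $\mathcal{C}$, the morphism $\breve{F}(\alpha)=id+\phi$ is already a morphism between spaces in which $X$ is dense, so restricting to $Cl(X)$ on source and target returns $id+\phi$ itself, i.e.\ $J(\breve{F}(\alpha))=F(\alpha)$.

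Combining these two ingredients gives
\[
\lim\limits_{\longleftarrow}F \;=\; \lim\limits_{\longleftarrow}(J\circ\breve{F}) \;=\; J\bigl(\lim\limits_{\longleftarrow}\breve{F}\bigr) \;=\; J(Z) \;=\; Cl_{Z}(X),
\]
which is exactly the statement. In particular the limit $\lim\limits_{\longleftarrow}F$ exists in $Sum(X)$, since RAPL produces it.

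I do not expect any serious obstacle here: the only point that needs genuine checking is the equality $J\circ\breve{F}=F$, and that reduces to the trivial observation that the closure operator $J$ acts as the identity on objects already in $Sum(X)$ and that the morphisms $id+\phi$ of $Sum(X)$ restrict to themselves. Everything nontrivial about the existence of the limit has already been proved in the construction of $\lim\limits_{\longleftarrow}\breve{F}$ inside $SUM(X)$ and in establishing the adjunction $I\dashv J$.
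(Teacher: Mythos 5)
Your proof is correct and is exactly the paper's argument: the paper's entire proof of this proposition is the single word ``RAPL,'' and you have simply spelled out the details (identifying $J\circ\breve{F}=F$ via density of $X$ in each $F(c)$, then applying RAPL to the adjunction $I\dashv J$). Nothing to add.
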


\begin{proof}RAPL.
\end{proof}

\begin{prop}\label{densonolimite}Let $F: \mathcal{C} \rightarrow Sum(X)$ be a functor, where $\mathcal{C}$ is a codirected poset. Then, $X$ is dense in $X+_{f}Y = \lim\limits_{\longleftarrow}\breve{F}$ and $\lim\limits_{\longleftarrow}F \cong \lim\limits_{\longleftarrow}\breve{F}$.
\end{prop}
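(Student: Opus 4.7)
The plan is to establish density of $X$ in $Z := \lim\limits_{\longleftarrow}\breve{F}$ directly; the isomorphism $\lim\limits_{\longleftarrow}F \cong \lim\limits_{\longleftarrow}\breve{F}$ will then be formal, via the preceding proposition which identifies $\lim\limits_{\longleftarrow}F$ with $Cl_Z(X)$. So the whole problem reduces to the density statement.

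To prove density I would fix an arbitrary $y \in Y$ and an arbitrary open neighborhood $V$ of $y$ in $Z$, and exhibit a point of $X \cap V$. First I would invoke Proposition 9 of \cite{Bou} as recalled in the preliminaries: since $\mathcal{C}$ is a codirected poset (viewed with reversed order it is directed), the topology of the inverse limit $Z$ has the single pullbacks $\pi_c^{-1}(U_c)$, with $c \in \mathcal{C}$ and $U_c$ open in $F(c) = X+_{f_c}Y_c$, as a basis. So I may shrink $V$ to some $\pi_c^{-1}(U_c)$ containing $y$.

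Next I would locate $\pi_c(y)$. Using the extended functor $\hat{F}$ constructed above, the morphism $\hat{F}(e_c)\colon F(c) \to X+_{f_\infty}\{\infty\}$ is the identity on $X$ and collapses $Y_c$ to $\infty$, and satisfies $\hat{F}(e_c)\circ \pi_c = \pi_\infty$. Since $y \in Y = Z \setminus \operatorname{Im}\Delta$, the argument inside the proof of Proposition \ref{limitesegundotermo} yields $\pi_\infty(y) = \infty$, whence $\pi_c(y) \in Y_c$. Thus $U_c$ is a nonempty open subset of $F(c)$ that meets $Y_c$, and because $F(c) \in Sum(X)$ i.e.\ $X$ is dense in $F(c)$, we can pick $x \in U_c \cap X$. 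Then $\Delta(x) \in Z$ satisfies $\pi_c(\Delta(x)) = x \in U_c$, so $\Delta(x) \in \pi_c^{-1}(U_c) \subseteq V$, providing the desired point of $X \cap V$.

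The only non-routine ingredient is the first step, and this is exactly where codirectedness of $\mathcal{C}$ is used: without it the basis of $Z$ would only consist of finite intersections $\bigcap_i \pi_{c_i}^{-1}(U_{c_i})$, and one would need to align the indices through a common lower bound and push the $U_{c_i}$ back along the transition maps to reduce to a single pullback. Fortunately this alignment is precisely what Proposition 9 of \cite{Bou} performs for us under the directedness hypothesis, so the argument remains short; the example just before the proposition, with two copies of the two-point compactification of $\R$, shows that without a hypothesis of this kind (the diagram there is not codirected) the conclusion genuinely fails.
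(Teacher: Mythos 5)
Your proof is correct and follows essentially the same route as the paper's: both hinge on the fact that, by codirectedness, single pullbacks $\pi_c^{-1}(U)$ form a basis of the limit topology, combined with the density of $X$ in each $F(c)$, and then conclude via the identification $\lim\limits_{\longleftarrow}F \cong Cl_Z(X)$. The only cosmetic differences are that the paper argues contrapositively with the open set $Y - f(X)$ while you argue directly, and your intermediate step locating $\pi_c(y)$ in $Y_c$ is harmless but not actually needed (nonemptiness of $U_c$ already suffices).
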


\begin{proof}Let $y \in Y - f(X)$. Since $Y - f(X)$ is open in $X+_{f}Y$ and the set $\{\pi_{c}^{-1}(U): c \in \mathcal{C}$ and $U$ is open in $F(c)\}$ is a basis for the topology of $X+_{f}Y$ (because $\mathcal{C}$ is codirected), we have that there exists $c \in \mathcal{C}$ and $U$ an open set of $F(c) = X+_{f_{c}}Y_{c}$ such that $y \in \pi^{-1}_{c}(U) \subseteq Y - f(X)$. So, $\pi_{c}(y) \in U$. Let $x \in X \cap U$. We have that $x \in \pi_{c}^{-1}(U) \subseteq Y$, a contradiction. So, $X \cap U = \emptyset$. But $X \subseteq (X+_{f_{c}}Y_{c}) - U$, a closed set, implies that $X \cup f_{c}(X) = Cl_{X+_{f_{c}}Y_{c}}(X) \subseteq (X+_{f_{c}}Y_{c}) - U$. Since $f_{c}(X) = Y_{c}$, it follows that $X+_{f_{c}}Y_{c} \subseteq (X+_{f_{c}}Y_{c}) - U$ and then $U = \emptyset$, a contradiction, since $\pi_{c}(y) \in U$. Thus, $f(X) = Y$ and then $X$ is dense in $X+_{f}Y$. Since $\lim\limits_{\longleftarrow}F \cong Cl_{\lim\limits_{\longleftarrow}\breve{F}}(X)$, it follows that $\lim\limits_{\longleftarrow}F \cong \lim\limits_{\longleftarrow}\breve{F}$
\end{proof}

\begin{cor}\label{limitesegundotermominimal}Let $F: \mathcal{C} \rightarrow Sum(X)$ be a functor, where $\mathcal{C}$ is codirected poset and $X+_{f}Y = \lim\limits_{\longleftarrow}F$. Then, $Y \cong \lim\limits_{\longleftarrow} \acute{F}$.
\eod\end{cor}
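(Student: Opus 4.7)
My plan is to chain together Proposition \ref{densonolimite} with Proposition \ref{limitesegundotermo}. Setting up: let $\breve{F}: \mathcal{C} \to SUM(X)$ be the functor doing the same thing as $F$ but viewed in the ambient category, let $\hat{F}: \hat{\mathcal{C}} \to SUM(X)$ be its canonical extension with $\hat{F}(\infty) = X +_{f_{\infty}} \{\infty\}$, and let $\tilde{F}: \hat{\mathcal{C}} \to Top$ be the underlying $Top$-valued functor, so that the companion $\acute{F}$ is the one defined from $\tilde{F}$ as in Proposition \ref{limitesegundotermo}.

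First I would invoke Proposition \ref{densonolimite}, whose hypotheses hold because $\mathcal{C}$ is codirected, to obtain $\lim\limits_{\longleftarrow} F \cong \lim\limits_{\longleftarrow} \breve{F}$; writing $\lim\limits_{\longleftarrow} F = X+_{f}Y$, this identifies $Y$ with the complement of $\Delta(X)$ inside $\lim\limits_{\longleftarrow} \breve{F}$. Next, the two propositions immediately preceding Proposition \ref{densonolimite} supply $\lim\limits_{\longleftarrow} \breve{F} \cong \lim\limits_{\longleftarrow} \hat{F} \cong \lim\limits_{\longleftarrow} \tilde{F}$, with isomorphisms induced by the same universal cone, so they preserve the set-theoretic splitting into $\Delta(X)$ and its complement. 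Finally, Proposition \ref{limitesegundotermo} identifies the complement of $\Delta(X)$ in $\lim\limits_{\longleftarrow} \tilde{F}$ with $\lim\limits_{\longleftarrow} \acute{F}$, which yields $Y \cong \lim\limits_{\longleftarrow} \acute{F}$.

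The only point that needs care is checking that all four limits in this chain share a common embedded copy of $X$, so that ``complement of $X$'' is well-defined and transported compatibly along the isomorphisms. This ought to be automatic: each limit is constructed from the same cone $(X, \{\iota_{c}\}_{c \in \mathcal{C}})$, extended to $\hat{\mathcal{C}}$ by $\iota_{\infty}$, and the forgetful passage $SUM(X) \to Top$ leaves underlying sets intact. Consequently I do not anticipate any real obstacle beyond this bookkeeping, and the corollary should follow as a direct application of RAPL together with the two propositions above.
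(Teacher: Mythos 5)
Your proposal is correct and follows exactly the route the paper intends: the corollary is stated with no written proof precisely because it is the immediate concatenation of Proposition \ref{densonolimite} (which, for a codirected poset, identifies $\lim\limits_{\longleftarrow}F$ with $\lim\limits_{\longleftarrow}\breve{F}$) and Proposition \ref{limitesegundotermo} (which identifies the remainder of $\lim\limits_{\longleftarrow}\tilde{F}$ with $\lim\limits_{\longleftarrow}\acute{F}$), the intermediate identifications $\lim\limits_{\longleftarrow}\breve{F}\cong\lim\limits_{\longleftarrow}\hat{F}\cong\lim\limits_{\longleftarrow}\tilde{F}$ being supplied by the two preceding propositions. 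The bookkeeping point you flag about transporting the embedded copy of $X$ is indeed automatic for the reason you give, since all the limits are built on the same cone $(X,\{\iota_{c}\})$.
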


\subsection{Freudenthal compactification}

Let $X$ be a Hausdorff connected and locally connected space. We  construct the Freudenthal compactification of $X$ using the language of sum of spaces. For $K \subseteq X$ let's define $\pi_{0}^{u}(X-K)$ as the set of unbounded connected components of $X - K$ with the discrete topology (boundedness here means that its closure in $X$ is compact).

\begin{prop}$\forall K \subseteq X$ compact, $\pi_{0}^{u}(X-K)$ is finite.
\end{prop}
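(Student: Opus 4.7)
The plan is to argue by contradiction: I assume there is an infinite family of distinct unbounded components of $X - K$ and derive that two of them must coincide. (Throughout I treat $X$ as locally compact as well, which is the setting stated in the introduction for the Freudenthal compactification and which is needed to produce relatively compact neighborhoods of $K$.)

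The first step is to isolate $K$ inside a relatively compact open set with compact frontier. Using local compactness, cover $K$ by finitely many open sets with compact closures and let $V$ be their union, so that $K \subseteq V$, $\overline{V}$ is compact, and $\partial V = \overline{V} \setminus V$ is compact. The key auxiliary claim is that every unbounded component $C$ of $X - K$ meets $\partial V$. Discarding the trivial case $K = \emptyset$, connectedness of $X$ forces $\overline{C} \cap K \neq \emptyset$ for every component $C$ of $X - K$: otherwise $\overline{C} \subseteq X - K$ would be a connected set containing the component $C$, so $\overline{C} = C$, making $C$ a nonempty clopen subset of the connected space $X$ (using local connectedness to see that $C$ is open) and thus forcing $C = X$, contradicting $K \neq \emptyset$. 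Since $K \subseteq V$ is open and $\overline{C}$ meets $K$, we obtain $C \cap V \neq \emptyset$. On the other hand $C$ unbounded prevents $\overline{C} \subseteq \overline{V}$, so $C \cap (X \setminus \overline{V}) \neq \emptyset$. The decomposition $X \setminus \partial V = V \sqcup (X \setminus \overline{V})$ into disjoint opens, combined with connectedness of $C$, then yields $C \cap \partial V \neq \emptyset$.

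Now suppose for contradiction that $\{C_{i}\}_{i}$ is an infinite family of distinct unbounded components of $X - K$, and for each $i$ pick $y_{i} \in C_{i} \cap \partial V$. Pairwise disjointness of the $C_{i}$ makes the $y_{i}$ pairwise distinct, so $\{y_{i}\}$ is an infinite subset of the compact Hausdorff space $\partial V$ and therefore has an accumulation point $y_{\ast} \in \partial V$. Since $\partial V \cap V = \emptyset$ and $K \subseteq V$, we have $y_{\ast} \notin K$; local connectedness together with the closedness of $K$ then allows us to choose a connected open neighborhood $U$ of $y_{\ast}$ contained in $X - K$. Then $U$ lies in a single component $C_{\ast}$ of $X - K$, while by choice of $y_{\ast}$ the neighborhood $U$ contains infinitely many of the $y_{i}$; this forces $C_{i} = C_{\ast}$ for infinitely many $i$, in contradiction with distinctness.

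The main obstacle is the auxiliary claim that each unbounded component reaches $\partial V$: it is the one place where connectedness of $X$, local compactness, and the precise definition of boundedness via compact closures must be combined. The remainder is a routine accumulation argument in the compact set $\partial V$ followed by a standard application of local connectedness.
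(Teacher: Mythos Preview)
Your proof is correct and follows essentially the same strategy as the paper: both enclose $K$ in a relatively compact open $V$, use that every unbounded component of $X-K$ must meet the compact frontier $\partial V$, and then exploit local connectedness at points of $\partial V$ to bound the number of such components. The only cosmetic difference is that the paper extracts finiteness via a finite cover of $\partial V$ by connected open sets disjoint from $K$ and a pigeonhole count, whereas you use the equivalent limit-point form of compactness of $\partial V$ to reach a contradiction; your version has the advantage of making the claim ``each unbounded component meets $\partial V$'' explicit, which the paper leaves somewhat implicit.
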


\begin{proof}Let $V$ be an open set such that $K \subseteq V$ and $Cl_{X}(V)$ is compact (it exists since $X$ is locally compact). Let $S = \{U \in \pi_{0}^{u}(X-K): U \cap V \neq \emptyset\}$. Since $X$ is connected, $\partial V \neq \emptyset$. Let, $\forall p \in  \partial V, \ V_{p}$ be an open and connected neighbourhood of $p$ such that $V_{p} \cap K = \emptyset$. Let $\{V_{p_{1}},...,V_{p_{n}}\}$ be a finite subcover of $\partial V$ ($\partial V$ is compact, since $Cl_{X}V$ is compact). If $U \in S$, then there exists $i \in \{1,...,n\}: V_{p_{i}} \subseteq U$. However, for $U\neq U' \in S$ and $i,i'\in\{1,...,n\}: V_{p_{i}} \subseteq U$ and $V_{p_{i'}} \subseteq U'$, we have that $i \neq i'$ (since $U$ and $U'$ are disjoint). So, $S$ is finite.

Let $W \in \pi_{0}^{u}(X-K)-S$. Then, $W \subseteq V \cup (X - Cl_{X}V)$. Since $W$ is connected, we have that $W \subseteq V$ or $W \subseteq X - Cl_{X}V$. Since $V$ is bounded and $W$ is not, it follows that $W \subseteq X - Cl_{X}V$. Since $X-K$ is locally connected, it follows that $W$ is open in $X-K$ and then open in $X$. But $X-W = V \cup \bigcup_{U \in \pi_{0}^{u}(X-K)-\{W\}} U$ is an open set as well, contradicting the fact that $X$ is connected. Thus, $\pi_{0}^{u}(X-K) = S$, which implies that $\pi_{0}^{u}(X-K)$ is finite.
\end{proof}

If $K_{1}, K_{2}$ are two compact subspaces of $X$ with $K_{1} \subseteq K_{2}$, take the map $\psi_{K_{1}K_{2}}: \pi_{0}^{u}(X-K_{2}) \rightarrow \pi_{0}^{u}(X-K_{1})$ defined by $\psi_{K_{1}K_{2}}(U)$ as the connected component of $U$ in $\pi_{0}^{u}(X-K_{1})$, for $U \in \pi_{0}^{u}(X-K_{2})$. We have that, for $K_{1} \subseteq K_{2} \subseteq K_{3} \subseteq X, \ \psi_{K_{1}K_{2}} \circ \psi_{K_{2}K_{3}} = \psi_{K_{1}K_{3}}$. So we are able to define the end space of $X$ as $Ends(X) = \lim\limits_{\longleftarrow} \pi_{0}^{u}(X-K)$.

Let's consider, for $K \subseteq X$ a compact, the space $X+_{f_{K}}\pi_{0}^{u}(X-K)$ with $f_{K}(F) = \{U \in \pi_{0}^{u}(X-K): U \cap F$ is unbounded$\}$.

\begin{prop}$f_{K}$ is admissible.
\end{prop}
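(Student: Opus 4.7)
The plan is to verify directly the two defining conditions of an admissible map: $f_K(\emptyset)=\emptyset$ and $f_K(A\cup B)=f_K(A)\cup f_K(B)$. Note first that since $\pi_0^u(X-K)$ carries the discrete topology, every subset is closed, so there is nothing to check regarding the codomain beyond set-theoretic equality.

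The empty-set condition is immediate: $U\cap\emptyset=\emptyset$, whose closure in $X$ is empty, hence compact, hence $\emptyset$ is bounded, so no $U$ satisfies the defining condition of $f_K(\emptyset)$.

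For the union condition, the key observation is that boundedness (closure in $X$ being compact) is preserved under finite unions in both directions. Concretely, for any subsets $S_1,S_2\subseteq X$ one has $Cl_X(S_1\cup S_2)=Cl_X(S_1)\cup Cl_X(S_2)$. If both factors are compact, their union is compact; conversely, if the union $Cl_X(S_1)\cup Cl_X(S_2)$ is compact, then each $Cl_X(S_i)$ is a closed subset of a compact space, hence compact. Applying this to $S_1=U\cap A$ and $S_2=U\cap B$ (so that $S_1\cup S_2=U\cap(A\cup B)$) yields: $U\cap(A\cup B)$ is unbounded if and only if at least one of $U\cap A$ or $U\cap B$ is unbounded. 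This gives exactly $f_K(A\cup B)=f_K(A)\cup f_K(B)$.

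I do not expect any genuine obstacle here: the statement is essentially the observation that ``having an unbounded intersection with $U$'' is a property that distributes over finite unions, which reduces to the elementary fact that a finite union of compacts is compact. The only thing to remain mindful of is that $A,B$ are assumed closed so that $f_K$ is being applied at valid inputs, but closedness of $A,B$ is not actually used in the computation.
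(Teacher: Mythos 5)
Your proof is correct and follows essentially the same route as the paper: both verify $f_K(\emptyset)=\emptyset$ directly and establish $f_K(A\cup B)=f_K(A)\cup f_K(B)$ by observing that $U\cap(A\cup B)$ is unbounded precisely when at least one of $U\cap A$, $U\cap B$ is, which reduces to the fact that a finite union of bounded sets is bounded (and conversely, via $Cl_X(S_1\cup S_2)=Cl_X(S_1)\cup Cl_X(S_2)$). The paper splits this equivalence into the two inclusions rather than invoking the closure identity, but the content is identical.
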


\begin{proof}We have that $f_{K}(\emptyset) = \{U \in \pi_{0}^{u}(X-K): U \cap \emptyset$ is unbounded$\} = \emptyset$. Let $F_{1},F_{2} \in Closed(X)$. If $U \in f_{K}(F_{1})$, then $U \cap F_{1}$ is unbounded, which implies that $U \cap (F_{1}\cup F_{2})$ is unbounded and then $U \in f_{K}(F_{1} \cup F_{2})$. Analogously, $f_{K}(F_{2}) \subseteq f_{K}(F_{1}\cup F_{2})$, which implies that $f_{K}(F_{1}) \cup f_{K}(F_{2}) \subseteq f_{K}(F_{1}\cup F_{2})$. If $U \in \pi_{0}^{u}(X-K) - (f_{K}(F_{1}) \cup f_{K}(F_{2}))$, then $U \cap F_{1}$ and $U \cap F_{2}$ are bounded. Hence $(U \cap F_{1}) \cup (U \cap F_{2}) = U \cap (F_{1} \cup F_{2})$ is bounded, which implies that $U \notin f_{K}(F_{1}\cup F_{2})$. So, $f_{K}(F_{1}\cup F_{2}) \subseteq f_{K}(F_{1}) \cup f_{K}(F_{2})$, which implies that $f_{K}(F_{1}\cup F_{2}) = f_{K}(F_{1}) \cup f_{K}(F_{2})$. Thus, $f_{K}$ is admissible.
\end{proof}

\begin{prop}$X+_{f_{K}}\pi_{0}^{u}(X-K)$ is compact.
\end{prop}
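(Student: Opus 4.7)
The plan is to invoke the characterization of compactness from the previous subsection: since $\pi_{0}^{u}(X-K)$ is finite (by the preceding proposition) and discrete, hence compact, $X+_{f_{K}}\pi_{0}^{u}(X-K)$ is compact if and only if $f_{K}(A)\neq\emptyset$ for every closed non-compact $A\subseteq X$. So I would fix such an $A$, assume for contradiction that $A\cap U$ has compact closure for every $U\in\pi_{0}^{u}(X-K)$, and deduce that $A$ is compact after all, contradicting the choice of $A$.

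Decomposing $A$ according to the components of $X-K$, two of the three resulting pieces behave well: $A\cap K$ is compact as a closed subset of $K$, and $\bigcup_{U\in\pi_{0}^{u}(X-K)}Cl_{X}(A\cap U)$ is a finite union of compact sets, by the contradiction hypothesis and the finiteness of $\pi_{0}^{u}(X-K)$. The only delicate piece is the intersection of $A$ with the union of the bounded components of $X-K$, because there may be infinitely many bounded components and their closures need not be contained in a single compact region.

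The core step is therefore a lemma: writing $B$ for the union of all bounded components of $X-K$, I claim $B$ has compact closure in $X$ (the case $K=\emptyset$ is trivial, so assume $K\neq\emptyset$). I would mimic the finiteness proof for $\pi_{0}^{u}(X-K)$: pick an open $W$ with $K\subseteq W$ and $Cl_{X}(W)$ compact, cover the compact set $\partial W$ by finitely many open connected subsets $V_{p_{1}},\dots,V_{p_{n}}$ of $X-K$, and examine each bounded component $V$. If $V\cap\partial W\neq\emptyset$, then any $V_{p_{i}}$ meeting $V$ satisfies $V_{p_{i}}\subseteq V$ by connectedness, so at most $n$ bounded components are of this type. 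Otherwise $V\subseteq (W-K)\cup(X-Cl_{X}(W))$, and connectedness of $V$ places it inside one of these two open pieces; the alternative $V\subseteq X-Cl_{X}(W)$ would force $\partial V\subseteq K\cap(X-W)=\emptyset$, making $V$ clopen and non-empty in $X$, hence equal to $X$ by connectedness, contradicting $K\neq\emptyset$. So the remaining bounded components all lie inside $Cl_{X}(W)$.

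Combining these bounds, $B\subseteq Cl_{X}(W)\cup Cl_{X}(V_{1})\cup\dots\cup Cl_{X}(V_{m})$ with $m\leqslant n$, where $V_{1},\dots,V_{m}$ are the bounded components meeting $\partial W$; this is a finite union of compact sets, hence compact. Then $A\subseteq (A\cap K)\cup \bigcup_{U}Cl_{X}(A\cap U)\cup Cl_{X}(B)$ is contained in a compact set, and being closed in $X$ it is itself compact—the desired contradiction. The hard part is really just the lemma: once the bounded components are corralled into a compact region by the finite cover of $\partial W$, everything else is a clean application of the preceding compactness criterion.
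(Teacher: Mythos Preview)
Your argument is correct and follows the same overall strategy as the paper: invoke the compactness criterion from the preceding subsection and show $f_{K}(A)\neq\emptyset$ for every closed non-compact $A\subseteq X$ by contradiction.

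The difference is that you are more careful than the paper on one point. The paper simply asserts $F\subseteq K\cup\bigcup_{U\in\pi_{0}^{u}(X-K)}(U\cap F)$, which is literally false whenever $F$ meets a bounded component of $X-K$; the paper then concludes $F$ is bounded. You correctly identify this as a gap and supply the missing lemma: the union $B$ of all bounded components of $X-K$ has compact closure, so the extra piece $A\cap B$ is harmless. Your proof of that lemma (boundary-of-$W$ cover, the dichotomy on whether a bounded component meets $\partial W$, and the clopen contradiction when $V\subseteq X-Cl_{X}(W)$ via $\partial V\subseteq K\subseteq W$ while every neighbourhood of a point of $\partial V$ must meet $V\subseteq X-W$) is sound. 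So your proof is the paper's proof with the implicit step made explicit and justified.
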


\begin{proof}Let $F \in Closed(X)$ be non compact. If $f_{K}(F) = \emptyset$, then $\forall U \in \pi_{0}^{u}(X-K), \ U\cap F$ is bounded, which implies that $K \cup \bigcup\limits_{U \in \pi_{0}^{u}(X-K)} (U\cap F)$ is bounded (since $\pi_{0}^{u}(X-K)$ is finite). But $F \subseteq K \cup \bigcup\limits_{U \in \pi_{0}^{u}(X-K)} (U\cap F)$, contradicting the fact that $F$ is not compact (and then unbounded). Thus, $f_{K}(F) \neq \emptyset$, which implies that $X+_{f_{K}}\pi_{0}^{u}(X-K)$ is compact.
\end{proof}

\begin{prop}$X+_{f_{K}}\pi_{0}^{u}(X-K)$ is Hausdorff.
\end{prop}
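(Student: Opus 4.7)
The plan is to verify the Hausdorff criterion from the earlier proposition (``Let $X,Y$ be Hausdorff spaces, with $X$ locally compact\ldots''), applied with $Y = \pi_0^u(X-K)$, which carries the discrete topology and is finite by the first proposition of the subsection. Since $X$ is Hausdorff and locally compact (standing assumption in this subsection) and $\pi_0^u(X-K)$ is finite discrete, it suffices to check the two conditions: (a) $f_K(L) = \emptyset$ for every compact $L \subseteq X$, and (b) for every pair of distinct points $a,b \in \pi_0^u(X-K)$, there exist $A,B \in Closed(X)$ with $A \cup B = X$, $a \notin f_K(B)$, and $b \notin f_K(A)$.

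Condition (a) is immediate: if $L$ is compact and $U \in \pi_0^u(X-K)$, then $U \cap L \subseteq L$ has compact closure in $X$, so $U \cap L$ is bounded and hence $U \notin f_K(L)$.

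For condition (b), let $U_a, U_b$ denote the distinct unbounded connected components of $X - K$ corresponding to $a$ and $b$. The key observation is that each $U_a, U_b$ is open in $X$: since $X - K$ is open in the Hausdorff space $X$ and $X$ is locally connected, $X - K$ is also locally connected, so its connected components are open in $X-K$ and therefore open in $X$. I would then set $A := X - U_b$ and $B := X - U_a$, both closed in $X$. Disjointness $U_a \cap U_b = \emptyset$ yields $A \cup B = X - (U_a \cap U_b) = X$, and the intersections $U_b \cap A = \emptyset$ and $U_a \cap B = \emptyset$ are (trivially) bounded, giving $b \notin f_K(A)$ and $a \notin f_K(B)$.

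There is no serious obstacle here; the only subtlety is invoking local connectedness to ensure that the components $U_a, U_b$ are genuinely open subsets of $X$, which is what allows us to take their complements as the closed sets $A, B$ needed in the criterion. Everything else is direct bookkeeping with the definition of $f_K$ and of boundedness.
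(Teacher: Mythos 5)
Your proof is correct and follows essentially the same route as the paper: both verify the two conditions of the Hausdorff criterion for $X+_{f}Y$, using that compact sets meet each component in a bounded set, and separating two components $U_a, U_b$ by the closed sets $X-U_b$ and $X-U_a$, which cover $X$ by disjointness of distinct components. Your extra remark that local connectedness is what makes the components open in $X$ is a point the paper asserts without comment here (it is justified earlier, in the proof that $\pi_0^u(X-K)$ is finite), so nothing is missing.
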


\begin{proof}Let $F$ be a compact subset of $X$. We have that $\forall U \in \pi_{0}^{u}(X-K), \ U \cap F$ is bounded, which implies that $f_{K}(F) = \emptyset$. Let $U,V \in \pi_{0}^{u}(X-K)$. Since $U$ and $V$ are open in $X$, we have that $X-U, X-V \in Closed(X)$. But $X = (X-U) \cup (X-V)$ and $f_{K}(X-U) = \pi_{0}^{u}(X-K) - \{U\}, \ f_{K}(X-V) = \pi_{0}^{u}(X-K) - \{V\}$, which implies that $U \notin f_{K}(X-U)$ and $V \notin f_{K}(X-V)$. Thus, $X+_{f_{K}}\pi_{0}^{u}(X-K)$ is Hausdorff.
\end{proof}

If $K_{1} \subseteq K_{2} \subseteq X$ are two compact subspaces, we are able to consider the map $id+\psi_{K_{1}K_{2}}: X+_{f_{K_{1}}}\pi_{0}^{u}(X-K_{2}) \rightarrow X+_{f_{K_{1}}}\pi_{0}^{u}(X-K_{1})$.

\begin{prop}$id+\psi_{K_{1}K_{2}}$ is continuous.
\end{prop}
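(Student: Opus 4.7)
The plan is to invoke the continuity criterion for maps between sums of spaces established earlier in this section, which says that a map of the form $\psi+\phi: X+_{f}Y \to Z+_{h}W$ is continuous if and only if, for every $A \in Closed(Z)$, $f(\psi^{-1}(A)) \subseteq \phi^{-1}(h(A))$. Specialized to $\psi = id_X$ and $\phi = \psi_{K_{1}K_{2}}$, I only need to verify that for every closed $A \subseteq X$,
$$f_{K_{2}}(A) \subseteq \psi_{K_{1}K_{2}}^{-1}\bigl(f_{K_{1}}(A)\bigr).$$

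The verification is immediate once the containment $U \subseteq \psi_{K_{1}K_{2}}(U)$ is recorded. Indeed, since $K_{1} \subseteq K_{2}$, every $U \in \pi_{0}^{u}(X-K_{2})$ is a connected subset of $X-K_{1}$, so by definition of $\psi_{K_{1}K_{2}}$ it lies inside its ambient component $\psi_{K_{1}K_{2}}(U) \in \pi_{0}^{u}(X-K_{1})$. (One should also remark that $\psi_{K_{1}K_{2}}(U)$ really is unbounded, since it contains the unbounded set $U$, so the map $\psi_{K_{1}K_{2}}$ is well-defined into $\pi_{0}^{u}(X-K_{1})$.)

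Now suppose $U \in f_{K_{2}}(A)$, i.e.\ $U \cap A$ is unbounded. Then $U \cap A \subseteq \psi_{K_{1}K_{2}}(U) \cap A$, so $\psi_{K_{1}K_{2}}(U) \cap A$ is unbounded as well, which means $\psi_{K_{1}K_{2}}(U) \in f_{K_{1}}(A)$, i.e.\ $U \in \psi_{K_{1}K_{2}}^{-1}(f_{K_{1}}(A))$. This gives the required inclusion, and therefore the continuity of $id + \psi_{K_{1}K_{2}}$.

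There is no real obstacle here; the only subtlety worth flagging is the (purely set-theoretic) inclusion $U \subseteq \psi_{K_{1}K_{2}}(U)$, which is what makes the preservation of ``unboundedness along $A$'' automatic. Everything else is an application of the general continuity criterion for sums of spaces proved earlier.
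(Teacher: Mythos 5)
Your proof is correct and follows essentially the same route as the paper: both reduce to the continuity criterion for sums of spaces and verify $f_{K_{2}}(F) \subseteq \psi_{K_{1}K_{2}}^{-1}(f_{K_{1}}(F))$ via the inclusion $U \subseteq \psi_{K_{1}K_{2}}(U)$. The extra remark that $\psi_{K_{1}K_{2}}(U)$ is unbounded (so the map is well-defined) is a reasonable addition the paper leaves implicit.
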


\begin{proof}Let $F \in Closed(X)$ and $U \in \pi_{0}^{u}(X-K_{2})$. If $U \cap F$ is unbounded, then $\psi_{K_{1}K_{2}}(U) \cap F$ is unbounded (since $U \subseteq \psi_{K_{1}K_{2}}(U)$). So, if $U \in f_{K_{2}}(F)$, then $\psi_{K_{1}K_{2}}(U) \in f_{K_{1}}(F)$. Hence, $f_{K_{2}}(F) \subseteq \psi_{K_{1}K_{2}}^{-1} \circ f_{K_{1}}(F)$. In another words, we have the diagram:

$$ \xymatrix{ Closed(X) \ar[r]^<<{ \ \ \ \ f_{K_{2}}} & Closed(\pi_{0}^{u}(X-K_{2})) \\
            Closed(X) \ar[r]^<<{ \ \ \ \ f_{K_{1}}} \ar[u]^{id^{-1}} \ar@{}[ur]|{\subseteq \ \ \ \ } & Closed(\pi_{0}^{u}(X-K_{1})) \ar[u]^{\psi_{K_{1}K_{2}}^{-1}} } $$

Thus, $id+\psi_{K_{1}K_{2}}$ is continuous.
\end{proof}

Let $\K$ be the category defined by the poset of compact subspaces of $X$ with the partial order defined by inclusions. Let $\digamma: \K \rightarrow Sum(X)$ defined by $\digamma(K) = X+_{f_{K}}\pi_{0}^{u}(X-K)$ and, if $K_{1} \subseteq K_{2} \subseteq X$ are compact, $\digamma(K_{1} \subseteq K_{2}) = id+\psi_{K_{1}K_{2}}: X+_{f_{K_{1}}}\pi_{0}^{u}(X-K_{2}) \rightarrow X+_{f_{K_{1}}}\pi_{0}^{u}(X-K_{1})$.

\begin{prop}$\lim\limits_{\longleftarrow}\digamma \cong X+_{f}Ends(X)$, for some admissible map $f$.
\end{prop}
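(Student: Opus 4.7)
The plan is to assemble this proposition directly from the limit theory developed in the previous subsection, without constructing the homeomorphism by hand.

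First I would verify that the functor $\digamma$ genuinely lands in $Sum(X)$ rather than just in $SUM(X)$: for each compact $K\subseteq X$ one has $f_{K}(X)=\{U\in\pi_{0}^{u}(X-K):U\cap X\text{ is unbounded}\}=\pi_{0}^{u}(X-K)$, because every $U\in\pi_{0}^{u}(X-K)$ is by definition unbounded. Hence $X$ is dense in $\digamma(K)=X+_{f_{K}}\pi_{0}^{u}(X-K)$, and together with the already proved Hausdorff and compactness properties this places $\digamma(K)$ in $Sum(X)$. I would also observe that $\K$, ordered by inclusion, is directed because $K_{1}\cup K_{2}$ dominates any two compacts, so that the indexing category for $\digamma$ (whose morphisms point from $\digamma(K_{2})$ to $\digamma(K_{1})$ when $K_{1}\subseteq K_{2}$) is a codirected poset. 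This is exactly the hypothesis required by Proposition \ref{densonolimite}.

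Next I would apply Proposition \ref{densonolimite} to conclude that $X$ is dense in the limit $\lim\limits_{\longleftarrow}\digamma$, so the limit can be written in the form $X+_{f}Y$ for some admissible map $f$ and some compact Hausdorff space $Y$. To identify $Y$ I would invoke Corollary \ref{limitesegundotermominimal}, which gives $Y\cong\lim\limits_{\longleftarrow}\acute{\digamma}$, where $\acute{\digamma}(K)=\pi_{0}^{u}(X-K)$ and $\acute{\digamma}(K_{1}\subseteq K_{2})=\psi_{K_{1}K_{2}}$. But this is precisely the inverse system whose limit was taken as the definition of $Ends(X)$. Therefore $Y\cong Ends(X)$ and the statement $\lim\limits_{\longleftarrow}\digamma\cong X+_{f}Ends(X)$ follows.

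I do not anticipate a real obstacle here: the whole content of the proposition lies in reducing to the two general limit results of the previous subsection, and the only verifications needed are the density of $X$ in each $\digamma(K)$ and the codirectedness of the indexing category, both of which are immediate. If anything deserves a brief comment it is the typographical point that the morphism $\digamma(K_{1}\subseteq K_{2})$ really has source $X+_{f_{K_{2}}}\pi_{0}^{u}(X-K_{2})$ (not $X+_{f_{K_{1}}}\pi_{0}^{u}(X-K_{2})$ as written), so that $\digamma$ is genuinely a contravariant functor from $\K$, equivalently a covariant functor from the codirected poset $\K^{op}$, into $Sum(X)$.
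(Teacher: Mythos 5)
Your proposal is correct and follows exactly the route the paper takes: the paper's proof simply cites Propositions \ref{limitesegundotermo} and \ref{densonolimite}, and your write-up supplies the same reduction together with the (easy) verifications of density of $X$ in each $\digamma(K)$ and codirectedness of $\K$ that the paper leaves implicit.
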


\begin{proof}It is immediate from the \textbf{Propositions \ref{limitesegundotermo}} and \textbf{\ref{densonolimite}}.
\end{proof}

\begin{lema}Let $X+_{g}Z \in Sum(X)$ be a compact space with $Z$ totally disconnected. For $K \subseteq X$ a compact, we define, for $z_{1},z_{2} \in Z, \ z_{1}\sim_{K}z_{2}$ if $z_{1}$ and $z_{2}$ are in the same connected component in $(X+_{g}Z) - K$ and extend trivially to $X+_{g}Z$. Let's define $Z_{K} = Z/ \sim_{K}$, $g_{K}$ an admissible map such that $X+_{g_{K}} Z_{K} = X+_{g}Z/\sim_{K}$ (via the identification of $X$ with its classes) and the functor $\digamma_{g}: \K \rightarrow Sum(X)$ defined by $\digamma_{g}(K) = X+_{g_{K}}Z_{K}$ and $\digamma_{g}(K_{1} \subseteq K_{2}): X+_{g_{K_{2}}}Z_{K_{2}} \rightarrow X+_{g_{K_{1}}}Z_{K_{1}}$ the quotient map. Then, $\lim\limits_{\longleftarrow} \digamma_{g} = X+_{g}Z$.
\end{lema}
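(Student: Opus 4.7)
The plan is to construct a natural comparison map \(\eta: X+_g Z \to \lim\limits_{\longleftarrow} \digamma_g\) and show it is a homeomorphism. For each compact \(K \subseteq X\) the quotient map \(q_K: X+_g Z \to X+_{g_K} Z_K\) is continuous, and the relations \(\sim_{K_2}\) refine \(\sim_{K_1}\) whenever \(K_1 \subseteq K_2\), so \(\digamma_g(K_1 \subseteq K_2)\circ q_{K_2} = q_{K_1}\); thus \(\{q_K\}_K\) is a cone on \(\digamma_g\) and induces a continuous \(\eta\). Since \(X+_g Z\) is compact Hausdorff and \(\lim\limits_{\longleftarrow} \digamma_g\) is an object of \(Sum(X)\), a continuous bijection between them is automatically a homeomorphism, so it suffices to prove bijectivity. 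As \(\eta\) is the identity on \(X\), and by Corollary \ref{limitesegundotermominimal} the boundary part of \(\lim\limits_{\longleftarrow}\digamma_g\) is \(\lim\limits_{\longleftarrow} Z_K\), the whole question reduces to bijectivity of the canonical map \(Z \to \lim\limits_{\longleftarrow} Z_K\).

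For injectivity I use the total disconnectedness of \(Z\): given \(z_1 \neq z_2\) in \(Z\), pick a clopen \(U \subseteq Z\) with \(z_1 \in U \not\ni z_2\). Since \(U\) and \(Z-U\) are disjoint closed sets in the normal space \(X+_g Z\), I choose disjoint open \(V_1 \supseteq U\) and \(V_2 \supseteq Z-U\). Then \(Cl_{X+_g Z}(V_1) \cap V_2 = \emptyset\) forces \(Cl_{X+_g Z}(V_1) \cap Z = U\) and \(V_1 \cap Z = U\), hence \(\partial V_1 \subseteq X\); being closed in a compact space, \(K := \partial V_1\) is a compact subset of \(X\). The equality \(Cl_{X+_g Z}(V_1) = V_1 \cup K\) shows that \(V_1 - K\) is clopen in \((X+_g Z) - K\), separating \(z_1\) and \(z_2\) into distinct connected components, so \(z_1 \not\sim_K z_2\).

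For surjectivity, given a compatible family \((a_K)_K \in \lim\limits_{\longleftarrow} Z_K\), write \(A_K \subseteq Z\) for the class \(a_K\), equal to \(C_K \cap Z\) for the connected component \(C_K\) of \((X+_g Z)-K\) through any representative. Since \(C_K\) is closed in the open set \((X+_g Z)-K\), we have \(Cl_{X+_g Z}(C_K) \subseteq C_K \cup K\), whence \(Cl_{X+_g Z}(A_K) \cap Z = A_K\). Compatibility gives \(A_{K_1 \cup K_2} \subseteq A_{K_1} \cap A_{K_2}\), so \(\{Cl_{X+_g Z}(A_K)\}_K\) has the finite intersection property; compactness of \(X+_g Z\) provides \(p\) in the intersection. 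Local compactness of \(X\) (open in compact Hausdorff \(X+_g Z\)) excludes \(p \in X\): such a \(p\) would lie in the interior of some compact \(N \subseteq X\), and that interior is disjoint from \(Z\) and hence from \(A_N\), contradicting \(p \in Cl_{X+_g Z}(A_N)\). Therefore \(p \in Z\), and then \(p \in Cl_{X+_g Z}(A_K) \cap Z = A_K\) for every \(K\), which means \([p]_K = a_K\).

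The step I expect to be the main obstacle is the separation argument showing \(\partial V_1 \subseteq X\) in the injectivity part: this is the only place where total disconnectedness of \(Z\) is genuinely used, and it is what allows the question to collapse onto clopen behavior on \(Z\). The surjectivity step is then a routine finite-intersection argument using local compactness of \(X\), and the passage from \(Z \to \lim Z_K\) bijective to \(\eta\) a homeomorphism is formal from compact-Hausdorff bookkeeping.
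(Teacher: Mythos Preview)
Your proof is correct and the injectivity step is essentially identical to the paper's: both choose a clopen $U\subseteq Z$ separating $z_1,z_2$, thicken $U$ and $Z\setminus U$ to disjoint open sets by normality, and read off a compact $K\subseteq X$ so that the thickenings become clopen in $(X+_gZ)-K$; you take $K=\partial V_1$ while the paper takes $K=(X+_gZ)\setminus(\tilde A\cup\tilde B)$, which contains your $K$.

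The only genuine difference is surjectivity. The paper avoids your finite-intersection argument entirely: since $\K$ is codirected and $X$ is dense in each $X+_{g_K}Z_K$, Proposition~\ref{densonolimite} gives that $X$ is dense in the limit, and then the image of the compact $X+_gZ$ under $\eta$ is a closed dense set, hence everything. Your direct argument via $\bigcap_K Cl(A_K)\neq\emptyset$ works too (and in fact you can shorten it: each $A_K\subseteq Z$ and $Z$ is closed, so $Cl(A_K)\subseteq Z$ already forces $p\in Z$ without invoking local compactness of $X$). The paper's route is a bit slicker because it reuses the general density result; yours is more self-contained.
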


\begin{obs}We have that $Z = g(Cl_{X}(X-K)) = \bigcup\limits_{U\in \pi_{0}^{u}(X-K)}g(Cl_{X}(U))$, which implies that every element of $Z$ is in the closure of some element of $\pi_{0}^{u}(X-K)$ and then in a connected component of some element of $\pi_{0}^{u}(X-K)$. Since $\pi_{0}^{u}(X-K)$ is finite and each element of $Z$ must be in a component of an element of $\pi_{0}^{u}(X-K)$ , it follows that $Z_{K}$ is finite. We have also that every connected component is closed, which implies that every class of $Z$ is closed. So $\sim_{K} = \Delta^{2}(X+_{g}Z) \cup \bigcup_{z\in Z} [z]$ is closed, which implies that $X+_{g_{K}}Z_{K}$ is Hausdorff, and then, an element of $Sum(X)$.
\end{obs}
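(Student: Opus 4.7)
The plan is to unpack the Remark's chain of claims in the order they are stated: (i) the decomposition $Z = \bigcup_{U \in \pi_{0}^{u}(X-K)} g(Cl_{X}(U))$, (ii) finiteness of $Z_K$, (iii) closedness of each $\sim_K$-class, (iv) closedness of $\sim_K \subseteq (X+_g Z)^2$, and finally (v) Hausdorffness of $X+_{g_K} Z_K$ and its membership in $Sum(X)$. Each of (ii)--(v) will rest on (i) plus the finiteness of $\pi_0^u(X-K)$ established earlier in the Freudenthal subsection.

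For (i), I would first argue $Z \subseteq Cl_{X+_g Z}(X-K)$ using density of $X$: since $K$ is compact in the locally compact Hausdorff space $X$, $K$ is closed in $X+_g Z$, so given $z \in Z$ and any neighborhood $V$ of $z$, the open set $V \cap ((X+_g Z) - K)$ is a neighborhood of $z$ and meets the dense subset $X$, but any such meeting point sits in $X - K$. Combined with Proposition \ref{bomcomp} this gives $g(Cl_X(X-K)) = Cl_{X+_g Z}(X-K) \cap Z = Z$. Then, using local connectedness of $X$, I decompose $X - K$ into its (open) connected components and observe that for bounded components $V$ the set $Cl_X(V)$ is compact, whence the Hausdorff-implies-empty result from the Separation subsection yields $g(Cl_X(V)) = \emptyset$. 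Finiteness of $\pi_0^u(X-K)$ and admissibility of $g$ (distribution over finite unions) then deliver the stated decomposition.

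For (ii) and (iii), I would use that each $U \in \pi_0^u(X-K)$ is connected in $X$, hence in $X+_g Z$, so $Cl_{X+_g Z}(U) = U \cup g(Cl_X(U))$ is connected. This set is disjoint from $K$, so it lies inside a single connected component of $(X+_g Z) - K$; consequently every point of $g(Cl_X(U))$ is $\sim_K$-equivalent to every other, and by (i) this covers all of $Z$ in finitely many classes, giving finiteness of $Z_K$. For closedness of a class $[z]$ with $z \in Z$, write $[z] = C \cap Z$ where $C$ is the component of $z$ in $(X+_g Z) - K$; since components are closed and $Cl_{X+_g Z}(C) \setminus C \subseteq K$ is disjoint from $Z$, I get $[z] = Cl_{X+_g Z}(C) \cap Z$, a finite intersection of closed sets.

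For (iv) and (v), the relation decomposes as $\sim_K = \Delta^2(X+_g Z) \cup \bigcup_{[z] \in Z_K} [z] \times [z]$; the diagonal is closed by Hausdorffness, each $[z] \times [z]$ is closed by (iii), and the union is finite by (ii), so $\sim_K$ is closed. The standard fact that a quotient of a compact Hausdorff space by a closed equivalence relation is Hausdorff then yields $X+_{g_K} Z_K$ Hausdorff, and since the quotient map is the identity on $X$, density of $X$ in $X+_g Z$ passes to density of $X$ in $X+_{g_K} Z_K$, placing it in $Sum(X)$. The main obstacle I foresee is step (i): keeping the interplay between $Cl_X$ and $Cl_{X+_g Z}$ straight while simultaneously using density, compactness of $K$, and the Hausdorff-implies-$g(\text{compact})=\emptyset$ principle to rule out bounded components; the remaining steps are then essentially bookkeeping on top of the finiteness of $\pi_0^u(X-K)$.
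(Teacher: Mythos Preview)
Your outline matches the paper's reasoning (the Remark is its own justification, and you are fleshing it out along the same lines), but step~(ii) contains a slip that makes the argument fail as written. You assert $Cl_{X+_g Z}(U) = U \cup g(Cl_X(U))$ and that this set is disjoint from $K$; in fact $Cl_{X+_g Z}(U) = Cl_X(U) \cup g(Cl_X(U))$, which contains $\partial_X U \subseteq K$ and hence meets $K$ whenever $K \neq \emptyset$. The repair is immediate: do not take the full closure but work directly with the set $U \cup g(Cl_X(U))$, which lies between the connected set $U$ and its closure (so is connected) and is genuinely disjoint from $K$; this set therefore sits inside a single component of $(X+_g Z)-K$, and since it contains $g(Cl_X(U))$ your conclusion for (ii) follows.

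A smaller point in step~(i): you dispose of bounded components one at a time via $g(Cl_X(V)) = \emptyset$ and then invoke finiteness of $\pi_0^u(X-K)$ to distribute $g$ over a finite union; but there may be infinitely many \emph{bounded} components, so admissibility alone does not let you split $g(Cl_X(X-K))$ over all components. One fix is to show that the union $B$ of all bounded components has compact closure: choose a relatively compact open $V \supseteq K$ and observe that only finitely many components of $X-K$ can meet the compact set $\partial_X V$, so all but finitely many bounded components lie inside $V$; hence $Cl_X(B)$ is contained in $Cl_X(V)$ together with finitely many compact closures, and $g(Cl_X(B)) = \emptyset$. The decomposition $Cl_X(X-K) = Cl_X(B) \cup Cl_X(U_1) \cup \cdots \cup Cl_X(U_n)$ is then a genuine finite union of closed sets and admissibility applies. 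The paper's Remark simply asserts the decomposition without addressing this, so your argument is no less complete than the original on this point.
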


\begin{proof}Let, for $K \subseteq X$ compact, $id+\eta_{K}: X+_{g}Z \rightarrow X+_{g_{K}}Z_{K}$ be the quotient map. We have that $X+_{g}Z$, together with the family $\{id+\eta_{K}\}_{K\in \K}$, is a cone of $\digamma$. So, it inducts a continuous map $id+\eta: X+_{g}Z \rightarrow X+_{\tilde{g}}\tilde{Z} = \lim\limits_{\longleftarrow}  \digamma$ (with $\tilde{Z} \cong \lim\limits_{\longleftarrow} Z_{K}$). Since $\K$ is codirected and $\forall K\in \K, \ X$ is dense in $X+_{g_{K}}Z_{K}$, it follows that $X$ is dense in $X+_{\tilde{g}}\tilde{Z}$ and then, the map $id+\eta$ is surjective. Let $x\neq y \in Z$. Since $Z$ is totally disconnected, there exists a clopen set $A$ of $Z$ such that $x \in A$ and $y \in Z-A$. Since $X+_{g}Z$ is normal, there exists $\tilde{A},\tilde{B}$, open sets of $X+_{g}Z$, such that $A \subseteq \tilde{A}, \ Z-A \subseteq \tilde{B}$ and $\tilde{A} \cap \tilde{B} = \emptyset$. Take $K = (X+_{g}Z) - (\tilde{A}\cup \tilde{B})$. We have that $K$ is compact and $K \subseteq X$. Since $(X+_{g}Z) - K = \tilde{A}\cup \tilde{B}$ and $\tilde{A}$ and $\tilde{B}$ are open in $X+_{g}Z$, we have that $\tilde{A}$ and $\tilde{B}$ are clopen in $(X+_{g}Z) - K$. So, $x$ and $y$ are not in the same connected component of $(X+_{g}Z) - K$, which implies that $x\nsim_{K}y$ and then $\eta_{K}(x) \neq \eta_{K}(y)$. So, $(id+\eta)(x) \neq (id+\eta)(y)$, which implies that $id+\eta$ is injective. Thus, $id+\eta$ is bijective and, since $X+_{g}Z$ is compact and $X+_{\tilde{g}}\tilde{Z}$ is Hausdorff, it is a homeomorphism.
\end{proof}

\begin{prop}\label{universalends}Let $X+_{g}Z \in Sum(X)$ be a compact space with $Z$ totally disconnected. Then, there exists only one continuous surjective map of the form $id:\phi: X+_{f}Ends(X) \rightarrow X+_{g}Z$.
\end{prop}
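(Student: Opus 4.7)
The plan is to build the map $id + \phi$ levelwise, using the inverse-limit descriptions of both sides, and then pass to the limit. By the previous proposition $X +_f Ends(X) \cong \lim\limits_{\longleftarrow} \digamma$, and by the preceding lemma $X +_g Z \cong \lim\limits_{\longleftarrow} \digamma_g$, both indexed by the codirected poset $\K$ of compact subsets of $X$. It therefore suffices to exhibit, for each compact $K \subseteq X$, a continuous map $id + \phi_K : X +_{f_K} \pi_0^u(X-K) \to X +_{g_K} Z_K$, verify naturality in $K$, and appeal to functoriality of the limit.

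For fixed $K$ and $U \in \pi_0^u(X-K)$, local connectedness of $X$ makes $U$ clopen in $X-K$, so $Cl_X(U) \setminus U \subseteq K$ is compact. This yields $Cl_{X+_gZ}(U) = Cl_X(U) \cup g(Cl_X(U))$, and in particular $Cl_{X+_gZ}(U) \cap Z = g(Cl_X(U))$, which is nonempty because $U$ is unbounded and $X+_gZ$ is compact. The key observation is that for every $z \in g(Cl_X(U))$ the set $U \cup \{z\}$ sits between the connected set $U$ and its closure, hence is connected; so $U \cup g(Cl_X(U))$ is connected as a union of connected sets sharing $U$, and it is disjoint from $K$. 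Therefore it lies inside a single connected component of $(X+_gZ) - K$, which means $g(Cl_X(U))$ collapses to a single point of $Z_K = Z/{\sim_K}$ under the quotient $\eta_K : Z \to Z_K$, and I will define $\phi_K(U) := \eta_K(g(Cl_X(U)))$.

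To check continuity of $id + \phi_K$ I will use the criterion $f_K(A) \subseteq \phi_K^{-1}(g_K(A))$ for $A \in Closed(X)$ together with the pushforward identity $g_K(A) = \eta_K(g(A))$ (valid because $Z_K$ is finite and discrete). Whenever $U \cap A$ is unbounded, any $z \in g(Cl_X(U \cap A)) \subseteq g(A) \cap g(Cl_X(U))$ satisfies both $\eta_K(z) = \phi_K(U)$ and $\eta_K(z) \in g_K(A)$, as required. Naturality in $K$ (for $K_1 \subseteq K_2$) is analogous: $U \subseteq \psi_{K_1 K_2}(U)$ forces $g(Cl_X(U)) \subseteq g(Cl_X(\psi_{K_1 K_2}(U)))$, so both sides of the compatibility square land in the single $\sim_{K_1}$-class containing $g(Cl_X(U))$. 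Passing to the limit then produces $id + \phi : X +_f Ends(X) \to X +_g Z$, and surjectivity is automatic: $id + \phi$ is a continuous map from a compact space to a Hausdorff space, so its image is closed and contains the dense subspace $X$.

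Uniqueness is immediate: any two such maps agree with the identity on $X$, which is dense in $X +_f Ends(X)$, and $X +_g Z$ is Hausdorff, so their equalizer is a closed set containing a dense subset, hence all of $X +_f Ends(X)$. The main technical obstacle is the connectedness step showing that $g(Cl_X(U))$ lies in a single $\sim_K$-class, since without it $\phi_K$ is not even well-defined; once that is in hand, the remaining verifications are routine functorial and density bookkeeping.
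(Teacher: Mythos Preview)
Your proof is correct and follows essentially the same route as the paper: define the map levelwise on each $X+_{f_K}\pi_0^u(X-K) \to X+_{g_K}Z_K$, verify the continuity diagram $f_K \subseteq \phi_K^{-1}\circ g_K$, check naturality in $K$, and pass to the inverse limit; uniqueness and surjectivity then follow from density of $X$. Your well-definedness argument (that $U\cup g(Cl_X(U))$ is connected and hence lies in a single $\sim_K$-class) is in fact a more explicit version of what the paper leaves as ``by the definition of $\sim_K$''; otherwise the two arguments coincide point for point.
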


\begin{proof}Let $K \subseteq X$ be a compact. We define $\zeta_{K}: \pi_{0}^{u}(X-K) \rightarrow Z_{K}$ as $\zeta_{K}(U) = [z]$, where $z$ is in the same component of $U$ in $X-K$. By the definition of $\sim_{K}$, the map $\zeta_{K}$ is well defined, it is continuous because the space $\pi_{0}^{u}(X-K)$ is discrete and is surjective since every element of $Z$ is in some connected component of an element of $\pi_{0}^{u}(X-K)$.

Let $F\in Closed(X)$ and $U \in f_{K}(F)$. We have that $F\cap U$ is unbounded. Let $z \in Cl_{X+_{g}Z}(F\cap U)\cap Z$ (it exists since $F\cap U$ is unbounded). Since $U$ is connected, $z$ is in the connected component of $U$, which implies that $\zeta_{K}(U) = [z]$, and then, $U \in \zeta_{K}^{-1}([z])$. However, $[z] \in Cl_{X+_{g_{K}}Z_{K}}(F\cap U)\cap Z_{K} = g_{K}(F)$ (since $z \in Cl_{X+_{g}Z}(F\cap U)\cap Z$ and the quotient map is continuous). So, $f_{K}(F) \subseteq \zeta_{K}^{-1}(g_{K}(F))$. In another words, we have the diagram:

$$ \xymatrix{ Closed(X) \ar[r]^<<{ \ \ \ \ f_{K}} & Closed(\pi_{0}^{u}(X-K)) \\
            Closed(X) \ar[r]^<<{ \ \ \ \ \ \ g_{K}} \ar[u]^{id^{-1}} \ar@{}[ur]|{\subseteq} & Closed(Z_{K}) \ar[u]^{\zeta_{K}^{-1}} } $$

So, $id+\zeta_{K}: X+_{f_{K}}\pi_{0}^{u}(X-K) \rightarrow X+_{g_{K}}Z_{K}$ is continuous.

Let $K_{1} \subseteq K_{2} \subseteq X$ be compact subspaces. It is clear that the diagram commutes:

$$ \xymatrix{ X \!\! +_{f_{K_{2}}} \! \! \pi_{0}^{u}(X \! - \! K_{2}) \ar[r]_*+{\labelstyle \ \ id + \psi_{K_{1}K_{2}}} \ar[d]^{id+\zeta_{K_{2}}} & X \!\! +_{f_{K_{1}}} \! \! \pi_{0}^{u}(X \!- \! K_{1}) \ar[d]^{id+\zeta_{K_{1}}} \\
            X \!\! +_{g_{K_{2}}} \! \! Z_{K_{2}} \ar[r]^{\digamma_{g}(K_{1} \subseteq K_{2})} & X \!\! +_{g_{K_{1}}} \! \! Z_{K_{1}}  }$$

So, it induces a continuous map $id+\zeta: X+_{f}Ends(X) \rightarrow X+_{g}Z$. Since $X$ is dense in $X+_{f}Ends(X)$, it is the only map that extends $id_{X}$ and since $X$ is dense in $X+_{g}Z$, it follows that the map must be surjective.

\end{proof}

\begin{cor}\label{universalends}Let $X+_{g}Z \in SUM(X)$ be a compact space with $Z$ totally disconnected. Then, there exists only one continuous surjective map of the form $id:\phi: X+_{f}Ends(X) \rightarrow X+_{g}Z$.
\end{cor}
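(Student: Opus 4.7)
The plan is to reduce to the preceding Proposition by means of the reflection functor $J \colon SUM(X) \to Sum(X)$ already shown to be right adjoint to the inclusion, which sends $X+_{g}Z$ to $Cl_{X+_{g}Z}(X)$. First I would observe that $Cl_{X+_{g}Z}(X) = X \cup g(X)$, and by the subspace/pullback proposition from the previous subsection this closure carries the topology of $X+_{g'}g(X)$, where $g' \colon Closed(X) \to Closed(g(X))$ is the restriction of $g$. Since $g(X)$ is a closed subspace of the totally disconnected space $Z$, it is itself totally disconnected, so $X+_{g'}g(X)$ lies in $Sum(X)$ and has totally disconnected second summand.

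Next I would apply the preceding Proposition to $X+_{g'}g(X) \in Sum(X)$ to obtain a unique continuous surjection $id+\phi' \colon X+_{f}Ends(X) \to X+_{g'}g(X)$, and compose it with the closed inclusion $\iota \colon X+_{g'}g(X) \hookrightarrow X+_{g}Z$ to produce the required morphism $id+\phi := \iota \circ (id+\phi')$. This establishes existence, and its image is precisely $Cl_{X+_{g}Z}(X)$.

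For uniqueness, suppose $id+\psi \colon X+_{f}Ends(X) \to X+_{g}Z$ is any continuous map restricting to the identity on $X$. Its image is compact, hence closed in $X+_{g}Z$, and contains $X$; since $X$ is dense in $X+_{f}Ends(X)$ by Proposition \ref{densonolimite}, this image equals $Cl_{X+_{g}Z}(X) = X+_{g'}g(X)$. Therefore $id+\psi$ factors through $X+_{g'}g(X)$ as a morphism in $Sum(X)$, and the uniqueness clause of the preceding Proposition forces $id+\psi = id+\phi$.

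The only subtlety, and the place where I expect the main friction, is the word ``surjective'' in the corollary: the map produced cannot be surjective onto $X+_{g}Z$ itself unless $X$ is already dense there (equivalently, unless $X+_{g}Z$ lies in $Sum(X)$). The construction above yields a map that is surjective onto its natural range $Cl_{X+_{g}Z}(X)$, which is the only possible reading consistent with existence; any honest statement of the corollary should either drop the word or restrict the surjectivity claim to $Cl_{X+_{g}Z}(X)$, and the argument above proves both existence and uniqueness in that form.
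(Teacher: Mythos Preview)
Your approach is essentially identical to the paper's: the paper's one-line proof reads ``Just apply the last proposition to the subspace $X \cup g(X)$,'' which is exactly the reduction you spell out via the reflector $J$ and the subspace/pullback description. Your observation about surjectivity is also correct---the map produced has image $Cl_{X+_{g}Z}(X) = X \cup g(X)$, which need not equal $X+_{g}Z$ when $X$ is not dense there---so the corollary as stated is indeed imprecise on this word, and your suggested reading (surjective onto $Cl_{X+_{g}Z}(X)$) is the only sensible one.
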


\begin{proof}Just apply the last proposition to the subspace $X \cup g(X)$.
\end{proof}

So, $X+_{f}Ends(X)$ is the Freudenthal compactification of $X$.

\begin{prop}\label{extensaoends}Let $X_{1},X_{2}$ be locally compact, connected and locally connected spaces and $j: X_{1} \rightarrow X_{2}$ be a proper continuous map. Then, there exists a unique continuous extension to the Freudenthal compactifications: $X_{1}+_{f_{1}}Ends(X_{1}) \rightarrow X_{2}+_{f_{2}} Ends(X_{2})$.
\end{prop}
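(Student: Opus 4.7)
The plan is to construct the extension using the limit description of the Freudenthal compactification from the preceding propositions, and to get the map on ends from a natural transformation between the defining inverse systems. Since $j$ is proper, for every compact $K \subseteq X_2$ the preimage $j^{-1}(K)$ is compact in $X_1$; moreover the assignment $K \mapsto j^{-1}(K)$ is cofinal among compact subsets of $X_1$, because given any compact $L \subseteq X_1$ the set $j(L)$ is compact in $X_2$ and $L \subseteq j^{-1}(j(L))$. Thus $Ends(X_1) \cong \lim\limits_{\longleftarrow_{K \subseteq X_2 \text{ compact}}} \pi_0^u(X_1 - j^{-1}(K))$, which lets us index both Freudenthal compactifications by the same poset $\mathbb{K}_2$ of compact subsets of $X_2$.

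For each compact $K \subseteq X_2$, I would define $\eta_K: \pi_0^u(X_1 - j^{-1}(K)) \rightarrow \pi_0^u(X_2 - K)$ by sending $U$ to the unique unbounded connected component of $X_2 - K$ that contains $j(U)$. To see this is well-defined: $j(U)$ is connected and disjoint from $K$, and it is unbounded, for if $Cl_{X_2}(j(U))$ were compact then $j^{-1}(Cl_{X_2}(j(U)))$ would be compact by properness and would contain $U$, contradicting unboundedness of $U$. The map $\eta_K$ is automatically continuous since both spaces are discrete. Next I would verify that $j + \eta_K : X_1 +_{f^1_{j^{-1}(K)}} \pi_0^u(X_1 - j^{-1}(K)) \to X_2 +_{f^2_K} \pi_0^u(X_2 - K)$ is continuous by checking the admissibility diagram from the continuity criterion for maps of sum spaces: given $F \in Closed(X_2)$ and $U \in f^1_{j^{-1}(K)}(j^{-1}(F))$, the set $U \cap j^{-1}(F)$ is unbounded in $X_1$, so $j(U \cap j^{-1}(F)) \subseteq \eta_K(U) \cap F$ is unbounded in $X_2$ (same properness argument), which puts $\eta_K(U) \in f^2_K(F)$, i.e.\ $U \in \eta_K^{-1}(f^2_K(F))$.

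The collection $\{j + \eta_K\}_{K \in \mathbb{K}_2}$ is a natural transformation: for $K_1 \subseteq K_2$ in $\mathbb{K}_2$, if $U \in \pi_0^u(X_1 - j^{-1}(K_2))$ lies in the component $U' \in \pi_0^u(X_1 - j^{-1}(K_1))$ of $X_1 - j^{-1}(K_1)$, then $j(U) \subseteq j(U') \subseteq \eta_{K_1}(U')$, and since $j(U) \subseteq \eta_{K_2}(U) \subseteq X_2 - K_2 \subseteq X_2 - K_1$, both $\eta_{K_2}(U)$ and $\eta_{K_1}(U')$ are unbounded components of $X_2 - K_1$ meeting $j(U)$, hence they coincide; this is exactly the commutativity of the $\psi$-squares. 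Passing to the inverse limit of $\digamma^1$ along the cofinal subsystem and of $\digamma^2$ along $\mathbb{K}_2$, the universal property of limits together with Proposition \ref{limitesegundotermo} (applied to get a map in $Sum(X)$ out of compatible sum-space morphisms) yields the desired continuous extension $X_1 +_{f_1} Ends(X_1) \to X_2 +_{f_2} Ends(X_2)$.

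Uniqueness is the easy part: $X_1$ is dense in its Freudenthal compactification, the target is Hausdorff, and any such extension must restrict to $j$ on $X_1$, so two extensions agreeing on a dense set of a Hausdorff space must coincide. The main technical obstacle is the verification that $\eta_K$ is well-defined and that the relevant squares commute, which both reduce to the single input that $j$ is proper and hence preserves the bounded/unbounded dichotomy between $X_1$ and $X_2$; once that is in hand, the functorial machinery from the previous subsections delivers the extension without further work.
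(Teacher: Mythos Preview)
Your proof is correct and follows essentially the same route as the paper: define the level maps $\eta_K$ (the paper calls them $j_K$) on unbounded components via properness, check continuity of $j+\eta_K$ through the admissibility diagram $f^1_{j^{-1}(K)}\circ j^{-1}\subseteq \eta_K^{-1}\circ f^2_K$, verify naturality in $K$, and pass to the inverse limit, with uniqueness from density. The only difference is that you make the cofinality of $\{j^{-1}(K):K\subseteq X_2\text{ compact}\}$ in the compacts of $X_1$ explicit, whereas the paper leaves this implicit when it says the commuting squares ``induce'' the map on the limit; your version is slightly cleaner on that point.
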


\begin{proof}Since $j$ is proper, $\forall K \subseteq X_{2}$ compact, $j^{-1}(K)$ is also compact. If $U \in \pi_{0}^{u}(X_{1} - j^{-1}(K)), \ j(U)$ is connected, which implies that it is contained in a connected component of $X_{2}-K$. If $j(U)$ is bounded, then $Cl_{X_{2}}(j(U))$ is compact, which implies that $j^{-1}(Cl_{X_{2}}(j(U)))$ is compact (since $j$ is proper). But $j^{-1}(Cl_{X_{2}}(j(U))) \supseteq U$, contradicting the fact that $U \in \pi_{0}^{u}(X_{1}-j^{-1}(K))$. So, $j(U)$ is contained in an (unique) element of $\pi_{0}^{u}(X_{2}-K)$. Consider the map $j_{K}: \pi_{0}^{u}(X_{1}-j^{-1}(K)) \rightarrow \pi_{0}^{u}(X_{2} - K)$ defined by $j_{K}(U)$ equal to the connected component of $j(U)$. Since $\pi_{0}^{u}(X_{1} - j^{-1}(K))$ is discrete, it follows that $j_{K}$ is continuous.

Let $F$ be a closed subset of $X_{2}$. Then, we have that $f_{j^{-1}(K)} \circ j^{-1}(F) = \{U \in \pi_{0}^{u}(X_{1}-j^{-1}(K)): U \cap j^{-1}(F)$ is unbounded$\}$. But $U \cap j^{-1}(F)$ unbounded implies that $j(U \cap j^{-1}(F)) \subseteq j(U) \cap F \subseteq j_{K}(U) \cap F$ is unbounded. Hence, $U \in f_{j^{-1}(K)} \circ j^{-1}(F)$ implies $j_{K}(U) \in f_{K}(F)$ and then $U \in j_{K}^{-1} \circ f_{K}(F)$. So, $f_{j^{-1}(K)} \circ j^{-1}(F) \subseteq j_{K}^{-1} \circ f_{K}(F)$. In another words, we have the diagram:

$$ \xymatrix{ Closed(X_{1}) \ar[r]^<<{ \ \ \ \ f_{j^{-1}(K)}} & Closed(\pi_{0}^{u}(X_{1}-j^{-1}(K))) \\
            Closed(X_{2}) \ar[r]^<<{ \ \ \ \ \ \ f_{K}} \ar[u]^{j} \ar@{}[ur]|{\subseteq} & Closed(\pi_{0}^{u}(X_{2}-K)) \ar[u]^{j_{K}^{-1}} } $$

Then, the map $j+j_{K}: X_{1}+_{f_{j^{-1}(K)}} \pi_{0}^{u}(X_{1}-j^{-1}(K)) \rightarrow X_{2}+_{f_{K}} \pi_{0}^{u}(X_{2}-K)$ is continuous. It is clear that, $\forall K_{1} \subseteq K_{2} \subseteq X_{2}$ compact subspaces, the diagram commutes:

$$ \xymatrix{ X_{1} \!\! +_{f_{j^{-1}(K_{2})}} \!\! \pi_{0}^{u}(X_{1} \! - \! j^{-1}(K_{2})) \ar[r]_*++{\labelstyle \ id+ \psi_{j^{-1}(K_{1})j^{-1}(K_{2})} } \ar[d]^{j+j_{K_{2}}} & X_{1} \!\! +_{f_{j^{-1}(K_{1})}} \!\! \pi_{0}^{u}(X_{1} \! - \! j^{-1}(K_{1})) \ar[d]^{j+j_{K_{1}}}  \\
           X_{2} \!\! +_{f_{K_{2}}} \!\! \pi_{0}^{u}(X_{2} \! - \! K_{2}) \ar[r]^{id+ \psi_{K_{1}K_{2}}}  & X_{2} \!\! +_{f_{K_{1}}} \!\! \pi_{0}^{u}(X_{2} \! - \! K_{1}) } $$

So, it induces a continuous map $\tilde{j}: X_{1}+_{f_{1}}Ends(X_{1}) \rightarrow X_{2}+_{f_{2}}Ends(X_{2})$ that extends $j$. The uniqueness comes from the fact that the space is dense on its compactification.

\end{proof}

This construction of the Freudenthal compactification is going to be useful later. However, the existence of limits described on the last section gives us an easier way to construct the Freudenthal compactification that works for every locally compact Hausdorff space:

\begin{prop}Let $X$ be a locally compact Hausdorff space. Then, there exists a universal compactification of $X$ such that the remainder is totally disconnected.
\end{prop}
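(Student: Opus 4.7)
The plan is to realise the desired object as a limit in $Sum(X)$ of all compactifications of $X$ with totally disconnected remainder. First I would handle the set-theoretic foundation: every Hausdorff compactification of $X$ admits a continuous surjection from $\beta X$, hence has cardinality at most $2^{2^{|X|}}$, so the objects of $Sum(X)$ whose remainder is totally disconnected form, up to isomorphism in $Sum(X)$, a set. Let $\mathcal{C}$ denote the resulting small full subcategory of $Sum(X)$; it is non-empty since the one-point compactification lies in it.

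Next, by the existence of small limits in $Sum(X)$ established in the previous subsection, the inclusion functor $F: \mathcal{C} \rightarrow Sum(X)$ admits a limit $X +_{f} Y$. Computing this limit via its counterpart $\breve{F}: \mathcal{C} \rightarrow SUM(X)$ (using the adjunction of the limits subsection) and applying \textbf{Proposition \ref{limitesegundotermo}}, the remainder of $\lim\limits_{\longleftarrow} \breve{F}$ is $\lim\limits_{\longleftarrow} \acute{F}$, a closed subspace of $\prod_{c \in \mathcal{C}} Z_{c}$ with each $Z_{c}$ totally disconnected. Since total disconnectedness passes to products and subspaces, this remainder is totally disconnected; closing up along $X$, which is how limits in $Sum(X)$ are recovered from limits in $SUM(X)$, only restricts to a further subspace of this, so $Y$ is totally disconnected and $X +_{f} Y$ belongs to $\mathcal{C}$.

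Universality is then immediate from the limit property. For every $X +_{g} Z \in \mathcal{C}$, the projection of the limit cone provides a morphism $X +_{f} Y \rightarrow X +_{g} Z$ in $Sum(X)$; uniqueness of such a morphism follows from the fact that $X$ is dense in $X +_{f} Y$ (it lies in $Sum(X)$) together with $X +_{g} Z$ being Hausdorff, so that any continuous extension of $id_{X}$ is forced to coincide with it. Hence $X +_{f} Y$ is the initial object of $\mathcal{C}$, i.e., the sought universal compactification with totally disconnected remainder.

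The main obstacle, as I see it, is the set-theoretic step of confirming that $\mathcal{C}$ can be taken as a small category; once this is pinned down via the cardinality bound on compactifications, the argument delegates entirely to the small-limit theorem in $Sum(X)$ and \textbf{Proposition \ref{limitesegundotermo}}, and is essentially formal.
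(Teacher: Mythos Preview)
Your argument is correct and follows essentially the same approach as the paper: bound the collection of compactifications with totally disconnected remainder via $\beta X$ to obtain a small diagram, take its limit in $Sum(X)$, observe that the remainder sits inside a product of totally disconnected spaces and hence is itself totally disconnected, and read off universality from the projections together with density of $X$. The only cosmetic difference is that the paper takes the \emph{product} over the discrete set of such compactifications rather than the limit over the full subcategory, which slightly streamlines the verification but is otherwise the same construction.
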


\begin{proof}Let $\{X+_{f_{i}}Y_{i}\}_{i \in \Gamma}$ be the collection of all compactifications of $X$ such that $Y_{i}$ is totally disconnected. Since all of them are quotients of the Stone-$\check{C}$ech compactification, it follows that this collection is actually a set. So, there exists a product $X+_{f}Y$ for those spaces on the category $Sum(X)$. Since this product is the closure of $X$ in the pullback on the category $Top$, we have that $X+_{f}Y$ is compact and $Y$ is a subspace of $\prod Y_{i}$, which implies that $Y$ is totally disconnected. The projection maps are the unique maps to the spaces $X+_{f_{i}}Y_{i}$ that are the identity on $X$, since $X$ is dense. Thus, $X+_{f}Y$ is the universal compactification of $X$ such that the remainder is totally disconnected.
\end{proof}

\section{Attractor-Sum functors}

\begin{defi}Let $G$ be a group, $X$ and $Y$ Hausdorff topological spaces with $X$ locally compact and $Y$ compact, $L: G \curvearrowright G$ the left multiplication action, $\varphi: G \curvearrowright X$ a  properly discontinuous cocompact action, $\psi: G \curvearrowright Y$ an action by homeomorphisms and $K \subseteq X$ a compact such that $\varphi(G,K) = X$. We define $\Pi_{K}: Closed(X) \rightarrow  Closed(G)$ as $\Pi_{K}(S) = \{g\in G: \varphi(g,K)\cap S \neq \emptyset\}$ and $\Lambda_{K}: Closed(G) \rightarrow Closed(X)$ as $\Lambda_{K}(F) = \varphi(F,K)$.
\end{defi}

\begin{prop}$\forall F \in Closed(G), \ \varphi(F,K)\in Closed(X)$.
\end{prop}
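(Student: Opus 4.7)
The plan is to write $\varphi(F,K) = \bigcup_{g \in F} \varphi(g,K)$ and show this is a locally finite union of closed sets, then invoke the earlier Bourbaki proposition that a locally finite union of closed sets is closed. Each $\varphi(g,K)$ is the image of the compact $K$ under the homeomorphism $\varphi(g,\_)$, hence compact, and compact subsets of the Hausdorff space $X$ are closed. So the only real content is the local finiteness.

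To establish local finiteness of $\{\varphi(g,K)\}_{g \in F}$, I would take an arbitrary $x \in X$ and use local compactness of $X$ to pick an open neighbourhood $V$ of $x$ with $Cl_X(V)$ compact. Since $\varphi$ is properly discontinuous and both $K$ and $Cl_X(V)$ are compact, the set $\{g \in G : \varphi(g,K) \cap Cl_X(V) \neq \emptyset\}$ is finite by the defining property of proper discontinuity. A fortiori, $\{g \in F : \varphi(g,K) \cap V \neq \emptyset\}$ is finite, which is exactly local finiteness of the family at $x$.

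With local finiteness in hand, the Bourbaki proposition cited earlier in the Preliminaries (Proposition 4, \S2.5, Chapter 1) gives that $\bigcup_{g \in F} \varphi(g,K) = \varphi(F,K)$ is closed in $X$, which is the conclusion. Note that the hypothesis $\varphi(G,K) = X$ is not needed for this particular statement, only that $K$ is compact and the action is properly discontinuous; the cocompactness will matter later when interchanging $\Pi_K$ and $\Lambda_K$.

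The argument is entirely routine and I do not anticipate any genuine obstacle: the only mildly delicate point is that $F$ is an arbitrary (not necessarily finite) subset of $G$, but since we are using proper discontinuity, the index set $F$ plays no role beyond restricting the already-locally-finite family indexed by $G$.
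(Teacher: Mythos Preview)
Your proposal is correct and follows essentially the same approach as the paper: both write $\varphi(F,K)$ as the union $\bigcup_{g\in F}\varphi(g,K)$, use local compactness of $X$ to find a neighbourhood with compact closure, invoke proper discontinuity to get that only finitely many translates $\varphi(g,K)$ meet that neighbourhood, and then apply the Bourbaki proposition on locally finite unions of closed sets. Your write-up is slightly more explicit (noting why each $\varphi(g,K)$ is closed and observing that $\varphi(G,K)=X$ is not needed here), but the argument is the same.
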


\begin{proof}Let $x \in X$ and $U\ni x$ be an open set such that $Cl_{X}(U)$ is compact. Since $\varphi$ is properly discontinuous, the set $\{g\in F: \varphi(g,K)\cap Cl_{X}(U)\}$ is finite. So, the family of closed sets $\{\varphi(g,K)\}_{g \in F}$ is locally finite, which implies that $\varphi(F,K) = \bigcup_{g \in F}\varphi(g,K)$ is closed.
\end{proof}

\begin{prop}$\Pi_{K}$ and $\Lambda_{K}$ are admissible.
\end{prop}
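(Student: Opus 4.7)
The plan is to verify, for each of $\Pi_K$ and $\Lambda_K$, the three conditions required by admissibility: (i) the image actually lands in the closed sets of the target, (ii) the empty set is preserved, and (iii) finite unions are preserved.

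For $\Lambda_K$ the membership condition $\Lambda_K(F)\in Closed(X)$ is precisely the content of the preceding proposition (applied to the locally finite family $\{\varphi(g,K)\}_{g\in F}$), so I would just cite it. Then $\Lambda_K(\emptyset)=\varphi(\emptyset,K)=\emptyset$ and $\Lambda_K(F_1\cup F_2)=\varphi(F_1\cup F_2,K)=\varphi(F_1,K)\cup\varphi(F_2,K)=\Lambda_K(F_1)\cup\Lambda_K(F_2)$, where the middle equality is the pointwise definition of the action (the image of a union is the union of the images).

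For $\Pi_K$ the membership condition is automatic because the ambient group $G$ is discrete (implicit in the hypothesis that $\varphi$ is properly discontinuous), so every subset of $G$ lies in $Closed(G)$. Emptiness is immediate: no element $g\in G$ satisfies $\varphi(g,K)\cap\emptyset\neq\emptyset$. For unions, the characterization $g\in\Pi_K(S)$ iff $\varphi(g,K)\cap S\neq\emptyset$ together with the distributivity $\varphi(g,K)\cap(A\cup B)=(\varphi(g,K)\cap A)\cup(\varphi(g,K)\cap B)$ gives $\Pi_K(A\cup B)=\Pi_K(A)\cup\Pi_K(B)$ directly.

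There is no real obstacle here; the only slightly non-formal step was already handled in the previous proposition, where proper discontinuity was used to make $\{\varphi(g,K)\}_{g\in F}$ locally finite so that its union is closed. Everything else is a one-line set-theoretic manipulation, so the proof should be no more than a few lines.
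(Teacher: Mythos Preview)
Your proposal is correct and follows essentially the same approach as the paper: the paper's proof computes $\Pi_K(\emptyset)$, $\Pi_K(S_1\cup S_2)$, $\Lambda_K(\emptyset)$, and $\Lambda_K(F_1\cup F_2)$ exactly as you outline, with the codomain of $\Lambda_K$ justified by the immediately preceding proposition and the codomain of $\Pi_K$ being automatic since $G$ carries the discrete topology. One tiny remark: $G$ being discrete is not a consequence of the action $\varphi$ being properly discontinuous, but rather a standing convention of the paper (cf.\ the definition of $Comp(G)$ and the left multiplication action $L:G\curvearrowright G$); your argument is unaffected by this.
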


\begin{proof}We have that $\Pi_{K}(\emptyset) = \{g \in G: \varphi(g,K)\cap \emptyset \neq \emptyset\} = \emptyset$ and, for $S_{1},S_{2}\in Closed(X)$, $\Pi_{K}(S_{1}\cup S_{2}) = \{g \in G: \varphi(g,K)\cap (S_{1}\cup S_{2}) \neq \emptyset\} =$ $\{g \in G: (\varphi(g,K)\cap S_{1})\cup (\varphi(g,K)\cap S_{2}) \neq \emptyset\} =$ $\\ \{g \in G: \varphi(g,K)\cap S_{1} \neq \emptyset\} \cup \{g \in G: \varphi(g,K)\cap S_{2} \neq \emptyset\} = \Pi_{K}(S_{1}) \cup \Pi_{K}(S_{2})$. Thus, $\Pi_{K}$ is admissible.

We have that $\Lambda_{K}(\emptyset) = \emptyset$ and, for $F_{1},F_{2}\in Closed(G)$, $\Lambda_{K}(F_{1}\cup F_{2}) = \varphi(F_{1}\cup F_{2},K) = \varphi(F_{1},K) \cup \varphi(F_{2},K) = \Lambda_{K}(F_{1})\cup \Lambda_{K}(F_{2})$. Thus, $\Lambda_{K}$ is admissible.
\end{proof}

We consider, for the admissible maps $\partial: \ Closed(G) \ \rightarrow \ Closed(Y)$ and $f: Closed(X) \rightarrow Closed(Y)$, the compositions $\partial_{id \Pi_{K}} = \partial \circ \Pi_{K}$ and $f_{id \Lambda_{K}} = f \circ \Lambda_{K}$. To simplify the notation, we denote them by $\partial_{\Pi_{K}}$ and $f_{\Lambda_{K}}$, respectively.

\begin{prop}$\Pi_{K}\circ \Lambda_{K} \supseteq id_{Closed(G)}$ and $\Lambda_{K} \circ \Pi_{K} \supseteq id_{Closed(X)}$.
\end{prop}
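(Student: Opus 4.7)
The plan is to verify both inclusions by unfolding the definitions directly; no subtle argument is needed, the content of the statement is exactly that $K$ is a fundamental domain-like compact (it hits every orbit) and that $\varphi(g,K)$ is nonempty for each $g$.

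For the first inclusion, fix $F \in Closed(G)$ and compute
$$\Pi_{K}\circ\Lambda_{K}(F) = \Pi_{K}(\varphi(F,K)) = \{g \in G : \varphi(g,K) \cap \varphi(F,K) \neq \emptyset\}.$$
Then I would observe that for any $g \in F$ one has $\varphi(g,K) \subseteq \varphi(F,K)$, so the intersection $\varphi(g,K) \cap \varphi(F,K)$ equals $\varphi(g,K)$. Since $\varphi(G,K) = X$ and $X$ is assumed nonempty (the case $X = \emptyset$ is trivial because then every closed set in $X$ and in $G$ is $\emptyset$, and both maps collapse), we have $K \neq \emptyset$ and hence $\varphi(g,K) \neq \emptyset$. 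This gives $g \in \Pi_{K}(\Lambda_{K}(F))$, so $F \subseteq \Pi_{K}\circ\Lambda_{K}(F)$.

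For the second inclusion, fix $S \in Closed(X)$ and write
$$\Lambda_{K}\circ\Pi_{K}(S) = \varphi\bigl(\{g \in G : \varphi(g,K)\cap S \neq \emptyset\},\,K\bigr) = \bigcup\{\varphi(g,K) : g \in G,\ \varphi(g,K)\cap S \neq \emptyset\}.$$
Given $x \in S$, cocompactness $\varphi(G,K)=X$ produces some $g \in G$ with $x \in \varphi(g,K)$. Then $x \in \varphi(g,K)\cap S$, so $g$ appears in the index set of the union and $x \in \varphi(g,K) \subseteq \Lambda_{K}\circ\Pi_{K}(S)$. Hence $S \subseteq \Lambda_{K}\circ\Pi_{K}(S)$.

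There is no real obstacle; the only thing to keep in mind is to handle (or explicitly exclude) the degenerate possibility $K = \emptyset$, which is incompatible with $\varphi(G,K)=X$ unless $X = \emptyset$. Both arguments are one-line unfoldings once the definitions of $\Pi_{K}$ and $\Lambda_{K}$ are written out, and they use exactly the two hypotheses on $K$: that $K$ is nonempty (so every $\varphi(g,K)$ is nonempty) and that $G\cdot K = X$ (so every point of $X$ lies in some translate of $K$).
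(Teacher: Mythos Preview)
Your proof is correct and follows essentially the same approach as the paper: both unfold the definitions of $\Pi_K$ and $\Lambda_K$ and use $\varphi(G,K)=X$ for the second inclusion. You are simply more explicit than the paper about why $\varphi(g,K)\neq\emptyset$ and about the degenerate case $X=\emptyset$, but the argument is the same.
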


\begin{proof}We have that $\Pi_{K}\circ \Lambda_{K}(F) = \{g\in G: \varphi(g,K)\cap \varphi(F,K)\neq \emptyset\} \supseteq F$. We have also that $\Lambda_{K} \circ \Pi_{K}(S) = \Lambda_{K}(\{g \in G: \varphi(g,K)\cap S \neq \emptyset\}) = \bigcup \{\varphi(g,K): \varphi(g,K)\cap S \neq \emptyset\} \supseteq S$, since $\varphi(G,K) = X$.
\end{proof}

\begin{cor}Let $G+_{\partial}Y$ be a topological space. Then, the application $id: G+_{\partial}Y \rightarrow G+_{(\partial_{\Pi_{K}})_{\Lambda_{K}}}Y$ is continuous.
\eod\end{cor}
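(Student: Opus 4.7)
The plan is to chain together three items from the preceding material: the previous proposition establishing $\Pi_{K}\circ\Lambda_{K} \supseteq id_{Closed(G)}$, the fact that admissible maps are automatically monotone, and the earlier corollary characterising when a map of the form $id: G+_{\partial}Y \rightarrow G+_{\partial'}Y$ is continuous (namely, iff $\partial(A) \subseteq \partial'(A)$ for every $A \in Closed(G)$).

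First I would unpack the definitions: $\partial_{\Pi_{K}} = \partial \circ \Pi_{K}$, and therefore $(\partial_{\Pi_{K}})_{\Lambda_{K}} = \partial \circ \Pi_{K} \circ \Lambda_{K}$ as a map $Closed(G) \rightarrow Closed(Y)$. Next, I would record that every admissible map $h$ is monotone: if $A \subseteq B$ are closed, then $h(B) = h(A \cup B) = h(A) \cup h(B) \supseteq h(A)$. Applying this observation to $\partial$, together with the inclusion $A \subseteq \Pi_{K}\circ\Lambda_{K}(A)$ supplied by the previous proposition, gives
\[ \partial(A) \;\subseteq\; \partial(\Pi_{K}\circ\Lambda_{K}(A)) \;=\; (\partial_{\Pi_{K}})_{\Lambda_{K}}(A) \]
for every $A \in Closed(G)$. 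By the continuity corollary, this inclusion is exactly the condition that $id: G+_{\partial}Y \rightarrow G+_{(\partial_{\Pi_{K}})_{\Lambda_{K}}}Y$ be continuous, which is what we want.

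Equivalently, one can read off the same inclusion as a direct instantiation of the earlier \emph{composition} proposition, taking $\Sigma = \Omega = id_{Closed(Y)}$: then $\Omega\circ\Sigma = id \supseteq id$ and $\Pi_{K}\circ\Lambda_{K} \supseteq id$, so the proposition hands us $(\partial_{\Pi_{K}})_{\Lambda_{K}} \supseteq \partial$ at once. There is no serious obstacle; the reason the statement is marked with the empty-box convention is precisely that each ingredient has already been proved, and the corollary amounts to stitching them together.
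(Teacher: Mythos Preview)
Your proof is correct and takes essentially the same approach as the paper. The paper marks this corollary with the empty-box convention precisely because it follows immediately from the composition proposition (with $\Sigma = \Omega = id_{Closed(Y)}$, giving $(\partial_{\Pi_K})_{\Lambda_K} \supseteq \partial$ from $\Pi_K \circ \Lambda_K \supseteq id_{Closed(G)}$) together with the earlier corollary characterising continuity of $id: X+_{f}Y \to X+_{f'}Y$; your explicit unpacking via monotonicity of admissible maps is exactly the content of that composition proposition's proof.
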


\begin{cor}Let $X+_{f}Y$ be a topological space. Then, the application $id: X+_{f}Y \rightarrow X+_{(f_{\Lambda_{K}})_{\Pi_{K}}}Y$ is continuous.
\eod\end{cor}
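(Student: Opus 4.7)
The plan is to read this off directly from two earlier results: the corollary characterizing continuity of the identity between $X+_{f}Y$ and $X+_{f'}Y$, and the composition proposition relating $(f_{\Sigma\Pi})_{\Omega\Lambda}$ to $f$.

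First, I would recall the criterion: $id: X+_{f}Y \to X+_{f'}Y$ is continuous if and only if $f(A) \subseteq f'(A)$ for every $A \in Closed(X)$. So the whole task reduces to showing $f(A) \subseteq (f_{\Lambda_{K}})_{\Pi_{K}}(A) = f(\Lambda_{K}(\Pi_{K}(A)))$ for each closed $A \subseteq X$.

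Next, I would invoke the composition proposition, taking $\Sigma = \Omega = id_{Closed(Y)}$, $\Pi = \Lambda_{K}$ and $\Lambda = \Pi_{K}$. The hypothesis $\Omega \circ \Sigma \supseteq id_{Closed(Y)}$ is trivial, and $\Pi \circ \Lambda = \Lambda_{K} \circ \Pi_{K} \supseteq id_{Closed(X)}$ is exactly the content of the preceding proposition (since every $x \in A$ lies in some $\varphi(g,K)$, and that $g$ belongs to $\Pi_{K}(A)$). Hence the proposition delivers $(f_{\Lambda_{K}})_{\Pi_{K}} \supseteq f$ on every closed set, which is precisely the inclusion we needed.

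Putting the two steps together, the continuity of $id: X+_{f}Y \to X+_{(f_{\Lambda_{K}})_{\Pi_{K}}}Y$ follows. There is no real obstacle here; the statement is designed to be a formal consequence of the composition machinery just developed, and the $\eod$ marker in the preceding corollary suggests the author intends the same essentially one-line argument. The only minor point to check is that admissible maps such as $f$ are monotone on $Closed(X)$ (which is immediate from $f(A \cup B) = f(A) \cup f(B)$ applied to $A \subseteq B$), so that $A \subseteq \Lambda_{K}(\Pi_{K}(A))$ does transfer through $f$ to give the required inclusion.
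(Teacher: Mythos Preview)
Your proof is correct and matches the paper's intended approach: the corollary is marked with $\eod$, meaning it follows immediately from the composition proposition together with $\Lambda_{K}\circ\Pi_{K}\supseteq id_{Closed(X)}$, exactly as you spell out. Your identification of the roles $\Pi=\Lambda_{K}$, $\Lambda=\Pi_{K}$, $\Sigma=\Omega=id_{Closed(Y)}$ in that proposition is the right way to unpack the one-line justification.
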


\begin{prop}\label{functorobjeto1}If $L+\psi: G \curvearrowright G+_{\partial}Y$ is an action by homeomorphisms, then $\varphi+\psi: G \curvearrowright X+_{\partial_{\Pi_{K}}}Y$ is an action by homeomorphisms.
\end{prop}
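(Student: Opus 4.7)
The plan is to deduce this from Corollary \ref{action} by choosing appropriate data. I would take $G_1 = G_2 = G$, $\alpha = \mathrm{id}_G$, put $G+_\partial Y$ (with the given action $L+\psi$) in the role of the base sum ``$X+_f W$'' of the Corollary, and put $X$ (with action $\varphi$) and $Y$ (with action $\psi$) in the roles of ``$Y$'' and ``$Z$'' respectively. For the two admissible maps I would take $\Pi = \Pi_K: Closed(X) \to Closed(G)$ and $\Sigma = \mathrm{id}_{Closed(Y)}$. Under this translation, the composed admissible map $f_{\Sigma\Pi}$ is exactly $\mathrm{id}\circ\partial\circ\Pi_K = \partial_{\Pi_K}$, and the induced action $\psi+\phi$ produced by the Corollary is precisely our $\varphi+\psi$. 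So the whole conclusion reduces to verifying the two commutativity diagrams required by Corollary \ref{action}, for every $g \in G$.

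The second diagram (with $\Sigma = \mathrm{id}$) is trivial: both vertical arrows are the identity, and the bottom and top horizontal arrows are both the same map $\psi(g,\_)^{-1}$ on $Closed(Y)$, so the square commutes with equality. The only nontrivial step is the first diagram, which amounts to showing, for every $g \in G$, the inclusion
\[
L(g,\_)^{-1} \circ \Pi_K \;\supseteq\; \Pi_K \circ \varphi(g,\_)^{-1}.
\]
I expect this to hold with equality, expressing the equivariance of $\Pi_K$ for $\varphi$ on $X$ and left multiplication on $G$. The routine calculation is: for $S \in Closed(X)$, one has $\Pi_K(\varphi(g,\_)^{-1}(S)) = \{h \in G : \varphi(h,K) \cap g^{-1}S \neq \emptyset\} = \{h : \varphi(gh,K) \cap S \neq \emptyset\}$, while $L(g,\_)^{-1}(\Pi_K(S)) = g^{-1}\{h' \in G : \varphi(h',K) \cap S \neq \emptyset\}$; reindexing by $h' = gh$ identifies the two sets.

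There is essentially no obstacle here: the whole content is the equivariance of $\Pi_K$, which is a direct consequence of the fact that the defining condition $\varphi(h,K) \cap S \neq \emptyset$ is preserved when we simultaneously translate $h$ and $S$ by $g$. Once that is noted, Corollary \ref{action} applies and yields that $\varphi+\psi: G \curvearrowright X+_{\partial_{\Pi_K}}Y$ is an action by homeomorphisms. The only thing to be careful about is making the notational translation between the generic letters of Corollary \ref{action} and the specific data of the present proposition, so that the composed admissible map and the induced disjoint-union action really do coincide with $\partial_{\Pi_K}$ and $\varphi+\psi$ on the nose.
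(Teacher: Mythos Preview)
Your proposal is correct and follows essentially the same approach as the paper: both verify that the first diagram of Corollary~\ref{action} commutes (in fact with equality) via the equivariance computation $\Pi_K(\varphi(g,\_)^{-1}(S)) = L(g,\_)^{-1}(\Pi_K(S))$, note the second diagram is trivial since $\Sigma = \mathrm{id}$, and then invoke Corollary~\ref{action}. The only cosmetic difference is that the paper carries out the reindexing by writing $h = g^{-1}h'$ explicitly, while you phrase it as $h' = gh$.
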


\begin{proof}Let $S$ be a closed subset of $X$ and $g \in G$. Then, $\Pi_{K} \circ \varphi(g,\_)^{-1}(S) = \Pi_{K} (\varphi(g^{-1},S)) = \{h \in G: \varphi(h,K)\cap \varphi(g^{-1},S) \neq \emptyset\} =$ $\\ \{g^{-1}h' \in G: \varphi(g^{-1}h',K)\cap \varphi(g^{-1},S) \neq \emptyset\} = \\ \{g^{-1}h' \in G: \varphi(g^{-1},\varphi(h',K))\cap \varphi(g^{-1},S) \neq \emptyset\} = \\ \{g^{-1}h' \in G: \varphi(h',K)\cap S \neq \emptyset\} = L(g,\_)^{-1}(\{h' \in G: \varphi(h',K)\cap S \neq \emptyset\}) = L(g,\_)^{-1}\circ \Pi_{K}(S)$. So, the diagrams commute ($\forall g \in G$):

$$ \xymatrix{   Closed(G) \ar[r]^{\ \ L(g,\_)^{-1}} & Closed(G) & & Closed(Y) \ar[r]^{\ \ \psi(g,\_)^{-1}} \ar[d]^{id} & Closed(Y) \ar[d]_{id} \\
                Closed(X) \ar[r]^{\ \ \varphi(g,\_)^{-1}} \ar[u]^{\Pi_{K}} & Closed(X) \ar[u]_{\Pi_{K}} & & Closed(Y) \ar[r]^{\ \ \psi(g,\_)^{-1}} & Closed(Y) } $$

By the \textbf{Corollary \ref{action}}, $\varphi+\psi: G \curvearrowright X+_{\partial_{\Pi_{K}}}Y$ is an action by homeomorphisms.

\end{proof}

\begin{prop}\label{functorobjeto2}If $\varphi+\psi: G \curvearrowright X+_{f}Y$ is an action by homeomorphisms, then $L+\psi: G \curvearrowright G+_{f_{\Lambda_{K}}}Y$ is an action by homeomorphisms.
\end{prop}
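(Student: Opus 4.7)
The plan is to mirror the proof of Proposition \ref{functorobjeto1} by applying Corollary \ref{action}, only with the roles of the two sides swapped. Concretely, I would instantiate the Cube-Lemma corollary with $G_{1}=G_{2}=G$, $\alpha=id_{G}$, with the ``upper'' sum being $X+_{f}Y$ (so that $X$ plays the role of $X$, $Y$ plays the role of $W$, and the upper action is $\varphi+\psi$), and with the ``lower'' sum being $G+_{f_{\Lambda_{K}}}Y$ (so that $G$ plays the role of $Y$ in the corollary, $Y$ plays the role of $Z$, the $Y$-side action is $L$ and the $Z$-side action is $\psi$). The admissible maps that glue the two levels are $\Pi=\Lambda_{K}\colon Closed(G)\to Closed(X)$ and $\Sigma=id_{Closed(Y)}$, so that the induced map is $(f)_{\Sigma\Pi}=id\circ f\circ\Lambda_{K}=f_{\Lambda_{K}}$, as required.

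The work then reduces to verifying the two square diagrams demanded by Corollary \ref{action} for every $g\in G$. The second square, involving $\Sigma=id$ and $\psi$ on both sides, commutes trivially. For the first square one must check that
$$\Lambda_{K}\circ L(g,\_)^{-1}\;\supseteq\;\varphi(g,\_)^{-1}\circ\Lambda_{K}.$$
This I would establish by a direct computation using the fact that $\varphi$ is a group action: for $F\in Closed(G)$,
$$\Lambda_{K}\bigl(L(g,\_)^{-1}(F)\bigr)=\varphi(g^{-1}F,K)=\varphi\bigl(g^{-1},\varphi(F,K)\bigr)=\varphi(g,\_)^{-1}\bigl(\Lambda_{K}(F)\bigr),$$
so the square in fact commutes with equality (which is stronger than the $\supseteq$ hypothesis of Corollary \ref{action}).

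Having verified both squares, Corollary \ref{action} immediately yields that $L+\psi\colon G\curvearrowright G+_{f_{\Lambda_{K}}}Y$ is an action by homeomorphisms, finishing the proof. I expect no real obstacle: the statement is entirely symmetric to Proposition \ref{functorobjeto1}, and the only point that requires any thought is bookkeeping of the relabeling so that Corollary \ref{action} applies in the reversed direction; once the admissible map $\Lambda_{K}$ is seen to intertwine the actions $L$ and $\varphi$ (which is essentially the equivariance of $\Lambda_{K}$ built into the definition $\Lambda_{K}(F)=\varphi(F,K)$), the conclusion is automatic.
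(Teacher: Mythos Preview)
Your proposal is correct and follows essentially the same route as the paper: the paper verifies the identity $\Lambda_{K}\circ L(g,\_)^{-1}=\varphi(g,\_)^{-1}\circ\Lambda_{K}$ by the same computation $\varphi(g^{-1}F,K)=\varphi(g^{-1},\varphi(F,K))$, notes that the $Y$-side square commutes trivially, and then invokes Corollary~\ref{action}. The only cosmetic difference is that you spell out the relabeling of the roles in Corollary~\ref{action} more explicitly than the paper does.
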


\begin{proof}Let $F \in Closed(G)$ and $g \in G$. Then, $\Lambda_{K} \circ L(g,\_)^{-1}(F) = \Lambda_{K}(g^{-1}F) = \varphi(g^{-1}F,K) = \varphi(g^{-1},\varphi(F,K)) = \varphi(g,\_)^{-1} \circ \Lambda_{K}(F)$. So, the diagrams commute ($\forall g \in G$):

$$ \xymatrix{   Closed(X) \ar[r]^{\ \ \varphi(g,\_)^{-1}} & Closed(X) & & Closed(Y) \ar[r]^{\ \ \psi(g,\_)^{-1}} \ar[d]^{id} & Closed(Y) \ar[d]_{id} \\
                Closed(G) \ar[r]^{\ \ L(g,\_)^{-1}} \ar[u]^{\Lambda_{K}} & Closed(G) \ar[u]_{\Lambda_{K}} & & Closed(Y) \ar[r]^{\ \ \psi(g,\_)^{-1}} & Closed(Y) } $$

By the \textbf{Corollary \ref{action}}, $L+\psi: G \curvearrowright G+_{\partial_{\Lambda_{K}}}Y$ is an action by homeomorphisms.

\end{proof}

\begin{prop}If $G+_{\partial}Y$ is compact, then $X+_{\partial_{\Pi_{K}}}Y$ is compact.
\end{prop}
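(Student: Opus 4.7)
The plan is to use the compactness criterion established earlier in the section: a space of the form $Z+_{h}W$ with $W$ compact is compact if and only if $h(A) \neq \emptyset$ for every non-compact $A \in Closed(Z)$. So I would reduce to checking this criterion for $X+_{\partial_{\Pi_{K}}}Y$, which amounts to showing that $\partial(\Pi_{K}(A)) \neq \emptyset$ for every non-compact closed $A \subseteq X$.

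First I would fix such an $A$ and show that $\Pi_{K}(A)$ is an infinite subset of $G$. Since $\varphi(G,K) = X$, we have
$$A = \bigcup_{g \in G}\bigl(A \cap \varphi(g,K)\bigr) = \bigcup_{g \in \Pi_{K}(A)}\bigl(A \cap \varphi(g,K)\bigr),$$
because the intersection is empty precisely for $g \notin \Pi_{K}(A)$. If $\Pi_{K}(A)$ were finite, then $A$ would be a closed subset of the finite union $\bigcup_{g \in \Pi_{K}(A)}\varphi(g,K)$; each $\varphi(g,K)$ is compact (the image of a compact set under the homeomorphism $\varphi(g,\_)$), so this union is compact, making $A$ compact and contradicting the assumption.

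Next, since $G$ carries the discrete topology, the infinite set $\Pi_{K}(A)$ is a non-compact closed subset of $G$. Applying the same compactness criterion in the other direction to the hypothesis that $G+_{\partial}Y$ is compact, we conclude $\partial(\Pi_{K}(A)) \neq \emptyset$, that is, $\partial_{\Pi_{K}}(A) \neq \emptyset$. By the criterion, $X+_{\partial_{\Pi_{K}}}Y$ is compact.

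I do not expect any real obstacle: the heart of the argument is the observation that proper discontinuity combined with $\varphi(G,K) = X$ converts non-compactness of a closed set in $X$ into infiniteness of the corresponding set of translates in $G$, and the compactness criterion then transfers between the two sum-of-spaces via $\Pi_{K}$.
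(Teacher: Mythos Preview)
Your proof is correct and follows essentially the same approach as the paper. The paper phrases the key containment as $S \subseteq \Lambda_{K}\circ\Pi_{K}(S) = \varphi(\Pi_{K}(S),K)$ (using the earlier fact $\Lambda_{K}\circ\Pi_{K}\supseteq id_{Closed(X)}$), which is exactly your decomposition $A = \bigcup_{g\in\Pi_{K}(A)}(A\cap\varphi(g,K))$ in slightly more compact form; both then conclude via the same compactness criterion.
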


\begin{proof}Let $S\in Closed(X)$ be non compact. Let's suppose that $\Pi_{K}(S)$ is compact. Then, $S \subseteq \Lambda_{K} \circ \Pi_{K}(S) = \varphi(\Pi_{K}(S),K)$, which is compact, absurd. So, $\Pi_{K}(S)$ is not compact. Since $G+_{\partial}Y$ is compact, it follows that $\partial(\Pi_{K}(S)) \neq \emptyset$. Thus, $\partial_{\Pi_{K}}(S) = \partial(\Pi_{K}(S)) \neq \emptyset$, which implies that $X+_{\partial_{\Pi_{K}}}Y$ is compact.
\end{proof}

\begin{prop}If $X+_{f}Y$ is compact, then $G+_{f_{\Lambda_{K}}}Y$ is compact.
\end{prop}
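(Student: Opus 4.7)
The plan is to mimic the strategy used in the preceding proposition about $\Pi_{K}$, but with the roles of $G$ and $X$ exchanged: invoke the compactness criterion established earlier (a space of the form $A+_{h}Y$ with $Y$ compact is compact iff every non-compact closed subset of $A$ is sent by $h$ to a non-empty set) and reduce the problem to showing that $\Lambda_{K}$ sends non-compact closed subsets of $G$ to non-compact closed subsets of $X$.

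Concretely, let $F \in Closed(G)$ be non-compact. Since $G$ carries the discrete topology, this just means $F$ is infinite. I would first recall that $\Lambda_{K}(F) = \varphi(F,K)$ has already been shown to be closed in $X$ (using proper discontinuity and local finiteness of $\{\varphi(g,K)\}_{g\in F}$). So the question reduces to: is $\varphi(F,K)$ non-compact?

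The crucial step is the converse-type inclusion $F \subseteq \Pi_{K}(\varphi(F,K))$, which holds because each $g \in F$ satisfies $\varphi(g,K) \cap \varphi(F,K) \neq \emptyset$. Now if $\varphi(F,K)$ were compact, then proper discontinuity of $\varphi$ would force $\Pi_{K}(\varphi(F,K)) = \{g \in G : \varphi(g,K) \cap \varphi(F,K) \neq \emptyset\}$ to be finite, whence $F$ would be finite, a contradiction. Therefore $\varphi(F,K) = \Lambda_{K}(F)$ is a non-compact closed subset of $X$.

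Finally, applying the compactness hypothesis on $X+_{f}Y$ to this non-compact closed set yields $f(\Lambda_{K}(F)) \neq \emptyset$, that is, $f_{\Lambda_{K}}(F) \neq \emptyset$. Since this holds for every non-compact $F \in Closed(G)$ and $Y$ is compact, the criterion gives compactness of $G+_{f_{\Lambda_{K}}}Y$. The only substantive point is the proper-discontinuity argument that $\Lambda_{K}$ preserves non-compactness; everything else is a direct invocation of results already in the paper.
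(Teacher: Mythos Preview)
Your proof is correct and follows essentially the same route as the paper: both use the inclusion $F \subseteq \Pi_{K}(\Lambda_{K}(F))$ together with the fact that $\Pi_{K}$ sends compacts to finite sets (by proper discontinuity) to conclude that $\Lambda_{K}(F)$ is non-compact, then apply the compactness criterion. You simply spell out the proper-discontinuity step a bit more explicitly than the paper does.
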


\begin{proof}Let $F\in Closed(G)$ be non compact. Let's suppose that $\Lambda_{K}(F)$ is compact. Then, $F \subseteq \Pi_{K} \circ \Lambda_{K}(F)$, which is compact, absurd. So, $\Lambda_{K}(F)$ is not compact. Since $X+_{f}Y$ is compact, it follows that $f(\Lambda_{K}(F)) \neq \emptyset$. Thus, $f_{\Lambda_{K}}(F) = f(\Lambda_{K}(F)) \neq \emptyset$, which implies that $G+_{f_{\Lambda_{K}}}Y$ is compact.
\end{proof}

\begin{prop}If $G$ is dense in $G+_{\partial}Y$, then $X$ is dense in $X+_{\partial_{\Pi_{K}}}Y$.
\end{prop}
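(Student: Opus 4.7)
The plan is to reduce everything to the characterization already proved in the excerpt: $X$ is dense in $X+_{f}Y$ if and only if $f(X) = Y$ (and the analogous statement for $G+_{\partial}Y$). So the hypothesis gives $\partial(G) = Y$, and I just need to show $\partial_{\Pi_{K}}(X) = Y$.

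First I would unwind definitions: $\partial_{\Pi_{K}}(X) = \partial(\Pi_{K}(X))$, where $\Pi_{K}(X) = \{g \in G : \varphi(g,K) \cap X \neq \emptyset\}$. Since $K \subseteq X$ is nonempty (assuming $X \neq \emptyset$; otherwise the statement is trivial) and $\varphi(g,K) \subseteq X$ for every $g$, each set $\varphi(g,K)$ is a nonempty subset of $X$, so $\varphi(g,K) \cap X = \varphi(g,K) \neq \emptyset$ for every $g \in G$. Hence $\Pi_{K}(X) = G$.

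Substituting, $\partial_{\Pi_{K}}(X) = \partial(G) = Y$, where the last equality uses the hypothesis that $G$ is dense in $G+_{\partial}Y$ together with the earlier corollary. By that same corollary applied in the other direction, $\partial_{\Pi_{K}}(X) = Y$ yields that $X$ is dense in $X+_{\partial_{\Pi_{K}}}Y$.

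There is essentially no obstacle here; the only subtlety is making sure the trivial edge cases ($K = \emptyset$ or $X = \emptyset$, which would force $G = \emptyset$ since $\varphi(G,K) = X$, and then both density statements are vacuous) are handled, but the admissibility of $\Pi_{K}$ and $\partial$ (with $\partial(\emptyset) = \emptyset$) takes care of them automatically.
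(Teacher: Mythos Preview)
Your proof is correct and follows essentially the same approach as the paper: compute $\partial_{\Pi_{K}}(X) = \partial(\Pi_{K}(X)) = \partial(G) = Y$ and invoke the density characterization $f(X) = Y$. The paper's proof is the same one-line computation, just without the extra commentary on edge cases.
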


\begin{proof}We have that $\partial_{\Pi_{K}}(X) = \partial(\{g \in G: \varphi(g,K)\cap X \neq \emptyset\}) = \partial(G) = Y$. Thus, $X$ is dense in $X+_{\partial_{\Pi_{K}}}Y$.
\end{proof}

\begin{prop}If $X$ is dense in $X+_{f}Y$, then $G$ is dense in $G+_{f_{\Lambda_{K}}}Y$.
\end{prop}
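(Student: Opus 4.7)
The plan is to reduce the density statement to the characterization of density already proved for sums of spaces: namely, that $X$ is dense in $X+_{f}Y$ if and only if $f(X) = Y$, and analogously $G$ is dense in $G+_{f_{\Lambda_{K}}}Y$ if and only if $f_{\Lambda_{K}}(G) = Y$. So it suffices to show that $f_{\Lambda_{K}}(G) = Y$ under the hypothesis $f(X) = Y$.

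First I would unpack the definition: $f_{\Lambda_{K}}(G) = f(\Lambda_{K}(G)) = f(\varphi(G,K))$. By the cocompactness assumption on the action $\varphi$ (recall $K$ was chosen so that $\varphi(G,K) = X$), we have $\varphi(G,K) = X$, hence $f_{\Lambda_{K}}(G) = f(X)$. By hypothesis $X$ is dense in $X+_{f}Y$, which by the earlier corollary means $f(X) = Y$. Combining these, $f_{\Lambda_{K}}(G) = Y$, which is exactly the density criterion for $G$ inside $G+_{f_{\Lambda_{K}}}Y$.

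There is essentially no obstacle here; the statement is an immediate consequence of the defining identity $\Lambda_{K}(G) = \varphi(G,K) = X$ and the density characterization proved in the section on the construction of $X+_{f}Y$. The proof is a one-line chain of equalities, and no separation or continuity considerations enter.
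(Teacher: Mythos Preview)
Your proof is correct and is essentially identical to the paper's own argument: the paper writes the single chain $f_{\Lambda_{K}}(G) = f(\varphi(G,K)) = f(X) = Y$ and concludes density of $G$ from the same characterization you invoke.
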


\begin{proof}We have that $f_{\Lambda_{K}}(G) = f(\varphi(G,K)) = f(X) = Y$. Thus, $G$ is dense in $G+_{f_{\Lambda_{K}}}Y$.
\end{proof}

\begin{prop}\label{functor}Let $G_{1}$ and $G_{2}$ be groups, $X_{1}$ and $X_{2}$ Hausdorff locally compact spaces, $Y_{1}$ and $Y_{2}$ Hausdorff compact spaces, $\varphi_{i}: G_{i} \curvearrowright X_{i}$ properly discontinuous cocompact actions, $\psi_{i}: G_{i} \curvearrowright Y_{i}$ actions by homeomorphisms, $\alpha: G_{1} \rightarrow G_{2}$ a homomorphism, $\mu: X_{1} \rightarrow X_{2}$, $\nu: Y_{1} \rightarrow Y_{2}$ continuous maps with $\mu$ $\alpha$-equivariant, $K_{i} \subseteq X_{i}$ compact subspaces such that $\varphi_{i}(G,K_{i}) = X_{i}$ and $\mu(K_{1}) \subseteq K_{2}$ and $\Pi_{K_{i}}: Closed(X_{i}) \rightarrow Closed(G_{i})$. If the application $\alpha+\nu: G_{1}+_{\partial_{1}}Y_{1} \rightarrow G_{2}+_{\partial_{2}}Y_{2}$ is continuous, then the application $\\ \mu+\nu: X_{1}+_{\partial_{1\Pi_{K_{1}}}} Y_{1} \rightarrow X_{2}+_{\partial_{2\Pi_{K_{2}}}} Y_{2}$ is continuous.
\end{prop}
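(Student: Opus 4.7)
The plan is to reduce the statement to an application of the Cube Lemma established earlier in the Sum-of-Spaces section. Recall that $\partial_{i \Pi_{K_i}} = (\partial_i)_{id_{Y_i} \Pi_{K_i}}$, so the spaces $X_i +_{\partial_{i\Pi_{K_i}}} Y_i$ are exactly of the form to which the Cube Lemma applies, with $\Sigma_1 = \Sigma_2 = id$. To invoke it, I must verify the two square diagrams: one involving the $Y$-coordinate and $\nu$, and one involving the $X$-coordinate, the projections $\Pi_{K_i}$ and the map $\alpha$.

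The second diagram (the one with $\Sigma_i = id$) reduces to checking $\nu^{-1} \circ id \supseteq id \circ \nu^{-1}$, which is a trivial equality. So the heart of the proof is to establish the first diagram, namely the inclusion
\[ \Pi_{K_1} \circ \mu^{-1} \subseteq \alpha^{-1} \circ \Pi_{K_2} \]
of admissible maps $Closed(X_2) \to Closed(G_1)$. Once this is in hand, the Cube Lemma immediately yields continuity of $\mu+\nu: X_1 +_{\partial_{1\Pi_{K_1}}} Y_1 \to X_2 +_{\partial_{2\Pi_{K_2}}} Y_2$.

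To prove the inclusion, I would argue pointwise: fix $A \in Closed(X_2)$ and $g \in \Pi_{K_1}(\mu^{-1}(A))$, so that $\varphi_1(g, K_1) \cap \mu^{-1}(A) \neq \emptyset$. Pick $k \in K_1$ with $\varphi_1(g,k) \in \mu^{-1}(A)$, so $\mu(\varphi_1(g,k)) \in A$. By the $\alpha$-equivariance of $\mu$, one has $\mu(\varphi_1(g,k)) = \varphi_2(\alpha(g), \mu(k))$, and since $\mu(K_1) \subseteq K_2$, this point lies in $\varphi_2(\alpha(g), K_2) \cap A$. Therefore $\alpha(g) \in \Pi_{K_2}(A)$, i.e.\ $g \in \alpha^{-1}(\Pi_{K_2}(A))$, as desired.

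The hard part is essentially conceptual rather than technical: recognizing that the continuity hypothesis on $\alpha+\nu$ together with the equivariance and compatibility conditions on $\mu$ and $K_1,K_2$ are exactly what the Cube Lemma needs, and that no further work on $\nu$ or on $\partial_i$ is required because $\Sigma_i = id$ trivializes the second diagram. The only genuine content is the set-theoretic inclusion above, which is a direct consequence of $\alpha$-equivariance and $\mu(K_1) \subseteq K_2$; no appeal to properness, cocompactness, or the closedness of $\varphi_i(F,K_i)$ is needed at this stage.
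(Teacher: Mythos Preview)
Your proposal is correct and follows essentially the same approach as the paper: both verify the inclusion $\Pi_{K_1}(\mu^{-1}(S)) \subseteq \alpha^{-1}(\Pi_{K_2}(S))$ by the same pointwise argument using $\alpha$-equivariance of $\mu$ and $\mu(K_1)\subseteq K_2$, note that the $\Sigma_i = id$ square is trivial, and conclude by the Cube Lemma. The only cosmetic difference is that the paper phrases the key computation at the level of sets (using $\mu(\varphi_1(g,K_1)\cap \mu^{-1}(S))\subseteq \varphi_2(\alpha(g),K_2)\cap S$) while you pick a specific $k\in K_1$; the content is identical.
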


\begin{proof}Let $S \in Closed(X_{2})$ and $g \in \Pi_{K_{1}}(\mu^{-1}(S))$. So, $\varphi_{1}(g,K_{1})\cap \mu^{-1}(S) \neq \emptyset$, which implies that $\mu(\varphi_{1}(g,K_{1})\cap \mu^{-1}(S)) \neq \emptyset$. But $\mu(\varphi_{1}(g,K_{1})\cap \mu^{-1}(S)) \subseteq \mu(\varphi_{1}(g,K_{1}))\cap \mu(\mu^{-1}(S)) = \varphi_{2}(\alpha(g),\mu(K_{1}))\cap S \subseteq \varphi_{2}(\alpha(g),K_{2})\cap S$, which implies that $\varphi_{2}(\alpha(g),K_{2})\cap S \neq \emptyset$ and then $\alpha(g) \in \Pi_{K_{2}}(S)$. So, $\alpha(\Pi_{K_{1}}(\mu^{-1}(S))) \subseteq \Pi_{K_{2}}(S)$, which implies that $\Pi_{K_{1}}(\mu^{-1}(S)) \subseteq \alpha^{-1}(\Pi_{K_{2}}(S))$.

So, we have the diagrams:

$$ \xymatrix{ Closed(G_{2}) \ar[r]^{\alpha^{-1}} & Closed(G_{1}) & & Closed(Y_{2}) \ar[r]^{\nu^{-1}} & Closed(Y_{1}) \\
            Closed(X_{2}) \ar[r]^{\mu^{-1}} \ar[u]^{\Pi_{K_{2}}} \ar@{}[ur]|{\supseteq} & Closed(X_{1}) \ar[u]_{\Pi_{K_{1}}} & & Closed(Y_{2}) \ar[r]^{\nu^{-1}} \ar[u]^{id} \ar@{}[ur]|{\circlearrowleft} & Closed(Y_{1}) \ar[u]^{id} } $$

By the Cube Lemma, it follows that $\mu+\nu: X_{1}+_{\partial_{1\Pi_{K_{1}}}}Y_{1} \rightarrow X_{2}+_{\partial_{2\Pi_{K_{2}}}}Y_{2}$ is continuous.

\end{proof}

\begin{prop}\label{functor2}Let $G_{1}$ and $G_{2}$ be groups, $X_{1}$ and $X_{2}$ Hausdorff locally compact spaces, $Y_{1}$ and $Y_{2}$ Hausdorff compact spaces, $\varphi_{i}: G_{i} \curvearrowright X_{i}$ properly discontinuous cocompact actions, $\psi_{i}: G_{i} \curvearrowright Y_{i}$ actions by homeomorphisms, $\alpha: G_{1} \rightarrow G_{2}$ a homomorphism, $\mu: X_{1} \rightarrow X_{2}$, $\nu: Y_{1} \rightarrow Y_{2}$ continuous maps with $\mu$ $\alpha$-equivariant, $K_{i} \subseteq X_{i}$ compacts such that $\varphi_{i}(G,K_{i}) = X_{i}$ and $\mu(K_{1}) \subseteq K_{2}$ and $\Lambda_{K_{i}}: Closed(G_{i}) \rightarrow Closed(X_{i})$ given by $\Lambda_{K_{i}}(F) = \varphi_{i}(G,K_{i})$. If $\mu+\nu: X_{1}+_{f_{1}}Y_{1} \rightarrow X_{2}+_{f_{2}}Y_{2}$ is continuous, then the map $\alpha+\nu: G_{1}+_{f_{1\Lambda_{K_{1}}}} Y_{1} \rightarrow G_{2}+_{f_{2\Lambda_{K_{2}}}} Y_{2}$ is continuous.
\end{prop}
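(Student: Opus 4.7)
The plan is to apply the Cube Lemma in the same spirit as in Proposition \ref{functor}, but with $\Lambda$ playing the role of $\Pi$. So I need to verify the two hypothesis-diagrams of the Cube Lemma for the maps $\alpha$, $\nu$ (on the base) in terms of the admissible maps $\Lambda_{K_1}, \Lambda_{K_2}$ and the identities on the $Y$-side.

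The second diagram, involving the identity maps on $Closed(Y_i)$, is trivial: $\nu^{-1} = \nu^{-1}$ (strict equality, which in particular gives the inclusion required by the Cube Lemma). The content is the first diagram, namely the inclusion
$$\Lambda_{K_1} \circ \alpha^{-1}(F) \subseteq \mu^{-1} \circ \Lambda_{K_2}(F), \quad \forall F \in Closed(G_2).$$
This is where I would do the only actual work. Unpacking, $\Lambda_{K_1}(\alpha^{-1}(F)) = \varphi_1(\alpha^{-1}(F), K_1)$. Pick $x = \varphi_1(g,k)$ with $g \in \alpha^{-1}(F)$ and $k \in K_1$. Using the $\alpha$-equivariance of $\mu$ and the hypothesis $\mu(K_1) \subseteq K_2$, I compute $\mu(x) = \varphi_2(\alpha(g), \mu(k)) \in \varphi_2(F, K_2) = \Lambda_{K_2}(F)$, hence $x \in \mu^{-1}(\Lambda_{K_2}(F))$. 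This gives the desired inclusion.

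With both diagrams established, the Cube Lemma applied to the continuous map $\mu+\nu : X_1+_{f_1}Y_1 \to X_2+_{f_2}Y_2$ yields that the induced map on the sums built with the composed admissible maps, namely $\alpha + \nu : G_1 +_{f_{1\Lambda_{K_1}}} Y_1 \to G_2 +_{f_{2\Lambda_{K_2}}} Y_2$, is continuous. This is exactly the conclusion.

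I do not expect a serious obstacle: this proposition is the formal mirror of Proposition \ref{functor}, with the only substantive step being the small diagram-chase above, which depends entirely on equivariance of $\mu$ and the containment $\mu(K_1) \subseteq K_2$. If anything subtle arises, it would only be in making sure to use the admissibility and the identifications correctly (for instance, that $\Lambda_{K_1}(\alpha^{-1}(F))$ is still a closed subset of $X_1$, which was already established in the preliminary proposition showing $\varphi(F,K) \in Closed(X)$ for all $F \in Closed(G)$).
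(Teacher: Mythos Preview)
Your proposal is correct and follows essentially the same approach as the paper: verify the inclusion $\Lambda_{K_1}(\alpha^{-1}(F)) \subseteq \mu^{-1}(\Lambda_{K_2}(F))$ via $\alpha$-equivariance of $\mu$ and $\mu(K_1)\subseteq K_2$, note the trivial identity diagram on the $Y$-side, and conclude by the Cube Lemma. The paper writes the key inclusion as a chain of set containments rather than your pointwise argument, but the content is identical.
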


\begin{proof}Let $F \in Closed(G_{2})$. Then, $\Lambda_{K_{1}}(\alpha^{-1}(F)) = \varphi_{1}(\alpha^{-1}(F),K_{1}) \subseteq \mu^{-1}(\mu(\varphi_{1}(\alpha^{-1}(F),K_{1}))) = \mu^{-1}(\varphi_{2}(\alpha(\alpha^{-1}(F)),\mu(K_{1}))) \subseteq \mu^{-1}(\varphi_{2}(F,K_{2})) = \mu^{-1}(\Lambda_{K_{2}}(F))$. So, we have the diagrams:

$$ \xymatrix{ Closed(X_{2}) \ar[r]^{\mu^{-1}} & Closed(X_{1}) & & Closed(Y_{2}) \ar[r]^{\nu^{-1}} & Closed(Y_{1}) \\
            Closed(G_{2}) \ar[r]^{\alpha^{-1}} \ar[u]^{\Lambda_{K_{2}}} \ar@{}[ur]|{\supseteq} & Closed(G_{1}) \ar[u]_{\Lambda_{K_{1}}} & & Closed(Y_{2}) \ar[r]^{\nu^{-1}} \ar[u]^{id} \ar@{}[ur]|{\circlearrowleft} & Closed(Y_{1}) \ar[u]^{id} } $$

By the Cube Lemma, it follows that $\alpha+\nu: G_{1}+_{f_{1\Lambda_{K_{1}}}} Y_{1} \rightarrow G_{2}+_{f_{2\Lambda_{K_{2}}}} Y_{2}$ is continuous.

\end{proof}

Let's consider what happens by the categorical point of view:

\begin{defi}Let $G$ be a group and $L: G \curvearrowright G$ the left multiplication action. We denote by $Comp(G)$ the category whose  objects are compact spaces of the form $G+_{\partial}Y$, for some Hausdorff compact $Y$, with an action by homeomorphisms of the form $L+\psi: G \curvearrowright G+_{\partial}Y$, and the morphisms are continuous maps of the form $id+\phi: G+_{\partial_{1}}Y_{1}\rightarrow G+_{\partial_{2}}Y_{2}$ , such that $\phi$ is equivariant with respect to the actions of $G$ in $Y_{1}$ and $Y_{2}$.
\end{defi}

\begin{defi}Let $G$ be a group, $X$ a locally compact and Hausdorff space and $\varphi: G \curvearrowright X$ a properly discontinuous  cocompact action. We denote by $Comp(\varphi)$ the category whose  objects are compact spaces of the form $X+_{f}Y$, for some Hausdorff compact $Y$, with an action by homeomorphisms of the form $\varphi+\psi: G \curvearrowright X+_{f}Y$, and the morphisms are continuous maps of the form $id+\phi: X+_{f_{1}}Y_{1}\rightarrow X+_{f_{2}}Y_{2}$ , such that $\phi$ is equivariant with respect to the actions of $G$ in $Y_{1}$ and $Y_{2}$.
\end{defi}

\begin{prop}Let $K \subseteq X$ be a compact such that $\varphi(G,K) = X$. The map $\bar{\Pi}_{K}: Comp(G) \rightarrow Comp(\varphi)$, that takes $G+_{\partial}Y$ to $X+_{\partial_{\Pi_{K}}}Y$, the action $L+\psi:G \curvearrowright G+_{\partial}Y$ to $\varphi+\psi: G \curvearrowright X+_{\partial_{\Pi_{K}}}Y$ and $id+\phi: G+_{\partial_{1}}Y_{1}\rightarrow G+_{\partial_{2}}Y_{2}$ to $id+\phi: X+_{\partial_{1\Pi_{K}}}Y_{1}\rightarrow X+_{\partial_{2\Pi_{K}}}Y_{2}$, is a functor.
\end{prop}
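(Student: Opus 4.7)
The plan is to verify the standard functor axioms, which here reduces almost entirely to applying propositions already proved in the excerpt.

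First, I would check well-definedness on objects. Given $G+_{\partial}Y \in Comp(G)$, the image $X+_{\partial_{\Pi_K}}Y$ must be an object of $Comp(\varphi)$. Compactness is exactly the content of the proposition ``If $G+_{\partial}Y$ is compact, then $X+_{\partial_{\Pi_K}}Y$ is compact.'' That $\varphi+\psi : G \curvearrowright X+_{\partial_{\Pi_K}}Y$ is an action by homeomorphisms is Proposition \ref{functorobjeto1}. The Hausdorff compact space $Y$ is preserved as the closed complement of $X$ in the sum. If one needs to verify Hausdorffness of the full sum (rather than reading ``compact'' as including it), then using the earlier Hausdorffness criterion one pulls back the sets $A,B \in Closed(G)$ witnessing separation of two points of $Y$ in $G+_{\partial}Y$ through $\Lambda_K$, observing that $\Lambda_K(A)\cup \Lambda_K(B) = \Lambda_K(A\cup B) = \varphi(G,K) = X$ and that $\partial_{\Pi_K}(\Lambda_K(A)) = \partial(\Pi_K\circ\Lambda_K(A))$ contains $\partial(A)$ in a controlled way; combined with the fact that $\varphi$ is properly discontinuous (so $\Pi_K$ sends compacts to finite sets and in particular $\partial_{\Pi_K}(K') = \emptyset$ for every compact $K'\subseteq X$), this gives the required separation.

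Second, I would check well-definedness on morphisms. Given a morphism $id+\phi : G+_{\partial_1}Y_1 \to G+_{\partial_2}Y_2$ in $Comp(G)$, its image $id+\phi : X+_{\partial_{1\Pi_K}}Y_1 \to X+_{\partial_{2\Pi_K}}Y_2$ is continuous by Proposition \ref{functor} specialised to $G_1 = G_2 = G$, $\alpha = id_G$, $X_1 = X_2 = X$, $\mu = id_X$, $K_1 = K_2 = K$ and $\nu = \phi$ (so trivially $\mu(K_1) \subseteq K_2$). Equivariance of $\phi$ with respect to $\psi_1$ and $\psi_2$ is unaffected since $\bar{\Pi}_K$ leaves both the group $G$ and the $Y_i$-actions $\psi_i$ untouched.

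Third, the functor axioms themselves are immediate: $\bar{\Pi}_K$ acts as the identity on the morphism data $\phi$, merely retopologising the ambient sums, so $\bar{\Pi}_K(id_{G+_{\partial}Y}) = id+id_Y = id_{X+_{\partial_{\Pi_K}}Y}$, and for a composable pair $id+\phi_1$, $id+\phi_2$ one has $\bar{\Pi}_K((id+\phi_2)\circ(id+\phi_1)) = id+(\phi_2\circ\phi_1) = \bar{\Pi}_K(id+\phi_2)\circ \bar{\Pi}_K(id+\phi_1)$.

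There is no substantive obstacle: the genuine work was already carried out in the pair of propositions ``functorobjeto1'' and ``functor'' (together with the compactness proposition), and the present statement just assembles them into the categorical language. The only place where one might pause is the Hausdorff verification mentioned above, and even there the characterisation from the section on separation makes the argument mechanical.
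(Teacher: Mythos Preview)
Your proposal is correct and follows essentially the same route as the paper: invoke Proposition~\ref{functorobjeto1} for objects, Proposition~\ref{functor} for morphisms, and note that preservation of identities and compositions is trivial because $\bar{\Pi}_K$ acts as the identity on the morphism data $\phi$. Your digression about Hausdorffness of the full sum is unnecessary (and the sketch there is vague): the definition of $Comp(\varphi)$ only requires $Y$ to be Hausdorff compact, not the whole space $X+_{f}Y$, so nothing beyond compactness and the action by homeomorphisms needs to be checked.
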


\begin{proof}By the \textbf{Proposition \ref{functorobjeto1}}, if $L+\psi:G \curvearrowright G+_{\partial}Y$ is an action by  homeomorphisms, then $\varphi+\psi: G \curvearrowright X+_{\partial_{\Pi_{K}}}Y$ is also an action by homeomorphisms. So, $\bar{\Pi}_{K}$ maps objects to objects. By the \textbf{Proposition \ref{functor}}, if $id+\phi: G+_{\partial_{1}}Y_{1}\rightarrow G+_{\partial_{2}}Y_{2}$ is continuous, then the application $id+\phi: X+_{\partial_{1\Pi_{K}}} Y_{1}\rightarrow X+_{\partial_{2\Pi_{K}}}Y_{2}$ is also continuous. So, $\bar{\Pi}_{K}$ maps morphisms to morphisms. It is  trivial that it maps identity to identity and preserves compositions. Thus, it is a functor.
\end{proof}

\begin{prop}Let $K \subseteq X$ be a compact such that $\varphi(G,K) = X$. The map $\bar{\Lambda}_{K}: Comp(\varphi) \rightarrow Comp(G)$, that takes $X+_{f}Y$ to $G+_{f_{\Lambda_{K}}}Y$, the action $\varphi+\psi:G \curvearrowright X+_{f}Y$ to $L+\psi: G \curvearrowright G+_{f_{\Lambda_{K}}}Y$ and $id+\phi: X+_{f_{1}}Y_{1}\rightarrow X+_{f_{2}}Y_{2}$ to $id+\phi: G+_{f_{1\Lambda_{K}}}Y_{1}\rightarrow G+_{f_{2\Lambda_{K}}}Y_{2}$ is a functor.
\end{prop}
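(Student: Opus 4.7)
The plan is essentially to mirror the proof just given for $\bar{\Pi}_K$, only using the ``dual'' propositions \ref{functorobjeto2} and \ref{functor2} in place of \ref{functorobjeto1} and \ref{functor}. Concretely, the verification breaks into three routine checks.

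First, I would verify that $\bar{\Lambda}_K$ sends objects to objects. Given $X+_f Y \in Comp(\varphi)$ with action $\varphi+\psi$, the underlying set $G+_{f_{\Lambda_K}} Y$ is compact by the earlier proposition showing that compactness of $X+_f Y$ implies compactness of $G+_{f_{\Lambda_K}} Y$, and Hausdorffness of $Y$ is inherited. That the induced map $L+\psi$ is an action by homeomorphisms is exactly the content of \textbf{Proposition \ref{functorobjeto2}}, so nothing new is needed.

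Second, I would check that $\bar{\Lambda}_K$ sends morphisms to morphisms. A morphism $id+\phi: X+_{f_1}Y_1 \rightarrow X+_{f_2}Y_2$ in $Comp(\varphi)$ is a continuous, $G$-equivariant map that is the identity on $X$. Applying \textbf{Proposition \ref{functor2}} with $G_1 = G_2 = G$, $\alpha = id_G$, $\mu = id_X$, $\nu = \phi$, and $K_1 = K_2 = K$ (so the compatibility hypothesis $\mu(K_1)\subseteq K_2$ is trivial), we conclude that $id+\phi: G+_{f_{1\Lambda_K}}Y_1 \rightarrow G+_{f_{2\Lambda_K}}Y_2$ is continuous. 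Equivariance of $\phi$ is unchanged by this process, and the map is the identity on $G$ by construction, so it is indeed a morphism in $Comp(G)$.

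Finally, functoriality of $\bar{\Lambda}_K$ on identities and compositions is immediate: $\bar{\Lambda}_K(id_{X+_f Y}) = id+id_Y = id_{G+_{f_{\Lambda_K}}Y}$, and for composable morphisms $id+\phi_1$, $id+\phi_2$ in $Comp(\varphi)$, both $\bar{\Lambda}_K((id+\phi_2)\circ(id+\phi_1))$ and $\bar{\Lambda}_K(id+\phi_2)\circ \bar{\Lambda}_K(id+\phi_1)$ equal $id+(\phi_2\circ\phi_1)$ on the nose, because the assignment only depends pointwise on the second component. There is no substantive obstacle here: all the real work was already carried out in \textbf{Propositions \ref{functorobjeto2}} and \textbf{\ref{functor2}}, and this proposition just packages those two facts into a single functorial statement.
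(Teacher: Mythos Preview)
Your proposal is correct and follows essentially the same approach as the paper's own proof: invoke \textbf{Proposition \ref{functorobjeto2}} to show objects go to objects, \textbf{Proposition \ref{functor2}} to show morphisms go to morphisms, and observe that preservation of identities and compositions is trivial. Your version is slightly more detailed (explicitly noting the compactness proposition and spelling out the parameters in the application of \textbf{Proposition \ref{functor2}}), but there is no substantive difference.
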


\begin{proof}By the \textbf{Proposition \ref{functorobjeto2}}, if $\varphi+\psi:G \curvearrowright X+_{f}Y$ is an action by homeomorphisms, then $L+\psi: G \curvearrowright G+_{f_{\Lambda_{K}}}Y$ is also an action by homeomorphisms. So, $\bar{\Lambda}_{K}$ maps objects to objects. By the \textbf{Proposition \ref{functor2}}, if $id+\phi: X+_{f_{1}}Y_{1}\rightarrow X+_{f_{2}}Y_{2}$ is continuous, then the application  $id+\phi: G+_{f_{1\Lambda_{K}}}Y_{1}\rightarrow G+_{f_{2\Lambda_{K}}}Y_{2}$ is also continuous. So, $\bar{\Lambda}_{K}$ maps morphisms to morphisms. It is trivial that it maps identity to identity and preserves compositions. Thus, it is a functor.
\end{proof}

\begin{prop}The family of maps $id: G+_{\partial}Y  \rightarrow G+_{(\partial_{\Pi_{K}})_{\Lambda_{K}}}Y$ forms a natural transformation $id_{K}: id_{Comp(G)}\Rightarrow \bar{\Lambda}_{K} \circ \bar{\Pi}_{K}$.
\end{prop}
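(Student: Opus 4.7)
The plan is to verify, for this fixed choice of $K$, two things: (a) each component map $id_{G+_\partial Y}: G+_{\partial}Y \to G+_{(\partial_{\Pi_K})_{\Lambda_K}}Y$ is a morphism in $Comp(G)$, and (b) the naturality squares commute. Both should be essentially immediate from material already in the excerpt, so the bulk of the work is bookkeeping.

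For (a) I would first check continuity of $id: G+_{\partial}Y \to G+_{(\partial_{\Pi_K})_{\Lambda_K}}Y$. This is already packaged as the corollary following the proposition that $\Pi_K \circ \Lambda_K \supseteq id_{Closed(G)}$ and $\Lambda_K \circ \Pi_K \supseteq id_{Closed(X)}$, applied with $\partial$ playing the role of $f$; the earlier composition lemma for sum of spaces then yields $(\partial_{\Pi_K})_{\Lambda_K} \supseteq \partial$, which by the corollary about $id: X+_f Y \to X+_{f'} Y$ means exactly that the identity is continuous between the two topologies on $G \dot\cup Y$. Equivariance is automatic: on $G$ both actions are $L$, on $Y$ both actions are $\psi$, and a set-theoretic identity is trivially equivariant when source and target carry the same action. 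Hence $id_{G+_\partial Y}$ is a morphism in $Comp(G)$.

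For (b) I would write down the naturality square for a morphism $id+\phi: G+_{\partial_{1}}Y_{1}\rightarrow G+_{\partial_{2}}Y_{2}$,
$$\xymatrix{G+_{\partial_{1}}Y_{1} \ar[r]^{id+\phi} \ar[d]_{id} & G+_{\partial_{2}}Y_{2} \ar[d]^{id} \\ G+_{(\partial_{1\Pi_K})_{\Lambda_K}}Y_{1} \ar[r]^{id+\phi} & G+_{(\partial_{2\Pi_K})_{\Lambda_K}}Y_{2}}$$
and observe that it commutes as maps of sets, since every arrow restricts to $id_G$ on $G$ and to either $id_{Y_i}$ or $\phi$ on the remainder, and these clearly compose in the same way on either route. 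The two vertical arrows are morphisms by (a); the top horizontal is a morphism by hypothesis; the bottom horizontal is the image under $\bar\Lambda_K \circ \bar\Pi_K$ of the top one, hence a morphism because $\bar\Lambda_K$ and $\bar\Pi_K$ are functors (already established). So the square lives in $Comp(G)$ and commutes, giving naturality.

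There is no real obstacle here; the only thing to be careful about is not accidentally confusing the two topologies on $G \dot\cup Y$ when writing the square, and making sure to cite the corollary that continuity of the identity map between $X+_f Y$ and $X+_{f'} Y$ is equivalent to the pointwise inclusion $f(A) \subseteq f'(A)$. Everything else reduces to invoking functoriality of $\bar\Pi_K$ and $\bar\Lambda_K$ and the trivial observation that the underlying set-level maps agree.
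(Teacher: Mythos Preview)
Your proposal is correct and follows essentially the same approach as the paper. The paper's proof simply observes that $\bar{\Lambda}_K \circ \bar{\Pi}_K(id+\phi) = id+\phi$ on the modified topologies and that the naturality square commutes; you do the same, but are more explicit about verifying that each component map is a morphism in $Comp(G)$ (continuity via the earlier corollary, equivariance trivially), which the paper leaves implicit.
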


\begin{proof}Let $id+\phi: G+_{\partial_{1}}Y_{1}\rightarrow G+_{\partial_{2}}Y_{2}$ be a morphism. We have that $\bar{\Lambda}_{K} \circ \bar{\Pi}_{K}(id+\phi) = id+\phi: G+_{(\partial_{1\Pi_{K}})_{\Lambda_{K}}}Y_{1}\rightarrow G+_{(\partial_{2\Pi_{K}})_{\Lambda_{K}}}Y_{2}$ and the diagram commutes:

$$ \xymatrix{  G+_{\partial_{1}}Y_{1} \ar[r]^<<{ \ \ \ \ id} \ar[d]^{id+\phi} & G+_{(\partial_{1\Pi_{K}})_{\Lambda_{K}}} \!\! Y_{1} \ar[d]^{id+\phi} \\
            G+_{\partial_{2}}Y_{2}  \ar[r]^<<{ \ \ \ \ id} & G+_{(\partial_{2\Pi_{K}})_{\Lambda_{K}}} \!\! Y_{2} }$$

Thus, $id_{K}$ is a natural transformation.

\end{proof}

\begin{prop}The family of maps $id: X+_{f}Y  \rightarrow X+_{(f_{\Lambda_{K}})_{\Pi_{K}}}Y$ forms a natural transformation $id'_{K}: id_{Comp(\varphi)}\Rightarrow \bar{\Pi}_{K} \circ \bar{\Lambda}_{K}$.
\end{prop}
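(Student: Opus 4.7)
The plan is to mirror the proof of the preceding proposition with the roles of $\Pi_{K}$ and $\Lambda_{K}$ interchanged throughout. First, I would invoke the corollary immediately following the observation that $\Pi_{K}\circ \Lambda_{K} \supseteq id_{Closed(G)}$ and $\Lambda_{K}\circ \Pi_{K}\supseteq id_{Closed(X)}$: it asserts that for every object $X+_{f}Y$ of $Comp(\varphi)$, the set-theoretic identity $id: X+_{f}Y \rightarrow X+_{(f_{\Lambda_{K}})_{\Pi_{K}}}Y$ is continuous. Because both the source and the target carry the action $\varphi+\psi$ extending the same action of $G$ on $X$ and $Y$, this map is automatically $G$-equivariant and restricts to the identity on $X$, hence is a morphism of $Comp(\varphi)$. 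By the very definitions of $\bar{\Lambda}_{K}$ and $\bar{\Pi}_{K}$, the codomain is exactly $\bar{\Pi}_{K}\circ \bar{\Lambda}_{K}(X+_{f}Y)$, so this yields a well-typed component of the purported natural transformation at every object.

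Next, for a morphism $id+\phi: X+_{f_{1}}Y_{1}\rightarrow X+_{f_{2}}Y_{2}$ of $Comp(\varphi)$, I would form the candidate naturality square whose horizontal arrows are the identity components just constructed and whose vertical arrows are $id+\phi$ on the left and $\bar{\Pi}_{K}\circ \bar{\Lambda}_{K}(id+\phi) = id+\phi : X+_{(f_{1\Lambda_{K}})_{\Pi_{K}}}Y_{1} \rightarrow X+_{(f_{2\Lambda_{K}})_{\Pi_{K}}}Y_{2}$ on the right. Continuity of this right-hand arrow is precisely what functoriality of $\bar{\Pi}_{K}\circ \bar{\Lambda}_{K}$ has already delivered. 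Commutativity at the level of underlying sets is immediate, since every composite sends $x\in X$ to $x$ and $y \in Y_{1}$ to $\phi(y)$; as all four arrows are morphisms of $Comp(\varphi)$, the square commutes in that category.

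I expect no real obstacle: the statement is the formal dual of the previous proposition, and each ingredient (continuity of the comparison identity, functoriality of $\bar{\Pi}_{K}$ and $\bar{\Lambda}_{K}$, and $G$-equivariance) was already established in the $\Lambda_{K}$-to-$\Pi_{K}$ direction. The only items to verify are therefore bookkeeping: that the codomain of each component agrees with $\bar{\Pi}_{K}\circ \bar{\Lambda}_{K}$ applied to the object and that the square commutes set-theoretically, both of which are one-line checks.
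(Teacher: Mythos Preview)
Your proposal is correct and follows essentially the same approach as the paper. The paper's proof is terser---it simply writes down the naturality square, notes that $\bar{\Pi}_{K}\circ\bar{\Lambda}_{K}(id+\phi)=id+\phi$ between the modified spaces, and asserts commutativity---while you spell out explicitly why the components are morphisms (via the corollary on continuity of $id$) and why the square commutes set-theoretically; both are the same argument at different levels of detail.
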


\begin{proof}Let $id+\phi: X+_{f_{1}}Y_{1}\rightarrow X+_{f_{2}}Y_{2}$ be a morphism. We have that $\bar{\Pi}_{K} \circ \bar{\Lambda}_{K}(id+\phi) = id+\phi: X+_{(f_{1\Lambda_{K}})_{\Pi_{K}}}Y_{1}\rightarrow X+_{(f_{2\Lambda_{K}})_{\Pi_{K}}}Y_{2}$ and the diagram commutes:

$$ \xymatrix{  X+_{f_{1}}Y_{1} \ar[r]^<<{ \ \ \ \ id} \ar[d]^{id+\phi} & X+_{(f_{1\Lambda_{K}})_{\Pi_{K}}} \!\! Y_{1} \ar[d]^{id+\phi} \\
            X+_{f_{2}}Y_{2}  \ar[r]^<<{ \ \ \ \ id} & X+_{(f_{2\Lambda_{K}})_{\Pi_{K}}} \!\! Y_{2} }$$

Thus, $id'_{K}$ is a natural transformation.

\end{proof}

However, this is not an equivalence of categories since the first natural transformation might not be an isomorphism (and even do not be replaced by someone), as the example below shows us:

\begin{ex}\label{exemplochave}Let $G = \Z, \ Y = \{x_{0},x_{1},x_{2}\}$ with the  discrete topology, $\psi: \Z \curvearrowright Y$ given by $\psi(z,x_{i}) = x_{i+z mod 3}, \ X = \R$ and $\varphi: \Z \curvearrowright \R$ given by $\varphi(z,r) = z+r$. Let's take the one point compactification $A_{i} = (i+3\Z)\cup \{x_{i}\} \\$ and $\Z+_{\partial}Y = A_{1} \dot{\cup} A_{2} \dot{\cup} A_{3}$. Clearly $L+\psi: \Z \curvearrowright \Z+_{\partial}Y$ is an action by homeomorphisms. So, we have an Hausdorff space that is an object of $Comp(G)$. Take $K = [0,3]$. So, we have that $\partial(3 \Z) = \{x_{0}\}$, but $(\partial_{\Pi_{[0,3]}})_{\Lambda_{[0,3]}}(3\Z) = \partial_{\Pi_{[0,3]}}(3\Z+[0,3]) = \partial_{\Pi_{[0,3]}}(\R) = \partial(\{z \in \Z: (z+[0,3])\cap\R \neq \emptyset\}) = \partial(\Z) = Y$. Thus, $\partial \neq (\partial_{\Pi_{[0,3]}})_{\Lambda_{[0,3]}}$, which implies that $id: G+_{\partial}Y \rightarrow G+_{(\partial_{\Pi_{[0,3]}})_{\Lambda_{[0,3]}}}Y$ is not a homeomorphism, and then $id_{[0,3]}$ is not a natural isomorphism. Since $id: G+_{\partial}Y \rightarrow G+_{(\partial_{\Pi_{[0,3]}})_{\Lambda_{[0,3]}}}Y$ is continuous, it is not a homeomorphism and $G+_{\partial}Y$ is compact, it follows that $G+_{(\partial_{\Pi_{[0,3]}})_{\Lambda_{[0,3]}}}Y$ is not Hausdorff and then it is not homeomorphic to $G+_{\partial}Y$. Thus, there is no natural isomorphism between $id_{Comp(G)}$ and $\bar{\Lambda}_{[0,3]}\circ \bar{\Pi}_{[0,3]}$.

On the same example, take a compact $K\subseteq \R$ such that $\Z+K = \R$. We have that $\partial_{\Pi_{K}}(\R) = \partial(\{z \in \Z: (z+K)\cap\R\neq \emptyset\}) = \partial(\Z) = Y$, which implies that $\R$ is dense in $\R+_{\partial_{\Pi_{K}}}Y$. If $\R+_{\partial_{\Pi_{K}}}Y$ were Hausdorff, it would be a compactification of $\R$ by three points, which does not exist since $\#End(\R) = 2$. Thus, $\R+_{\partial_{\Pi_{K}}}Y$ is not Hausdorff.
\end{ex}

One way to obtain natural isomorphisms (and then an equivalence of categories) would be restrict the categories to Hausdorff spaces and shows that the functors maps Hausdorff spaces to Hausdorff spaces (so, the maps of the natural transformations would be automatically homeomorphisms). However, this fact does not happen, as the example above shows us.

In order to work around the problem, we are looking for a good property on the objects, that we are able to restrict the categories and permits us to have the isomorphism. This is the quasi-perspectivity property.

\subsection{Quasi-perspectivity}

\begin{defi}Let $G$ be a group, $Y$ a Hausdorff compact space, $L: G \curvearrowright G$ the left multiplication action, $R: G \curvearrowright G$ the right multiplication action and $\psi: G \curvearrowright Y$ an action by homeomorphisms. We say that a compact space $G+_{\partial}Y$ is a quasi-perspectivity if $L+\psi: G \curvearrowright G+_{\partial}Y$ and $R+id: G \curvearrowright G+_{\partial}Y$ are continuous. We denote by $qPers(G)$ the full subcategory of $Comp(G)$ whose objects are quasi-perspectivities.
\end{defi}

Let $\varphi: G \curvearrowright X$ be a properly discontinuous  cocompact action on a locally compact Hausdorff space and $K \subseteq X$ a fundamental domain.

\begin{prop}Let $G+_{\partial}Y \in Comp(G)$ and $F \in Closed(G)$. Then, $(\partial_{\Pi_{K}})_{\Lambda_{K}}(F) = \bigcup_{z \in Z_{K}} \partial(Fz)$, where $Z_{K} = \{z \in G: \varphi(z,K)\cap K \neq \emptyset\}$.
\end{prop}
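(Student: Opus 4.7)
The plan is to unfold the composition $(\partial_{\Pi_K})_{\Lambda_K} = \partial \circ \Pi_K \circ \Lambda_K$ explicitly, reduce the middle term $\Pi_K \circ \Lambda_K(F)$ to the set $FZ_K$, and then pull $\partial$ through the resulting finite union using admissibility.

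First, I would compute $\Lambda_K(F) = \varphi(F,K)$ directly from the definition, and then expand
\[
\Pi_K(\varphi(F,K)) = \{g \in G : \varphi(g,K) \cap \varphi(F,K) \neq \emptyset\}.
\]
The key algebraic step is to show that this set equals $FZ_K = \bigcup_{z \in Z_K} Fz$. For one inclusion, if $g = fz$ with $f \in F$ and $z \in Z_K$, then $\varphi(z,K) \cap K \neq \emptyset$, and applying the homeomorphism $\varphi(f,\_)$ to both sides yields $\varphi(g,K) \cap \varphi(f,K) \neq \emptyset$, which is contained in $\varphi(F,K)$. Conversely, if $\varphi(g,K) \cap \varphi(f,K) \neq \emptyset$ for some $f \in F$, then applying $\varphi(f^{-1},\_)$ gives $\varphi(f^{-1}g, K) \cap K \neq \emptyset$, so $f^{-1}g \in Z_K$ and $g \in fZ_K \subseteq FZ_K$.

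Next, I would observe that $Z_K$ is finite: this is exactly the condition of proper discontinuity of $\varphi$ applied to the compact $K$. Hence $FZ_K = \bigcup_{z \in Z_K} Fz$ is a finite union of closed subsets of the discrete group $G$ (every subset of $G$ is closed). Applying admissibility of $\partial$, which commutes with finite unions and sends $\emptyset$ to $\emptyset$, I get
\[
\partial(FZ_K) = \partial\!\left(\bigcup_{z \in Z_K} Fz\right) = \bigcup_{z \in Z_K} \partial(Fz).
\]
Combining the two displays yields $(\partial_{\Pi_K})_{\Lambda_K}(F) = \bigcup_{z \in Z_K} \partial(Fz)$, as required.

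There is no real obstacle here: the only non-formal point is the identification $\Pi_K(\varphi(F,K)) = FZ_K$, which is a one-line consequence of equivariance of $\varphi$, and the finiteness of $Z_K$, which is proper discontinuity. Everything else is definition chasing and an application of admissibility already established for $\partial$.
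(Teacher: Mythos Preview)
Your proposal is correct and follows essentially the same approach as the paper: identify $\Pi_K(\Lambda_K(F))$ with $FZ_K$ via the two inclusions you describe, note that $Z_K$ is finite by proper discontinuity, and then use admissibility of $\partial$ to split the finite union. The paper presents these steps in the same order with the same justifications.
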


\begin{proof}Since $\varphi$ is properly discontinuous, it follows that $Z_{K}$ is finite. Let $F \in Closed(G), \ x \in F$ and $z \in Z_{K}$. So, $\varphi(z,K) \cap K \neq \emptyset$, which implies that $\varphi(xz,K)\cap \varphi(x,K) \neq \emptyset$ and then $\varphi(xz,K)\cap \varphi(F,K) \neq \emptyset$. For the other side, let $g\in G$ such that $\varphi(g,K)\cap \varphi(F,K) \neq \emptyset$. So, $\exists x \in F:$ $\varphi(g,K)\cap \varphi(x,K) \neq \emptyset$, which implies that $\varphi(x^{-1}g,K)\cap K \neq \emptyset$ and then $x^{-1}g \in Z_{K}$, which implies that $g = xx^{-1}g \in FZ_{K}$. It follows that $FZ_{K} = \{g \in G: \varphi(g,K)\cap \varphi(F,K) \neq \emptyset\}$.

Thus, $(\partial_{\Pi_{K}})_{\Lambda_{K}}(F) = \partial_{\Pi_{K}}(\varphi(F,K)) = \\ \partial(\{g \in G: \varphi(g,K)\cap \varphi(F,K) \neq \emptyset\}) =$ $\partial(FZ_{K}) = \bigcup_{z \in Z_{K}}\partial(Fz)$, because $Z_{K}$ is finite.

\end{proof}

\begin{lema}If $K \subseteq K' \subseteq X$ are fundamental domains, then $\partial_{\Pi_{K}} \subseteq \partial_{\Pi_{K'}}$.
\end{lema}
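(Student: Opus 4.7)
The plan is to unfold the definitions and use the monotonicity of admissible maps. Recall that $\partial_{\Pi_K}(S) = \partial(\Pi_K(S))$ where $\Pi_K(S) = \{g \in G : \varphi(g,K) \cap S \neq \emptyset\}$, so I want to show $\partial(\Pi_K(S)) \subseteq \partial(\Pi_{K'}(S))$ for every $S \in Closed(X)$.

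First, I would observe that $\Pi_K$ itself is monotone in $K$: since $K \subseteq K'$, we have $\varphi(g,K) \subseteq \varphi(g,K')$ for every $g \in G$, so $\varphi(g,K) \cap S \neq \emptyset$ implies $\varphi(g,K') \cap S \neq \emptyset$. This gives $\Pi_K(S) \subseteq \Pi_{K'}(S)$.

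Next I would use that every admissible map is monotone, in particular $\partial$ is monotone. Indeed, if $A \subseteq B$ are closed sets, then $B = A \cup B$, whence $\partial(B) = \partial(A \cup B) = \partial(A) \cup \partial(B)$, so $\partial(A) \subseteq \partial(B)$. Applying this to $\Pi_K(S) \subseteq \Pi_{K'}(S)$ yields $\partial_{\Pi_K}(S) = \partial(\Pi_K(S)) \subseteq \partial(\Pi_{K'}(S)) = \partial_{\Pi_{K'}}(S)$, as desired.

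The argument is a one-line chase once the monotonicity of $\Pi_{(-)}$ in its compact parameter and the monotonicity of admissible maps are both noted, so I do not anticipate any real obstacle; both ingredients follow directly from the definitions already given.
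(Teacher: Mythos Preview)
Your proof is correct and follows essentially the same route as the paper: both show $\Pi_K(S) \subseteq \Pi_{K'}(S)$ from $K \subseteq K'$ and then apply $\partial$. You are simply more explicit than the paper in spelling out why admissible maps are monotone.
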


\begin{proof}Let $S \in Closed(X)$. So, $\partial_{\Pi_{K'}}(S) = \partial(\{g \in G: \varphi(g,K')\cap S \neq \emptyset\}) \supseteq \partial(\{g \in G: \varphi(g,K)\cap S \neq \emptyset\}) = \partial_{\Pi_{K}}(S)$. Thus, $\partial_{\Pi_{K}} \subseteq \partial_{\Pi_{K'}}$.
\end{proof}

\begin{lema}If $K,K' \subseteq X$ are fundamental domains, then $\forall S \in Closed(X), \\ \partial_{\Pi_{K\cup K'}}(S) = \partial_{\Pi_{K}}(S) \cup \partial_{\Pi_{K'}}(S)$.
\end{lema}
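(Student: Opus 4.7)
The plan is to reduce the identity to two observations, each of which is almost immediate. The first is a pointwise computation of $\Pi_{K\cup K'}$, and the second is the admissibility of $\partial$.

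First I would expand the definitions. For any $g\in G$ and any set $S$, one has $\varphi(g, K\cup K') = \varphi(g,K)\cup \varphi(g,K')$, so
\[
\varphi(g,K\cup K')\cap S \;=\; (\varphi(g,K)\cap S)\,\cup\,(\varphi(g,K')\cap S),
\]
which is nonempty precisely when at least one of the two summands is nonempty. Therefore
\[
\Pi_{K\cup K'}(S) \;=\; \Pi_K(S)\,\cup\,\Pi_{K'}(S).
\]
Note also that $K\cup K'$ is itself a fundamental domain (it is compact as a finite union of compacts, and already $\varphi(G,K)=X$ suffices to give $\varphi(G,K\cup K')=X$), so the expression $\Pi_{K\cup K'}$ and hence $\partial_{\Pi_{K\cup K'}}$ is meaningful.

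Second, I would apply the admissibility of $\partial\colon Closed(G)\to Closed(Y)$. Since $\Pi_K(S)$ and $\Pi_{K'}(S)$ are closed in $G$ (as subsets of the discrete group) and $\partial$ preserves finite unions, we get
\[
\partial_{\Pi_{K\cup K'}}(S) \;=\; \partial\bigl(\Pi_K(S)\cup \Pi_{K'}(S)\bigr) \;=\; \partial(\Pi_K(S))\cup \partial(\Pi_{K'}(S)) \;=\; \partial_{\Pi_K}(S)\cup \partial_{\Pi_{K'}}(S),
\]
which is the desired equality.

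There is no real obstacle here; the only point to keep in mind is the bookkeeping that $\Pi_{K\cup K'}$ is literally defined via the union $K\cup K'$ (so the distributivity of intersection over union applied to $\varphi(g,\cdot)$ does the work) and that admissibility of $\partial$ is the property invoked at the end. Everything else is routine set algebra and requires no further input beyond what has already been established for $\Pi_K$ and $\partial$.
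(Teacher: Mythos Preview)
Your proof is correct and follows essentially the same approach as the paper: both arguments use $\varphi(g,K\cup K')=\varphi(g,K)\cup\varphi(g,K')$ to obtain $\Pi_{K\cup K'}(S)=\Pi_K(S)\cup\Pi_{K'}(S)$ and then apply the admissibility of $\partial$ to split the union. The paper presents it as a single chain of equalities while you organize it into two labeled steps, but the content is identical.
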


\begin{proof}Let $S \in Closed(X)$. Then:

$\partial_{\Pi_{K\cup K'}}(S) = \partial(\{g \in G: \varphi(g,K \cup K') \cap S \neq \emptyset\}) = \\ \partial(\{g \in G: (\varphi(g,K) \cap S) \cup (\varphi(g,K') \cap S) \neq \emptyset\}) = \\ \partial(\{g \in G: \varphi(g,K) \cap S \neq \emptyset\} \cup \{g \in G: \varphi(g, K') \cap S \neq \emptyset\}) = \\ \partial(\{g \in G: \varphi(g,K) \cap S \neq \emptyset\}) \cup \partial(\{g \in G: \varphi(g,K') \cap S \neq \emptyset\}) = \\ \partial_{\Pi_{K}}(S) \cup \partial_{\Pi_{K'}}(S)$.
\end{proof}

\begin{lema}If $K \subseteq X$ is a fundamental domain and $h \in G$, then $\forall S \in Closed(X), \ \partial_{\Pi_{\varphi(h,K)}}(S) =  \partial(\Pi_{K}(S)h^{-1})$.
\end{lema}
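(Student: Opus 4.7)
The plan is to prove the identity by a direct unfolding of the definition of $\Pi_{K}$ and a change of variables in the group. The only real ingredient is that $\varphi$ is an action, so $\varphi(g,\varphi(h,K)) = \varphi(gh,K)$, and everything else is bookkeeping.

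Concretely, I would first show the set-level identity $\Pi_{\varphi(h,K)}(S) = \Pi_{K}(S)h^{-1}$ for every $S \in Closed(X)$. Starting from the definition,
\begin{equation*}
\Pi_{\varphi(h,K)}(S) = \{g \in G : \varphi(g,\varphi(h,K)) \cap S \neq \emptyset\} = \{g \in G : \varphi(gh, K) \cap S \neq \emptyset\}.
\end{equation*}
Performing the substitution $g' = gh$ (which is a bijection of $G$ onto itself with inverse $g = g' h^{-1}$), the right-hand side becomes $\{g'h^{-1} : g' \in G,\ \varphi(g',K) \cap S \neq \emptyset\} = \Pi_{K}(S)h^{-1}$.

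With that in hand, applying $\partial$ to both sides yields
\begin{equation*}
\partial_{\Pi_{\varphi(h,K)}}(S) = \partial(\Pi_{\varphi(h,K)}(S)) = \partial(\Pi_{K}(S)h^{-1}),
\end{equation*}
which is exactly the desired equality. No appeal to continuity, admissibility or topological properties of $Y$ is needed here; the statement is purely at the level of the admissible map $\partial$ evaluated on a closed subset of $G$ (and $\Pi_{K}(S)h^{-1}$ is closed in $G$ because right multiplication by $h^{-1}$ is a homeomorphism of the discrete group $G$, so this is automatic).

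The only place to be slightly careful is the first equality, where one must use that $\varphi$ is genuinely an action, i.e.\ $\varphi(g,\varphi(h,x)) = \varphi(gh,x)$ for all $x \in K$, so that $\varphi(g,\varphi(h,K)) = \varphi(gh,K)$ as subsets of $X$. There is no real obstacle; the lemma is essentially a notational translation of the associativity of the action.
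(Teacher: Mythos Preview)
Your proof is correct and follows essentially the same approach as the paper: both unfold the definition of $\Pi_{\varphi(h,K)}$, use the action identity $\varphi(g,\varphi(h,K))=\varphi(gh,K)$, and perform the substitution $g'=gh$ to obtain $\Pi_{\varphi(h,K)}(S)=\Pi_{K}(S)h^{-1}$ before applying $\partial$. The only cosmetic difference is that the paper carries out the change of variables inside a single chain of equalities for $\partial(\cdots)$, whereas you first isolate the set-level identity and then apply $\partial$; the content is identical.
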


\begin{proof}Let $S \in Closed(X)$. Then:

$\partial_{\Pi_{\varphi(h,K)}}(S) = \partial(\{g \in G: \varphi(g,\varphi(h,K)) \cap S \neq \emptyset\}) = \\ \partial(\{gh^{-1} \in G: \varphi(gh^{-1},\varphi(h,K)) \cap S \neq \emptyset\}) = \\ \partial(\{gh^{-1} \in G: \varphi(g,K) \cap S \neq \emptyset\}) = \\ \partial(\{g \in G: \varphi(g,K) \cap S \neq \emptyset\}h^{-1}) = \partial(\Pi^{}_{K}(S)h^{-1})$.
\end{proof}

Let's suppose that $G+_{\partial}Y$ is quasi-perspective.

$\partial_{\Pi_{K}}$ do not depend of the choice of the compact $K$:

\begin{prop}\label{dependencia1}Let $K,K' \subseteq X$ be fundamental domains. Then, $\partial_{\Pi_{K}} = \partial_{\Pi_{K'}}$.
\end{prop}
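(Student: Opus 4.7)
The plan is to show that quasi-perspectivity forces $\partial$ to be invariant under right multiplication by group elements, and then to reduce the comparison of $\partial_{\Pi_K}$ and $\partial_{\Pi_{K'}}$ to this invariance by covering one fundamental domain with finitely many translates of the other.

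First I would extract the crucial consequence of quasi-perspectivity: for every closed $F \subseteq G$ and every $h \in G$, we have $\partial(Fh) = \partial(F)$. This follows because $R+id: G \curvearrowright G+_{\partial}Y$ is an action by homeomorphisms, so for the closed set $F \cup \partial(F) \subseteq G+_{\partial}Y$, its image $(R+id)(h^{-1},\_)(F \cup \partial(F)) = Fh^{-1} \cup \partial(F)$ must also be closed. By the characterization of closed sets in $G+_{\partial}Y$ this forces $\partial(Fh^{-1}) \subseteq \partial(F)$, and applying the same argument to $Fh^{-1}$ with $h$ gives the reverse inclusion.

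Next I would reduce the proposition to the case $K \subseteq K'$ as follows. By symmetry it suffices to show $\partial_{\Pi_{K'}} \subseteq \partial_{\Pi_K}$. Since $K'$ is compact and $\varphi(G,K) = X$, there exist finitely many $h_1,\ldots,h_n \in G$ such that $K' \subseteq K'' := \varphi(h_1,K) \cup \cdots \cup \varphi(h_n,K)$; note that $K''$ is itself a fundamental domain. By the first lemma, $\partial_{\Pi_{K'}} \subseteq \partial_{\Pi_{K''}}$, and by iterating the union lemma,
$$\partial_{\Pi_{K''}}(S) = \bigcup_{i=1}^n \partial_{\Pi_{\varphi(h_i,K)}}(S).$$

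Finally, I would combine the translation lemma with right-invariance of $\partial$. For each $i$,
$$\partial_{\Pi_{\varphi(h_i,K)}}(S) = \partial(\Pi_K(S)\, h_i^{-1}) = \partial(\Pi_K(S)) = \partial_{\Pi_K}(S),$$
where the middle equality is the right-invariance established in the first step. Hence $\partial_{\Pi_{K''}}(S) = \partial_{\Pi_K}(S)$, and consequently $\partial_{\Pi_{K'}}(S) \subseteq \partial_{\Pi_K}(S)$. Exchanging the roles of $K$ and $K'$ yields the reverse inclusion, proving equality.

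The main obstacle is the initial derivation of right-invariance of $\partial$: one must unpack carefully what continuity of $R+id$ means in terms of the admissible map $\partial$, using the characterization of closed sets in $G+_{\partial}Y$ as those $A$ with $A \cap G$ closed in $G$, $A \cap Y$ closed in $Y$ and $\partial(A \cap G) \subseteq A$. Once that is in hand, the rest of the argument is a straightforward combination of the three preceding lemmas.
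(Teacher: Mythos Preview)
Your proof is correct and follows essentially the same route as the paper: both arguments reduce to covering one fundamental domain by finitely many translates of the other, apply the union and translation lemmas, and then invoke the right-invariance $\partial(Fh)=\partial(F)$ coming from quasi-perspectivity. One small remark: the finiteness of the covering $K' \subseteq \bigcup_i \varphi(h_i,K)$ is not a compactness argument (the translates are not open) but follows from proper discontinuity, since $\Pi_K(K')$ is finite; the paper makes this explicit by taking $K''' = K \cup \varphi(\Pi_K(K'),K)$.
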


\begin{proof}Let $K'' = K \cup K'$. We have that $K''$ is a fundamental domain and $\partial_{\Pi_{K}} \subseteq \partial_{\Pi_{K''}}$. Let $K''' = K \cup \varphi(\Pi_{K}(K'),K)$. Since $\varphi$ is properly discontinuous, we have that $\Pi_{K}(K')$ is finite. So, $K'''$ is compact and then, a fundamental domain. Since $K$ is a fundamental domain, $\forall k' \in K'$, $\exists g \in G$ and $\exists k \in K$ such that $\varphi(g,k) = k'$. So, $K' \subseteq \varphi(\Pi_{K}(K'),K)$, which implies that $K'' \subseteq K'''$ and then, $\partial_{\Pi_{K''}} \subseteq \partial_{\Pi_{K'''}}$. Let $S \in Closed(X)$. We have that $\partial_{\Pi_{K'''}}(S) = \partial_{\Pi_{K}}(S) \cup \bigcup_{g \in \Pi_{K}(K')}\partial_{\Pi_{\varphi(g,K)}}(S) = \partial_{\Pi_{K}}(S) \cup \bigcup_{g \in \Pi_{K}(K')}\partial(\Pi_{K}(S)g^{-1}) =$ $\\ \partial_{\Pi_{K}}(S) \cup \bigcup_{g \in \Pi_{K}(K')}\partial_{\Pi_{K}}(S) = \partial_{\Pi_{K}}(S)$. So, $\partial_{\Pi_{K'''}} = \partial_{\Pi_{K}}$, which implies that $\partial_{\Pi_{K}} = \partial_{\Pi_{K''}}$. Analogously, we have that $\partial_{\Pi_{K'}} = \partial_{\Pi_{K''}}$. Thus, $\partial_{\Pi_{K}} = \partial_{\Pi_{K'}}$.
\end{proof}

\begin{prop}\label{identidade1} $\partial = (\partial_{\Pi_{K}})_{\Lambda_{K}}$.
\end{prop}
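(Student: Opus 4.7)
The plan is to combine the formula $(\partial_{\Pi_K})_{\Lambda_K}(F) = \bigcup_{z \in Z_K} \partial(Fz)$, already established just above, with the observation that quasi-perspectivity forces $\partial$ to be invariant under right translation on the $G$-coordinate. Once that invariance is in hand, each $\partial(Fz)$ collapses to $\partial(F)$, and since $e \in Z_K$ (as $\varphi(e,K) \cap K = K \neq \emptyset$), the union simplifies to $\partial(F)$, giving the desired equality.

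The core step is therefore: for every $F \in Closed(G)$ and every $z \in G$, one has $\partial(Fz) = \partial(F)$. Here I would use that $R+id \colon G \curvearrowright G+_{\partial} Y$ is an action by homeomorphisms: fixing $z$, the map $(R+id)(z,\_)$ is a self-homeomorphism of $G+_\partial Y$ which restricts to right multiplication by $z$ on $G$ and to the identity on $Y$. Homeomorphisms send closures to closures, so applying $(R+id)(z,\_)$ to
\[
Cl_{G+_{\partial}Y}(F) = F \cup \partial(F)
\]
yields $Cl_{G+_{\partial}Y}(Fz) = Fz \cup \partial(F)$; but the closure formula applied directly to $Fz$ gives $Fz \cup \partial(Fz)$, and since $Fz$ and $Y$ are disjoint we can read off $\partial(Fz) = \partial(F)$.

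With this invariance, the finite union $\bigcup_{z \in Z_K} \partial(Fz)$ reduces to $\partial(F)$, and comparing with the preceding proposition gives $(\partial_{\Pi_K})_{\Lambda_K}(F) = \partial(F)$ for all $F \in Closed(G)$, which is exactly the claim. I do not expect any real obstacle here: the only subtlety is checking that the homeomorphism argument genuinely splits the image of the closure into its $G$-part and its $Y$-part, and this is automatic because $R+id$ preserves the decomposition $G \dot{\cup} Y$ setwise.
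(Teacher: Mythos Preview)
Your proof is correct and follows essentially the same approach as the paper: both use the earlier formula $(\partial_{\Pi_K})_{\Lambda_K}(F) = \bigcup_{z\in Z_K}\partial(Fz)$ and the right-translation invariance $\partial(Fz)=\partial(F)$ coming from quasi-perspectivity to collapse the union to $\partial(F)$. You simply spell out in more detail why $R+id$ being a homeomorphism forces $\partial(Fz)=\partial(F)$, whereas the paper states this in one line.
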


\begin{proof}Since $R+id$ is continuous, we have that $\forall g \in G, \ \partial(Fg) = \partial(F)$. Thus, $(\partial_{\Pi_{K}})_{\Lambda_{K}}(F) = \bigcup_{z \in Z_{K}}\partial(Fz) = \bigcup_{z \in Z_{K}}\partial(F) = \partial(F)$.
\end{proof}

We are not able to generalize the definition of quasi-perspectivity for the compact spaces of the form $X+_{f}Y$, since there is no clear right action established on $X$. So, let's work a bit formally:

\begin{defi}We denote by $qPers(\varphi)$ the full subcategory of $Comp(\varphi)$ where the objects are images of the functor $\bar{\Pi}_{K}|_{qPers(G)}$ (we saw that this do not depend of the choice of the compact $K$).
\end{defi}

\begin{prop}\label{identidade2}Let $X+_{f}Y \in qPers(\varphi)$. Then, $\forall K$ fundamental domain, $f = (f_{\Lambda_{K}})_{\Pi_{K}}$.
\end{prop}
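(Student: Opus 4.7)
The plan is straightforward: the statement is essentially a corollary of the two propositions immediately preceding it (\textbf{Proposition \ref{dependencia1}} and \textbf{Proposition \ref{identidade1}}), so the strategy is simply to unpack the definition of $qPers(\varphi)$ and chain these two equalities.

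First I would unpack the definition. Since $X+_{f}Y$ lies in $qPers(\varphi)$, by definition there exists some $G+_{\partial}Y \in qPers(G)$ and some fundamental domain $K_{0} \subseteq X$ such that $\bar{\Pi}_{K_{0}}(G+_{\partial}Y) = X+_{f}Y$. This is exactly the statement $f = \partial_{\Pi_{K_{0}}}$. The key observation of \textbf{Proposition \ref{dependencia1}} is that this does not depend on the chosen fundamental domain, so for the given $K$ we also have $f = \partial_{\Pi_{K}}$.

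Next I apply $\Lambda_{K}$ on the right: $f_{\Lambda_{K}} = (\partial_{\Pi_{K}})_{\Lambda_{K}}$. By \textbf{Proposition \ref{identidade1}}, which relies on the right-multiplication equivariance built into the definition of quasi-perspectivity, the right hand side is exactly $\partial$. Therefore $f_{\Lambda_{K}} = \partial$. Finally, applying $\Pi_{K}$ again yields
$$(f_{\Lambda_{K}})_{\Pi_{K}} = \partial_{\Pi_{K}} = f,$$
which is the desired identity.

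There is no real obstacle here: the hard work has already been done in establishing \textbf{Proposition \ref{dependencia1}} (independence of fundamental domain, which used the right action to absorb the translates $\Pi_{K}(S)h^{-1}$) and \textbf{Proposition \ref{identidade1}} (which used $\partial(Fg) = \partial(F)$ for $g \in G$). The proof is just a two-line composition of these facts after translating from the definition. I would present it exactly as above, making explicit that the \textbf{Proposition \ref{dependencia1}} step is what allows us to move from the specific $K_{0}$ that witnesses $X+_{f}Y \in qPers(\varphi)$ to the arbitrary fundamental domain $K$ in the statement.
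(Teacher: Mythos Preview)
Your proposal is correct and follows essentially the same approach as the paper's own proof. The paper compresses your step using \textbf{Proposition \ref{dependencia1}} into the parenthetical remark ``and any choice of the fundamental domain $K$'', but otherwise the chain $f = \partial_{\Pi_{K}} \Rightarrow f_{\Lambda_{K}} = (\partial_{\Pi_{K}})_{\Lambda_{K}} = \partial \Rightarrow (f_{\Lambda_{K}})_{\Pi_{K}} = \partial_{\Pi_{K}} = f$ is identical.
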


\begin{proof}Since $X+_{f}Y \in qPers(\varphi)$, we have that $f = \partial_{\Pi_{K}}$ for some $\partial$ such that $G+_{\partial}Y \in qPers(G)$ (and any choice of the fundamental domain $K$). So, $f_{\Lambda_{K}} = (\partial_{\Pi_{K}})_{\Lambda_{K}} = \partial$, which implies that $(f_{\Lambda_{K}})_{\Pi_{K}} = \partial_{\Pi_{K}} = f$.
\end{proof}

\begin{prop}\label{dependencia2}Let $X+_{f}Y \in qPers(\varphi)$ and $K,K'\subseteq X$ fundamental domains. Then, $f_{\Lambda_{K}} = f_{\Lambda_{K'}}$.
\end{prop}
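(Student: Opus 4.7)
The plan is to reduce the equality $f_{\Lambda_K} = f_{\Lambda_{K'}}$ to the already-established independence of $\partial_{\Pi_K}$ from $K$, by unwinding the definition of $qPers(\varphi)$. Since $X+_{f}Y \in qPers(\varphi)$, the definition gives a quasi-perspectivity $G+_{\partial}Y \in qPers(G)$ and some fundamental domain $K_{0}$ such that $X+_{f}Y = \bar{\Pi}_{K_{0}}(G+_{\partial}Y)$; in particular $f = \partial_{\Pi_{K_{0}}}$.

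First I would invoke Proposition \ref{dependencia1} (applied inside the quasi-perspectivity $G+_{\partial}Y$): for any two fundamental domains $K, K' \subseteq X$ one has $\partial_{\Pi_{K}} = \partial_{\Pi_{K'}} = \partial_{\Pi_{K_{0}}} = f$. Thus $f$ admits the representation $f = \partial_{\Pi_{K}}$ for \emph{every} fundamental domain $K$, with the same $\partial$.

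Next I would apply Proposition \ref{identidade1} (also valid in $G+_{\partial}Y$) to compute $f_{\Lambda_{K}}$. We have
$$f_{\Lambda_{K}} = (\partial_{\Pi_{K}})_{\Lambda_{K}} = \partial,$$
and exactly the same computation with $K'$ in place of $K$ gives $f_{\Lambda_{K'}} = \partial$. Hence $f_{\Lambda_{K}} = \partial = f_{\Lambda_{K'}}$, which is the desired equality.

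No genuine obstacle arises here; the only point requiring care is that the two previous propositions must be applied to the \emph{same} ambient quasi-perspectivity $G+_{\partial}Y$ furnished by the definition of $qPers(\varphi)$, so that the map $\partial$ in the expressions $(\partial_{\Pi_{K}})_{\Lambda_{K}}$ and $(\partial_{\Pi_{K'}})_{\Lambda_{K'}}$ really is one and the same. Once this is observed, the proof is a direct concatenation of Propositions \ref{dependencia1} and \ref{identidade1}.
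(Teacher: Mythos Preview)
Your proposal is correct and follows essentially the same approach as the paper: unwind the definition of $qPers(\varphi)$ to obtain a single $G+_{\partial}Y \in qPers(G)$, use Proposition~\ref{dependencia1} to identify $f$ with $\partial_{\Pi_K}$ for any fundamental domain, and then apply Proposition~\ref{identidade1} to conclude $f_{\Lambda_K} = (\partial_{\Pi_K})_{\Lambda_K} = \partial = (\partial_{\Pi_{K'}})_{\Lambda_{K'}} = f_{\Lambda_{K'}}$. The paper's proof is simply a more compressed version of yours.
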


\begin{proof}Since $X+_{f}Y \in qPers(\varphi)$, there exists $G+_{\partial}Y \in qPers(G)$ such that $\partial_{\Pi_{K}} = \partial_{\Pi_{K'}} = f$. Thus, $f_{\Lambda_{K}} = (\partial_{\Pi_{K}})_{\Lambda_{K}} = \partial = (\partial_{\Pi_{K'}})_{\Lambda_{K'}} = f_{\Lambda_{K'}}$.
\end{proof}

And then, we are able to present our first isomorphism of categories:

\begin{teo}$qPers(G)$ and $qPers(\varphi)$ are isomorphic.
\end{teo}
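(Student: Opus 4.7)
The plan is to show that the functors $\bar{\Pi}_K$ and $\bar{\Lambda}_K$ (for any choice of fundamental domain $K$) restrict to mutually inverse functors between $qPers(G)$ and $qPers(\varphi)$. The technology is already assembled in Propositions \ref{dependencia1}, \ref{identidade1}, \ref{identidade2} and \ref{dependencia2}; the theorem is essentially a bookkeeping argument.

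First, fix a fundamental domain $K \subseteq X$. The restriction $\bar{\Pi}_K|_{qPers(G)}$ lands in $qPers(\varphi)$ tautologically, since $qPers(\varphi)$ is defined as the image of $\bar{\Pi}_K|_{qPers(G)}$ (and Proposition \ref{dependencia1} guarantees this image does not depend on $K$). Next, I show that $\bar{\Lambda}_K|_{qPers(\varphi)}$ lands in $qPers(G)$: if $X +_f Y \in qPers(\varphi)$, then by definition $f = \partial_{\Pi_K}$ for some $\partial$ with $G+_\partial Y \in qPers(G)$, and Proposition \ref{identidade1} gives $f_{\Lambda_K} = (\partial_{\Pi_K})_{\Lambda_K} = \partial$, so $\bar{\Lambda}_K(X+_f Y) = G+_\partial Y \in qPers(G)$. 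In particular this also shows that $\bar{\Lambda}_K|_{qPers(\varphi)}$ does not depend on $K$ (which is also direct from Proposition \ref{dependencia2}).

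For the compositions, on objects: $\bar{\Lambda}_K \circ \bar{\Pi}_K(G +_\partial Y) = G +_{(\partial_{\Pi_K})_{\Lambda_K}} Y = G +_\partial Y$ by Proposition \ref{identidade1}, and $\bar{\Pi}_K \circ \bar{\Lambda}_K(X +_f Y) = X +_{(f_{\Lambda_K})_{\Pi_K}} Y = X +_f Y$ by Proposition \ref{identidade2}. On morphisms, both functors act on a morphism $id + \phi$ by keeping $\phi$ unchanged and only altering the topology of the source and target (the morphism is still given by the disjoint union map $id + \phi$), so $\bar{\Lambda}_K \circ \bar{\Pi}_K (id + \phi) = id + \phi$ and $\bar{\Pi}_K \circ \bar{\Lambda}_K (id + \phi) = id + \phi$ literally, agreeing with the identity functors.

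The one point that requires a tiny bit of care, and which I expect to be the main (mild) obstacle, is making sure that the equality $\bar{\Lambda}_K \circ \bar{\Pi}_K = id_{qPers(G)}$ is a genuine equality of functors and not merely a natural isomorphism: this works because the underlying sets $G \dot{\cup} Y$ are identified on the nose, and Proposition \ref{identidade1} gives equality of the admissible maps $\partial = (\partial_{\Pi_K})_{\Lambda_K}$, not just homeomorphism of the resulting topologies. With this in hand we conclude that $\bar{\Pi}_K|_{qPers(G)}$ and $\bar{\Lambda}_K|_{qPers(\varphi)}$ are strict inverses, giving the isomorphism of categories.
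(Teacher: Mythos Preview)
Your proposal is correct and follows essentially the same approach as the paper's own proof: both restrict $\bar{\Pi}_K$ and $\bar{\Lambda}_K$ to the quasi-perspectivity subcategories, invoke Propositions \ref{dependencia1} and \ref{dependencia2} for independence of $K$, and Propositions \ref{identidade1} and \ref{identidade2} for the identities $\Lambda\circ\Pi = id_{qPers(G)}$ and $\Pi\circ\Lambda = id_{qPers(\varphi)}$. Your write-up is slightly more explicit than the paper's (in particular on morphisms and on the equality being strict rather than up to isomorphism), but the substance is the same.
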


\begin{proof}Let $\Pi: qPers(G) \rightarrow qPers(\varphi)$ and $\Lambda: qPers(\varphi) \rightarrow qPers(G)$ be the restrictions of $\bar{\Pi}_{K}$ and $\bar{\Lambda}_{K}$ respectively (by the definition of $qPers(\varphi)$ the codomain of $\Pi$ is correct and by \textbf{Proposition \ref{identidade1}} the codomain of $\Lambda$ is correct). We have, by \textbf{Propositions \ref{dependencia1}} and \textbf{\ref{dependencia2}}, that such restrictions do not depend of the choice of the compact $K$. And, by \textbf{Propositions \ref{identidade1}} and \textbf{\ref{identidade2}}, it follows that $\Lambda \circ \Pi = id_{qPers(G)}$ and $\Pi \circ \Lambda = id_{qPers(\varphi)}$.
\end{proof}

For geometric and even topological purposes, it is interesting to consider only Hausdorff compactifications of groups and spaces in general. So, we restrict the categories even further.

\subsection{Perspectivity}

\begin{defi} We say that $G+_{\partial}Y \in qPers(G)$ is perspective if it is Hausdorff. We denote by $Pers(G)$ the full subcategory of $qPers(G)$ whose objects are perspectives and $MPers(G)$ the full subcategory of $Pers(G)$ whose objects are metrizable.
\end{defi}

\begin{prop}\label{somahausdorff}If $G+_{\partial}Y$ is perspective, then $X+_{\partial_{\Pi_{K}}}Y$ is Hausdorff.
\end{prop}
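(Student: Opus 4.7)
The plan is to obtain the conclusion as a direct application of Corollary \ref{functorausdorff}, after checking that each of its hypotheses is met in the present setting. The relevant dictionary between the two notations is: the Hausdorff compact sum in the corollary is $G+_{\partial}Y$ (so ``their $X$'' becomes our $G$ and ``their $W$'' becomes our $Y$), while ``their $Y$'' becomes our $X$, the admissible map ``$\Pi$'' becomes $\Pi_{K}\colon Closed(X)\rightarrow Closed(G)$ and the admissible map ``$\Lambda$'' becomes $\Lambda_{K}\colon Closed(G)\rightarrow Closed(X)$.

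First I would verify the two purely topological hypotheses: $G+_{\partial}Y$ is Hausdorff by the definition of perspectivity; $G$ is locally compact (being discrete) and $X$ is locally compact by standing assumption. Next I would verify the two admissible-map conditions. For the first one, namely that $\Pi_{K}(C)$ is compact whenever $C\subseteq X$ is compact, I use that $G$ is discrete, so it suffices to check finiteness, which is precisely the content of proper discontinuity of $\varphi$: only finitely many $g\in G$ satisfy $\varphi(g,K)\cap C\neq\emptyset$. For the second one, $\Lambda_{K}(G)=\varphi(G,K)=X$ holds because $K$ is a fundamental domain.

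The remaining hypothesis is the identity $(\partial_{\Pi_{K}})_{\Lambda_{K}}=\partial$, and this is exactly Proposition \ref{identidade1}, which was proved using the continuity of the right multiplication $R+id$ guaranteed by quasi-perspectivity of $G+_{\partial}Y$. With all the hypotheses of Corollary \ref{functorausdorff} in place, the corollary yields that $X+_{\partial_{\Pi_{K}}}Y$ is Hausdorff, which is exactly the statement to be proved.

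No step should be a genuine obstacle: the real content was already packed into the preceding proposition (the fixed-point equation $(\partial_{\Pi_{K}})_{\Lambda_{K}}=\partial$, coming from the right-multiplication invariance) and into the cube-lemma machinery distilled in Corollary \ref{functorausdorff}. The only thing to be slightly careful about is matching the variances and roles of $\Pi$ and $\Lambda$ correctly when transporting the corollary to this setting, since in the corollary the ``source'' locally compact space of the conclusion plays the role of the codomain of $\Lambda$, which is opposite to how $\Pi_{K}$ and $\Lambda_{K}$ are named here.
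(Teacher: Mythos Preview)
Your proof is correct and follows exactly the same route as the paper: verify the hypotheses of Corollary \ref{functorausdorff} (compactness of $\Pi_{K}(C)$ via proper discontinuity, $\Lambda_{K}(G)=X$ via cocompactness, and $(\partial_{\Pi_{K}})_{\Lambda_{K}}=\partial$ via Proposition \ref{identidade1}) and then invoke the corollary. The paper's proof is simply a terser version of what you wrote.
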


\begin{proof}Let $C \subseteq X$ be a compact. Since $\varphi$ is properly discontinuous, we have that $\Pi_{K}(C) = \{g \in G: \varphi(g,K)\cap C \neq \emptyset\}$ is finite and then compact. We have also that $\Lambda_{K}(G) = \varphi(G,K) = X$ and $\partial = (\partial_{\Pi_{K}})_{\Lambda_{K}}$. Then, by the \textbf{Corollary \ref{functorausdorff}} it follows that $X+_{\partial_{\Pi_{K}}}Y$ is Hausdorff.
\end{proof}

To have a direct comprehension of the objects in $qPers(\varphi)$ that comes from objects in $Pers(G)$, let's present a equivalent definition of perspectivity:

\begin{prop}$G+_{\partial}Y$ is perspective if and only if it is Hausdorff and $\forall u \in \mathcal{U}_{\partial}, \ \forall C \subseteq G$ compact, $\#\{g \in G: gC \notin Small(u)\} < \aleph_{0}$, where $\mathcal{U}_{\partial}$ is the only uniform structure compatible with the topology of $G+_{\partial}Y$.

\end{prop}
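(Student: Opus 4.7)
The plan is to exploit the fact that perspectivity of $G+_{\partial}Y$ bundles together (i) Hausdorff compactness (so there is a unique compatible uniform structure $\mathcal{U}_\partial$), (ii) continuity of the left action $L+\psi$, and (iii) continuity of the right action $R+id$ (where the extension to $Y$ is the identity). Since $G$ sits discretely inside $G+_{\partial}Y$, a compact subset $C\subseteq G$ is simply a finite subset, so the chunk condition is a pigeonhole-friendly statement. The whole equivalence is essentially a translation between continuity of the right action near $Y$ and the statement ``left translates $gC$ shrink as $g$ runs off to infinity.''

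For the forward direction, assume $G+_{\partial}Y$ is perspective, fix $u\in \mathcal{U}_\partial$ and a finite $C\subseteq G$, and suppose for contradiction that $S=\{g\in G : gC\notin \mathrm{Small}(u)\}$ is infinite. Since $G+_{\partial}Y$ is compact and every element of $G$ is isolated, the infinite set $S$ accumulates at some $y\in Y$, giving a net $g_\alpha\in S$ with $g_\alpha\to y$. For each $\alpha$ pick witnesses $c_\alpha,d_\alpha\in C$ with $(g_\alpha c_\alpha,g_\alpha d_\alpha)\notin u$; finiteness of $C$ lets me pass to a subnet on which $c_\alpha=c$ and $d_\alpha=d$ are constant. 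Continuity of $R+id$ applied to the elements $c$ and $d$ of $G$ gives $g_\alpha c \to y$ and $g_\alpha d\to y$, so $(g_\alpha c, g_\alpha d)\to (y,y)$. Since $u$ is a neighbourhood of the diagonal, the pair lies in $u$ eventually, contradicting the choice of witnesses.

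For the converse, assume $G+_{\partial}Y$ is Hausdorff and satisfies the chunk condition; the left action is continuous by hypothesis (objects of $Comp(G)$), so I only need to show $R+id$ is continuous. Since $G$ is discrete, it suffices to verify continuity of the map $\rho_c:g\mapsto gc$, $y\mapsto y$, for each $c\in G$, and only at boundary points $y\in Y$ (points of $G$ are isolated). Given a net $x_\alpha\to y$, split it into a $Y$-valued subnet (mapped identically and therefore convergent to $y$) and a $G$-valued subnet $g_\alpha\to y$. For any entourage $u$, pick a symmetric $v$ with $v\circ v\subseteq u$ and apply the chunk hypothesis to the finite set $C=\{e,c\}$: only finitely many $g$ satisfy $(g,gc)\notin v$, so eventually $(g_\alpha,g_\alpha c)\in v$; combined with $(g_\alpha,y)\in v$ eventually (from $g_\alpha\to y$), this yields $(g_\alpha c, y)\in v\circ v\subseteq u$, so $g_\alpha c\to y$. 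The inverse $\rho_{c^{-1}}$ is handled symmetrically, so $\rho_c$ is a homeomorphism and $R+id$ is continuous, making $G+_{\partial}Y$ perspective.

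The main obstacle I anticipate is purely bookkeeping: making sure that the argument using accumulation in the forward direction is valid (in particular that the discreteness of $G$ in $G+_{\partial}Y$ forces the accumulation point into $Y$), and that in the backward direction the triangle inequality via $v\circ v\subseteq u$ is written in the right order relative to the symmetry of $v$; neither is deep, but both need care in the chain of entourage containments.
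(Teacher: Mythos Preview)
Your proof is correct and follows essentially the same strategy as the paper: both directions hinge on reducing to two-element compacts $C=\{e,c\}$ (or $\{k_1,k_2\}$), locating an accumulation point in $Y$, and closing a triangle via a symmetric entourage $v$ with $v^2\subseteq u$ (the paper uses $v^3$). The only difference is cosmetic: you phrase everything with nets and continuity of $\rho_c$ at boundary points, while the paper works directly with the closure operator, showing $\partial(Fg^{-1})\subseteq\partial(F)$ and deriving the contradiction from $\partial(B_1)=\partial(B_2)$; these are equivalent expressions of the same idea.
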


\begin{proof}$(\Leftarrow)$ Let $F \in Closed(G)$. If $F$ is compact, then $Fg^{-1}$ is compact and then $\partial(F) = \partial(Fg^{-1}) = \emptyset$. So, let's suppose that $\partial(Fg^{-1}) \nsubseteq \partial(F)$. This implies that there exists $x \in \partial(Fg^{-1}) - \partial(F)$. Take $u \in \mathcal{U}_{\partial}$ such that $\forall y \in \partial(F), \ (x,y) \notin u$ (it exists since $\partial(F)$ is compact). Let $v \in \mathcal{U}_{\partial}$ symmetric such that $v^{3} \subseteq u$ and take $A = \mathfrak{B}(x,v)\cap Fg^{-1}$. We have that $A$ is infinite, since $x \in Cl_{G+_{\partial}Y}(Fg^{-1})$. Since $v$ is symmetric, we have that the set $\{h \in G: (h,hg)\notin v\}$ is finite, which implies that there exists $A' \subseteq A$ such that $A - A'$ is finite and $\forall h \in A', (h,hg)\in v$. We have that $A'g$ is infinite (so, it is not compact), $A'g \subseteq F$ and $\partial(A'g) \subseteq \partial(F)$. Since $G+_{\partial}Y$ is compact, we have that $\partial(A'g) \neq \emptyset$. Let $y \in \partial(A'g)$ and $hg \in A'g$ such that $(hg,y)\in v$. So, $(x,h),(h,hg),(hg,y)\in v$, which implies that $(x,y)\in v^{3}\subseteq u$, a contradiction.

Thus, $\forall F \in Closed(G), \ \partial(Fg^{-1}) \subseteq \partial(F)$, which implies that the map $R(\_,g)+id: G+_{\partial}Y \rightarrow G+_{\partial}Y$ is continuous (and $R+id$ is continuous, since $G$ is discrete).

$(\Rightarrow)$ Let $K \subseteq G$ be a compact and $u \in \mathcal{U}_{\partial}$.

Let's consider the case that $K = \{k_{1},k_{2}\}$ (the case $\#K = 1$ is trivial). Let $A = \{g\in G: gK \notin Small(u)\}, \ A_{1} = Ak_{1}$ and $A_{2} = Ak_{2}$. Let's suppose that $\partial(A_{1}) \neq \emptyset$ and let $y \in \partial(A_{1})$. Let $v\in \mathcal{U}_{\partial}$ symmetric such that $v^{2} \subseteq u$. Since $y \in Cl_{G+_{\partial}Y}(A_{1})$, there exists a non empty set $B \subseteq A$ such that $\forall h \in B$, $(hk_{1},y) \in v$ and $\forall h \in A-B, \ (hk_{1},y) \notin v$. Let $B_{1} = Bk_{1}$ and $B_{2} = Bk_{2}$. We have that $B_{1} \subseteq A_{1}$ and $B_{2} \subseteq A_{2}$. We have that $y \in Cl_{G+_{\partial}Y}(B_{1})$, which implies that $y \in f(B_{1})$. Let $g \in G$ such that $k_{1}g = k_{2}$. Since $R(\_,g)+id$ is a homeomorphism, we have that $\forall F \in Closed(G), \ \partial(Fg) = \partial(F)$. In particular, we have that $\partial(B_{1}) = \partial(B_{1}g) = \partial(Bk_{1}g) = \partial(Bk_{2}) = \partial(B_{2})$. Since $y \in \partial(B_{2})$, there exists $h \in B$ such that $(y,hk_{2})\in v$. But $(hk_{1},y) \in v$, by the definition of $B$. So, $(hk_{1},hk_{2}) \in v^{2} \subseteq u$, a contradiction (because $B \subseteq A$). So, $\partial(A_{1}) = \emptyset$, which implies that $A_{1}$ is compact (since $G+_{\partial}Y$ is compact) and then $A_{1}$ is finite. Since $A_{1} = Ak_{1}$, we have that $A$ is finite.

In the general case, we have that $\{g \in G: gK \notin Small(u)\} =$ $\\ \bigcup_{k,k'\in K}\{g \in G: g\{k,k'\}\notin Small(u)\}$, that is finite because it is a finite union (since $K$ is finite) of finite sets.
\end{proof}

Now, we are able to generalize the definition of perspectivity:

\begin{defi}Let $G$ be a group, $X,Y$ Hausdorff spaces with $X$ locally compact and $Y$ compact, $\psi: G \curvearrowright Y$ an action by homeomorphisms and $\varphi: G \curvearrowright X$ a properly discontinuous cocompact action. We say that a compact space of the form $X+_{f}Y$ is perspective if it is Hausdorff, the action $\varphi+\phi: G \curvearrowright X+_{f}Y$ is by homeomorphisms and $\forall u \in \mathcal{U}_{f}, \ \forall K \subseteq X$ compact, $\#\{g \in G: \varphi(g,K)\notin Small(u)\} < \aleph_{0}$, where $\mathcal{U}_{f}$ is the only uniform structure compatible with the topology of $X+_{f}Y$. We denote by $Pers(\varphi)$ the full subcategory of $Comp(\varphi)$ whose objects are perspectives and $MPers(\varphi)$ the full subcategory of $Pers(\varphi)$ whose objects are metrizable.
\end{defi}

\begin{lema}\label{fechopequeno}Let $G+_{\partial}Y \in Comp(G)$ be a Hausdorff space, $\mathcal{U}_{Y}$ the uniform structure of $Y$ and $v \in \mathcal{U}_{Y}$. If $Z \subseteq G$ is an infinite subset, then there exists $Z' \subseteq Z$ infinite such that $\partial(Z') \in Small(v)$.
\end{lema}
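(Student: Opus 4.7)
The plan is to exploit the unique compatible uniform structure $\mathcal{U}$ on the compact Hausdorff space $G+_{\partial}Y$, whose trace on $Y\times Y$ realizes $\mathcal{U}_{Y}$, and to concentrate $Z$ inside a small closed neighborhood of a single point of $\partial(Z)$.

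First I would set up the entourages. Pick $\tilde v \in \mathcal{U}$ with $\tilde v \cap (Y\times Y) \subseteq v$. Using the standard fact that closed symmetric entourages form a basis of a uniform structure, choose a closed symmetric $\tilde w \in \mathcal{U}$ with $\tilde w \circ \tilde w \subseteq \tilde v$. Then $\mathfrak{B}(y,\tilde w) = \{x \in G+_{\partial}Y : (x,y)\in \tilde w\}$ is a closed neighborhood of each $y$.

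Next I would locate a suitable boundary point. Because $G$ is open and discrete in $G+_{\partial}Y$, no infinite subset of $G$ is compact, and compactness of $G+_{\partial}Y$ forces $Cl(Z)\nsubseteq G$; hence $\partial(Z) = Cl(Z)\cap Y \neq \emptyset$. Pick $y \in \partial(Z)$. For every open neighborhood $U$ of $y$, the set $U\cap Z$ must be infinite: otherwise $U\cap Z$ would be a finite subset of $G$, hence closed in $G+_{\partial}Y$, so $U\setminus (U\cap Z)$ would be an open neighborhood of $y$ disjoint from $Z$, contradicting $y\in Cl(Z)$. Applying this to an open neighborhood of $y$ contained in $\mathfrak{B}(y,\tilde w)$ (which exists since $\tilde w$ is an entourage), the set $Z' = Z \cap \mathfrak{B}(y,\tilde w)$ is infinite.

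Finally I would verify smallness. Since $\mathfrak{B}(y,\tilde w)$ is closed, $Cl(Z') \subseteq \mathfrak{B}(y,\tilde w)$, so $\partial(Z') \subseteq \mathfrak{B}(y,\tilde w) \cap Y$. For any $y_{1},y_{2} \in \partial(Z')$, symmetry of $\tilde w$ gives $(y_{1},y),(y,y_{2}) \in \tilde w$, whence $(y_{1},y_{2}) \in \tilde w \circ \tilde w \subseteq \tilde v$; as both points lie in $Y$, this yields $(y_{1},y_{2}) \in v$. Therefore $\partial(Z') \in Small(v)$. The only mildly delicate step is ensuring $\partial(Z)\neq \emptyset$, which rests on combining compactness of $G+_{\partial}Y$ with discreteness of $G$; no other obstacle arises.
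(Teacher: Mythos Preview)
Your proof is correct. Both your argument and the paper's use compactness of $G+_{\partial}Y$ to concentrate $Z$ inside a uniformly small region, and then a triangle-inequality estimate to bound the diameter of $\partial(Z')$. The difference is in how the small region is produced: the paper takes a symmetric $u'$ with $u'^{3}\subseteq u$, covers the compact space by finitely many $u'$-small sets, and applies pigeonhole to pick a piece $C_{i_0}$ meeting $Z$ infinitely; you instead first observe $\partial(Z)\neq\emptyset$, fix a single accumulation point $y\in\partial(Z)$, and take $Z' = Z\cap\mathfrak{B}(y,\tilde w)$ for a closed symmetric $\tilde w$ with $\tilde w^{2}\subseteq\tilde v$. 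Your route is marginally sharper (two compositions instead of three, since your pivot point $y$ already lies in $Y$), and it makes explicit the fact---left implicit in the cover argument---that some point of $Y$ must lie in the closure of $Z$. The paper's cover argument, on the other hand, avoids singling out any particular boundary point. Both are standard and equally short.
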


\begin{proof}Let $u \in \mathcal{U}_{\partial}$ such that $v = u \cap Y^{2}$ and $u' \in \mathcal{U}_{\partial}$ symmetric such that $u'^{3} \subseteq u$. Since $G+_{\partial}Y$ is compact, there exists $\{C_{1},...C_{n}\}$ cover such that $\forall i \in \{1,...,n\}, \ C_{i} \in Small(u')$. Since $Z$ is infinite, there exists $i_{0}$ such that $Z \cap C_{i_{0}}$ is infinite. Take $Z' = Z \cap C_{i_{0}}$. Let $y,y' \in \partial (Z')$ and $U,U'$ $u'$-small neighbourhoods of $y$ and $y'$, respectively. We have that $Z'\cap U \neq \emptyset$ and $Z'\cap U' \neq \emptyset$. So, take $z \in Z'\cap U \neq \emptyset$ and $z'\in Z'\cap U' \neq \emptyset$. We have that $(y,z)\in u', \ (z,z')\in u'$ and $(z',y') \in u'$, which implies that $(y,y')\in u'^{3} \subseteq u$ and then $(y,y')\in v$.
\end{proof}

\begin{prop}\label{perspectividade1}If $G+_{\partial}Y$ is perspective, then $X+_{\partial_{\Pi_{K}}}Y$ is perspective.
\end{prop}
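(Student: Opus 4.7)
The plan is to verify the three defining conditions of perspectivity for $X +_{\partial_{\Pi_{K}}} Y$. Hausdorffness is Proposition~\ref{somahausdorff} and the action being by homeomorphisms is Proposition~\ref{functorobjeto1}, so the only substantive content is the smallness condition: for every compact $K' \subseteq X$ and every $u \in \mathcal{U}_{f}$, the set $\{g \in G : \varphi(g,K') \notin Small(u)\}$ is finite.

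The key auxiliary object will be a continuous map
$$\tilde{\Psi}_{K'}: (G +_{\partial} Y) \times K' \longrightarrow X +_{\partial_{\Pi_{K}}} Y,$$
defined by $\tilde{\Psi}_{K'}(g,k) = \varphi(g,k)$ for $g \in G$, $k \in K'$ and $\tilde{\Psi}_{K'}(y,k) = y$ for $y \in Y$, $k \in K'$. To check continuity, fix $W$ closed in $X +_{\partial_{\Pi_{K}}} Y$; the preimage decomposes as $A \cup ((W \cap Y) \times K')$ with $A = \{(g,k) \in G \times K' : \varphi(g,k) \in W\}$, and the second set is clearly closed. If a net $(g_{\alpha}, k_{\alpha})$ in $A$ converges to $(y,k) \in Y \times K'$, then $g_{\alpha} \in \Pi_{K'}(W \cap X)$ for every $\alpha$, so $y \in \partial(\Pi_{K'}(W \cap X))$. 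Since $K \cup K'$ is a fundamental domain and $G +_{\partial} Y$ is quasi-perspective, Proposition~\ref{dependencia1} gives $\partial_{\Pi_{K \cup K'}} = \partial_{\Pi_{K}}$, and together with the monotonicity $\Pi_{K'} \subseteq \Pi_{K \cup K'}$ this yields $y \in \partial_{\Pi_{K}}(W \cap X) \subseteq W$. Hence $(y,k) \in (W \cap Y) \times K'$, so the preimage is closed.

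Given continuity, the crucial observation is that $\tilde{\Psi}_{K'}(\{y\} \times K') = \{y\}$ for every $y \in Y$. Taking an open $u \in \mathcal{U}_{f}$ and, for each $y \in Y$, an open $V_{y} \ni y$ with $V_{y} \times V_{y} \subseteq u$, the tube lemma applied to the compact slice $\{y\} \times K' \subseteq \tilde{\Psi}_{K'}^{-1}(V_{y})$ produces an open neighborhood $A_{y}$ of $y$ in $G +_{\partial} Y$ with $\tilde{\Psi}_{K'}(A_{y} \times K') \subseteq V_{y}$. In particular, for every $g \in A_{y} \cap G$, $\varphi(g, K') \subseteq V_{y}$, which lies in $Small(u)$.

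Extract from $\{A_{y}\}_{y \in Y}$ a finite subcover $A_{y_{1}}, \ldots, A_{y_{p}}$ of $Y$. The complement $(G +_{\partial} Y) \setminus \bigcup_{i} A_{y_{i}}$ is closed in a compact space and disjoint from $Y$, hence a compact subset of the discrete group $G$, therefore finite. Outside this finite set every $g \in G$ belongs to some $A_{y_{i}}$, so $\varphi(g, K') \in Small(u)$, which establishes the smallness condition. The main obstacle is the continuity of $\tilde{\Psi}_{K'}$, and specifically the step where quasi-perspectivity is used to replace $\partial_{\Pi_{K'}}$ (whose values are not \emph{a priori} controlled by $W$ being closed in $X +_{\partial_{\Pi_{K}}} Y$ when $K'$ is not a fundamental domain) by $\partial_{\Pi_{K \cup K'}} = \partial_{\Pi_{K}}$; the remaining pieces are standard tube-lemma and compactness arguments.
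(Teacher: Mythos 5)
Your proof is correct, but it takes a genuinely different route from the paper's. The paper argues by contradiction and works entirely with entourages: assuming the set $Z$ of $g$ with $\varphi(g,K)$ not $u$-small is infinite, it uses \textbf{Lemma \ref{fechopequeno}} to extract an infinite $Z'\subseteq Z$ with $\partial(Z')$ $v$-small, uses the identity $(\partial_{\Pi_{K}})_{\Lambda_{K}}=\partial$ to see that all but finitely many of the witness points $\varphi(z,k_{z}),\varphi(z,k'_{z})$ fall into the $v$-neighbourhood of $\partial(Z')=\partial_{\Pi_{K}}(\varphi(Z',K))$, and derives the contradiction $(\varphi(z,k_{z}),\varphi(z,k'_{z}))\in v^{3}\subseteq u$; it then passes from the fundamental domain $K$ to an arbitrary compact $K'$ via $\partial_{\Pi_{K}}=\partial_{\Pi_{K\cup K'}}$. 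You instead package the entire estimate into the continuity of the evaluation map $\tilde{\Psi}_{K'}$ collapsing $\{y\}\times K'$ to $y$, and finish with the tube lemma and the compactness of $Y$; quasi-perspectivity (\textbf{Proposition \ref{dependencia1}}) is invoked exactly once, inside the continuity proof, to dominate $\partial_{\Pi_{K'}}$ by $\partial_{\Pi_{K\cup K'}}=\partial_{\Pi_{K}}$. Your route handles arbitrary $K'$ in a single pass and makes the geometric content explicit (translates of a compact set shrink uniformly as they approach the boundary), at the cost of introducing the auxiliary product space; the paper's route stays inside $G+_{\partial}Y$ but needs the two-step reduction and the $v^{3}$ bookkeeping. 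One minor omission: in the closedness argument for $\tilde{\Psi}_{K'}^{-1}(W)$ you only treat nets in $A$ accumulating on $Y\times K'$; you should also note the case of a limit in $G\times K'$, which is immediate because $G$ is open and discrete in $G+_{\partial}Y$, so the net is eventually constant in the first coordinate and one concludes by continuity of $\varphi(g,\_)$ and closedness of $W\cap X$ in $X$.
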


\begin{proof}Let's suppose that there exists an element $u \in \mathcal{U}_{\partial_{\Pi_{K}}}$ such that the set $Z = \{g \in G: \varphi(g,K)\notin Small(u)\}$ is infinite. Let $v \in \mathcal{U}_{\partial_{\Pi_{K}}}$ symmetric such that $v^{3}\subseteq u$. By the lemma above, there exists $Z' \subseteq Z$ a infinite subset such that $\partial(Z') \in Small(v\cap Y^{2}) \subseteq Small(v)$. Since $\forall z \in Z', \ \varphi(z,K)\notin Small(u)$, take $k_{z},k'_{z}\in K: \ (\varphi(z,k_{z}),\varphi(z,k'_{z})) \notin u$. Take $\mathfrak{B}(\partial_{\Pi_{K}}(\varphi(Z',K)),v)$. We have that $\{\varphi(z,k_{z})\}_{z \in Z'} - \mathfrak{B}(\partial_{\Pi_{K}}(\varphi(Z',K)),v)$ is finite, otherwise the set would have some cluster point outside of $\partial_{\Pi_{K}}(\varphi(Z',K))$, contradicting the fact that $\partial_{\Pi_{K}}(\{\varphi(z,k_{z})\}_{z\in Z'}) \subseteq \partial_{\Pi_{K}}(\varphi(Z',K))$. Analogously, we have that $\{\varphi(z,k'_{z})\}_{z\in Z'} - \mathfrak{B}(\partial_{\Pi_{K}}(\varphi(Z',K)),v)$ is finite. So, there exists $z \in Z'$ such that $\varphi(z,k_{z}),\varphi(z,k'_{z}) \in \mathfrak{B}(\partial_{\Pi_{K}}(\varphi(Z',K)),v)$, which implies that there are $y,y' \in \partial_{\Pi_{K}}(\varphi(Z',K))$ such that $(\varphi(z,k_{z}),y),(y',\varphi(z,k'_{z})\in v$. But $\partial_{\Pi_{K}}(\varphi(Z',K)) = (\partial_{\Pi_{K}})_{\Lambda_{K}}(Z') = \partial(Z') \in Small(v)$, which implies that $(y,y')\in v$. So, $(\varphi(z,k_{z}),\varphi(z,k'_{z})) \in v^{3} \subseteq u$, an absurd. Thus, $Z$ is finite.

Let $K' \subseteq X$ be a compact. We have that $K'' = K \cup K'$ is a fundamental domain. Since $G+_{\partial}Y$ is perspective, we have that $\partial_{ \Pi_{K}} = \partial_{\Pi_{K''}}$, which implies that $\forall u \in \mathcal{U}_{\partial_{\Pi_{K}}} = \mathcal{U}_{\partial_{ \Pi_{K''}}}$, the set $\{g \in G: \varphi(g,K'')\notin Small(u)\}$ is finite. But $\{g \in G: \varphi(g,K')\notin Small(u)\} \subseteq \{g \in G: \varphi(g,K'')\notin Small(u)\}$, which implies that $\{g \in G: \varphi(g,K')\notin Small(u)\}$ is finite. Thus, $X+_{\partial_{\Pi_{K}}}Y$ is perspective.

\end{proof}

Let's suppose that $X+_{f}Y$ is perspective.

\begin{prop}\label{identidade3}$f = (f_{\Lambda_{K}})_{\Pi_{K}}$
\end{prop}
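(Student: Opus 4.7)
The plan is to prove the two inclusions of the equality separately. The inclusion $f \subseteq (f_{\Lambda_K})_{\Pi_K}$ is essentially formal. Since $\Lambda_K\circ\Pi_K\supseteq id_{Closed(X)}$ (already established in the section) and $f$ is monotone (which follows from admissibility, because $f(A\cup B)=f(A)\cup f(B)$ gives $A\subseteq B\Rightarrow f(A)\subseteq f(B)$), for every $S\in Closed(X)$ we have $f(S)\subseteq f(\Lambda_K\circ\Pi_K(S))=(f_{\Lambda_K})_{\Pi_K}(S)$. This half does not use perspectivity, only that $K$ is a fundamental domain.

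For the reverse inclusion $(f_{\Lambda_K})_{\Pi_K}\subseteq f$, I would fix $S\in Closed(X)$ and pick an arbitrary $y\in (f_{\Lambda_K})_{\Pi_K}(S)=f(\varphi(\Pi_K(S),K))$. Since $f(A)=Cl_{X+_fY}(A)\cap Y$ for $A\in Closed(X)$ (by \textbf{Proposition \ref{bomcomp}}), there is a net $(x_\alpha)$ in $\varphi(\Pi_K(S),K)$ with $x_\alpha\to y$ in $X+_fY$. Write $x_\alpha=\varphi(g_\alpha,k_\alpha)$ with $g_\alpha\in \Pi_K(S)$ and $k_\alpha\in K$, and for each $\alpha$ choose a witness $k'_\alpha\in K$ with $s_\alpha:=\varphi(g_\alpha,k'_\alpha)\in S$; this is possible exactly because $g_\alpha\in\Pi_K(S)$.

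The key step is to show $s_\alpha\to y$ as well, which then forces $y\in Cl_{X+_fY}(S)\cap Y=f(S)$. First note that the $g_\alpha$ must escape every finite subset of $G$ eventually: if $\{g_{i_1},\ldots,g_{i_N}\}$ were a finite set containing $g_\alpha$ cofinally, then $x_\alpha$ would lie cofinally in the compact set $\varphi(\{g_{i_1},\ldots,g_{i_N}\},K)\subseteq X$, contradicting $x_\alpha\to y\in Y$. Now invoke perspectivity: for any entourage $u\in\mathcal{U}_f$ the set $\{g\in G:\varphi(g,K)\notin Small(u)\}$ is finite. Hence eventually $\varphi(g_\alpha,K)\in Small(u)$, and in particular $(x_\alpha,s_\alpha)\in u$ since both $x_\alpha$ and $s_\alpha$ belong to $\varphi(g_\alpha,K)$.

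Since this holds for every $u\in\mathcal{U}_f$, the net $(s_\alpha)$ is eventually $u$-close to $(x_\alpha)$ for any $u$; combined with $x_\alpha\to y$ this yields $s_\alpha\to y$. As $s_\alpha\in S\subseteq X$ and $y\in Y$, we conclude $y\in Cl_{X+_fY}(S)\cap Y=f(S)$, giving the desired containment $(f_{\Lambda_K})_{\Pi_K}(S)\subseteq f(S)$. The only real obstacle is the extraction of the parallel net $s_\alpha$ and the uniform control of $(x_\alpha,s_\alpha)$, and both are resolved precisely by the perspectivity condition that compact fundamental-domain translates become arbitrarily small near the boundary.
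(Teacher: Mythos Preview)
Your proposal is correct and follows essentially the same approach as the paper. Both prove the non-trivial inclusion $(f_{\Lambda_K})_{\Pi_K}\subseteq f$ by taking a point $y\in f(\varphi(\Pi_K(S),K))$, using perspectivity to ensure that all but finitely many translates $\varphi(g,K)$ are $u$-small, and then replacing an approximating point $\varphi(g,k)$ by a witness $s\in\varphi(g,K)\cap S$; the only difference is cosmetic, in that you phrase the argument with nets while the paper works directly with entourages (choosing $v$ with $v^2\subseteq u$ and producing a single suitable $g$ rather than passing to eventually-small tails). One minor remark: the identity $f(A)=Cl_{X+_fY}(A)\cap Y$ that you invoke follows from the proposition $Cl_{X+_fY}(A)=A\cup f(A)$ rather than from \textbf{Proposition~\ref{bomcomp}} itself.
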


\begin{proof}Let $S \in Closed(X)$. We have that $(f_{\Lambda_{K}})_{\Pi_{K}}(S) = f_{\Lambda_{K}}(\Pi_{K}(S)) = f(\bigcup\{\varphi(g,K): g\in \Pi_{K}(S)\})$ Let $x \in (f_{\Lambda_{K}})_{\Pi_{K}}(S)$ and $u,v \in \mathcal{U}_{f}$ such that $v^{2} \subseteq u$. Since $\{g \in G: \varphi(g,K) \notin Small(v)\}$ is finite, $\exists g \in \Pi_{K}(S)$, $\exists k \in K:$ $(\varphi(g,k),x) \in v$ and $\varphi(g,K) \in Small(v)$. Let $s \in \varphi(g,K)\cap S$ (this set is not empty since $g \in \Pi_{K}(S)$). We have that $(s,\varphi(g,k)),(\varphi(g,k),x) \in v$, which implies that $(s,x) \in v^{2} \subseteq u$. So, $\forall u \in \mathcal{U}_{f}$, there exists $s \in S$ such that $(s,x) \in u$, which implies that $x$ is a cluster point of $S$ (in $X+_{f}Y$) and then $x \in f(S)$. Thus, $(f_{\Lambda_{K}})_{\Pi_{K}}\subseteq f$. Since $f \subseteq (f_{\Lambda_{K}})_{\Pi_{K}}$ (without the hypothesis of perspectivity), it follows that $f = (f_{\Lambda_{K}})_{\Pi_{K}}$.
\end{proof}

A general result:

\begin{prop}\label{preservahausdorff}If $X+_{f}Y \in Comp(\varphi)$ is Hausdorff and $f = (f_{ \Lambda_{K}})_{\Pi_{K}}$, then $G+_{f_{\Lambda_{K}}}Y$ is Hausdorff.
\end{prop}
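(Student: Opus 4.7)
The plan is to apply the Hausdorff criterion for sums of spaces established in the Separation subsection directly to $G+_{f_{\Lambda_{K}}}Y$. Since $G$ is discrete (hence locally compact) and both $G$ and $Y$ are Hausdorff, I need to verify two conditions: (a) $f_{\Lambda_{K}}(C) = \emptyset$ for every compact $C \subseteq G$; and (b) for every $a,b \in Y$ there exist $A,B \in Closed(G)$ with $A \cup B = G$, $a \notin f_{\Lambda_{K}}(B)$, and $b \notin f_{\Lambda_{K}}(A)$.

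Condition (a) is immediate: any compact $C \subseteq G$ is finite since $G$ is discrete, so $\Lambda_{K}(C) = \varphi(C,K)$ is a finite union of compacts in $X$ and hence compact. Since $X+_{f}Y$ is Hausdorff, $f$ vanishes on compact subsets of $X$, giving $f_{\Lambda_{K}}(C) = f(\Lambda_{K}(C)) = \emptyset$.

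For condition (b), the strategy is to pull separation witnesses back from the $X$-side to the $G$-side via $\Pi_{K}$. Applying the Hausdorff criterion to $X+_{f}Y$, I obtain $A',B' \in Closed(X)$ with $A' \cup B' = X$, $a \notin f(B')$ and $b \notin f(A')$. Set $A = \Pi_{K}(A')$ and $B = \Pi_{K}(B')$. Admissibility of $\Pi_{K}$ yields $A \cup B = \Pi_{K}(A' \cup B') = \Pi_{K}(X) = G$, where the last equality uses that $K$ is a fundamental domain, so $\varphi(g,K)$ is a nonempty subset of $X$ for every $g \in G$. The hypothesis $f = (f_{\Lambda_{K}})_{\Pi_{K}}$ then supplies the essential identity
\[
f_{\Lambda_{K}}(B) \;=\; f_{\Lambda_{K}}(\Pi_{K}(B')) \;=\; (f_{\Lambda_{K}})_{\Pi_{K}}(B') \;=\; f(B'),
\]
so $a \notin f_{\Lambda_{K}}(B)$; symmetrically $b \notin f_{\Lambda_{K}}(A)$, completing the verification.

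No real obstacle arises: the proof is a direct verification once one recognizes that the hypothesis $f = (f_{\Lambda_{K}})_{\Pi_{K}}$ is precisely the identity needed to translate $X$-side witnesses into $G$-side witnesses through $\Pi_{K}$. This proposition is the natural partner of \textbf{Proposition \ref{somahausdorff}}, which handled the opposite direction of transfer by means of \textbf{Corollary \ref{functorausdorff}}, with the roles of $\Lambda_{K}$ and $\Pi_{K}$ interchanged.
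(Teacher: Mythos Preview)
Your proof is correct and follows essentially the same approach as the paper: the paper simply packages your verification of conditions (a) and (b) as an invocation of \textbf{Corollary \ref{functorausdorff}} (with $\Lambda_{K}$ playing the role of $\Pi$ there and $\Pi_{K}$ playing the role of $\Lambda$), checking exactly the three hypotheses $\Lambda_{K}(C)$ compact for $C$ compact, $\Pi_{K}(X)=G$, and $(f_{\Lambda_{K}})_{\Pi_{K}}=f$. Your final remark slightly mischaracterizes the paper, since the paper uses \textbf{Corollary \ref{functorausdorff}} for \emph{this} direction as well, not only for \textbf{Proposition \ref{somahausdorff}}.
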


\begin{proof}Let $C \subseteq G$ be a compact. Since $G$ is discrete, $C$ is finite, which implies that $\Lambda_{K}(C) = \varphi(C,K)$ is compact. We have also that $\Pi_{K}(X) = \{g \in G: \varphi(g,K)\cap X \neq \emptyset\} = G$ and $f = (f_{ \Lambda_{K}})_{\Pi_{K}}$. Thus, by the \textbf{Corollary \ref{functorausdorff}}, it follows that $G+_{f_{\Lambda_{K}}}Y$ is Hausdorff.
\end{proof}

To apply in our case (perspective):

\begin{cor}$G+_{f_{\Lambda_{K}}}Y$ is Hausdorff.
\eod\end{cor}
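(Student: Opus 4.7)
The statement is labeled as a corollary of \textbf{Proposition \ref{preservahausdorff}}, which says: if $X+_{f}Y \in Comp(\varphi)$ is Hausdorff and satisfies the fixed-point identity $f = (f_{\Lambda_{K}})_{\Pi_{K}}$, then $G+_{f_{\Lambda_{K}}}Y$ is Hausdorff. So the plan is simply to verify that both hypotheses of that proposition are available to us in the present setting, where the standing assumption is that $X+_{f}Y$ is perspective.

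First I would invoke the definition of perspectivity itself: a perspective space is by definition Hausdorff, so the first hypothesis of \textbf{Proposition \ref{preservahausdorff}} is immediate. Second, I would appeal to \textbf{Proposition \ref{identidade3}}, which was just established a few lines above under the very same hypothesis (that $X+_{f}Y$ is perspective), and which states exactly that $f = (f_{\Lambda_{K}})_{\Pi_{K}}$. This supplies the second hypothesis.

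With both hypotheses in hand, \textbf{Proposition \ref{preservahausdorff}} applies verbatim and yields that $G+_{f_{\Lambda_{K}}}Y$ is Hausdorff, which is the claim. There is essentially no obstacle here; the content of the corollary is the observation that the general statement \textbf{\ref{preservahausdorff}} is applicable in the perspective case, and both pieces needed to apply it have already been proved. In effect, the proof is a single line: combine \textbf{Proposition \ref{identidade3}} with \textbf{Proposition \ref{preservahausdorff}}.
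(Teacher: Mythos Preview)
Your proposal is correct and matches the paper's intended argument exactly: the paper marks this corollary with $\square$ immediately, i.e.\ as following directly from \textbf{Proposition~\ref{preservahausdorff}} once its two hypotheses (Hausdorffness and $f=(f_{\Lambda_K})_{\Pi_K}$) are supplied by the definition of perspectivity and \textbf{Proposition~\ref{identidade3}} respectively.
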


$f_{\Lambda_{K}}$ does not depend of the choice of the compact $K$:

\begin{cor}\label{dependencia3}Let $K,K' \subseteq X$ be fundamental domains. Then, $f_{\Lambda_{K}} = f_{\Lambda_{K'}}$.
\end{cor}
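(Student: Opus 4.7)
The plan is to prove that $X+_{f}Y$ lies in $qPers(\varphi)$ and then invoke \textbf{Proposition \ref{dependencia2}}. The preceding corollary gives that $G+_{f_{\Lambda_{K}}}Y$ is Hausdorff, an earlier proposition of this subsection gives that it is compact, and \textbf{Proposition \ref{identidade3}} yields $(f_{\Lambda_{K}})_{\Pi_{K}} = f$. Hence it suffices to check that $G+_{f_{\Lambda_{K}}}Y$ is quasi-perspective; for then $X+_{f}Y = \bar{\Pi}_{K}(G+_{f_{\Lambda_{K}}}Y)$ lies in the image of $\bar{\Pi}_{K}|_{qPers(G)}$, i.e.\ in $qPers(\varphi)$, and \textbf{Proposition \ref{dependencia2}} immediately produces $f_{\Lambda_{K}} = f_{\Lambda_{K'}}$.

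\textbf{Proposition \ref{functorobjeto2}} already shows that $L+\psi$ acts on $G+_{f_{\Lambda_{K}}}Y$ by homeomorphisms. It therefore remains to verify that $R+id$ is continuous, which via the continuity criterion for maps of the form $\psi+\phi$ amounts to the containment
$$f_{\Lambda_{K}}(Fz) \subseteq f_{\Lambda_{K}}(F) \qquad \forall F \in Closed(G),\ \forall z \in G,$$
i.e.\ $f(\varphi(Fz,K)) \subseteq f(\varphi(F,K))$.

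The heart of the argument is this inclusion, and the idea is to compare the shifted orbit $\varphi(Fz,K) = \varphi(F,\varphi(z,K))$ with $\varphi(F,K)$ by applying perspectivity of $X+_{f}Y$ to the compact set $K'' = K \cup \varphi(z,K)$. Given $x \in f(\varphi(Fz,K))$ and an arbitrary $v \in \mathcal{U}_{f}$, choose $u \in \mathcal{U}_{f}$ symmetric with $u^{2} \subseteq v$. Perspectivity produces a finite set $E = \{g \in G : \varphi(g,K'') \notin Small(u)\}$, so $\varphi(Ez,K)$ is compact and hence closed in $X+_{f}Y$; since $X+_{f}Y$ is Hausdorff there is a neighborhood $W \ni x$ with $W \cap \varphi(Ez,K) = \emptyset$. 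Because $x$ is a cluster point of $\varphi(Fz,K)$, the neighborhood $W \cap \mathfrak{B}(\{x\},u)$ meets $\varphi(Fz,K)$ at some point $\varphi(fz,k)$ with $f \in F$ and $k \in K$; disjointness from $\varphi(Ez,K)$ forces $f \notin E$, whence $\varphi(f,K'') \in Small(u)$. Since both $\varphi(fz,k) = \varphi(f,\varphi(z,k))$ and $\varphi(f,k)$ lie in $\varphi(f,K'')$, we get $(\varphi(fz,k),\varphi(f,k)) \in u$, and combining with $(\varphi(fz,k),x) \in u$ yields $(\varphi(f,k),x) \in u^{2} \subseteq v$. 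As $v$ was arbitrary and $\varphi(f,k) \in \varphi(F,K)$, the point $x$ is a cluster point of $\varphi(F,K)$, so $x \in f(\varphi(F,K)) = f_{\Lambda_{K}}(F)$.

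The main difficulty is organizing the use of perspectivity: translates of $K''$ are uniformly $u$-small outside a finite exceptional set $E$, and the crucial observation is that the Hausdorffness of $X+_{f}Y$ lets us separate $x$ from the compact set $\varphi(Ez,K)$, after which the smallness of the remaining translates automatically produces a point of $\varphi(F,K)$ close to $x$.
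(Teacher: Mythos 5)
Your argument is correct, but it takes a genuinely different route from the paper's. The paper's proof is much softer: it sets $K'' = K \cup K'$, notes the trivial monotonicity $f_{\Lambda_{K}} \subseteq f_{\Lambda_{K''}}$, so that $id: G+_{f_{\Lambda_{K}}}Y \rightarrow G+_{f_{\Lambda_{K''}}}Y$ is continuous, and then invokes the fact that a continuous bijection between compact Hausdorff spaces (Hausdorffness coming from the corollary immediately preceding, compactness from the earlier proposition) is a homeomorphism, forcing $f_{\Lambda_{K}} = f_{\Lambda_{K''}} = f_{\Lambda_{K'}}$. You instead prove directly, via the uniform-structure smallness condition applied to the compact $K \cup \varphi(z,K)$, that $f_{\Lambda_{K}}(Fz) \subseteq f_{\Lambda_{K}}(F)$, i.e.\ that $R+id$ is continuous on $G+_{f_{\Lambda_{K}}}Y$; combined with \textbf{Propositions \ref{functorobjeto2}} and \textbf{\ref{identidade3}} this places $X+_{f}Y$ in $qPers(\varphi)$, and \textbf{Proposition \ref{dependencia2}} finishes. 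I checked the reduction for circularity: \textbf{Propositions \ref{dependencia1}}, \textbf{\ref{identidade1}}, \textbf{\ref{dependencia2}} and \textbf{\ref{identidade3}} are all established independently of the present corollary, so the logic is sound. What your route buys is that it establishes the right-invariance of $f_{\Lambda_{K}}$ (the substance of \textbf{Proposition \ref{perspectividade2}}) en passant, whereas the paper deduces that proposition \emph{from} this corollary; what it costs is length — the paper's two-line compactness argument avoids the entourage bookkeeping entirely. One cosmetic remark: you use the letter $f$ both for the admissible map and for an element of $F$; rename one of them.
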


\begin{proof}Let $K'' = K \cup K'$. We have that $K''$ is a fundamental domain. Let $F \in Closed(G)$. We have that $f_{\Lambda_{K''}}(F) = f(\varphi(F,K'')) \supseteq f(\varphi(F,K)) = f_{\Lambda_{K}}(F)$. But $f_{\Lambda_{K}} \subseteq f_{\Lambda_{K''}}$ implies that $id: G+_{f_{\Lambda_{K}}}Y \rightarrow G+_{f_{\Lambda_{K''}}}Y$ is continuous. Since the spaces are Hausdorff and compact, it follows that $id$ is a homeomorphism, which implies that $f_{\Lambda_{K}} = f_{\Lambda_{K''}}$. Analogously, we have that $f_{\Lambda_{K'}} = f_{\Lambda_{K''}}$. Thus, $f_{\Lambda_{K}} = f_{\Lambda_{K'}}$.
\end{proof}

\begin{prop}\label{perspectividade2}$G+_{f_{\Lambda_{K}}}Y$ is perspective.
\end{prop}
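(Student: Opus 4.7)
The plan is to verify that $G+_{f_{\Lambda_{K}}}Y$ is a Hausdorff quasi-perspectivity, which by definition amounts to membership in $Pers(G)$. Hausdorffness is exactly the preceding corollary. Continuity of $L+\psi$ on $G+_{f_{\Lambda_{K}}}Y$ is immediate from Proposition \ref{functorobjeto2} applied to the perspective object $X+_{f}Y$ whose associated action $\varphi+\phi$ is by homeomorphisms. So the only substantive requirement is to verify that $R+\mathrm{id}$ is continuous on $G+_{f_{\Lambda_{K}}}Y$.

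Exactly as in the proof of the equivalent characterisation of perspective objects of $Pers(G)$, continuity of $R+\mathrm{id}$ reduces to the inclusion $f_{\Lambda_{K}}(Fg) \subseteq f_{\Lambda_{K}}(F)$ for every $F \in Closed(G)$ and every $g\in G$ (applying the same inclusion with $g^{-1}$ in place of $g$ and $Fg$ in place of $F$ yields the reverse inclusion, hence equality). The key step is to enlarge the fundamental domain: let $K' = K\cup \varphi(g,K)$, which is compact and still satisfies $\varphi(G,K')=X$, so $K'$ is itself a fundamental domain. Using the elementary identity $\varphi(F,gK) = \varphi(Fg,K)$ together with admissibility of $f$, one computes
\[
f_{\Lambda_{K'}}(F) \;=\; f\!\left(\varphi(F,K)\cup \varphi(Fg,K)\right) \;=\; f_{\Lambda_{K}}(F) \cup f_{\Lambda_{K}}(Fg).
\]
By Corollary \ref{dependencia3}, $f_{\Lambda_{K}}$ does not depend on the choice of fundamental domain, so the left-hand side equals $f_{\Lambda_{K}}(F)$, yielding the desired inclusion $f_{\Lambda_{K}}(Fg)\subseteq f_{\Lambda_{K}}(F)$.

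Assembling the three pieces — Hausdorffness, continuity of $L+\psi$, and continuity of $R+\mathrm{id}$ — gives $G+_{f_{\Lambda_{K}}}Y \in Pers(G)$. The subtle point of the whole argument is that it rests, via Corollary \ref{dependencia3}, on the fixed-point identity $f = (f_{\Lambda_{K}})_{\Pi_{K}}$ provided by Proposition \ref{identidade3}; this in turn is the genuine use of perspectivity of $X+_{f}Y$ (rather than mere quasi-perspectivity). Once that identity is granted, the present proposition is a clean book-keeping consequence, and no direct manipulation of the uniform structure $\mathcal{U}_{f_{\Lambda_{K}}}$ or of Lemma \ref{fechopequeno} is needed (in contrast to the $\bar{\Pi}_{K}$ direction in Proposition \ref{perspectividade1}).
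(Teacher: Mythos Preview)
Your proof is correct and follows essentially the same route as the paper: both reduce perspectivity to the identity $f_{\Lambda_{K}}(Fg)=f_{\Lambda_{K}}(F)$ and deduce it from the independence of $f_{\Lambda_{K}}$ from the fundamental domain (Corollary \ref{dependencia3}). The only cosmetic difference is that the paper replaces $K$ by $\varphi(g,K)$ itself, computing $f_{\Lambda_{K}}(Fg)=f(\varphi(F,\varphi(g,K)))=f_{\Lambda_{\varphi(g,K)}}(F)=f_{\Lambda_{K}}(F)$ directly, whereas you pass through the larger domain $K\cup\varphi(g,K)$ to obtain an inclusion and then symmetrise; both arguments are equally short and rest on the same lemma.
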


\begin{proof}We already have that $G+_{f_{\Lambda_{K}}}Y$ is Hausdorff. Let $F \in Closed(G)$. We have that $f_{\Lambda_{K}}(Fg) = f(\varphi(Fg,K)) = f(\varphi(F,\varphi(g,K))) = f_{\Lambda_{\varphi(g,K)}}(F)$, since $\varphi(g,K)$ is a compact such that $\varphi(G,\varphi(g,K)) = X$. However, we have that $f_{\Lambda_{K}} = f_{\Lambda_{\varphi(g,K)}}$, which implies that $f_{\Lambda_{K}}(Fg) = f_{\Lambda_{K}}(F)$. Thus, $G+_{f_{\Lambda_{K}}}Y$ is perspective.
\end{proof}

\begin{prop}$Pers(\varphi)$ is a full subcategory of $qPers(\varphi)$.
\end{prop}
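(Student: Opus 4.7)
The plan is to reduce everything to an object-level inclusion. Since both $Pers(\varphi)$ and $qPers(\varphi)$ are \emph{full} subcategories of $Comp(\varphi)$, the hom-sets between any two objects of $Pers(\varphi)$, computed in either $Pers(\varphi)$ or in $qPers(\varphi)$, coincide with the ones computed in $Comp(\varphi)$. Hence fullness is automatic, and the whole task reduces to showing $\mathrm{Ob}(Pers(\varphi)) \subseteq \mathrm{Ob}(qPers(\varphi))$, that is, that every perspective $X+_{f}Y$ lies in the image of $\bar{\Pi}_{K}|_{qPers(G)}$.

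Fix $X+_{f}Y \in Pers(\varphi)$ and a fundamental domain $K\subseteq X$. The natural preimage candidate is obtained by applying the other functor: set $G+_{f_{\Lambda_{K}}}Y = \bar{\Lambda}_{K}(X+_{f}Y)$. Two verifications are required: (i) $G+_{f_{\Lambda_{K}}}Y \in qPers(G)$, and (ii) $\bar{\Pi}_{K}(G+_{f_{\Lambda_{K}}}Y) = X+_{f}Y$. For (i), Proposition \ref{perspectividade2} already shows that $G+_{f_{\Lambda_{K}}}Y$ is perspective, hence lies in $Pers(G) \subseteq qPers(G)$ by definition of the latter (Hausdorffness together with continuity of $L+\psi$ and $R+id$ follow from perspectivity). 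For (ii), unwinding the definition of $\bar{\Pi}_{K}$ yields $\bar{\Pi}_{K}(G+_{f_{\Lambda_{K}}}Y) = X+_{(f_{\Lambda_{K}})_{\Pi_{K}}}Y$, and Proposition \ref{identidade3} provides the key identity $(f_{\Lambda_{K}})_{\Pi_{K}} = f$, finishing the identification. Thus $X+_{f}Y$ lies in the image of $\bar{\Pi}_{K}|_{qPers(G)}$, which is by definition $qPers(\varphi)$.

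There is essentially no obstacle in this last proposition because the two substantive propositions that feed it, namely \ref{identidade3} (the fixed-point identity $f = (f_{\Lambda_{K}})_{\Pi_{K}}$ under perspectivity) and \ref{perspectividade2} (the fact that $\bar{\Lambda}_{K}$ preserves perspectivity), were proved precisely to package this inclusion. The only thing one should double-check is that the construction does not secretly depend on the choice of $K$, but Corollary \ref{dependencia3} (and, on the other side, Proposition \ref{dependencia1}) already guarantees that the functors $\bar{\Pi}_{K}$ and $\bar{\Lambda}_{K}$ restrict to genuinely $K$-independent assignments on the relevant subcategories, so the argument is coherent with the definition of $qPers(\varphi)$ as the image of any one of the $\bar{\Pi}_{K}|_{qPers(G)}$.
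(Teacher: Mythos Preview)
Your proof is correct and follows essentially the same approach as the paper: apply $\bar{\Lambda}_{K}$ to land in $Pers(G)\subseteq qPers(G)$ via Proposition~\ref{perspectividade2}, then use Proposition~\ref{identidade3} to identify $\bar{\Pi}_{K}(G+_{f_{\Lambda_{K}}}Y)$ with $X+_{f}Y$. Your additional remarks on why fullness is automatic and on $K$-independence are accurate elaborations but not required for the argument.
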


\begin{proof}Let $X+_{f}Y \in Pers(\varphi)$. By the last proposition $G+_{f_{\Lambda_{K}}}Y \in Pers(G) \subseteq qPers(G)$, which implies that $X+_{(f_{\Lambda_{K}})_{\Pi_{K}}}Y \in qPers(\varphi)$. But, by the \textbf{Proposition \ref{identidade3}}, $(f_{\Lambda_{K}})_{\Pi_{K}} = f$, which implies that $X+_{f}Y \in qPers(\varphi)$. So, all objects of $Pers(\varphi)$ are in $qPers(\varphi)$. Since all morphisms are always maintained, we have that $Pers(\varphi)$ is a full subcategory of $qPers(\varphi)$.
\end{proof}

Now we are able to present the correspondence between the two forms of perspectivity:

\begin{teo}\label{main}Let $G$ be a group, $X$ a Hausdorff  locally compact space and $\varphi: G \curvearrowright X$ a properly discontinuous cocompact action. Then, the categories $Pers(G)$ and $Pers(\varphi)$ are isomorphic.
\end{teo}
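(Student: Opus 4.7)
The plan is to show that the isomorphism between $qPers(G)$ and $qPers(\varphi)$ already established restricts to an isomorphism between the full subcategories $Pers(G)$ and $Pers(\varphi)$. Since all the hard work has essentially been done in the preceding propositions, the proof amounts to verifying that the functors $\bar{\Pi}_{K}$ and $\bar{\Lambda}_{K}$ send perspective objects to perspective objects and that the natural identifications on composed functors still hold.

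First I would define $\Pi: Pers(G) \rightarrow Pers(\varphi)$ as the restriction of $\bar{\Pi}_{K}$. To see the codomain is correct, I would invoke \textbf{Proposition \ref{perspectividade1}}, which guarantees that if $G+_{\partial}Y$ is perspective then $X+_{\partial_{\Pi_{K}}}Y$ is also perspective. Furthermore, by \textbf{Proposition \ref{dependencia1}} (applied via the fact that $Pers(G) \subseteq qPers(G)$), the map $\partial_{\Pi_{K}}$ does not depend on the choice of the fundamental domain $K$, so $\Pi$ is canonically defined.

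Next I would define $\Lambda: Pers(\varphi) \rightarrow Pers(G)$ as the restriction of $\bar{\Lambda}_{K}$. The codomain is correct because of \textbf{Proposition \ref{perspectividade2}}, which gives that $G+_{f_{\Lambda_{K}}}Y$ is perspective whenever $X+_{f}Y$ is. By \textbf{Corollary \ref{dependencia3}}, $\Lambda$ does not depend on the choice of $K$ either.

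Finally, to show these restrictions are mutually inverse, I would apply \textbf{Proposition \ref{identidade1}} on objects coming from $Pers(G)$, which gives $\Lambda\circ\Pi = id_{Pers(G)}$, and \textbf{Proposition \ref{identidade3}}, which gives $\Pi\circ\Lambda = id_{Pers(\varphi)}$. On morphisms, both composites act as the identity since $\bar{\Pi}_{K}$ and $\bar{\Lambda}_{K}$ leave the map $id+\phi$ unchanged. I do not expect a serious obstacle: every non-trivial input has been prepared, and the argument is essentially an orchestration of the facts already proved. The only subtlety worth stating carefully is that, because $Pers(\varphi)$ is a full subcategory of $qPers(\varphi)$ (just established), the morphism sets are the same, so full faithfulness is inherited automatically from the isomorphism at the quasi-perspective level.
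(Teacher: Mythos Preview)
Your proposal is correct and follows essentially the same approach as the paper: restrict the functors $\Pi$ and $\Lambda$ (already shown to be mutually inverse on the quasi-perspective categories) to $Pers(G)$ and $Pers(\varphi)$, using \textbf{Propositions \ref{perspectividade1}} and \textbf{\ref{perspectividade2}} to check that perspectivities are sent to perspectivities. The paper's proof is slightly terser because it inherits the identities $\Lambda\circ\Pi = id$ and $\Pi\circ\Lambda = id$ directly from the $qPers$ isomorphism rather than re-invoking \textbf{Propositions \ref{identidade1}} and \textbf{\ref{identidade3}}, but the content is the same.
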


\begin{proof}Take the functors $\Pi$ and $\Lambda$. By \textbf{Propositions \ref{perspectividade1}} and \textbf{\ref{perspectividade2}}, both send perspectivities to perspectivities. Let's take $\Pi': Pers(G) \rightarrow Pers(\varphi)$ and $\Lambda': Pers(\varphi) \rightarrow Pers(G)$ the restrictions of $\Pi$ and $\Lambda$,  respectively, and we have that $\Lambda' \circ \Pi' = id_{Pers(G)}$ and $\Pi' \circ \Lambda' = id_{Pers(\varphi)}$.
\end{proof}

\begin{cor}Let $G$ be a countable group, $X$ a locally compact Hausdorff space with countable basis and $\varphi: G \curvearrowright X$ a properly discontinuous cocompact action. Then, the categories $MPers(G)$ and $MPers(\varphi)$ are isomorphic.
\end{cor}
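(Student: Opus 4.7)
The plan is to invoke Theorem \ref{main} and verify that its isomorphism functors $\Pi': Pers(G) \rightarrow Pers(\varphi)$ and $\Lambda': Pers(\varphi) \rightarrow Pers(G)$ restrict to the metrizable subcategories. Since $MPers(G)$ and $MPers(\varphi)$ are full subcategories, once I establish that $\Pi'$ sends metrizable objects to metrizable objects and likewise for $\Lambda'$, the identities $\Lambda' \circ \Pi' = id$ and $\Pi' \circ \Lambda' = id$ coming from Theorem \ref{main} restrict automatically to give the desired isomorphism.

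The key tool will be Corollary \ref{uniaometrizavel}: a compact Hausdorff space that decomposes as a countable union of subspaces, each admitting a countable basis, is metrizable. I will also use the standard facts that metrizability of a compact Hausdorff space is equivalent to second-countability, and that any subspace of a metrizable space is metrizable. For the forward direction, I take $G +_{\partial} Y \in MPers(G)$. The subspace $Y$, being closed in the metrizable space $G +_{\partial} Y$, is itself second-countable. By Proposition \ref{somahausdorff} the image $X +_{\partial_{\Pi_{K}}} Y$ is compact Hausdorff, and it decomposes as the two-term union $X \cup Y$, with $X$ second-countable by hypothesis and $Y$ second-countable as just observed; Corollary \ref{uniaometrizavel} then yields metrizability. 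For the reverse direction, starting from $X +_{f} Y \in MPers(\varphi)$, the same argument gives $Y$ second-countable; the space $G +_{f_{\Lambda_{K}}} Y$ is compact Hausdorff, with $G$ countable and discrete (hence second-countable) and $Y$ second-countable, so Corollary \ref{uniaometrizavel} applies again.

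There is essentially no obstacle here: the whole argument reduces to a direct invocation of the second-countability criterion for compact Hausdorff metrizability, coupled with the union-of-second-countables property in Corollary \ref{uniaometrizavel}. The heavy categorical lifting is already absorbed by Theorem \ref{main}; this corollary simply checks that metrizability is preserved on both sides of the correspondence.
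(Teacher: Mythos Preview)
Your proposal is correct and follows essentially the same approach as the paper: invoke Theorem \ref{main} and use Corollary \ref{uniaometrizavel} to verify that the functors $\Pi'$ and $\Lambda'$ preserve metrizability, so that they restrict to mutually inverse isomorphisms between $MPers(G)$ and $MPers(\varphi)$. Your write-up is in fact slightly more careful than the paper's, since you make explicit why $Y$ inherits a countable basis (as a closed subspace of the assumed metrizable space) before applying Corollary \ref{uniaometrizavel} to the image space.
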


\begin{proof}Take $\Pi$ and $\Lambda$ as the theorem above. Since $G$ and $X$ have countable basis, we have, by \textbf{Corollary \ref{uniaometrizavel}}, that every Hausdorff compact space of the form $G+_{\partial}Y$ or the form $X+_{f}Y$ is metrizable. So, $\Pi$ and $\Lambda$ send metrizable spaces to metrizable spaces. Let's take $\Pi'': MPers(G) \rightarrow MPers(\varphi)$ and $\Lambda'': MPers(\varphi) \rightarrow MPers(G)$ the restrictions of $\Pi$ and $\Lambda$,  respectively, and we have that $\Lambda'' \circ \Pi'' = id_{MPers(G)}$ and $\Pi'' \circ \Lambda'' = id_{MPers(\varphi)}$.
\end{proof}

At last, a proposition showing that all Hausdorff objects of $qPers(\varphi)$ belongs to $Pers(\varphi)$:

\begin{prop}Let $X+_{f}Y \in qPers(\varphi)$. If it is Hausdorff, then it belongs to $Pers(\varphi)$.
\end{prop}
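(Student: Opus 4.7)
The plan is to transfer the Hausdorffness hypothesis from $X+_f Y$ back to the associated boundary object over $G$, use that object's Hausdorffness together with the group-side definition of perspectivity, and then push the conclusion forward via Proposition \ref{perspectividade1}. Every step is a direct application of a result already proved in this section, so there is no genuinely new content to produce; the whole task is to thread the bookkeeping correctly.

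First, I would unpack what it means for $X+_f Y$ to lie in $qPers(\varphi)$: by the defining construction of that subcategory, there exists some $G+_\partial Y \in qPers(G)$ with $\partial_{\Pi_K} = f$ for any fundamental domain $K$. Proposition \ref{identidade1} then gives $f_{\Lambda_K} = (\partial_{\Pi_K})_{\Lambda_K} = \partial$, so $G+_{f_{\Lambda_K}} Y$ and $G+_\partial Y$ are the same object, and Proposition \ref{identidade2} yields $f = (f_{\Lambda_K})_{\Pi_K}$.

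Next, since $X+_f Y$ is Hausdorff and $f = (f_{\Lambda_K})_{\Pi_K}$, I would invoke Proposition \ref{preservahausdorff} to conclude that $G+_{f_{\Lambda_K}} Y = G+_\partial Y$ is Hausdorff. Combined with $G+_\partial Y \in qPers(G)$, the definition of perspectivity on the group side immediately gives $G+_\partial Y \in Pers(G)$.

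Finally, applying Proposition \ref{perspectividade1} to the perspective object $G+_\partial Y$ produces that $X+_{\partial_{\Pi_K}} Y = X+_f Y$ is perspective, so $X+_f Y \in Pers(\varphi)$, as required. The main (minor) obstacle, if any, is keeping straight that the $\partial$ provided by membership in $qPers(\varphi)$ really does coincide with $f_{\Lambda_K}$ — a point resolved at once by Proposition \ref{identidade1}; once that identification is made, Propositions \ref{preservahausdorff} and \ref{perspectividade1} do all the real work.
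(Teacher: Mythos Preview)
Your proof is correct and follows essentially the same route as the paper's own argument: use Proposition~\ref{identidade2} to get $f=(f_{\Lambda_K})_{\Pi_K}$, apply Proposition~\ref{preservahausdorff} to conclude $G+_{f_{\Lambda_K}}Y$ is Hausdorff (hence in $Pers(G)$), and then push back to $X+_fY\in Pers(\varphi)$ via Proposition~\ref{perspectividade1}. Your version is slightly more explicit in identifying $G+_{f_{\Lambda_K}}Y$ with the object $G+_\partial Y$ furnished by the definition of $qPers(\varphi)$, but the logical skeleton is identical.
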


\begin{proof}Since $X+_{f}Y \in qPers(\varphi)$, we have that $f = (f_{\Lambda_{K}})_{\Pi_{K}}$ (\textbf{Proposition \ref{identidade2}}). So, by \textbf{Proposition \ref{preservahausdorff}}, $G+_{f_{\Lambda_{K}}}Y$ is Hausdorff, which implies that $G+_{f_{\Lambda_{K}}}Y \in Pers(G)$. Thus, $X+_{f}Y = X+_{(f_{\Lambda_{K}})_{\Pi_{K}}}Y \in Pers(\varphi)$.
\end{proof}

\section{More about (quasi)-perspectivity}

\subsection{Non extendability of the isomorphism}

Here we see that, on the subcategories where the functors agree (do not depend of the choice of the fundamental domain), it is not possible to extend $qPers(G)$ and $qPers(\varphi)$ and still obtain an isomorphism of categories:

\begin{prop}Let $\psi: G \curvearrowright Y$, $G+_{\partial}Y \in Comp(G)$ and $\varphi: G\curvearrowright X$. If $\forall K$ fundamental domain in $X$, $\partial = (\partial_{\Pi_{K}})_{\Lambda_{K}}$, then $G+_{\partial}Y  \in qPers(G)$.
\end{prop}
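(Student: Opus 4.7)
The plan is to reduce the statement to a single condition on $\partial$ and then exploit the hypothesis by an explicit choice of fundamental domain. Since $G+_\partial Y$ already lies in $Comp(G)$, the action $L+\psi$ is continuous by assumption, so the only thing to check is that $R+id: G \curvearrowright G+_\partial Y$ is continuous. Because $G$ carries the discrete topology, this amounts to showing that for every $g \in G$ the map $R(\_,g)+id$ is continuous, and by the continuity criterion for maps between sums of spaces this is equivalent to proving that
\[
\partial(Ag^{-1}) \subseteq \partial(A) \qquad \forall A \in Closed(G).
\]
(The inverse $R(\_,g^{-1})+id$ will be handled symmetrically, giving that $R(\_,g)+id$ is in fact a homeomorphism.)

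To prove this inclusion, I would use the explicit formula for $(\partial_{\Pi_K})_{\Lambda_K}$ established earlier: for any fundamental domain $K$,
\[
(\partial_{\Pi_K})_{\Lambda_K}(A) \;=\; \bigcup_{z \in Z_K} \partial(Az), \qquad \text{where } Z_K = \{z \in G: \varphi(z,K)\cap K \neq \emptyset\}.
\]
By hypothesis this union equals $\partial(A)$, so $\partial(Az) \subseteq \partial(A)$ for every $z \in Z_K$. The task therefore becomes: given $g \in G$, exhibit \emph{some} fundamental domain $K$ with $g^{-1} \in Z_K$.

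The key step is the following construction. Fix any fundamental domain $K_0 \subseteq X$ and set $K = K_0 \cup \varphi(g,K_0)$. Since $K_0$ and $\varphi(g,K_0)$ are both compact and $\varphi(G,K_0) = X$, $K$ is a compact fundamental domain. For any $k \in K_0$, the point $\varphi(g,k)$ lies in $K$, and $\varphi(g^{-1}, \varphi(g,k)) = k \in K$, so $\varphi(g^{-1}, K) \cap K \supseteq K_0 \neq \emptyset$, i.e.\ $g^{-1} \in Z_K$. Applying the hypothesis with this $K$ yields $\partial(Ag^{-1}) \subseteq \partial(A)$, as desired.

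There is no real obstacle beyond spotting this choice of $K$; the rest is routine. Continuity of $R(\_,g)+id$ follows for every $g$, its inverse is continuous by the same argument applied to $g^{-1}$ (with the analogous fundamental domain $K_0 \cup \varphi(g^{-1},K_0)$), and hence each $R(\_,g)+id$ is a homeomorphism. Because $G$ is discrete, this is exactly the continuity of the global action $R+id: G \curvearrowright G+_\partial Y$, so $G+_\partial Y \in qPers(G)$. \qed
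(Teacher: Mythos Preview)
Your proof is correct and follows essentially the same route as the paper's: reduce to showing $\partial(Fg)\subseteq\partial(F)$ for every $g$, then use the formula $(\partial_{\Pi_K})_{\Lambda_K}(F)=\bigcup_{z\in Z_K}\partial(Fz)$ together with a fundamental domain chosen so that the required element lies in $Z_K$. The only cosmetic difference is in the enlargement: the paper adjoins a single point, taking $K'=K\cup\{\varphi(g,k)\}$ (which already has both $g$ and $g^{-1}$ in $Z_{K'}$), whereas you adjoin the whole translate $\varphi(g,K_0)$; either choice works for the same reason.
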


\begin{proof}Let $K$ be a fundamental domain, $k \in K$ and $g \in G$. Let's define $K' = K \cup \{\varphi(g,k)\}$. We have that $\forall F \in Closed(G), \ \partial(F) = (\partial_{\Pi_{K'}})_{\Lambda_{K'}}(F) = \bigcup_{z\in Z_{K'}}\partial(Fz)$. But $g \in Z_{K'}$, since $\varphi(g,k)\in \varphi(g,K')\cap K'$, which implies that $\partial(Fg)\subseteq \partial(F)$. However, $\partial(Fg) = (\partial_{\Pi_{K'}})_{\Lambda_{K'}}(Fg) = \bigcup_{z\in Z_{K'}}\partial(Fgz)$, which implies that $\partial(F) = \partial(Fgg^{-1}) \subseteq \partial(Fg)$, because $g^{-1} \in Z_{K'}$, since $k \in \varphi(g,K')\cap K'$. Thus, $\forall F \in Closed(G), \ \forall g \in G, \ \partial(Fg) = \partial(F)$, which implies that $G+_{\partial}Y \in qPers(G)$.
\end{proof}

\begin{cor}If $G+_{\partial}Y$ is Hausdorff and $\forall K$ fundamental domain in $X$, $\partial = (\partial_{\Pi_{K}})_{\Lambda_{K}}$, then $G+_{\partial}Y  \in Pers(G)$.
\eod\end{cor}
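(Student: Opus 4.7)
The plan is a direct two-step reduction, consisting of an application of the immediately preceding proposition followed by an appeal to the definition of $Pers(G)$. First, I would note that the hypothesis $\partial = (\partial_{\Pi_K})_{\Lambda_K}$ for every fundamental domain $K \subseteq X$ is exactly the hypothesis of the preceding proposition; invoking that proposition yields $G+_{\partial}Y \in qPers(G)$. Second, I would recall that $Pers(G)$ was defined as the full subcategory of $qPers(G)$ whose objects are Hausdorff. Since the corollary also assumes $G+_{\partial}Y$ is Hausdorff, membership in $Pers(G)$ follows at once.

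There is no real obstacle here: the corollary is simply the conjunction of the preceding proposition with the definitional fact that the Hausdorff objects of $qPers(G)$ are precisely the objects of $Pers(G)$. The \textbf{\eod} marker in the statement signals that no further reasoning is required beyond citing these two ingredients. The only small subtlety worth verifying is that the preceding proposition does not itself demand Hausdorffness, so that the two hypotheses of the corollary are being used independently and cleanly in the two steps.
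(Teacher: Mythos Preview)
Your proposal is correct and matches the paper's intent exactly: the \eod marker indicates the corollary follows immediately from the preceding proposition (yielding $G+_{\partial}Y \in qPers(G)$) together with the definition of $Pers(G)$ as the Hausdorff objects of $qPers(G)$. Your observation that the preceding proposition does not require Hausdorffness, so the two hypotheses act independently, is also accurate.
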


So, the property of quasi-perspectivity (or perspectivity if considered just Hausdorff spaces) is the most general one that expects a correspondence between the compactifications for every chosen fundamental domain.

\subsection{Adjunction}

\begin{defi}Let $\psi: G \curvearrowright Y$ be an action by homeomorphisms and $G+_{\partial}Y \in Comp(G)$. We define the map $\tilde{\partial}: Closed(G) \rightarrow Closed(Y)$ as $\tilde{\partial}(F) = Cl_{Y}(\bigcup_{g\in G}\partial(Fg))$.
\end{defi}

\begin{prop}$\tilde{\partial}$ is admissible.
\end{prop}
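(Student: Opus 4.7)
The plan is straightforward since $\tilde{\partial}$ is obtained from the already admissible map $\partial$ by pre-composing with the right translations $F \mapsto Fg$ (which clearly commute with union and preserve $\emptyset$) and post-composing with the closure operator $Cl_Y$ (which preserves binary unions of arbitrary subsets and sends $\emptyset$ to $\emptyset$). Both operations are well known to behave well with finite unions, so the admissibility of $\tilde{\partial}$ should reduce mechanically to the admissibility of $\partial$.

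First I would check that $\tilde{\partial}$ actually lands in $Closed(Y)$: this is immediate since $Cl_Y$ of any set in $Y$ is a closed subset of $Y$, so $\tilde{\partial}(F) \in Closed(Y)$ for every $F \in Closed(G)$. Next I would verify $\tilde{\partial}(\emptyset) = \emptyset$: since $\emptyset \cdot g = \emptyset$ for all $g \in G$ and $\partial(\emptyset) = \emptyset$ by admissibility of $\partial$, the inner union is empty, and $Cl_Y(\emptyset) = \emptyset$.

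The main step is the union identity. For $F_1, F_2 \in Closed(G)$, I would compute
\[ \tilde{\partial}(F_1 \cup F_2) = Cl_Y\Bigl(\bigcup_{g \in G} \partial((F_1 \cup F_2)g)\Bigr) = Cl_Y\Bigl(\bigcup_{g \in G} \partial(F_1 g \cup F_2 g)\Bigr), \]
using that right translation distributes over union. Applying admissibility of $\partial$ term by term gives
\[ Cl_Y\Bigl(\bigcup_{g \in G}\bigl(\partial(F_1 g) \cup \partial(F_2 g)\bigr)\Bigr) = Cl_Y\Bigl(\bigcup_{g \in G} \partial(F_1 g) \cup \bigcup_{g \in G} \partial(F_2 g)\Bigr). \]
Then I would use the general topological fact that $Cl_Y(A \cup B) = Cl_Y(A) \cup Cl_Y(B)$ for arbitrary $A, B \subseteq Y$ to split this into $\tilde{\partial}(F_1) \cup \tilde{\partial}(F_2)$.

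There is no genuine obstacle here; the only point worth a moment of care is the bookkeeping that closures commute with finite (but not arbitrary) unions, which is exactly why the definition uses a single outer $Cl_Y$ of the union over all $g$ rather than taking closures inside. So the proof is essentially two displayed computations plus the two appeals to admissibility of $\partial$.
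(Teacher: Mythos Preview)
Your proof is correct and follows essentially the same approach as the paper: both verify $\tilde{\partial}(\emptyset)=\emptyset$ via $\partial(\emptyset)=\emptyset$ and establish the union identity by distributing right translation over the union, applying admissibility of $\partial$ termwise, and then using $Cl_Y(A\cup B)=Cl_Y(A)\cup Cl_Y(B)$. Your additional remarks (that $\tilde{\partial}$ lands in $Closed(Y)$ and that closure commutes only with finite unions) are sound and merely make explicit what the paper leaves implicit.
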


\begin{proof}We have that $\tilde{\partial}(\emptyset) = Cl_{Y}(\bigcup_{g\in G}\partial(\emptyset g)) = Cl_{Y}(\bigcup_{g\in G}\partial(\emptyset)) =$ $\\ Cl_{Y}(\bigcup_{g\in G}\emptyset) = Cl_{Y}(\emptyset) = \emptyset$. Let $F_{1},F_{2} \in Closed(G)$. We have that $\tilde{\partial}(F_{1} \cup F_{2}) = Cl_{Y}(\bigcup_{g\in G}\partial((F_{1} \cup F_{2}) g)) = Cl_{Y}(\bigcup_{g\in G}\partial(F_{1}g \cup F_{2} g)) =$ $\\ Cl_{Y}(\bigcup_{g\in G}\partial(F_{1}g) \cup \partial(F_{2} g)) = Cl_{Y}(\bigcup_{g\in G}\partial(F_{1}g) \cup \bigcup_{g\in G}\partial(F_{1}g)) =$ $\\ Cl_{Y}(\bigcup_{g\in G}\partial(F_{1}g)) \cup Cl_{Y}(\bigcup_{g\in G}\partial(F_{1}g)) = \tilde{\partial}(F_{1}) \cup \tilde{\partial}(F_{2})$. Thus, $\tilde{\partial}$ is admissible.
\end{proof}

We immediately have that $id+id: G+_{\partial}Y \rightarrow G+_{\tilde{\partial}}Y$  is continuous, since $\partial \subseteq \tilde{\partial}$. We have also that if $G+_{\partial}Y \in qPers(G)$, then $\tilde{\partial} = \partial$, since $\forall F \in Closed(G), \ \forall g \in G, \ \partial(Fg) = \partial(F)$.

\begin{prop}$G+_{\tilde{\partial}}Y \in Comp(G)$.
\end{prop}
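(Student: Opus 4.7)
Membership in $Comp(G)$ requires two things: that $G+_{\tilde{\partial}}Y$ be compact, and that $L+\psi\colon G \curvearrowright G+_{\tilde{\partial}}Y$ be an action by homeomorphisms (the Hausdorff compactness of $Y$ is unchanged and $\tilde{\partial}$ is already known to be admissible).

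For compactness, I would note that taking $g = e$ in the definition of $\tilde{\partial}$ yields $\partial(F) \subseteq \tilde{\partial}(F)$ for every $F \in Closed(G)$. Since $G+_{\partial}Y$ is compact, the compactness criterion from Section 2.3 says that every non-compact $F \in Closed(G)$ satisfies $\partial(F) \neq \emptyset$, hence $\tilde{\partial}(F) \neq \emptyset$, so the same criterion applied in the other direction makes $G+_{\tilde{\partial}}Y$ compact.

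For the action, I would use the characterization of continuity for sum maps: $L(g,\_)+\psi(g,\_)\colon G+_{\tilde{\partial}}Y \to G+_{\tilde{\partial}}Y$ is continuous iff $\tilde{\partial}(g^{-1}A) \subseteq \psi(g^{-1},\tilde{\partial}(A))$ for every $A \in Closed(G)$. Because $L+\psi$ is already an action by homeomorphisms on $G+_{\partial}Y$, applying that same characterization to $g$ and to $g^{-1}$ yields the equality $\partial(g^{-1}B) = \psi(g^{-1},\partial(B))$ for every $B \in Closed(G)$. Substituting $B = Ah$ with $h$ ranging over $G$ and using that $\psi(g^{-1},\_)\colon Y \to Y$ is a homeomorphism (so it commutes with arbitrary unions and with closure in $Y$), I obtain
\[
\tilde{\partial}(g^{-1}A) \;=\; Cl_Y\!\Bigl(\bigcup_{h\in G}\partial(g^{-1}Ah)\Bigr) \;=\; Cl_Y\!\Bigl(\bigcup_{h\in G}\psi(g^{-1},\partial(Ah))\Bigr) \;=\; \psi\!\bigl(g^{-1},\tilde{\partial}(A)\bigr),
\]
which gives the required inclusion (as equality) and hence continuity of $L(g,\_)+\psi(g,\_)$. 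Its inverse is $L(g^{-1},\_)+\psi(g^{-1},\_)$, also continuous by the same reasoning, so each is a homeomorphism and $L+\psi$ is an action by homeomorphisms on $G+_{\tilde{\partial}}Y$.

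The only point requiring any care is the commutation of $\psi(g^{-1},\_)$ with the closure $Cl_Y$, which is immediate from its being a homeomorphism; no step in the plan goes beyond rearranging the definition of $\tilde{\partial}$ and invoking results already proved earlier in the paper, so I expect no real obstacle.
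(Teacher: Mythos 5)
Your proof is correct and follows essentially the same route as the paper: the equivariance computation $\tilde{\partial}(g^{-1}A) = \psi(g^{-1},\tilde{\partial}(A))$ is identical to the paper's. The only (immaterial) difference is in the compactness step, where the paper simply observes that $G+_{\tilde{\partial}}Y$ is the continuous image of the compact space $G+_{\partial}Y$ under $id+id$, while you invoke the non-compact-closed-set criterion; both work.
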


\begin{proof}Since $id+id: G+_{\partial}Y \rightarrow G+_{\tilde{\partial}}Y$  is continuous and $G+_{\partial}Y$ is compact, it follows that $G+_{\tilde{\partial}}Y$ is compact.

Let $F \in Closed(G)$ and $h \in G$. So, $\tilde{\partial}(hF) = Cl_{Y}(\bigcup_{g\in G}\partial(hFg))$. Since $L+\psi: G \curvearrowright G+_{\partial}Y$ is continuous, we have that, $\forall g \in G$, $\partial(hFg) = \psi(h,\partial(Fg))$. So, $\tilde{\partial}(hF) = Cl_{Y}(\bigcup_{g\in G}\psi(h,\partial(Fg))) = Cl_{Y}(\psi(h,\bigcup_{g\in G}\partial(Fg))) = \psi(h, Cl_{Y}(\bigcup_{g\in G}\partial(Fg))) = \psi(h,\tilde{\partial}(F))$, which implies that $L+\psi: G \curvearrowright G+_{\tilde{\partial}}Y$ is continuous.

Thus, $G+_{\tilde{\partial}}Y \in Comp(G)$.
\end{proof}

\begin{prop}$G+_{\tilde{\partial}}Y \in qPers(G)$.
\end{prop}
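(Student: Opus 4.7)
The previous proposition has already established that $G+_{\tilde{\partial}}Y\in Comp(G)$, i.e.\ that it is compact and that $L+\psi$ acts by homeomorphisms. So the only thing left to verify is the continuity of the right-multiplication action $R+id: G \curvearrowright G+_{\tilde{\partial}}Y$. By the general criterion for continuity of sums $\psi+\phi$, this reduces to the purely algebraic statement: for every $A\in Closed(G)$ and every $g\in G$,
$$\tilde{\partial}(Ag^{-1}) \subseteq id^{-1}(\tilde{\partial}(A)) = \tilde{\partial}(A).$$

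The plan is to verify this by reindexing the union in the definition of $\tilde{\partial}$. By definition,
$$\tilde{\partial}(Ag^{-1}) = Cl_{Y}\left(\bigcup_{h\in G}\partial(Ag^{-1}h)\right).$$
Substituting $h' = g^{-1}h$ (equivalently $h = gh'$), as $h$ ranges over $G$ so does $h'$, hence
$$\bigcup_{h\in G}\partial(Ag^{-1}h) \;=\; \bigcup_{h'\in G}\partial(Ah'),$$
and therefore $\tilde{\partial}(Ag^{-1}) = \tilde{\partial}(A)$. This gives not only the required inclusion but in fact equality, which is consistent with the fact that applying the argument with $g^{-1}$ in place of $g$ yields the reverse inclusion automatically.

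With this equality in hand, the hypothesis of the continuity criterion for $R(g,\_)+id$ is met for each $g\in G$, so $R+id$ acts by homeomorphisms (the inverse of $R(g,\_)+id$ is $R(g^{-1},\_)+id$, which is also continuous by the same argument). Combined with the prior proposition, this places $G+_{\tilde{\partial}}Y$ in $qPers(G)$. There is no real obstacle here: the construction of $\tilde{\partial}$ was designed precisely to be right-multiplication invariant by taking a union over all right translates, so the verification is essentially tautological once one unpacks the definitions.
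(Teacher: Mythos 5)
Your proof is correct and follows essentially the same route as the paper: the key step in both is the reindexing of the union $\bigcup_{h\in G}\partial(Ag^{-1}h)=\bigcup_{h'\in G}\partial(Ah')$, giving $\tilde{\partial}(Ag^{-1})=\tilde{\partial}(A)$ and hence continuity of $R+id$. The only difference is that you spell out the appeal to the continuity criterion and to the previous proposition explicitly, which the paper leaves implicit.
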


\begin{proof}Let $F$ be a subset of $G$ and $h$ an element of $G$. We have that $\tilde{\partial}(Fh) = Cl_{Y}(\bigcup_{g\in G}\partial(Fhg)) = Cl_{Y}(\bigcup_{g\in G}\partial(Fg)) = \tilde{\partial}(F)$. Thus, $G+_{\tilde{\partial}}Y \in qPers(G)$.
\end{proof}

\begin{prop}If $id+\phi: G+_{\partial_{1}}Y_{1} \rightarrow  G+_{\partial_{2}}Y_{2}$ is continuous, then $id+\phi: G+_{\tilde{\partial_{1}}}Y_{1} \rightarrow  G+_{\tilde{\partial_{2}}}Y_{2}$ is continuous.
\end{prop}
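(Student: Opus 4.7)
The plan is to invoke the characterization of continuous maps between sums of spaces established earlier: a map $id+\phi : G+_{f_1}Y_1 \to G+_{f_2}Y_2$ is continuous if and only if $f_1(F) \subseteq \phi^{-1}(f_2(F))$ for every $F \in Closed(G)$. So it suffices to start from the hypothesis $\partial_1(F) \subseteq \phi^{-1}(\partial_2(F))$ (for all $F \in Closed(G)$) and to conclude that $\tilde{\partial_1}(F) \subseteq \phi^{-1}(\tilde{\partial_2}(F))$ (for all $F \in Closed(G)$).

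First I would replace $F$ by $Fg$ in the hypothesis: for every $g \in G$,
$$\partial_1(Fg) \subseteq \phi^{-1}(\partial_2(Fg)) \subseteq \phi^{-1}\Bigl(\bigcup_{h \in G}\partial_2(Fh)\Bigr) \subseteq \phi^{-1}(\tilde{\partial_2}(F)),$$
where in the last step I use that $\bigcup_h \partial_2(Fh) \subseteq Cl_{Y_2}\bigl(\bigcup_h \partial_2(Fh)\bigr) = \tilde{\partial_2}(F)$ together with monotonicity of $\phi^{-1}$. Taking the union over $g \in G$ gives
$$\bigcup_{g \in G}\partial_1(Fg) \subseteq \phi^{-1}(\tilde{\partial_2}(F)).$$

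Next I would close up on the left-hand side. Since $\phi : Y_1 \to Y_2$ is continuous and $\tilde{\partial_2}(F)$ is a closed subset of $Y_2$ (by definition of $\tilde{\partial_2}$), its preimage $\phi^{-1}(\tilde{\partial_2}(F))$ is closed in $Y_1$. Therefore the closure of the left-hand side is still contained in it:
$$\tilde{\partial_1}(F) \;=\; Cl_{Y_1}\Bigl(\bigcup_{g \in G}\partial_1(Fg)\Bigr) \;\subseteq\; \phi^{-1}(\tilde{\partial_2}(F)).$$
This is exactly the criterion for continuity of $id+\phi : G+_{\tilde{\partial_1}}Y_1 \to G+_{\tilde{\partial_2}}Y_2$, so the proof is complete.

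There is no real obstacle here: the argument is a two-line manipulation that exploits (i) the $G$-translation invariance built into the definition of $\tilde{\partial}$ and (ii) the fact that a preimage of a closed set under a continuous map is closed, so closures on the left are absorbed on the right. The only thing to be careful about is using the correct continuity criterion from the earlier proposition (with $\psi = id_G$, so $\psi^{-1}$ disappears), and noting that $\phi$ is automatically continuous between $Y_1$ and $Y_2$ because the inclusions of $Y_i$ into $G+_{\partial_i}Y_i$ are embeddings.
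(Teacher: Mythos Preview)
Your proof is correct and follows essentially the same approach as the paper: both start from $\partial_1(Fg)\subseteq\phi^{-1}(\partial_2(Fg))$, take the union over $g\in G$, and then pass to the closure using continuity of $\phi$. The only cosmetic difference is that the paper writes the last step as $Cl_{Y_1}(\phi^{-1}(\cdot))\subseteq\phi^{-1}(Cl_{Y_2}(\cdot))$, whereas you equivalently observe that $\phi^{-1}(\tilde{\partial_2}(F))$ is already closed so the closure on the left is absorbed.
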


\begin{proof}Let $F \in Closed(G)$. We have that $\tilde{\partial_{1}}(F) = Cl_{Y}(\bigcup_{g\in G}\partial_{1}(Fg))$. But $\partial_{1}(Fg) \subseteq \phi^{-1}(\partial_{2}(Fg))$, because $id+\phi: G+_{\partial_{1}}Y_{1} \rightarrow  G+_{\partial_{2}}Y_{2}$ is continuous. So, $\tilde{\partial_{1}}(F) \subseteq Cl_{Y}(\bigcup_{g\in G}\phi^{-1}(\partial_{2}(Fg))) = Cl_{Y}(\phi^{-1}(\bigcup_{g\in G}\partial_{2}(Fg))) \subseteq \phi^{-1}(Cl_{Y}(\bigcup_{g\in G}\partial_{2}(Fg))) = \phi^{-1}(\tilde{\partial_{2}}(F))$. So, we have the diagram:

$$ \xymatrix{  Closed(G) \ar[r]^{\tilde{\partial_{1}}}  & Closed(Y_{1}) \ar@{}[dl]|{\subseteq} \\
            Closed(G) \ar[r]^{\tilde{\partial_{2}}} \ar[u]^{id} & Closed(Y_{2}) \ar[u]^{\phi^{-1}} }$$

Thus, $id+\phi: G+_{\tilde{\partial_{1}}}Y_{1} \rightarrow  G+_{\tilde{\partial_{2}}}Y_{2}$ is continuous.

\end{proof}

So, we have a functor $\mathcal{P}: Comp(G) \rightarrow qPers(G)$ such that $\mathcal{P}(G+_{\partial}Y) = G+_{\tilde{\partial}}Y$ and, for $id+\phi: G+_{\partial_{1}}Y_{1} \rightarrow  G+_{\partial_{2}}Y_{2}$ a morphism, $\mathcal{P}(id+\phi) = id+\phi: G+_{\tilde{\partial_{1}}}Y_{1} \rightarrow  G+_{\tilde{\partial_{2}}}Y_{2}$.

\begin{prop}Let $G+_{\partial_{1}}Y_{1} \in Comp(G), \ G+_{\partial_{2}}Y_{2} \in qPers(G)$ and $id+\phi: \ G+_{\partial_{1}}Y_{1} \ \rightarrow \ G+_{\partial_{2}}Y_{2}$ be a continuous map, Then, the map $id+\phi: G+_{\tilde{\partial_{1}}}Y_{1} \rightarrow G+_{\partial_{2}}Y_{2}$ is also continuous.
\end{prop}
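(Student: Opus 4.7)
The plan is to argue by the standard criterion for continuity between sum spaces, namely that $id+\phi: G+_{\tilde{\partial_{1}}}Y_{1}\rightarrow G+_{\partial_{2}}Y_{2}$ is continuous iff $\tilde{\partial_{1}}(F)\subseteq \phi^{-1}(\partial_{2}(F))$ for every $F\in Closed(G)$. So the task reduces to verifying this containment using (i) the hypothesis that $id+\phi$ is continuous as a map $G+_{\partial_{1}}Y_{1}\rightarrow G+_{\partial_{2}}Y_{2}$, and (ii) the fact that $G+_{\partial_{2}}Y_{2}\in qPers(G)$.

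First, I would unpack hypothesis (ii): the defining property of quasi-perspectivity, combined with the lemma that $R+id$ continuous is equivalent to $\partial_{2}(F'g)=\partial_{2}(F')$ for every $F'\in Closed(G)$ and every $g\in G$ (this was established in the proof of \textbf{Proposition \ref{identidade1}}, where the right-invariance of $\partial$ on closed sets is exactly what quasi-perspectivity yields). Next, I would apply hypothesis (i), which gives $\partial_{1}(F')\subseteq \phi^{-1}(\partial_{2}(F'))$ for every $F'\in Closed(G)$. Specializing $F'=Fg$ and combining with (ii), I get $\partial_{1}(Fg)\subseteq \phi^{-1}(\partial_{2}(Fg))=\phi^{-1}(\partial_{2}(F))$ for every $g\in G$.

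Taking the union over $g\in G$ then yields $\bigcup_{g\in G}\partial_{1}(Fg)\subseteq \phi^{-1}(\partial_{2}(F))$. Since $\phi:Y_{1}\rightarrow Y_{2}$ is continuous and $\partial_{2}(F)$ is closed in $Y_{2}$, the set $\phi^{-1}(\partial_{2}(F))$ is closed in $Y_{1}$, so I can take closures on the left and obtain $\tilde{\partial_{1}}(F)=Cl_{Y_{1}}\bigl(\bigcup_{g\in G}\partial_{1}(Fg)\bigr)\subseteq \phi^{-1}(\partial_{2}(F))$. This is the required containment, concluding the proof.

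There is really no obstacle here: the entire content is in the observation that quasi-perspectivity of the target collapses the union $\bigcup_{g}\partial_{2}(Fg)$ appearing in the definition of $\tilde{\partial_{2}}$ down to $\partial_{2}(F)$ itself, so the closure operation on the $Y_{1}$-side passes harmlessly through the preimage under $\phi$. In categorical language, this proposition is the universal property that makes $\mathcal{P}$ a left adjoint (or rather a reflection) to the inclusion $qPers(G)\hookrightarrow Comp(G)$; the unit at $G+_{\partial_{1}}Y_{1}$ is the continuous map $id+id: G+_{\partial_{1}}Y_{1}\rightarrow G+_{\tilde{\partial_{1}}}Y_{1}$, and the present proposition says that any morphism from $G+_{\partial_{1}}Y_{1}$ into a quasi-perspective object factors (uniquely, by density of $G$) through this unit.
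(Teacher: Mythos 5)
Your argument is correct and is essentially the paper's: the paper simply cites the preceding proposition (continuity of $id+\phi: G+_{\tilde{\partial_{1}}}Y_{1} \rightarrow G+_{\tilde{\partial_{2}}}Y_{2}$) and then uses $\tilde{\partial_{2}} = \partial_{2}$ for quasi-perspective objects, whereas you inline that same computation, using $\partial_{1}(Fg)\subseteq\phi^{-1}(\partial_{2}(Fg))=\phi^{-1}(\partial_{2}(F))$ and passing the closure through the closed preimage. The content is identical, so this is fine.
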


\begin{proof}Since $id+\phi: G+_{\partial_{1}}Y_{1} \rightarrow G+_{\partial_{2}}Y_{2}$ is continuous, we have that $id+\phi: G+_{\tilde{\partial_{1}}}Y_{1} \rightarrow G+_{\tilde{\partial_{2}}}Y_{2}$ is continuous. But $G+_{\partial_{2}}Y_{2} \in qPers(G)$, which implies that $\tilde{\partial_{2}} = \partial_{2}$. Thus, $id+\phi: G+_{\tilde{\partial_{1}}}Y_{1} \rightarrow G+_{\partial_{2}}Y_{2}$ is continuous.
\end{proof}

So, $id+\phi: G+_{\tilde{\partial_{1}}}Y_{1} \rightarrow G+_{\partial_{2}}Y_{2}$ is the only continuous map that commutes the diagram:

$$ \xymatrix{  G+_{\partial_{1}}Y_{1} \ar[r]^{id+id} \ar[d]^{id+\phi} & G+_{\tilde{\partial_{1}}}Y_{1} \ar[ld]^{id+\phi} \\
            G+_{\partial_{2}}Y_{2}  & }$$

So, for $\mathcal{I}: qPers(G) \rightarrow Comp(G)$ the inclusion functor, we have that $G+_{\tilde{\partial_{1}}}Y_{1}$ with the map $id+id: G+_{\partial_{1}}Y_{1} \rightarrow G+_{\tilde{\partial_{1}}}Y_{1}$, form a reflection of $G+_{\partial_{1}}Y_{1}$ along $\mathcal{I}$. Thus, we have:

\begin{teo}$\mathcal{P}$ is left adjoint to $\mathcal{I}$.
\eod\end{teo}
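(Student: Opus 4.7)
The plan is to invoke the standard characterization of a left adjoint via universal arrows (reflections), since the preceding proposition has already done all the substantive work. The displayed triangle just before the theorem says exactly that, for every $A = G+_{\partial_{1}}Y_{1} \in Comp(G)$, the map $\eta_{A} = id+id: A \to \mathcal{I}\mathcal{P}(A) = G+_{\tilde{\partial_{1}}}Y_{1}$ is a reflection of $A$ along $\mathcal{I}$: for every $B \in qPers(G)$ and every morphism $f: A \to \mathcal{I}(B)$, there is a unique morphism $\bar{f}: \mathcal{P}(A) \to B$ in $qPers(G)$ with $\mathcal{I}(\bar{f}) \circ \eta_{A} = f$. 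This is precisely the hypothesis of Mac Lane's theorem (IV.1) relating pointwise reflections to adjunctions.

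Concretely, I would record this by defining, for each pair $A \in Comp(G)$, $B \in qPers(G)$, the map
$$\Phi_{A,B}: \mathrm{Hom}_{qPers(G)}(\mathcal{P}(A), B) \longrightarrow \mathrm{Hom}_{Comp(G)}(A, \mathcal{I}(B)), \quad \Phi_{A,B}(\psi) = \mathcal{I}(\psi) \circ \eta_{A}.$$
Existence of $\bar{f}$ in the preceding proposition gives surjectivity of $\Phi_{A,B}$, and uniqueness gives injectivity, so $\Phi_{A,B}$ is a bijection of hom-sets.

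It then remains to check naturality of $\Phi$ in both arguments. Naturality in $B$, i.e.\ compatibility with post-composition by $\beta: B \to B'$ in $qPers(G)$, is immediate from functoriality of $\mathcal{I}$: $\mathcal{I}(\beta \circ \psi) \circ \eta_{A} = \mathcal{I}(\beta) \circ \mathcal{I}(\psi) \circ \eta_{A}$. Naturality in $A$, i.e.\ compatibility with pre-composition by $\alpha: A' \to A$ in $Comp(G)$, reduces to the identity $\mathcal{I}\mathcal{P}(\alpha) \circ \eta_{A'} = \eta_{A} \circ \alpha$, which holds trivially because all three maps $\eta_{A}, \eta_{A'}$ and $\mathcal{P}(\alpha)$ coincide on underlying sets with the identity, respectively with $\alpha$. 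The only possible obstacle is notational bookkeeping; there is no real analytic or topological content left to verify, since continuity, equivariance, and the functoriality of $\mathcal{P}$ were all established in the propositions immediately above, and this is why the theorem is essentially closed by a $\square$.
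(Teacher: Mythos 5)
Your proposal is correct and follows essentially the same route as the paper: the displayed triangle together with the two preceding propositions establishes that $id+id: G+_{\partial_{1}}Y_{1} \rightarrow G+_{\tilde{\partial_{1}}}Y_{1}$ is a reflection of each object along $\mathcal{I}$, and the adjunction is then the standard consequence (Mac Lane IV.1 / Borceux 3.1). Your explicit spelling out of the hom-set bijection $\Phi_{A,B}$ and its naturality is exactly the bookkeeping the paper leaves implicit behind the $\square$.
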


A similar thing happens between the categories $Comp(\varphi)$ and $qPers(\varphi)$. Let the functor $\mathcal{P}_{K}': Comp(\varphi) \rightarrow qPers(\varphi)$ be defined by $\mathcal{P}_{K}' = \Pi \circ \mathcal{P} \circ \Lambda_{K}$ and $\mathcal{I}': qPers(\varphi) \rightarrow Comp(\varphi)$ be the inclusion functor.

\begin{prop}Let $X+_{f}Y \in Comp(\varphi)$. Then, the application $\\ id+id: X+_{f}Y \rightarrow \mathcal{I}' \circ \mathcal{P}_{K}'(X+_{f}Y) = X+_{(\widetilde{f_{\Lambda_{K}}})_{\Pi_{K}}}Y$ is continuous.
\end{prop}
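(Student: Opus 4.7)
The plan is to reduce the statement to pointwise containment of admissible maps and then chain two easy inclusions. By the earlier corollary characterizing when an identity map between two sum-topologies is continuous, it suffices to show that for every $A \in Closed(X)$,
\[
f(A) \;\subseteq\; (\widetilde{f_{\Lambda_{K}}})_{\Pi_{K}}(A).
\]

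First I would factor the target admissible map. By definition $(\widetilde{f_{\Lambda_{K}}})_{\Pi_{K}}(A) = \widetilde{f_{\Lambda_{K}}}(\Pi_{K}(A))$, and from the construction of the tilde, $\widetilde{f_{\Lambda_{K}}}(F) = Cl_{Y}(\bigcup_{g\in G} f_{\Lambda_{K}}(Fg))$, which contains the term indexed by $g=e$. Thus for every $F \in Closed(G)$, $f_{\Lambda_{K}}(F) \subseteq \widetilde{f_{\Lambda_{K}}}(F)$; in particular, $(f_{\Lambda_{K}})_{\Pi_{K}}(A) \subseteq (\widetilde{f_{\Lambda_{K}}})_{\Pi_{K}}(A)$.

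Second, the already-established natural transformation $id'_{K}: id_{Comp(\varphi)} \Rightarrow \bar{\Pi}_{K}\circ\bar{\Lambda}_{K}$ gives continuity of $id+id: X+_{f}Y \to X+_{(f_{\Lambda_{K}})_{\Pi_{K}}}Y$, which by the continuity criterion means $f(A) \subseteq (f_{\Lambda_{K}})_{\Pi_{K}}(A)$ for all $A \in Closed(X)$. Chaining the two inclusions yields $f(A) \subseteq (\widetilde{f_{\Lambda_{K}}})_{\Pi_{K}}(A)$, and one more appeal to the same corollary delivers the desired continuity of $id+id$. Equivalently, viewed categorically: apply the functor $\Pi$ to the unit morphism $id+id: G+_{f_{\Lambda_{K}}}Y \to G+_{\widetilde{f_{\Lambda_{K}}}}Y$ of the adjunction $\mathcal{P}\dashv \mathcal{I}$, then precompose with the component of $id'_{K}$ at $X+_{f}Y$.

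There is no real obstacle here; the argument is entirely formal, the only thing to watch being the bookkeeping of how $\Pi_{K}$ and $\Lambda_{K}$ interact with the tilde-construction so that one does not accidentally strengthen $f_{\Lambda_{K}} \subseteq \widetilde{f_{\Lambda_{K}}}$ into an equality (which would require quasi-perspectivity and is not assumed here).
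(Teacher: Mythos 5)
Your argument is correct and follows essentially the same route as the paper: the paper also observes $f_{\Lambda_{K}} \subseteq \widetilde{f_{\Lambda_{K}}}$, deduces $f \subseteq (f_{\Lambda_{K}})_{\Pi_{K}} \subseteq (\widetilde{f_{\Lambda_{K}}})_{\Pi_{K}}$, and concludes continuity via the criterion for identity maps between sum topologies. Your version just spells out the two intermediate inclusions in more detail, including the correct caution that $f_{\Lambda_{K}} \subseteq \widetilde{f_{\Lambda_{K}}}$ need not be an equality outside $qPers$.
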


\begin{proof}We have that $f_{\Lambda_{K}} \subseteq \widetilde{f_{\Lambda_{K}}}$, which implies that $f \subseteq (f_{\Lambda_{K}})_{\Pi_{K}} \subseteq (\widetilde{f_{\Lambda_{K}}})_{\Pi_{K}}$ and then $id+id: X+_{f}Y \rightarrow X+_{(\widetilde{f_{\Lambda_{K}}})_{\Pi_{K}}}Y$ is continuous.
\end{proof}

\begin{prop}Let $X+_{f_{1}}Y_{1} \in Comp(\varphi), \ X+_{f_{2}}Y_{2} \in qPers(\varphi)$ and $id+\phi: X+_{f_{1}}Y_{1} \rightarrow X+_{f_{2}}Y_{2}$ be a continuous map, Then, the application $id+\phi: X+_{(\widetilde{f_{1\Lambda_{K}}})_{\Pi_{K}}}Y_{1} \rightarrow X+_{f_{2}}Y_{2}$ is also continuous.
\end{prop}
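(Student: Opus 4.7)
The plan is to leverage the adjunction $\mathcal{P} \dashv \mathcal{I}$ already established on the $G$-side, together with the functors $\bar{\Pi}_{K}$ and $\bar{\Lambda}_{K}$, and the fact that membership in $qPers(\varphi)$ gives the identity $f_{2} = (f_{2\Lambda_{K}})_{\Pi_{K}}$ via \textbf{Proposition \ref{identidade2}}.

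First, I would apply the functor $\bar{\Lambda}_{K}: Comp(\varphi) \rightarrow Comp(G)$ to the given morphism $id+\phi: X+_{f_{1}}Y_{1} \rightarrow X+_{f_{2}}Y_{2}$. This yields a continuous map $id+\phi: G+_{f_{1\Lambda_{K}}}Y_{1} \rightarrow G+_{f_{2\Lambda_{K}}}Y_{2}$ in $Comp(G)$. Next, since $X+_{f_{2}}Y_{2} \in qPers(\varphi)$, by definition of $qPers(\varphi)$ its image $G+_{f_{2\Lambda_{K}}}Y_{2}$ lies in $qPers(G)$.

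Now I would invoke the reflection property just proved: for any object $G+_{\partial_{1}}Y_{1} \in Comp(G)$ and any morphism into an object of $qPers(G)$, the map factors uniquely through $G+_{\tilde{\partial}_{1}}Y_{1}$. Applying this to our situation gives that $id+\phi: G+_{\widetilde{f_{1\Lambda_{K}}}}Y_{1} \rightarrow G+_{f_{2\Lambda_{K}}}Y_{2}$ is continuous.

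Finally, I would apply the functor $\bar{\Pi}_{K}: Comp(G) \rightarrow Comp(\varphi)$ to this map, producing a continuous map $id+\phi: X+_{(\widetilde{f_{1\Lambda_{K}}})_{\Pi_{K}}}Y_{1} \rightarrow X+_{(f_{2\Lambda_{K}})_{\Pi_{K}}}Y_{2}$. By \textbf{Proposition \ref{identidade2}} applied to $X+_{f_{2}}Y_{2} \in qPers(\varphi)$, we have $(f_{2\Lambda_{K}})_{\Pi_{K}} = f_{2}$, so the target is exactly $X+_{f_{2}}Y_{2}$, giving the desired continuity. There is no genuine obstacle here: the argument is a clean chaining of the already-developed functorial machinery with the key input being the $qPers$-hypothesis on the codomain, which simultaneously places $G+_{f_{2\Lambda_{K}}}Y_{2}$ in $qPers(G)$ (needed for the reflection step) and collapses $(f_{2\Lambda_{K}})_{\Pi_{K}}$ back to $f_{2}$ (needed to match the stated codomain).
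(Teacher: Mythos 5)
Your proposal is correct and follows essentially the same route as the paper: apply $\bar{\Lambda}_{K}$ to pass to $G+_{f_{1\Lambda_{K}}}Y_{1} \rightarrow G+_{f_{2\Lambda_{K}}}Y_{2}$, use the $G$-side reflection (equivalently, functoriality of $\mathcal{P}$ plus $\widetilde{f_{2\Lambda_{K}}} = f_{2\Lambda_{K}}$) to get continuity out of $G+_{\widetilde{f_{1\Lambda_{K}}}}Y_{1}$, then apply $\bar{\Pi}_{K}$ and collapse $(f_{2\Lambda_{K}})_{\Pi_{K}}$ back to $f_{2}$ via Proposition \ref{identidade2}. The only cosmetic difference is which of the two interchangeable $G$-side lemmas you cite for the middle step.
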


\begin{proof}Since $id+\phi: X+_{f_{1}}Y_{1} \rightarrow X+_{f_{2}}Y_{2}$ is continuous, we have that $id+\phi: G+_{f_{1\Lambda_{K}}}Y_{1} \rightarrow G+_{f_{2\Lambda_{K}}}Y_{2}$ is continuous, which implies that the map $id+\phi: G+_{\widetilde{f_{1\Lambda_{K}}}}Y_{1} \rightarrow G+_{\widetilde{f_{2\Lambda_{K}}}}Y_{2} = G+_{f_{2\Lambda_{K}}}Y_{2}$ is continuous, which implies that $id+\phi: X+_{(\widetilde{f_{1\Lambda_{K}}})_{\Pi_{K}}}Y_{1} \rightarrow X+_{(f_{2\Lambda_{K}})_{\Pi_{K}}}Y_{2} = X+_{f_{2}}Y_{2}$ is continuous.
\end{proof}

So, $id+\phi: X+_{(\widetilde{f_{1\Lambda_{K}}})_{\Pi_{K}}}Y_{1} \rightarrow X+_{f_{2}}Y_{2}$ is the only continuous map that commutes the diagram:

$$ \xymatrix{  X+_{f_{1}}Y_{1} \ar[r]^<<{ \ \ \ \ \ id+id} \ar[d]^{id+\phi} & X+_{(\widetilde{f_{\Lambda_{K}}})_{\Pi_{K}}} \!\!\! Y \ar[ld]^{id+\phi} \\
            X+_{f_{2}}Y_{2}  & }$$

Hence, $X+_{(\widetilde{f_{\Lambda_{K}}})_{\Pi_{K}}}Y$ with the map $id+id: X+_{f_{1}}Y_{1} \rightarrow X+_{(\widetilde{f_{\Lambda_{K}}})_{\Pi_{K}}}Y$, form a reflection of $X+_{f_{1}}Y_{1}$ along $\mathcal{I}'$. Thus, we have:

\begin{teo} $\mathcal{P}_{K}'$ is left adjoint to $\mathcal{I}'$.
\eod\end{teo}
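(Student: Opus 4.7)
The plan is to invoke the standard characterization of adjunctions via reflections: a functor $\mathcal{I}': qPers(\varphi) \to Comp(\varphi)$ has a left adjoint if and only if every object of $Comp(\varphi)$ admits a reflection along $\mathcal{I}'$, in which case the left adjoint is necessarily the assignment sending each object to its reflection. So the task reduces to exhibiting, for each $X+_{f}Y \in Comp(\varphi)$, a universal arrow $\eta_{X+_{f}Y}: X+_{f}Y \to \mathcal{I}'(\mathcal{P}_{K}'(X+_{f}Y))$, and checking that this assignment agrees with $\mathcal{P}_{K}'$ on both objects and morphisms.

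First, I would take as the candidate unit the map $\eta_{X+_{f}Y} = id+id: X+_{f}Y \to X+_{(\widetilde{f_{\Lambda_{K}}})_{\Pi_{K}}}Y$. The first of the two propositions immediately preceding the theorem guarantees this is continuous (hence a morphism in $Comp(\varphi)$). The universal property is then precisely the content of the second preceding proposition: for every $X+_{f_{2}}Y_{2} \in qPers(\varphi)$ and every morphism $id+\phi: X+_{f}Y \to X+_{f_{2}}Y_{2}$ in $Comp(\varphi)$, the map $id+\phi$ is continuous as a map $X+_{(\widetilde{f_{\Lambda_{K}}})_{\Pi_{K}}}Y \to X+_{f_{2}}Y_{2}$. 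Uniqueness of the factorization is automatic because morphisms in these categories are of the form $id+\phi$, so the extension is determined by $\phi$, which is unchanged.

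Next, I would verify that the reflection assignment coincides with $\mathcal{P}_{K}' = \Pi \circ \mathcal{P} \circ \Lambda_{K}$: unwinding the definitions, $\Lambda_{K}$ sends $X+_{f}Y$ to $G+_{f_{\Lambda_{K}}}Y$, then $\mathcal{P}$ sends it to $G+_{\widetilde{f_{\Lambda_{K}}}}Y$, then $\Pi$ sends it to $X+_{(\widetilde{f_{\Lambda_{K}}})_{\Pi_{K}}}Y$, matching the target of $\eta$. On morphisms, functoriality of $\Pi$, $\mathcal{P}$, and $\Lambda_{K}$ combined with the commuting square displayed just before the theorem statement shows that $\mathcal{P}_{K}'(id+\phi)$ is the unique factorization produced by the reflection.

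Since there is no genuine obstacle — the two propositions are engineered to express the universal property of a reflection — the only point requiring care is checking that the output of $\mathcal{P}_{K}'$ really lies in $qPers(\varphi)$ and that the diagram chase for functoriality is consistent; both are inherited from the analogous adjunction $\mathcal{P} \dashv \mathcal{I}$ already established, transported through the categorical isomorphism $qPers(G) \cong qPers(\varphi)$ given by $\Pi$ and $\Lambda_{K}$. The theorem therefore follows by exactly the same argument as its companion result, and one may legitimately close the proof with \eod.
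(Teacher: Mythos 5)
Your proposal is correct and follows exactly the paper's route: the two propositions preceding the theorem establish that $id+id: X+_{f}Y \rightarrow X+_{(\widetilde{f_{\Lambda_{K}}})_{\Pi_{K}}}Y$ is a reflection of $X+_{f}Y$ along $\mathcal{I}'$, and the adjunction follows from the standard reflection characterization, which is precisely why the paper closes the theorem with $\square$ and no further argument. The only remark worth making is that your final paragraph's appeal to transporting the adjunction $\mathcal{P}\dashv\mathcal{I}$ through the isomorphism $qPers(G)\cong qPers(\varphi)$ is not needed (and not quite what the paper does), since the second preceding proposition already proves the universal property directly by passing through $\Lambda_{K}$ and back.
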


\begin{obs}By the uniqueness of the adjunction, we have that $\mathcal{P}_{K}'$ do not depend of the choice of the fundamental domain $K$.
\end{obs}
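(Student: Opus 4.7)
The plan is to invoke the standard categorical fact that left adjoints to a fixed functor are unique up to a unique natural isomorphism. The functor $\mathcal{I}' : qPers(\varphi) \to Comp(\varphi)$ is the inclusion functor and manifestly has no dependence on any fundamental domain. For each choice of fundamental domain $K \subseteq X$, the preceding theorem established that $\mathcal{P}_{K}'$ is left adjoint to $\mathcal{I}'$. Hence for any two fundamental domains $K$ and $K'$, both $\mathcal{P}_{K}'$ and $\mathcal{P}_{K'}'$ are left adjoints to the same functor $\mathcal{I}'$, and the uniqueness of adjoints gives a natural isomorphism $\mathcal{P}_{K}' \cong \mathcal{P}_{K'}'$.

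In more detail, I would proceed as follows. Fix $X+_{f}Y \in Comp(\varphi)$ and the two reflection arrows $\eta_{K} : X+_{f}Y \to \mathcal{I}' \circ \mathcal{P}_{K}'(X+_{f}Y)$ and $\eta_{K'} : X+_{f}Y \to \mathcal{I}' \circ \mathcal{P}_{K'}'(X+_{f}Y)$ (both of the form $id + id$). Since $\mathcal{P}_{K'}'(X+_{f}Y) \in qPers(\varphi)$ and $\eta_{K}$ is a reflection along $\mathcal{I}'$, the universal property produces a unique morphism $\xi_{K,K'}: \mathcal{P}_{K}'(X+_{f}Y) \to \mathcal{P}_{K'}'(X+_{f}Y)$ in $qPers(\varphi)$ with $\mathcal{I}'(\xi_{K,K'}) \circ \eta_{K} = \eta_{K'}$. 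Swapping the roles of $K$ and $K'$ produces $\xi_{K',K}$ in the other direction, and the uniqueness clause of the universal property forces $\xi_{K',K} \circ \xi_{K,K'} = id$ and symmetrically, so $\xi_{K,K'}$ is an isomorphism.

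Naturality of $\xi_{K,K'}$ in the object $X+_{f}Y$ follows by the usual diagram chase: given a morphism $id+\phi: X+_{f_{1}}Y_{1} \to X+_{f_{2}}Y_{2}$ in $Comp(\varphi)$, the two composite arrows $X+_{f_{1}}Y_{1} \to \mathcal{I}' \circ \mathcal{P}_{K'}'(X+_{f_{2}}Y_{2})$ obtained by going around the square in the two possible ways both solve the same universal problem at $X+_{f_{1}}Y_{1}$, hence coincide. This yields a natural isomorphism $\mathcal{P}_{K}' \Rightarrow \mathcal{P}_{K'}'$, which is what was claimed.

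The only possible obstacle is purely formal: checking that both functors genuinely take values in the same category $qPers(\varphi)$ so that the comparison morphism exists there, but this is guaranteed by the construction $\mathcal{P}_{K}' = \Pi \circ \mathcal{P} \circ \Lambda_{K}$ landing in $qPers(\varphi)$ by definition. Hence the remark reduces entirely to invoking the uniqueness of left adjoints, and no additional verification beyond the preceding adjunction theorem is required.
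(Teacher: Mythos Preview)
Your argument is correct and is exactly the approach the paper intends: the remark is not given a separate proof in the paper, it simply invokes uniqueness of the left adjoint to $\mathcal{I}'$, which you have spelled out in full.

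One small sharpening worth noting: the paper says the functor ``does not depend'' on $K$, which suggests literal equality rather than mere natural isomorphism. Your argument actually yields this. The reflection units $\eta_{K}$ and $\eta_{K'}$ are both the map $id+id$, and every morphism in $qPers(\varphi)$ has the form $id+\phi$; the equation $\mathcal{I}'(\xi_{K,K'})\circ\eta_{K}=\eta_{K'}$ then forces $\phi=id_{Y}$, so the comparison isomorphism is the identity map between $\mathcal{P}_{K}'(X+_{f}Y)$ and $\mathcal{P}_{K'}'(X+_{f}Y)$. Hence the two topologies on $X\cup Y$ coincide and the functors agree on the nose, not just up to isomorphism.
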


\subsection{Subspaces and quotients}

\begin{prop}Let $\varphi: G \curvearrowright Y$ and $G+_{\partial}Y$ be a perspectivity, $H < G$ and $F \in Closed(Y)$ such that $\partial(H) \subseteq F$. So, $\varphi|_{H\times F}: H \curvearrowright F$ and $H+_{\partial^{\ast}}F$ form a perspectivity, where $\partial^{\ast}$ is a pullback with respect to the inclusions of $H$ in $G$ and $F$ in $Y$.
\end{prop}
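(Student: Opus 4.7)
The plan is to exploit the pullback machinery developed earlier in the paper. First I would unpack $\partial^{\ast}$: since $G$ carries the discrete topology every subset is closed, so $Cl_{G}(\iota_{H}(A)) = A$ for $A \subseteq H$, and the pullback formula collapses to $\partial^{\ast}(A) = \iota_{F}^{-1}(\partial(A)) = \partial(A)\cap F$. I would then assemble the basic structural facts about $H+_{\partial^{\ast}}F$ from the pullback propositions: the inclusion $\iota_{H}: H\to G$ is proper (compacts in a discrete group are finite), $\iota_{F}: F\to Y$ is injective and closed, $F$ is compact (closed subset of compact $Y$), and $G+_{\partial}Y$ is compact Hausdorff; consequently $H+_{\partial^{\ast}}F$ is compact and Hausdorff. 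Assuming $F$ is $H$-invariant so that $\iota_{H}$ and $\iota_{F}$ are equivariant with respect to the inclusion $H\hookrightarrow G$, Corollary~\ref{pullbackaction} supplies the action by homeomorphisms and places $H+_{\partial^{\ast}}F$ in $Comp(H)$.

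Next I would verify quasi-perspectivity directly from the explicit formula: for any $A\in Closed(H)$ and $h\in H$, one has $\partial^{\ast}(Ah) = \partial(Ah)\cap F = \partial(A)\cap F = \partial^{\ast}(A)$, using that $G+_{\partial}Y\in qPers(G)$. Thus $R+id$ is continuous on $H+_{\partial^{\ast}}F$, giving membership in $qPers(H)$; combined with Hausdorffness this already yields $H+_{\partial^{\ast}}F\in Pers(H)$ via the right-multiplication characterization of perspectivity.

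For the uniform-structure formulation — that for every $u\in\mathcal{U}_{\partial^{\ast}}$ and every finite $C\subseteq H$ the set $\{h\in H : hC\notin Small(u)\}$ is finite — I would use the hypothesis $\partial(H)\subseteq F$ to note that $Cl_{G+_{\partial}Y}(H) = H\cup \partial(H) \subseteq H\cup F$, so $H\cup F$ is closed, hence compact, in $G+_{\partial}Y$. By the last proposition of the Pullbacks subsection the topology of $H+_{\partial^{\ast}}F$ agrees with the subspace topology inherited from $G+_{\partial}Y$, and uniqueness of the compatible uniform structure on a compact Hausdorff space identifies $\mathcal{U}_{\partial^{\ast}}$ with the restriction of $\mathcal{U}_{\partial}$. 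Given $u\in \mathcal{U}_{\partial^{\ast}}$, I choose $\tilde{u}\in \mathcal{U}_{\partial}$ with $u = \tilde{u}\cap (H\cup F)^{2}$; since $hC\subseteq H$, the conditions $hC\in Small(u)$ and $hC\in Small(\tilde{u})$ coincide, and perspectivity of $G+_{\partial}Y$ provides the finiteness of $\{g\in G : gC\notin Small(\tilde{u})\}$, whose intersection with $H$ is the set we need.

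The main obstacle I anticipate is the $H$-invariance of $F$: the stated condition $\partial(H)\subseteq F$ does not literally force $\psi(H,F)\subseteq F$, yet such invariance is needed both for $\varphi|_{H\times F}$ to be an action and for $\iota_{F}$ to satisfy the equivariance required by Corollary~\ref{pullbackaction}. I expect this to be tacit in the statement, or else to be handled by enlarging $F$ to the smallest $H$-invariant closed set containing it (which still contains $\partial(H)$ and still lies inside $Y$), after which the argument above goes through unchanged.
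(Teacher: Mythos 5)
Your proof is correct and follows essentially the same route as the paper: both identify $H+_{\partial^{\ast}}F$ with the subspace $H\cup F$ of $G+_{\partial}Y$ via the pullback-by-inclusions proposition, observe that it is closed (since $Cl_{G+_{\partial}Y}(H)=H\cup\partial(H)\subseteq H\cup F$) hence compact and Hausdorff, and then restrict the two continuous actions $L+\varphi$ and $R+id$. Your remark about the tacit $H$-invariance of $F$ is well taken --- the paper silently assumes it when restricting the action --- and your explicit computation $\partial^{\ast}(A h)=\partial(A)\cap F$ and the uniform-structure check are harmless extra verifications beyond what the paper records.
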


\begin{proof}The topology of $H+_{\partial^{\ast}}F$ coincides with the topology of $H \cup F$ as a subspace of $G+_{\partial}Y$. So, $H+_{\partial^{\ast}}F$ is Hausdorff. Since $\partial(H) \subseteq F$, we have that $H\cup F \in Closed(G+_{\partial}Y)$, with implies that it is compact. And since $L+\varphi$ and $R+id$ are continuous, we have that $(L+\varphi)|_{H\times (H+_{\partial^{\ast}}F)}$ and $(R+id)|_{H\times (H+_{\partial^{\ast}}F)}$ are continuous. Thus, $H+_{\partial^{\ast}}F$ is perspective.
\end{proof}

\begin{prop}Let, for $i =1,2$, $\varphi_{i}+\psi_{i}: G \curvearrowright X_{i}+_{f_{i}}Y_{i}$ be actions by homeomorphisms on Hausdorff spaces and $m+n: X_{1}+_{f_{1}}Y_{1} \rightarrow X_{2}+_{f_{2}}Y_{2}$ a continuous and surjective map with $m$ equivariant. If $X_{1}+_{f_{1}}Y_{1}$ is perspective, then $X_{2}+_{f_{2}}Y_{2}$ is perspective.
\end{prop}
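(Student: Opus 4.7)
The plan is to verify the three conditions defining perspectivity for $X_2+_{f_2}Y_2$. Hausdorffness and the fact that $\varphi_2+\psi_2$ acts by homeomorphisms are already among the hypotheses, so only the smallness condition remains: for every $u \in \mathcal{U}_{f_2}$ and every compact $K \subseteq X_2$, the set $\{g \in G : \varphi_2(g,K) \notin Small(u)\}$ must be finite.

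First I would observe that $m+n$, being a continuous surjection between compact Hausdorff spaces, is automatically closed and uniformly continuous with respect to the unique uniform structures $\mathcal{U}_{f_1}$ and $\mathcal{U}_{f_2}$. Since by definition $m+n$ sends $X_1$ into $X_2$ and $Y_1$ into $Y_2$, surjectivity of $m+n$ forces $m : X_1 \to X_2$ to be surjective. The natural compact preimage in $X_1$ is then $K' := m^{-1}(K) = (m+n)^{-1}(K)$ (the second equality uses $K \cap Y_2 = \emptyset$): it is closed in the compact space $X_1+_{f_1}Y_1$, hence compact, and it is contained in $X_1$.

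The key step is the equivariant identity
\[ \varphi_1(g,K') \;=\; m^{-1}(\varphi_2(g,K)) \;=\; (m+n)^{-1}(\varphi_2(g,K)) \qquad (g \in G). \]
The inclusion ``$\subseteq$'' is immediate from $m \circ \varphi_1(g,\_) = \varphi_2(g,\_) \circ m$; for ``$\supseteq$'', given $y \in m^{-1}(\varphi_2(g,K))$ one uses that $\varphi_1(g^{-1},\_)$ is a bijection on $X_1$ and checks that $\varphi_1(g^{-1},y)$ lies in $K'$, whence $y \in \varphi_1(g,K')$. The second equality again follows from $\varphi_2(g,K) \cap Y_2 = \emptyset$.

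Finally, set $u' := ((m+n)^2)^{-1}(u) \in \mathcal{U}_{f_1}$. Applying Proposition \ref{small} in both directions (here surjectivity of $m+n$ is essential) yields
\[ \varphi_2(g,K) \in Small(u) \;\iff\; (m+n)^{-1}(\varphi_2(g,K)) = \varphi_1(g,K') \in Small(u'). \]
Hence $\{g \in G : \varphi_2(g,K) \notin Small(u)\}$ coincides with $\{g \in G : \varphi_1(g,K') \notin Small(u')\}$, which is finite because $X_1+_{f_1}Y_1$ is perspective. I expect the only mildly technical step to be the equivariant identity, where one must carefully invoke surjectivity of $m$; the rest is bookkeeping about the disjoint-union decomposition and the uniqueness of compatible uniform structures on compact Hausdorff spaces.
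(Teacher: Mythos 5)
Your proposal is correct and follows essentially the same route as the paper: pull $K$ back to the compact set $m^{-1}(K)=(m+n)^{-1}(K)\subseteq X_{1}$, use equivariance of $m$ to identify $\varphi_{1}(g,m^{-1}(K))$ with $(m+n)^{-1}(\varphi_{2}(g,K))$, and transfer non-smallness via Proposition \ref{small} using surjectivity of $m+n$. The only cosmetic difference is that you establish the equivalence of the two sets of group elements where the paper only needs (and only proves) one inclusion, and the surjectivity hypothesis is really what powers the converse direction of Proposition \ref{small} rather than the equivariant set identity, which holds by equivariance alone.
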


\begin{proof}Let $K$ be a compact subset of $X_{2}$ and $u$ an element of $\mathcal{U}_{f_{2}}$. We have that $(m+n)^{-1}(K)\in Closed(X_{1}+_{f_{1}}Y_{1})$, which implies that it is compact. But $(m+n)^{-1}(K) = m^{-1}(K) \subseteq X$. Since $X_{1}+_{f_{1}}Y_{1}$ is perspective, the set $\{g \in G: \varphi_{1}(g,m^{-1}(K))\notin Small(((m+n)^{2})^{-1}(u))\}$ is finite.

Let $h \in \{g \in G: \varphi_{2}(g,K)\notin Small(u)\}$. Then, $\varphi_{2}(h,K)\notin Small(u)$, which implies that $(m+n)^{-1}(\varphi_{2}(h,K)) \notin Small(((m+n)^{2})^{-1}(u))$. However, $(m+n)^{-1}(\varphi_{2}(h,K)) = m^{-1}(\varphi_{2}(h,K)) = \varphi_{1}(h,m^{-1}(K))$. So, $\varphi_{1}(h,m^{-1}(K)) \notin Small(((m+n)^{2})^{-1}(u))$, which implies that $h \in \{g \in G: \varphi_{1}(g,m^{-1}(K))\notin Small(((m+n)^{2})^{-1}(u))\}$. So, $\{g \in G: \varphi_{2}(g,K)\notin Small(u)\} \subseteq$ $\\ \{g \in G: \varphi_{1}(g,m^{-1}(K))\notin Small(((m+n)^{2})^{-1}(u))\}$, which implies that it is finite.

Thus, $X_{2}+_{f_{2}}Y_{2}$ is perspective.

\end{proof}

So, the closure of a subgroup and equivariant quotients maintain the perspective property, the same behavior that appears on the convergence case.

\subsection{Limits}

Let $\varphi: G \curvearrowright X$ be a properly discontinuous  cocompact action on a locally compact Hausdorff space $X$ and $F: \mathcal{C} \rightarrow T_{2}Comp(\varphi)$ be a covariant functor, where $\mathcal{C}$ is a small category and $T_{2}Comp(\varphi)$ is the full subcategory of $Comp(\varphi)$ whose objects are Hausdorff. If $\tilde{F}: \mathcal{C} \rightarrow SUM(X)$ is a functor that do the same as $F$ but forgetting the action, then it is easy to see that $\lim\limits_{\longleftarrow} \tilde{F}$ has a natural action of $G$ by homeomorphisms that extends $\varphi$ and this pair is the limit of $F$.

\begin{prop}If $F: \mathcal{C} \rightarrow SUM(X)$ is a covariant functor such that $\mathcal{C}$ is small and $\forall c \in \mathcal{C}, \ F(c)$ is compact with uniform structure $\mathcal{U}_{c}$, we have that $\forall K \subseteq X, \ \forall u \in \mathcal{U}_{c}, \ \{g \in G: \varphi(g,K) \notin Small(u)\}$ is finite, then $\forall K \subseteq X, \forall u \in \mathcal{U}, \ \{g \in G: \varphi(g,K) \notin Small(u)\}$ is finite, where $\mathcal{U}$ is the uniform structure of $\lim\limits_{\longleftarrow} F$.
\end{prop}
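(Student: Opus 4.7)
The plan is to exploit the fact that the uniform structure on $\lim\limits_{\longleftarrow} F$ is generated by pullbacks of uniform entourages from the factors $F(c)$, and then transfer the finiteness condition one factor at a time using \textbf{Proposition \ref{small}}.

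First, I would realize $\lim\limits_{\longleftarrow} F$ as a closed subspace of the uniform product $\prod_{c\in\mathcal{C}} F(c)$ (possible since $\mathcal{C}$ is small and all $F(c)$ are compact Hausdorff). The product uniformity admits as a basis the finite intersections $\bigcap_{i=1}^{n}(\pi_{c_{i}}^{2})^{-1}(u_{i})$ with $c_{i}\in\mathcal{C}$ and $u_{i}\in\mathcal{U}_{c_{i}}$, and the subspace uniformity on $\lim\limits_{\longleftarrow} F$ is obtained by restriction. Hence, given any $u\in\mathcal{U}$, I can choose finitely many objects $c_{1},\ldots,c_{n}\in\mathcal{C}$ and entourages $u_{i}\in\mathcal{U}_{c_{i}}$ so that $\bigcap_{i=1}^{n}(\pi_{c_{i}}^{2})^{-1}(u_{i})\subseteq u$ inside $\lim\limits_{\longleftarrow} F$.

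Next, I would use that each projection $\pi_{c_{i}}:\lim\limits_{\longleftarrow} F\rightarrow F(c_{i})$ is the identity on $X$ (by construction of the limit in $SUM(X)$, see \textbf{Proposition \ref{limitesegundotermo}}), so for every $g\in G$ and every compact $K\subseteq X$ one has $\pi_{c_{i}}(\varphi(g,K))=\varphi(g,K)\subseteq X\subseteq F(c_{i})$. By hypothesis, the set $A_{i}=\{g\in G:\varphi(g,K)\notin Small(u_{i})\text{ in }F(c_{i})\}$ is finite. Then by \textbf{Proposition \ref{small}} applied to the uniformly continuous map $\pi_{c_{i}}$, if $g\notin A_{i}$ then $\varphi(g,K)\in Small((\pi_{c_{i}}^{2})^{-1}(u_{i}))$ in $\lim\limits_{\longleftarrow} F$.

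Finally, for any $g\in G\setminus\bigcup_{i=1}^{n}A_{i}$, the set $\varphi(g,K)$ is small with respect to every $(\pi_{c_{i}}^{2})^{-1}(u_{i})$ and hence with respect to their intersection, which is contained in $u$; so $\varphi(g,K)\in Small(u)$. Therefore $\{g\in G:\varphi(g,K)\notin Small(u)\}\subseteq A_{1}\cup\cdots\cup A_{n}$ is finite, as required. The only delicate point is the first step, namely justifying that the product uniformity basis described above restricts to a basis of $\mathcal{U}$ on the limit; this is standard but relies on $\mathcal{C}$ being small (so that $\prod_{c\in\mathcal{C}}F(c)$ exists as a uniform space) rather than on the cited directed version of Bourbaki's proposition, and is the one place where care is needed.
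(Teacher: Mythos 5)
Your proposal is correct and follows essentially the same route as the paper: both identify a basis of $\mathcal{U}$ consisting of finite intersections $\bigcap_{i=1}^{n}(\pi_{c_{i}}^{2})^{-1}(u_{i})$, observe that $\pi_{c_{i}}$ restricts to the identity on $X$ so that smallness of $\varphi(g,K)$ with respect to $(\pi_{c_{i}}^{2})^{-1}(u_{i})$ reduces to smallness with respect to $u_{i}$ in $F(c_{i})$, and conclude that the exceptional set for $u$ is contained in the finite union $\bigcup_{i=1}^{n}A_{i}$. Your extra remark justifying the basis claim via the embedding of the limit into the product $\prod_{c}F(c)$ only makes explicit a step the paper asserts without proof.
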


\begin{obs}Since $F(c)$ is compact $\forall c \in \mathcal{C}$, we have that $\lim\limits_{\longleftarrow} F$ is compact, and then, it has a unique uniform structure.
\end{obs}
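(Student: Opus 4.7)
The plan is to exploit the fact that every entourage of $\mathcal{U}$ contains a finite intersection of pullbacks of entourages from individual factors, and then reduce the problem to finitely many instances of the hypothesis. Since each $F(c)$ is compact Hausdorff, $\lim\limits_{\longleftarrow} F$ embeds as a closed subspace of $\prod_{c \in \mathcal{C}} F(c)$ and is compact; its unique compatible uniformity $\mathcal{U}$ is the restriction of the product uniformity, so a basis is given by finite intersections $\bigcap_{i=1}^{n} (\pi_{c_{i}}^{2})^{-1}(u_{i})$ with $u_{i} \in \mathcal{U}_{c_{i}}$ and $\pi_{c_{i}}: \lim\limits_{\longleftarrow} F \to F(c_{i})$ the projections. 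Given $u \in \mathcal{U}$, fix such a basic entourage contained in $u$.

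The key observation is that for $S \subseteq \lim\limits_{\longleftarrow} F$, one has $S \in Small((\pi_{c_{i}}^{2})^{-1}(u_{i}))$ if and only if $\pi_{c_{i}}(S) \in Small(u_{i})$, directly from the definition of the preimage entourage. Since $X$ embeds identically into each $F(c)$ and $\pi_{c}|_{X} = id_{X}$, for every $g \in G$ we have $\pi_{c_{i}}(\varphi(g,K)) = \varphi(g,K)$ as a subset of $F(c_{i})$. Hence $\varphi(g,K) \notin Small((\pi_{c_{i}}^{2})^{-1}(u_{i}))$ in $\lim\limits_{\longleftarrow} F$ is equivalent to $\varphi(g,K) \notin Small(u_{i})$ in $F(c_{i})$.

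Combining these, if $\varphi(g,K)$ is $u_{i}$-small in every $F(c_{i})$, then it is small with respect to the intersection and hence $u$-small; contrapositively,
\[
\{g \in G: \varphi(g,K) \notin Small(u)\} \subseteq \bigcup_{i=1}^{n} \{g \in G: \varphi(g,K) \notin Small(u_{i}) \text{ in } F(c_{i})\},
\]
which is a finite union of finite sets by the hypothesis applied to each pair $(c_{i},u_{i})$. The only delicate point is verifying the basis description of $\mathcal{U}$, since the Bourbaki statement quoted earlier is phrased for directed inverse systems while here $\mathcal{C}$ is merely a small category; this is handled by computing $\lim\limits_{\longleftarrow} F$ directly as a closed subspace of the product and invoking the standard basis of the product uniformity, which requires no directedness.
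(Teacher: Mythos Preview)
Your proof is correct and essentially identical to the paper's: pass to a basic entourage $\bigcap_{i}(\pi_{c_i}^2)^{-1}(u_i)\subseteq u$, use $\pi_{c_i}|_X=id_X$ to translate non-smallness of $\varphi(g,K)$ in the limit down to non-smallness in some factor $F(c_i)$, and bound the bad set by a finite union of finite sets. The one place you go beyond the paper is your justification of the basis of $\mathcal{U}$ via the product uniformity, since the Bourbaki result quoted in the preliminaries is stated only for directed systems while here $\mathcal{C}$ is an arbitrary small category; the paper simply asserts this basis without comment, so your remark fills a small gap rather than constituting a different approach.
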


\begin{proof}If $\pi_{c}: \lim\limits_{\longleftarrow} F \rightarrow F(c)$ are the projection maps, we have that the set $\{\pi_{c_{1}}^{-1}(u_{1})\cap ... \cap\pi_{c_{n}}^{-1}(u_{n}): u_{i} \in \mathcal{U}_{c_{i}}\}$ is a basis for $\mathcal{U}$.

Let $K \subseteq X$ be a compact and $u$ an element of $\mathcal{U}$. There exists $c_{1},...,c_{n} \in \mathcal{C}$ and $u_{i} \in \mathcal{U}_{c_{i}}:$ $\pi_{c_{1}}^{-1}(u_{1})\cap ... \cap\pi_{c_{n}}^{-1}(u_{n}) \subseteq u$. We have that, $\forall i \in \{1,...,n\}$, the set $\{g \in G: \varphi(g,K) \notin Small(u_{i})\}$ is finite. If $g \in G:$ $\varphi(g,K)\notin Small(\pi_{c_{i}}^{-1}(u_{i}))$, then $\varphi(g,K) = \pi_{c}(\varphi(g,K)) \notin Small(u_{i})$, which implies that $\{g \in G: \varphi(g,K) \notin Small(\pi_{c_{i}}^{-1}(u_{i}))\} \subseteq \{g \in G: \varphi(g,K) \notin Small(u_{i})\}$ that is finite. However, $\{g \in G: \varphi(g,K) \notin Small(\pi_{c_{1}}^{-1}(u_{1})\cap ... \cap \pi_{c_{n}}^{-1}(u_{n}))\} \subseteq \bigcup_{i = 1}^{n}\{g \in G : \varphi(g,K) \notin Small(\pi_{c_{i}}^{-1}(u_{i}))\}$  (in a fact, if $g \in G$ such that $\varphi(g,K) \notin Small(\pi_{c_{1}}^{-1}(u_{1})\cap ... \cap \pi_{c_{n}}^{-1}(u_{n}))$, then $\exists x,y \in \varphi(g,K):$  $(x,y) \notin \pi_{c_{1}}^{-1}(u_{1})\cap ... \cap \pi_{c_{n}}^{-1}(u_{n})$, which implies that $\exists i\in \{1,...,n\}:$ $(x,y) \notin \pi_{c_{i}}^{-1}(u_{i})$ and then $\varphi(g,K) \notin Small(\pi_{c_{i}}^{-1}(u_{i}))$) that is also finite. However, the set $\{g: \varphi(g,K) \notin Small(u)\} \subseteq \{g: \varphi(g,K) \notin Small(\pi_{c_{1}}^{-1}(u_{1})\cap ... \cap \pi_{c_{n}}^{-1}(u_{n}))\}$, which implies that it is finite.

Thus, $\forall K \subseteq X, \forall u \in \mathcal{U}, \ \{g \in G: \varphi(g,K) \notin Small(u)\}$ is finite.

\end{proof}

\begin{cor}Let $F: \mathcal{C} \rightarrow Pers(X)$ be a covariant functor such that $\mathcal{C}$ is small. Then, there exists $\lim\limits_{\longleftarrow} F$.\end{cor}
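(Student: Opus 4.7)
The plan is to build the limit one layer at a time, starting from the underlying sum-of-spaces structure and adding the group action and perspectivity on top. First, composing $F$ with the forgetful functor gives a diagram $\tilde{F}\colon \mathcal{C}\to SUM(X)$ with $\tilde{F}(c)=X+_{f_c}Y_c$ Hausdorff and compact. By the construction of limits in $SUM(X)$ carried out earlier, $\lim\limits_{\longleftarrow}\tilde{F}$ exists, has the form $X+_{f}Y$ for some Hausdorff compact $Y$ and admissible $f$, and the second factor satisfies $Y\cong \lim\limits_{\longleftarrow}\acute{F}$ with projections $\pi_c\colon X+_fY\to X+_{f_c}Y_c$ that restrict to the identity on $X$.

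Second, I would promote the cone to a cone of $G$-spaces. For each $g\in G$, the family $\{(\varphi+\psi_c)(g,\_)\}_{c\in\mathcal{C}}$ is a natural transformation $\tilde{F}\Rightarrow\tilde{F}$ because every morphism of $F$ is equivariant; by the universal property it induces a continuous self-map of $X+_fY$, and applying the same argument to $g^{-1}$ produces its inverse. Since each $\pi_c$ is the identity on $X$, the induced self-map restricts to $\varphi(g,\_)$ on $X$, so we obtain an action $\varphi+\psi\colon G\curvearrowright X+_fY$ by homeomorphisms of the required form; continuity of the action in the discrete variable is automatic.

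Third, perspectivity follows directly from the immediately preceding proposition: each $F(c)\in Pers(\varphi)$ satisfies the finiteness condition $\#\{g\in G:\varphi(g,K)\notin Small(u_c)\}<\aleph_0$ for every compact $K\subseteq X$ and every $u_c\in\mathcal{U}_c$, so by that proposition the unique compatible uniform structure $\mathcal{U}$ on the compact Hausdorff space $X+_fY$ also has this property. Together with Hausdorffness (inherited from the $\tilde{F}(c)$ via the embedding into $\prod_c \tilde{F}(c)$), this places $X+_fY$ in $Pers(\varphi)$.

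Finally, universality in $Pers(\varphi)$ reduces to universality in $SUM(X)$ plus equivariance of the mediating map: given a cone $(X+_{f'}Y',\{id+\phi_c\})$ in $Pers(\varphi)$, the unique mediating arrow $id+\phi\colon X+_{f'}Y'\to X+_fY$ produced in $SUM(X)$ is automatically equivariant, since the two maps $(\varphi+\psi)(g,\_)\circ(id+\phi)$ and $(id+\phi)\circ(\varphi+\psi')(g,\_)$ agree after postcomposition with every $\pi_c$ (by equivariance of each $\phi_c$) and hence coincide by uniqueness. The only delicate step is this last diagram-chase, and I expect the main obstacle in the whole argument to be bookkeeping the compatibility of the $G$-action with the $SUM(X)$-limit construction; the perspectivity finiteness itself is then a direct invocation of the preceding proposition.
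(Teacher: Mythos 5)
Your proposal is correct and follows essentially the same route as the paper: pass to the limit in $SUM(X)$, observe that the $G$-action extends naturally to it (the paper dismisses this as ``easy to see'' at the start of the subsection, while you spell out the cone/naturality argument), and then invoke the immediately preceding proposition to transfer the finiteness condition on $\{g \in G: \varphi(g,K)\notin Small(u)\}$ to the limit's uniform structure. The only extra care needed in your second step is that the maps $(\varphi+\psi_{c})(g,\_)$ are not morphisms of $SUM(X)$ (they are not the identity on $X$), so the induced self-map must be obtained from the universal property of the limit taken in $Top$ over the augmented diagram; this is exactly the bookkeeping you flag, and it goes through.
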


\begin{proof}Take $\bar{F}: \mathcal{C} \rightarrow T_{2}Comp(X)$ and $\tilde{F}: \mathcal{C} \rightarrow SUM(X)$ be the functors that do the same as $F$. We have that $\lim\limits_{\longleftarrow} \bar{F}$ exists, its space is Hausdorff and equal to $\lim\limits_{\longleftarrow} \tilde{F}$. So, by the last proposition, it is perspective, which implies that $\lim\limits_{\longleftarrow} \bar{F} \in Pers(\varphi)$. Thus, there exists $\lim\limits_{\longleftarrow} F$ and $\lim\limits_{\longleftarrow} F = \lim\limits_{\longleftarrow} \bar{F}$.
\end{proof}

\subsection{CAT(0)}

Let $(X,d)$ be a CAT(0) space and $x \in X$. We define, for $r > s > 0$, $\pi_{rs}:  Cl_{X}(\mathfrak{B}(x,r)) \rightarrow Cl_{X}(\mathfrak{B}(x,s))$ as $\pi_{rs}(y) = y$ if $y \in Cl_{X}(\mathfrak{B}(x,s))$ and $\pi_{rs}(y)$ the unique point on the set $[x,y]\cap \mathcal{S}(x,s)$, where $[x,y]$ is the unique geodesic with this two extreme points and $\mathcal{S}(x,r) = \{z \in X: d(x,z) = r\}$. This forms a codirected set and the limit is the compactification of $X$ with its visual boundary: $X+_{f}\partial X$, for some admissible map $f$. The inducted maps $\pi_{r}:  X+_{f}\partial X \rightarrow Cl_{X}(\mathfrak{B}(x,r))$ are just the projections when restricted to $X$.

\begin{prop}The set $\{w_{r,\epsilon}: r,\epsilon > 0\}$ is a basis of the uniform structure $\U_{f}$ of $X+_{f}\partial X$, where $w_{r,\epsilon} = \{(y,z) \in (X+_{f}\partial X)^2: d(\pi_{r}(y),\pi_{r}(z)) < \epsilon\}$.
\end{prop}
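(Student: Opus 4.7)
The plan is to reduce the claim to a direct application of the earlier proposition on bases for the uniform structure of an inverse limit of uniform spaces. First I would observe that for each $r > 0$, the closed ball $Cl_X(\mathfrak{B}(x,r))$ is a compact metric space (properness of the CAT(0) space $X$ being implicit in the context of the visual compactification), so its unique compatible uniform structure $\U_r$ admits as a basis the family
\[ B_r = \{v_{r,\epsilon} : \epsilon > 0\}, \qquad v_{r,\epsilon} = \{(y,z) \in Cl_X(\mathfrak{B}(x,r))^2 : d(y,z) < \epsilon\}. \]

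Next I would identify the uniform structure $\U_f$ on $X+_f\partial X$ with the inverse-limit uniformity induced by the projections $\pi_r$. This is where the one real verification sits: $X+_f\partial X$ is Hausdorff and compact, so it carries a unique uniform structure compatible with its topology; and since it was constructed as the limit (in $Sum(X)$) of the codirected system $\{Cl_X(\mathfrak{B}(x,r)), \pi_{rs}\}$ of compact Hausdorff spaces, with $X$ dense in the limit by \textbf{Proposition \ref{densonolimite}}, the underlying topological space coincides with the usual inverse limit in $Top$. For an inverse system of compact Hausdorff uniform spaces the topological inverse limit and the uniform inverse limit agree, so $\U_f$ is exactly the initial uniformity for the $\pi_r$.

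Once this identification is in place, the cited \textbf{Proposition} on bases of limits of uniform spaces immediately yields that
\[ \{(\pi_r^2)^{-1}(b) : r > 0, \ b \in B_r\} \]
is a basis for $\U_f$. Unfolding the definitions,
\[ (\pi_r^2)^{-1}(v_{r,\epsilon}) = \{(y,z) \in (X+_f\partial X)^2 : d(\pi_r(y), \pi_r(z)) < \epsilon\} = w_{r,\epsilon}, \]
so $\{w_{r,\epsilon} : r, \epsilon > 0\}$ is a basis for $\U_f$, as required.

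The only non-mechanical step in the plan is the identification of $\U_f$ with the inverse-limit uniformity; this is the would-be obstacle, but it is standard and forced by the uniqueness of the compatible uniformity on a compact Hausdorff space. Everything else is formal manipulation of the previously established inverse-limit machinery.
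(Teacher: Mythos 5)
Your proposal is correct and follows essentially the same route as the paper: both invoke the cited Bourbaki proposition giving $\{(\pi_{r}^{2})^{-1}(b): b \in B_{r}\}$ as a basis for the limit uniformity, take $B_{r}$ to be the metric entourages of the compact ball $Cl_{X}(\mathfrak{B}(x,r))$, and unfold definitions to get $w_{r,\epsilon}$. The only difference is that you make explicit the identification of $\U_{f}$ with the inverse-limit uniformity via uniqueness of the compatible uniformity on a compact Hausdorff space, a step the paper leaves implicit in the phrase ``since this limit comes from a codirected set.''
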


\begin{proof}Since this limit comes from a codirected set, we have that, if $\U_{r}$ is the uniform structure of $Cl_{X}(\mathfrak{B}(x,r))$ and $B_{r}$ is a basis of $\U_{r}$, the uniform structure of $X+_{f}\partial X$ has a basis $\{(\pi_{r}^{2})^{-1}(u): u \in B_{r}, r > 0\}$. Take $B_{r} = \{u_{r,\epsilon}\}_{\epsilon > 0}$, where $u_{r,\epsilon} = \{(y,z) \in Cl_{X}(\mathfrak{B}(x,r))^{2}: d(y,z) < \epsilon\}$. So, the set $\{(\pi_{r}^{2})^{-1}(u_{r,\epsilon}): r > 0, \epsilon > 0\}$ is a basis of $\U_{f}$. However, $(\pi_{r}^{2})^{-1}(u_{r,\epsilon}) = \{(y,z) \in (X+_{f}\partial X)^2: (\pi_{r}(y),\pi_{r}(z)) \in u_{r,\epsilon}\} =$ $\\ \{(y,z) \in (X+_{f}\partial X)^2: d(\pi_{r}(y),\pi_{r}(z)) < \epsilon \} = w_{r,\epsilon}$. Thus, $\{w_{r,\epsilon}: r,\epsilon > 0\}$ is a basis of $\U_{f}$.
\end{proof}

\begin{lema}Let $(\R^{2},d)$ be the Euclidean plane and let's fix $x \in \R^{2}$ as the based point. Let $p,q \in \R^{2}$, and $r,\epsilon > 0$. If there is $d > 0$ such that $d(p,q) \leqslant d, \ d(x,p) \geqslant \frac{dr}{\epsilon}$ and $d(x,q) \geqslant \frac{dr}{\epsilon}$, then $(p,q) \in w_{r,\epsilon}$.
\end{lema}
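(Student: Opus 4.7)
The plan is to reduce everything to a direct calculation in the Euclidean plane. I would first place $x$ at the origin and write $a=d(x,p)$, $b=d(x,q)$, $c=d(p,q)$, so that the hypotheses become $c\leq d$ and $a,b\geq dr/\epsilon$.

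The key observation I would exploit is that $\pi_{r}$ is exactly the Euclidean nearest-point projection onto the closed (convex) ball $Cl_{\R^{2}}(\B(x,r))$: inside the ball it is the identity, and outside it sends $y$ to the unique point of $[x,y]$ on the sphere of radius $r$, which is precisely the nearest point of the ball to $y$. Being a projection onto a convex set of a Euclidean space, $\pi_{r}$ is therefore $1$-Lipschitz, so $d(\pi_{r}(p),\pi_{r}(q))\leq c\leq d$. This already disposes of the case $d\leq\epsilon$.

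If instead $d>\epsilon$, the hypothesis $a,b\geq dr/\epsilon>r$ forces both $p$ and $q$ to lie strictly outside the ball, so $\pi_{r}(p)$ and $\pi_{r}(q)$ both sit on the sphere of radius $r$. Letting $\theta$ denote the angle at $x$ in the triangle $(x,p,q)$, two applications of the law of cosines (one to $(x,p,q)$ and one to the isosceles triangle $(x,\pi_{r}(p),\pi_{r}(q))$) yield
$$d(\pi_{r}(p),\pi_{r}(q))=2r\sin(\theta/2)\quad\text{and}\quad 4ab\sin^{2}(\theta/2)=c^{2}-(a-b)^{2}\leq c^{2}.$$
Combining these gives $d(\pi_{r}(p),\pi_{r}(q))\leq rc/\sqrt{ab}\leq rc\cdot(\epsilon/(dr))=c\epsilon/d\leq \epsilon$, which places $(p,q)$ in $w_{r,\epsilon}$.

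The only step requiring any real thought is identifying $\pi_{r}$ with the nearest-point projection onto a convex set, so that the $1$-Lipschitz property comes for free; everything else is mechanical substitution, and the factor $dr/\epsilon$ in the hypothesis is precisely what is needed to force the estimate $rc/\sqrt{ab}\leq\epsilon$ to close. The two cases $d\leq\epsilon$ and $d>\epsilon$ glue together trivially, and any concern about strict versus non-strict inequality at the endpoint is absorbed by the slack available in choosing a slightly smaller $d$.
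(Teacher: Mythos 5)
Your proof is correct, and its core is the same computation as the paper's: the law of cosines applied to the triangle $\Delta pxq$ gives $2(1-\cos\theta)=\bigl(c^{2}-(a-b)^{2}\bigr)/ab\leqslant c^{2}/ab$, and the law of cosines applied to the isosceles triangle with two sides of length $r$ and apex angle $\theta$ converts this into the bound on $d(\pi_{r}(p),\pi_{r}(q))$; your $2r\sin(\theta/2)$ is just $\sqrt{2r^{2}(1-\cos\theta)}$ rewritten. Where you genuinely add something is the preliminary case split. The paper's proof tacitly takes $y\in[x,p]$ and $z\in[x,q]$ at distance $r$ from $x$, i.e.\ it assumes $p$ and $q$ lie outside the ball of radius $r$; but the hypothesis $a,b\geqslant dr/\epsilon$ only forces this when $d\geqslant\epsilon$, so for small $d$ one of the points could lie inside the ball and be fixed by $\pi_{r}$. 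Your observation that $\pi_{r}$ is the nearest-point projection onto the convex set $Cl_{\R^{2}}(\mathfrak{B}(x,r))$, hence $1$-Lipschitz, disposes of that case cleanly and patches a small gap in the paper's argument. One shared blemish: both you and the paper conclude $d(\pi_{r}(p),\pi_{r}(q))\leqslant\epsilon$, while $w_{r,\epsilon}$ is defined by a strict inequality, and in the extremal configuration $a=b=dr/\epsilon$, $c=d$ equality is actually attained, so the "slack in $d$" remark does not quite close this; it is harmless for the intended application (one simply uses $w_{r,2\epsilon}$, say), but it is a defect of the lemma as stated rather than of your argument specifically.
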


\begin{proof}Let $\theta = \angle pxq$, $a = d(x,p), \ b = d(x,q)$ and $c = d(p,q)$. By the cosine law on the triangle $\Delta pxq$, we have that $c^{2} = a^{2}+b^{2} - 2ab cos(\theta)$, which implies that $-2cos(\theta) = \frac{c^{2} -  a^{2} - b^{2}}{ab}$ and then $2(1-cos(\theta)) = \frac{c^{2} - a^{2} - b^{2} + 2ab}{ab} = \frac{c^{2} -  (a-b)^{2}}{ab} \leqslant \frac{c^{2}}{ab}$. But $c \leqslant d, \ a \geqslant \frac{dr}{\epsilon}$ and $b \geqslant \frac{dr}{\epsilon}$, which implies that $\\ 2(1-cos(\theta))\leqslant \frac{d^{2}}{\frac{d^{2}r^{2}}{\epsilon^{2}}} = \frac{\epsilon^{2}}{r^{2}}$. Let $y$ be a point in $[x,p]$ and $z$ a point in $[x,q]$ such that $d(x,y) = d(x,z) = r$. Applying the cosine law on the triangle $\Delta yxz$, we have that $d(y,z)^{2} = d(x,y)^{2}+d(x,z)^{2} -2d(x,y)d(x,z)cos(\theta) = 2r^{2}-2r^{2}cos(\theta) = 2r^{2}(1-cos(\theta)) \leqslant r^{2}\frac{\epsilon^{2}}{r^{2}} = \epsilon^{2}$. Thus, $d(y,z) \leqslant \epsilon$, which implies that $(p,q) \in w_{r,\epsilon}$.
\end{proof}

\begin{lema}Let $(X,d)$ be a CAT(0) space and let's fix $x \in X$ as the based point. Let $p,q \in X$, and $r,\epsilon > 0$. If there is $d > 0$ such that $d(p,q) \leqslant d$, $d(x,p) \geqslant \frac{dr}{\epsilon}$ and $d(x,q) \geqslant \frac{dr}{\epsilon}$, then $(p,q) \in w_{r,\epsilon}$.
\end{lema}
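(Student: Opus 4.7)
The strategy is to reduce this to the Euclidean lemma just proved, using the standard CAT(0) comparison principle. Fix $x$ as the basepoint and consider the geodesic triangle $\Delta xpq$ in $X$, which makes sense since CAT(0) spaces are uniquely geodesic.

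First I would build a comparison triangle $\Delta \bar{x}\bar{p}\bar{q}$ in the Euclidean plane $(\R^{2},d_{\R^{2}})$ with $d_{\R^{2}}(\bar{x},\bar{p})=d(x,p)$, $d_{\R^{2}}(\bar{x},\bar{q})=d(x,q)$ and $d_{\R^{2}}(\bar{p},\bar{q})=d(p,q)$. Because these three pairwise distances match the ones in $X$, the hypotheses $d_{\R^{2}}(\bar{p},\bar{q})\leqslant d$, $d_{\R^{2}}(\bar{x},\bar{p})\geqslant \tfrac{dr}{\epsilon}$ and $d_{\R^{2}}(\bar{x},\bar{q})\geqslant \tfrac{dr}{\epsilon}$ hold in the plane. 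Applying the preceding Euclidean lemma to $\bar{x},\bar{p},\bar{q}$ yields $(\bar{p},\bar{q})\in w_{r,\epsilon}^{\R^{2}}$, that is, writing $\bar{y}$ and $\bar{z}$ for the Euclidean projections onto $\mathcal{S}(\bar{x},r)$ along the rays $[\bar{x},\bar{p}]$ and $[\bar{x},\bar{q}]$, we have $d_{\R^{2}}(\bar{y},\bar{z})\leqslant \epsilon$.

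Next I would identify these Euclidean projection points as comparison points of $y:=\pi_{r}(p)$ and $z:=\pi_{r}(q)$ lying on the sides $[x,p]$ and $[x,q]$ of $\Delta xpq$. When $d(x,p)\geqslant r$, the point $y$ is characterized by $y\in [x,p]$ and $d(x,y)=r$; its comparison point on $[\bar{x},\bar{p}]$ is the unique point at Euclidean distance $r$ from $\bar{x}$, which is precisely $\bar{y}$. An analogous identification holds for $z$ and $\bar{z}$. The CAT(0) inequality for geodesic triangles then gives
\[ d(\pi_{r}(p),\pi_{r}(q)) \;=\; d(y,z) \;\leqslant\; d_{\R^{2}}(\bar{y},\bar{z}) \;\leqslant\; \epsilon, \]
so $(p,q)\in w_{r,\epsilon}$, as required.

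The only real obstacle is the degenerate case where $d(x,p)<r$ or $d(x,q)<r$, in which $\pi_{r}$ acts as the identity on that point. This can happen only when $d<\epsilon$. I would handle it by observing that the same identification still works: if $\pi_{r}(p)=p$, its comparison point in the Euclidean triangle is $\bar{p}$ itself, which coincides with $\pi_{r}^{\R^{2}}(\bar{p})$ under the same convention. Thus the CAT(0) inequality, applied to points on the sides (possibly at endpoints), yields $d(\pi_{r}(p),\pi_{r}(q))\leqslant d_{\R^{2}}(\pi_{r}^{\R^{2}}(\bar{p}),\pi_{r}^{\R^{2}}(\bar{q}))\leqslant \epsilon$ in every case, finishing the proof.
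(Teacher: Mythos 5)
Your proposal is correct and follows essentially the same route as the paper: form the Euclidean comparison triangle $\Delta\bar{x}\bar{p}\bar{q}$, apply the preceding Euclidean lemma to the comparison points at distance $r$ from $\bar{x}$, identify those with the comparison points of $\pi_{r}(p)$ and $\pi_{r}(q)$, and conclude via the CAT(0) comparison inequality. The only difference is that you explicitly flag the degenerate case $d(x,p)<r$ or $d(x,q)<r$ (which the paper silently ignores); your treatment of it is slightly informal, since the Euclidean lemma as proved only handles points at distance at least $r$ from the basepoint, but the corresponding Euclidean estimate does go through, so this is a refinement rather than a gap.
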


\begin{proof}Let's consider the triangle $\Delta xpq \subseteq (X,d)$ and its comparative triangle $\Delta \bar{x}\bar{p}\bar{q} \subseteq (\R^{2},\bar{d})$. Let's suppose that there are $d,r,\epsilon > 0$: $d(p,q) \leqslant d$, $d(x,p) \geqslant \frac{dr}{\epsilon}$ and $d(x,q) \geqslant \frac{dr}{\epsilon}$. We have that $\bar{d}(\bar{p},\bar{q}) = d(p,q) \leqslant d$,  $\bar{d}(\bar{x},\bar{p}) = d(x,p) \geqslant \frac{dr}{\epsilon}$ and $\bar{d}(\bar{x},\bar{q}) \geqslant \frac{dr}{\epsilon}$, which implies, by the lemma above, that if $z_{1} \in [\bar{x},\bar{p}]$ and $z_{2} \in [\bar{x},\bar{q}]$ are such that $\bar{d}(\bar{x},z_{1}) = \bar{d}(\bar{x},z_{2}) = r$, then $\bar{d}(z_{1},z_{2}) < \epsilon$. But $z_{1} = \overline{\pi_{r}(p)}$ and $z_{2} = \overline{\pi_{r}(q)}$, which implies that $d(\pi_{r}(p),\pi_{r}(p)) \leqslant d(z_{1},z_{2}) < \epsilon$, because $X$ is CAT(0). Thus, $(p,q) \in w_{r,\epsilon}$.
\end{proof}

\begin{prop}Let $(X,d)$ be a CAT(0) space, $x \in X$, $X+_{f}\partial X$ its compactification with its visual boundary with respect to $x$ and $\U_{f}$ the uniform structure of the compactification. Let $\varphi: G \curvearrowright X$ be an action that is properly discontinuous, cocompact and by isometries. Then, $X+_{f}\partial X \in Pers(\varphi)$.
\end{prop}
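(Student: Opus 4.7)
The plan is to verify the three clauses in the definition of $Pers(\varphi)$ for the visual compactification $X+_{f}\partial X$. The Hausdorff property of the visual compactification of a CAT(0) space and the fact that any isometry of $X$ extends continuously to $\partial X$ (so that $\varphi+\psi:G\curvearrowright X+_{f}\partial X$ is by homeomorphisms) are standard; I will invoke them and concentrate on the perspectivity condition itself.

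The core step is to show, for every compact $K\subseteq X$ and every $u\in\U_{f}$, that $\{g\in G:\varphi(g,K)\notin Small(u)\}$ is finite. Since the $w_{r,\epsilon}$ form a basis of $\U_{f}$, and since $w\subseteq u$ implies $Small(w)\subseteq Small(u)$ (hence $\{g:gK\notin Small(u)\}\subseteq\{g:gK\notin Small(w)\}$), it suffices to treat basic entourages $u=w_{r,\epsilon}$. Fix $r,\epsilon>0$, and set $d=\operatorname{diam}(K)$, which is finite because $K$ is compact. Because $\varphi$ acts by isometries, $\operatorname{diam}(\varphi(g,K))=d$ for every $g\in G$.

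Put $R=dr/\epsilon$. The last lemma (applied with $p,q\in\varphi(g,K)$) says: if every point of $\varphi(g,K)$ lies at distance $\geqslant R$ from the basepoint $x$, then every pair $(p,q)$ of points of $\varphi(g,K)$ satisfies $(p,q)\in w_{r,\epsilon}$, i.e.\ $\varphi(g,K)\in Small(w_{r,\epsilon})$. Taking contrapositives,
\[
\{g\in G:\varphi(g,K)\notin Small(w_{r,\epsilon})\}\subseteq\{g\in G:\varphi(g,K)\cap Cl_{X}(\mathfrak{B}(x,R))\neq\emptyset\}.
\]
The set $Cl_{X}(\mathfrak{B}(x,R))$ is compact (as $X$ is proper, which is implicit in the cocompact isometric CAT(0) setting), and $K$ is compact, so proper discontinuity of $\varphi$ forces the right-hand set to be finite.

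I expect no substantial obstacle: the geometric input has already been packaged into the preceding lemma, and the remaining work is just to choose the right radius $R$ in terms of $d=\operatorname{diam}(K)$, $r$, and $\epsilon$, then invoke proper discontinuity. The only mild subtlety is reducing from an arbitrary $u\in\U_{f}$ to a basic $w_{r,\epsilon}\subseteq u$, which is handled by the monotonicity of $Small(\cdot)$ observed above.
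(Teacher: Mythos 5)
Your proposal is correct and follows essentially the same route as the paper: reduce to a basic entourage $w_{r,\epsilon}\subseteq u$, set $d=\operatorname{diam}(K)$ and $R=dr/\epsilon$, apply the comparison-triangle lemma to conclude that $\varphi(g,K)\in Small(w_{r,\epsilon})$ whenever $\varphi(g,K)$ avoids $\mathfrak{B}(x,R)$, and finish by proper discontinuity together with properness of $X$. The only cosmetic difference is that you state the containment via the closed ball $Cl_{X}(\mathfrak{B}(x,R))$ explicitly, which is in fact slightly cleaner than the paper's phrasing.
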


\begin{proof}Let $K$ be a compact subspace of $X$ and $u \in \mathcal{U}_{f}$. There are $r,\epsilon > 0:$ $w_{r\epsilon} \subseteq u$. Let $d = diam \ K$ and $g \in G$ such that $d(\varphi(g,K),x) \geqslant \frac{dr}{\epsilon}$. If $k_{1},k_{2} \in K$, then $d(\varphi(g,k_{1}),\varphi(g,k_{2})) = d(k_{1},k_{2}) \leqslant d, \ d(x,\varphi(g,k_{1})) \geqslant \frac{dr}{\epsilon}$ and $d(x,\varphi(g,k_{2})) \geqslant \frac{dr}{\epsilon}$. By the lemma above, we have that $(\varphi(g,k_{1}), \varphi(g,k_{2})) \in w_{r\epsilon}$. So, $\{g \in G: \varphi(g,K) \notin Small(w_{r\epsilon})\} \subseteq \{g \in G: d(\varphi(g,K),x) \geqslant \frac{dr}{\epsilon}\}$ which is finite, since $\varphi$ is properly discontinuous and $X$ is proper. However, $\{g \in G: \varphi(g,K) \notin Small(u) \} \subseteq  \{g \in G: \varphi(g,K) \notin Small(w_{r\epsilon})\}$, which implies that $\{g \in G: \varphi(g,K) \notin Small(u) \}$ is finite. Thus, $X+_{f}\partial X \in Pers(\varphi)$
\end{proof}

\subsection{Convergence}

Let's consider here the action $\psi: G \curvearrowright Y$ with the convergence property (i.e. the induced action on the set of distinct triples is properly discontinuous) and the action $\varphi: G \curvearrowright X$ properly discontinuous and cocompact.

\begin{prop}(Attractor-Sum - Gerasimov, Proposition 8.3.1 of \cite{Ge2}) There exists a unique space  $X+_{f_{c}} Y$ such that $\varphi+\psi$  has the convergence property.
\end{prop}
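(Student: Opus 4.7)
The plan is to leverage the isomorphism of categories $Pers(G) \cong Pers(\varphi)$ established in Theorem \ref{main}, using the fact (cited as Example 0.1) that convergence implies perspectivity. The whole proof reduces to the analogous statement for the group $G$ itself, which is the classical attractor-sum construction.

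First I would record the reduction on the group side. Given the convergence action $\psi: G \curvearrowright Y$, a standard result in convergence group theory provides a unique admissible map $\partial: Closed(G) \rightarrow Closed(Y)$ such that $L+\psi: G \curvearrowright G+_{\partial}Y$ has the convergence property. Concretely, for $F \in Closed(G)$, one sets $\partial(F) = \{y \in Y : y$ is a limit of a net $(g_i) \subseteq F$ in $G+_{\partial}Y\}$, and uniqueness follows from the fact that the topology on a convergence compactification of a discrete group by its limit set is forced by the dynamics. In particular, by Example 0.1, $G+_{\partial}Y \in Pers(G)$.

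Next, I would define $f_c = \partial_{\Pi_K}$ for any fundamental domain $K \subseteq X$, so that $X+_{f_c}Y = \bar{\Pi}_K(G+_{\partial}Y)$. By Theorem \ref{main} this lies in $Pers(\varphi)$, hence is Hausdorff, compact, and the induced action $\varphi+\psi$ is by homeomorphisms. The convergence property on $X+_{f_c}Y$ is then verified directly: given an infinite sequence $(g_n) \subseteq G$, the convergence property on $G+_{\partial}Y$ supplies a subsequence and points $a,b \in Y$ such that $g_n \to a$ uniformly on compact subsets of $(G+_{\partial}Y) \setminus \{b\}$. For a compact $K' \subseteq X$ disjoint from a fixed neighbourhood of $b$, perspectivity (the condition $\{g : gK' \notin Small(u)\}$ finite) together with the convergence of $g_n \cdot K$ to $a$ in $G+_{\partial}Y$ forces $g_n K' \to a$ in $X+_{f_c}Y$, which is precisely the convergence property on $X+_{f_c}Y$.

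For uniqueness, suppose $X+_{f'}Y$ is another compactification such that $\varphi+\psi$ has the convergence property. By Example 0.1 again, $X+_{f'}Y \in Pers(\varphi)$, so by Theorem \ref{main}, $\bar{\Lambda}_K(X+_{f'}Y) = G+_{f'_{\Lambda_K}}Y \in Pers(G)$, and the continuous equivariant inclusions transfer the convergence dynamics back: the action of $G$ on $G+_{f'_{\Lambda_K}}Y$ has the convergence property. Uniqueness of the attractor-sum on $G$ then gives $f'_{\Lambda_K} = \partial$, and applying $\bar{\Pi}_K$ together with Proposition \ref{identidade3} yields $f' = (f'_{\Lambda_K})_{\Pi_K} = \partial_{\Pi_K} = f_c$.

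The main obstacle is the technical verification that perspectivity plus convergence on $G+_{\partial}Y$ actually delivers the full convergence property (a uniform source-sink statement on triples) on $X+_{f_c}Y$, and symmetrically that convergence on $X+_{f'}Y$ descends to convergence on $G+_{f'_{\Lambda_K}}Y$. Both directions require one to control how the compact fundamental domain $K$ and its translates $gK$ distribute in $Y$; the perspectivity condition is precisely what is needed to bridge between the discrete dynamics on $G$ and the continuous dynamics on $X$, turning pointwise convergence statements in one into pointwise convergence statements in the other.
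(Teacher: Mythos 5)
This proposition is not proved in the paper at all: it is imported verbatim from Gerasimov (Proposition 8.3.1 of \cite{Ge2}), and the text following it only records, after the fact, that Gerasimov's construction of $X+_{f_{c}}Y$ from the discrete case $G+_{\partial_{c}}Y$ is the model for the functor $\bar{\Pi}_{K}$. So any self-contained argument you give is necessarily doing more than the paper does, and your outline --- take the case $X=G$ as known, push forward with $\bar{\Pi}_{K}$, pull back with $\bar{\Lambda}_{K}$ for uniqueness --- is in fact the same reduction scheme the paper attributes to Gerasimov.

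The genuine gap is the one you flag yourself: the claim that the convergence property on $G+_{\partial}Y$ transfers to $X+_{\partial_{\Pi_{K}}}Y$ (and, for uniqueness, that convergence on $X+_{f'}Y$ descends to $G+_{f'_{\Lambda_{K}}}Y$) is asserted in one sentence and then deferred as ``the main obstacle.'' That transfer \emph{is} the content of the theorem; everything else (Hausdorffness, compactness, the action being by homeomorphisms) is already delivered by Theorem \ref{main}. As stated, your one-sentence version does not close: knowing that $\{g \in G: \varphi(g,K')\notin Small(u)\}$ is finite tells you the translates $\varphi(g_{n},K')$ eventually become $u$-small, but to conclude $\varphi(g_{n},K')\to a$ you must also locate these small sets near $a$. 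That requires a continuous comparison map such as $\omega+id_{Y}:G+_{\partial}Y\to X+_{\partial_{\Pi_{K}}}Y$ with $\omega(g)=\varphi(g,k_{0})$ (whose continuity must be checked against the criterion $\partial\circ\omega^{-1}\subseteq id_{Y}^{-1}\circ\partial_{\Pi_{K}}$, which does hold by monotonicity of $\partial$), together with an argument that an arbitrary compact subset of $(X+_{f_{c}}Y)\setminus\{b\}$ is covered by finitely many translates $\varphi(g,K)$ and a compact subset of $Y\setminus\{b\}$; one also has to be careful that $Y$ is not assumed metrizable, so the sequential source--sink formulation of convergence needs to be replaced by nets or by proper discontinuity on distinct triples. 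None of this is fatal, but as written the proof is a program whose hardest step coincides with the theorem. Finally, your starting point (existence and uniqueness of $G+_{\partial_{c}}Y$) is itself a special case of the statement being proved, so the argument is a bootstrap from a cited special case rather than an independent derivation; that is legitimate only if stated explicitly as input from \cite{Ge2}.
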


Furthermore, Gerasimov constructed such topology from the special case $G+_{\partial_{c}}Y$ (with the topology where $L+\psi$ has the convergence property) using the same process as the functor $\Pi_{K}$ (actually this was the motivation to define such functor). So, $f = \partial_{c\Pi_{K}}$. In the same article it was proved that $G+_{\partial_{c}}Y$ is perspective (\textit{Proposition 7.5.4 of \cite{Ge2}}). So, we have:

\begin{prop}If $X+_{f_{c}} Y$ has the convergence property, then it is perspective.
\end{prop}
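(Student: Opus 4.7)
The plan is to invoke the machinery already built in the paper rather than reprove convergence $\Rightarrow$ perspectivity from scratch. The key observation is the one recorded in the paragraph just before the statement: Gerasimov's construction of $X+_{f_c}Y$ is obtained from $G+_{\partial_c}Y$ by exactly the same procedure that defines the functor $\bar{\Pi}_K$, so one has the identification $f_c = \partial_{c\Pi_K}$ for any fundamental domain $K \subseteq X$. Thus $X+_{f_c}Y = \bar{\Pi}_K(G+_{\partial_c}Y)$.

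First, I would cite (or recall) the result from Gerasimov's paper that $G+_{\partial_c}Y$ is itself perspective, which is Proposition 7.5.4 of \cite{Ge2}; this gives $G+_{\partial_c}Y \in Pers(G)$. Next, I would apply Proposition \ref{perspectividade1}, which is precisely the statement that $\bar{\Pi}_K$ restricts to a functor $Pers(G) \to Pers(\varphi)$: if $G+_{\partial}Y$ is perspective, then $X+_{\partial_{\Pi_K}}Y$ is perspective. Taking $\partial = \partial_c$ yields that $X+_{\partial_{c\Pi_K}}Y = X+_{f_c}Y$ is perspective, which is exactly the conclusion.

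The only point that requires a line of justification is the identification $f_c = \partial_{c\Pi_K}$. This is not a calculation to be carried out anew, but a reading of Gerasimov's construction: the topology on $X+_{f_c}Y$ is built by declaring, for $S \in Closed(X)$, that a point of $Y$ lies in the closure of $S$ exactly when it lies in the $G$-boundary $\partial_c$ of the set of group elements $g$ with $\varphi(g,K) \cap S \neq \emptyset$, i.e.\ of $\Pi_K(S)$. Uniqueness of the convergence topology (Proposition 8.3.1 of \cite{Ge2}) then pins down $f_c = \partial_c \circ \Pi_K = \partial_{c\Pi_K}$.

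There is no genuine obstacle here: once the identification with $\bar{\Pi}_K(G+_{\partial_c}Y)$ is noted, perspectivity transfers for free through Proposition \ref{perspectividade1}. The substantive content, namely the finiteness statement $\#\{g \in G : \varphi(g,K) \notin Small(u)\} < \aleph_0$, was already extracted in the proof of Proposition \ref{perspectividade1} (via Lemma \ref{fechopequeno}) from the corresponding finiteness on $G$, and that in turn is Gerasimov's perspectivity of $G+_{\partial_c}Y$. So the whole proof collapses to two citations and one sentence of bookkeeping.
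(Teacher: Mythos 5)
Your proposal is correct and is essentially the paper's own argument: the paper likewise notes $f_c=\partial_{c\Pi_K}$ from Gerasimov's construction, cites Proposition 7.5.4 of \cite{Ge2} for perspectivity of $G+_{\partial_c}Y$, and transfers it via Proposition \ref{perspectividade1}. No differences worth noting.
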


\begin{proof}We have that $G+_{\partial_{c}}Y$ is perspective, which implies that $X+_{\partial_{c\Pi_{K}}}Y$ is perspective. But $f = \partial_{c\Pi_{K}}$, which implies that $X+_{f_{c}} Y$ is perspective.
\end{proof}

And also:

\begin{prop}If $X+_{f_{c}} Y$ has the convergence property, then, $\forall K \subseteq X$ fundamental domain, $G+_{f_{c\Lambda_{K}}} Y$ has the convergence property.
\end{prop}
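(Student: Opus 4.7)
The plan is to reduce the statement to a simple identity showing that $G+_{f_{c\Lambda_{K}}}Y$ is actually the same object as Gerasimov's $G+_{\partial_{c}}Y$, which is already known to carry a convergence action by the Attractor-Sum construction.

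First, I would recall from the discussion preceding the statement two facts: (i) Gerasimov's topology on $X\dot\cup Y$ is obtained from the topology on $G\dot\cup Y$ by the very construction used to define the functor $\bar\Pi_{K}$, so $f_{c}=\partial_{c\Pi_{K}}$; and (ii) $G+_{\partial_{c}}Y$ is perspective (Proposition~7.5.4 of \cite{Ge2}), and therefore in particular belongs to $qPers(G)$. This second fact is the crucial one, because objects of $qPers(G)$ are precisely those for which $R+id$ is continuous, which is exactly the hypothesis used in Proposition~\ref{identidade1} to obtain the identity $\partial=(\partial_{\Pi_{K}})_{\Lambda_{K}}$.

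Applying that identity to $\partial_{c}$ gives $\partial_{c}=(\partial_{c\Pi_{K}})_{\Lambda_{K}}=f_{c\Lambda_{K}}$. Hence $G+_{f_{c\Lambda_{K}}}Y=G+_{\partial_{c}}Y$ as topological $G$-spaces, and the latter carries the (unique) convergence action $L+\psi$ by Gerasimov's Attractor-Sum proposition. By Proposition~\ref{dependencia3}, $f_{c\Lambda_{K}}$ does not depend on the fundamental domain $K$, so the conclusion holds for every such $K$.

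The whole argument is essentially formal bookkeeping once one sees that the roundtrip $\Lambda_{K}\circ\Pi_{K}$ is the identity on the $G$-side within the perspective category. The only point that deserves a careful verification — and the place where I would expect a reader to ask for more detail — is the initial identification $f_{c}=\partial_{c\Pi_{K}}$, which is not a formal consequence of the definitions in this paper but rather an explicit description of Gerasimov's construction referenced from \cite{Ge2}; I would either cite it precisely or insert a brief unpacking of how the Attractor-Sum topology on $X\dot\cup Y$ is assembled out of the topology on $G\dot\cup Y$ via the same recipe that defines $\Pi_{K}$.
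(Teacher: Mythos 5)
Your argument is correct and follows essentially the same route as the paper: both identify $f_{c\Lambda_{K}}=(\partial_{c\Pi_{K}})_{\Lambda_{K}}=\partial_{c}$ and conclude that $G+_{f_{c\Lambda_{K}}}Y$ coincides with Gerasimov's $G+_{\partial_{c}}Y$, which already carries the convergence action. The only cosmetic difference is that you justify the middle equality via the quasi-perspectivity of $G+_{\partial_{c}}Y$ (Proposition \ref{identidade1}) while the paper phrases it through the perspectivity of $X+_{f_{c}}Y$; these amount to the same thing.
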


\begin{proof}There exists $G+_{\partial_{c}}Y$ with the convergence property and $f_{c} = \partial_{c\Pi_{K}}$. Since $X+_{f_{c}} Y$ is perspective, $f_{c\Lambda_{K}} = (\partial_{c\Pi_{K}})_{\Lambda_{K}} = \partial_{c}$, which implies that $G+_{f_{c\Lambda_{K}}} Y$ has the convergence property.
\end{proof}

Summarising, we get:

\begin{prop}\label{summary}If $\psi: G \curvearrowright Y$ is an action with the convergence property, then $G+_{\partial_{c}}Y \in Pers(G)$ and  $X+_{f_{c}}Y \in Pers(\varphi)$. Furthermore, the functors $\Pi$ and $\Lambda$ preserve the convergence property.
\eod\end{prop}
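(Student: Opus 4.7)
The statement is a consolidation of the three preceding propositions in this subsection, so my plan is simply to assemble them. The first conjunct $G+_{\partial_c}Y\in Pers(G)$ is Proposition 7.5.4 of \cite{Ge2}, already quoted in the paragraph preceding the summary. The second conjunct $X+_{f_c}Y\in Pers(\varphi)$ is the first proposition of this subsection, whose proof I would invoke verbatim: apply $\Pi$ to $G+_{\partial_c}Y$ and use the identification $f_c = \partial_{c\Pi_K}$ coming from Gerasimov's explicit construction of the Attractor-Sum topology.

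For the functorial part, the plan is to use the uniqueness clause of the Attractor-Sum proposition as a dictionary between $\partial_c$ and $f_c$. Given an object $G+_{\partial}Y\in Pers(G)$ on which $L+\psi$ has the convergence property, uniqueness forces $\partial = \partial_c$; hence
\[
\Pi(G+_{\partial}Y) \;=\; X+_{\partial_{c\Pi_K}}Y \;=\; X+_{f_c}Y,
\]
which carries the convergence property by the very definition of $f_c$. Conversely, given $X+_{f}Y\in Pers(\varphi)$ on which $\varphi+\psi$ has the convergence property, uniqueness forces $f=f_c$, and then by the third preceding proposition
\[
\Lambda(X+_{f}Y) \;=\; G+_{f_{c\Lambda_K}}Y
\]
inherits the convergence property.

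The last thing to check is that the images are well defined, i.e.\ independent of the chosen fundamental domain $K$. This is automatic because we are working inside the perspective (hence quasi-perspective) subcategories, where Propositions \ref{dependencia1}, \ref{dependencia2} and \ref{dependencia3} guarantee that $\partial_{\Pi_K}$ and $f_{\Lambda_K}$ do not depend on $K$. There is no genuine obstacle in the argument; the proposition is essentially a bookkeeping lemma whose content is that the isomorphism $\Pi,\Lambda$ of Theorem \ref{main} restricts to an isomorphism between the full subcategories of $Pers(G)$ and $Pers(\varphi)$ whose objects carry the convergence property.
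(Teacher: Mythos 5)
Your proposal is correct and matches the paper's intent: the proposition is marked as following immediately from the preceding considerations, and you assemble exactly those — Gerasimov's Proposition 7.5.4 for $G+_{\partial_{c}}Y$, the identification $f_{c}=\partial_{c\Pi_{K}}$ for $X+_{f_{c}}Y$, the uniqueness clause of the Attractor-Sum to pin down $\partial=\partial_{c}$ and $f=f_{c}$, and the preceding proposition for $\Lambda$. No discrepancy with the paper's argument.
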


\begin{prop}(Gerasimov and Potyagailo, Lemma 5.3 of \cite{GP2}) Let $\\ \psi_{i}: G \curvearrowright Y_{i}$ be convergence actions, with $\psi_{2}$ minimal and $Y_{2}$ with more than two points, and $\nu: Y_{1} \rightarrow Y_{2}$ a continuous equivariant map. Then, $id_{G}+\nu: G+_{\partial_{c1}}Y_{1} \rightarrow G+_{\partial_{c2}}Y_{2}$ is a continuous equivariant map.
\end{prop}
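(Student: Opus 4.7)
The plan is to apply the continuity criterion for sums proved earlier, namely that $id_G+\nu$ is continuous if and only if $\partial_{c1}(A)\subseteq \nu^{-1}(\partial_{c2}(A))$ for every $A\in Closed(G)$; equivalently, $\nu(\partial_{c1}(A))\subseteq \partial_{c2}(A)$. Once this set-theoretic containment is established, continuity follows formally. Equivariance is immediate because both $id_G$ and $\nu$ are equivariant.

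Before tackling the containment, I would record two preliminary facts. First, $\nu$ is surjective: its image $\nu(Y_1)$ is a compact (hence closed) $G$-invariant subset of $Y_2$, and minimality of $\psi_2$ forces $\nu(Y_1)=Y_2$. Second, I would recall the dynamical description of $\partial_{ci}(A)$: since $G+_{\partial_{ci}}Y_i$ is Hausdorff compact and singletons of $G$ are clopen, $y\in\partial_{ci}(A)$ means that there is a net (or ultrafilter) $(g_\lambda)$ of pairwise distinct elements of $A$ with $g_\lambda\to y$ in $G+_{\partial_{ci}}Y_i$. By Proposition \ref{summary}, this is precisely the convergence to $y$ in the attractor-sum topology, and the convergence property of $\psi_i$ supplies, after passing to a subnet, attractor and repellor points.

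Now fix $A\in Closed(G)$ and $y\in\partial_{c1}(A)$. Choose a net $(g_\lambda)\subseteq A$ of distinct elements with $g_\lambda\to y$ in $G+_{\partial_{c1}}Y_1$. Applying the convergence property of $\psi_1$ and refining, assume $(g_\lambda)$ has attractor $y$ and repellor $z\in Y_1$ in $Y_1$; refining once more using compactness of $Y_2$, assume $(g_\lambda)$ has attractor $y'$ and repellor $z'\in Y_2$ in $Y_2$. It suffices to show $y'=\nu(y)$: indeed then the net $g_\lambda\in A$ tends to $\nu(y)$ in $G+_{\partial_{c2}}Y_2$, so $\nu(y)\in\partial_{c2}(A)$, as desired.

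To identify $y'$, I would pick an evaluation point $p'\in Y_1\setminus\{z\}$ with $\nu(p')\neq z'$. This is where the two hypotheses on $\psi_2$ enter: since $Y_2$ has more than two points, there exist distinct $p_1,p_2\in Y_2\setminus\{z'\}$; since $\nu$ is surjective, their preimages are nonempty, and they cannot both be $\{z\}$ (else $p_1=\nu(z)=p_2$). So some $p'\in Y_1\setminus\{z\}$ satisfies $\nu(p')\in Y_2\setminus\{z'\}$. By the convergence property of $\psi_1$, $g_\lambda\cdot p'\to y$ in $Y_1$; continuity and equivariance of $\nu$ give $g_\lambda\cdot\nu(p')=\nu(g_\lambda\cdot p')\to\nu(y)$ in $Y_2$. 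On the other hand, $\nu(p')\neq z'$ and the convergence property of $\psi_2$ yield $g_\lambda\cdot\nu(p')\to y'$. Hausdorffness of $Y_2$ forces $y'=\nu(y)$, completing the proof. The main subtlety is the net-level formulation of the convergence property (rather than the familiar sequential one), but the attractor-sum construction of Gerasimov encodes exactly this, and the combinatorial role of the hypotheses on $\psi_2$ and $\#Y_2>2$ is to guarantee the existence of the auxiliary point $p'$.
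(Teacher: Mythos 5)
Your argument is correct, but note that the paper does not prove this statement at all: it is imported verbatim as Lemma 5.3 of \cite{GP2}, so there is no internal proof to compare against. What you have written is a legitimate self-contained proof along the standard lines. The reduction to the containment $\nu(\partial_{c1}(A))\subseteq\partial_{c2}(A)$ via the continuity criterion for $id+\phi$ is exactly right, surjectivity of $\nu$ from minimality is right (and it also guarantees $\#Y_{1}\geqslant\#Y_{2}>2$, so attractor--repellor dynamics on $Y_{1}$ is actually available), and the third-point trick correctly isolates where the hypotheses $\#Y_{2}>2$ and minimality are used. Two small caveats. First, the fact that a net of distinct elements of $G$ converges to $y$ in $G+_{\partial_{c}}Y$ exactly when its convergence subnets all have attractor $y$ on $Y$ is not Proposition \ref{summary} (which concerns perspectivity); it is the defining ``cross property'' of Gerasimov's attractor-sum, i.e.\ part of the content of Proposition 8.3.1 of \cite{Ge2} cited just above the statement. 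Second, you rely on the net (rather than sequential) form of the convergence property on a general compactum; this is standard in Gerasimov's framework but is a nontrivial input that should be cited rather than treated as folklore. With those attributions made precise, the proof stands, and it has the advantage over the paper of actually exhibiting why the two hypotheses on $\psi_{2}$ are needed.
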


\begin{cor}Let $\varphi: G \curvearrowright X$ be a properly discontinuous cocompact action, $\psi_{i}: G \curvearrowright Y_{i}$ be convergence actions, with $\#Y_{2} \neq 2$ and $\psi_{2}$ minimal, and $\\ \nu: Y_{1} \rightarrow Y_{2}$ a continuous equivariant map. Then, the application $\\ id_{X}+\nu: X+_{f_{c2}}Y_{1} \rightarrow X+_{f_{c2}}Y_{2}$ is continuous and equivariant.
\end{cor}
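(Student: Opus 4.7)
The plan is to deduce this corollary by transporting the Gerasimov--Potyagailo result along the functor $\bar{\Pi}_{K}$ that was studied in the preceding sections. The previous proposition provides the map $id_{G}+\nu\colon G+_{\partial_{c1}}Y_{1}\to G+_{\partial_{c2}}Y_{2}$ as a continuous equivariant morphism in $Comp(G)$, so the task is to push this morphism through the Attractor-Sum functor to land in $Comp(\varphi)$.

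First I would fix a compact fundamental domain $K\subseteq X$ (which exists by cocompactness of $\varphi$). By Proposition \ref{summary}, both spaces $G+_{\partial_{c i}}Y_{i}$ sit inside $Pers(G)$, and Gerasimov's construction of the attractor-sum identifies $f_{ci}$ with $\partial_{ci\,\Pi_{K}}$ for each $i\in\{1,2\}$; that identification is exactly the content of the remark preceding Proposition \ref{summary}. Hence applying the functor $\bar{\Pi}_{K}\colon Comp(G)\to Comp(\varphi)$ to the morphism $id_{G}+\nu$ produces a continuous equivariant map
$$\bar{\Pi}_{K}(id_{G}+\nu)\;=\;id_{X}+\nu\colon X+_{\partial_{c1\,\Pi_{K}}}Y_{1}\longrightarrow X+_{\partial_{c2\,\Pi_{K}}}Y_{2},$$
which under the identification $f_{ci}=\partial_{ci\,\Pi_{K}}$ is precisely the map $id_{X}+\nu\colon X+_{f_{c1}}Y_{1}\to X+_{f_{c2}}Y_{2}$ we want.

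For equivariance, note that $\bar{\Pi}_{K}$ sends equivariant morphisms to equivariant morphisms by construction (the $Y$-coordinate map $\nu$ is unchanged, and $id_{X}$ is tautologically $G$-equivariant with respect to $\varphi$). The only subtlety is to make sure the hypotheses $\#Y_{2}\neq 2$ and $\psi_{2}$ minimal, which are required to invoke the Gerasimov--Potyagailo proposition, are genuinely used only at the group-level step and not again after applying $\bar{\Pi}_{K}$; this is clear because the functor $\bar{\Pi}_{K}$ is defined unconditionally on $Comp(G)$.

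The only delicate point is therefore the identification $f_{ci}=\partial_{ci\,\Pi_{K}}$, which is the content of Gerasimov's construction recalled just before Proposition \ref{summary}; once that is invoked, the argument reduces to one application of functoriality. I expect no further obstacle, since both compactness and the Hausdorff property are automatic in the perspective setting, and the morphisms in $Comp(\varphi)$ are by definition of the form $id_{X}+\phi$, which matches the output of $\bar{\Pi}_{K}$.
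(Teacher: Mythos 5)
Your proof is correct and takes essentially the same route as the paper: both apply the Attractor-Sum functor $\Pi$ to the Gerasimov--Potyagailo morphism $id_{G}+\nu$ and use the identification $f_{ci}=\partial_{ci\,\Pi_{K}}$ recorded in Proposition \ref{summary}. (The domain in the statement should read $X+_{f_{c1}}Y_{1}$, which is exactly what your argument produces.)
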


\begin{proof}It follows from the fact that $id_{G}+\nu: G+_{\partial_{c1}}Y_{1} \rightarrow G+_{\partial_{c2}}Y_{2}$ is a continuous equivariant map (and then a morphism of $Pers(G)$) and $id_{X}+\nu = \Pi(id_{G}+\nu$).
\end{proof}

Let $Conv(G)$ be the category of minimal convergence actions on spaces with cardinality different than $2$ and $G$-equivariant continuous maps. By the proposition and the corollary above, we have the full and faithful functors $\Gamma_{G}: Conv(G) \rightarrow Pers(G)$ and $\Gamma_{\varphi}: Conv(G) \rightarrow Pers(\varphi)$ that send an object $\psi:G \curvearrowright Y$ to $G+_{\partial_{c}}Y$ and $X+_{f_{c}}Y$, respectively, and a morphism $\nu: Y_{1} \rightarrow Y_{2}$ to $id_{G}+\nu: G+_{\partial_{c1}}Y_{1} \rightarrow G+_{\partial_{c2}}Y_{2}$ and $id_{X}+\nu: X+_{f_{c2}}Y_{1} \rightarrow X+_{f_{c2}}Y_{2}$, respectively. The \textbf{Proposition \ref{summary}} says that the diagram is commutative:

$$ \xymatrix{ & Conv(G)\ar[ld]_{\Gamma_{G}} \ar[rd]^{\Gamma_{\varphi}} &  \\
            Pers(G) \ar@<2pt>[rr]^{\Pi} & & Pers(\varphi) \ar@<2pt>[ll]^{\Lambda} }$$

It is considered in \cite{GP2} the pullback problem: for two convergence actions $\psi_{1}: G \curvearrowright Y_{1}$ and $\psi_{2}: G \curvearrowright Y_{2}$, is there a convergence action $\psi_{3}: G \curvearrowright Y_{3}$ and two equivariant continuous maps $\nu_{1}: Y_{3} \rightarrow Y_{1}$ and $\nu_{2}: Y_{3} \rightarrow Y_{2}$? On the article, Gerasimov and Potyagailo answered negatively this question, \textit{Proposition 5.2 of \cite{GP2}}, even when the actions are both relatively hyperbolic (and the group is not finitely generated), \textit{Proposition 5.5 of \cite{GP2}}. On $Pers(G)$ (and equivalently on $Pers(\varphi)$) this problem has a positive answer, since the category is closed under finite limits, and then, closed under finite products. So, convergence actions have products on the category $Pers(G)$ (which, in general, do not have the convergence property).

\subsection{End Spaces}

It is clear that the compactification of a finitely generated group with its end space is perspective, since it has the convergence property (\textit{Proposition 9.3.2 of \cite{Ge2}}), but we have a direct proof.

Let $X$ be a connected, locally connected and locally compact Hausdorff space and $\varphi: G \curvearrowright X$ a properly discontinuous cocompact action.

\begin{prop}$\varphi$ extends uniquely to an action by homeomorphisms $\varphi+\psi: G \curvearrowright X+_{f}Ends(X)$, where $X+_{f}Ends(X)$ is the Freudenthal compactification of $X$.
\end{prop}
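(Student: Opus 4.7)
The plan is to build the extension one group element at a time, using the functoriality established in Proposition \ref{extensaoends}, and then recover the group law from uniqueness of continuous extensions.

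First, I would note that for each $g \in G$, the map $\varphi(g,\_): X \to X$ is a homeomorphism of a locally compact Hausdorff space, and in particular a proper continuous map. Since $X$ is connected and locally connected, Proposition \ref{extensaoends} applies and produces a unique continuous extension $\tilde{\varphi}(g,\_): X+_{f}Ends(X) \to X+_{f}Ends(X)$ of $\varphi(g,\_)$. Set $\psi(g,\_) := \tilde{\varphi}(g,\_)|_{Ends(X)}$; since $X$ is $G$-invariant and open, and $Ends(X)$ is the complement of $X$ in the compactification, $\psi(g,\_)$ maps $Ends(X)$ to itself, and $\tilde{\varphi}(g,\_) = \varphi(g,\_) + \psi(g,\_)$.

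Next I would recover the group law. The maps $\tilde{\varphi}(gh,\_)$ and $\tilde{\varphi}(g,\_) \circ \tilde{\varphi}(h,\_)$ are both continuous extensions of $\varphi(gh,\_) = \varphi(g,\_) \circ \varphi(h,\_): X \to X$ to $X+_{f}Ends(X)$. Because $X$ is dense in $X+_{f}Ends(X)$ (this is built into the Freudenthal construction, since $f_{K}(X)$ covers $\pi_{0}^{u}(X-K)$ for every compact $K$) and the codomain is Hausdorff, extensions from $X$ are unique, so the two maps coincide. The same argument applied to the identity gives $\tilde{\varphi}(e,\_) = id$, and hence $\tilde{\varphi}(g,\_) \circ \tilde{\varphi}(g^{-1},\_) = id = \tilde{\varphi}(g^{-1},\_) \circ \tilde{\varphi}(g,\_)$, so each $\tilde{\varphi}(g,\_)$ is a homeomorphism. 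This yields an action by homeomorphisms $\varphi+\psi: G \curvearrowright X+_{f}Ends(X)$ extending $\varphi$.

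For uniqueness of the extended action, suppose $\varphi+\psi': G \curvearrowright X+_{f}Ends(X)$ is another extension by homeomorphisms. Then for each $g \in G$, $(\varphi+\psi')(g,\_)$ is a continuous map extending $\varphi(g,\_)$ from the dense subspace $X$ to the Hausdorff space $X+_{f}Ends(X)$, and by the uniqueness clause of Proposition \ref{extensaoends} it must agree with $\tilde{\varphi}(g,\_) = (\varphi+\psi)(g,\_)$. Thus $\psi' = \psi$.

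I do not expect a genuine obstacle here: everything reduces to the two facts that proper continuous self-maps of $X$ extend uniquely to the Freudenthal compactification (Proposition \ref{extensaoends}) and that $X$ is dense in $X+_{f}Ends(X)$ (which lets us lift the group axioms and identify any two extensions). The only mild subtlety is checking that the extension of $\varphi(g,\_)$ actually carries $Ends(X)$ into itself rather than into $X$, but this is immediate from the fact that the extension of $\varphi(g^{-1},\_)$ inverts it and $\varphi(g,\_)$ already maps $X$ bijectively onto $X$.
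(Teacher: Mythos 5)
Your proposal is correct and follows essentially the same route as the paper: apply Proposition \ref{extensaoends} to each $\varphi(g,\_)$ to get a unique continuous extension, then recover the identity, the composition law, and the invertibility of each extended map from uniqueness of continuous extensions off the dense subspace $X$. The extra details you supply (that the extension preserves $Ends(X)$, and the explicit inverse argument) are harmless elaborations of what the paper leaves implicit.
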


\begin{proof}By the \textbf{Proposition \ref{extensaoends}}, for every $g \in G$, there exists a unique continuous map $(\varphi+\psi)(g,\_): X+_{f}Ends(X) \rightarrow X+_{f}Ends(X)$ extending $\varphi(g,\_)$. Since $id_{X+_{f}Ends(X)}$ is an extension of the map $\varphi(1,\_)$, it follows that $(\varphi+\psi)(1,\_) =$ $id_{X+_{f}Ends(X)}$. We have also that, $\forall g,h\in G$, $(\varphi+\psi)(gh,\_)$ and $(\varphi+\psi)(g,\_) \circ (\varphi+\psi)(h,\_)$ are both extensions of $\varphi(gh,\_)$, which implies that $(\varphi+\psi)(gh,\_) = (\varphi+\psi)(g,\_) \circ (\varphi+\psi)(h,\_)$. Thus, $\varphi+\psi$ is a group action that extends $\varphi$. The uniqueness comes from the uniqueness of the extension of each map.
\end{proof}

\begin{prop}If $X+_{g}Y \in T_{2}Comp(\varphi)$ is such that $Y$ is totally disconnected, then there exists a unique continuous and equivariant application $id+\phi: X+_{f}Ends(X) \rightarrow X+_{g}Y$.
\end{prop}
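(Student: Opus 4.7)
The plan is to reduce this to the already-proved universal property of the Freudenthal compactification (Corollary \ref{universalends}) and then upgrade the resulting map to an equivariant one using density plus uniqueness.

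First I would dispose of uniqueness. Since $X$ is dense in $X+_{f}Ends(X)$ (by construction as a limit in $Sum(X)$, cf. Proposition \ref{densonolimite}) and $X+_{g}Y$ is Hausdorff, any continuous map of the form $id+\phi: X+_{f}Ends(X)\to X+_{g}Y$ is completely determined by its values on $X$, which are forced to be $id_{X}$. So at most one map of the required form exists.

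Next, existence of the continuous map. The Freudenthal compactification is a member of $Sum(X)$ with totally disconnected remainder, and so is $X+_{g}Y$ since $Y$ is totally disconnected and $X+_{g}Y$ is Hausdorff (being in $T_{2}Comp(\varphi)$). Apply Corollary \ref{universalends} directly to produce the unique continuous surjective map $id+\phi:X+_{f}Ends(X)\to X+_{g}Y$ that extends $id_{X}$.

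The only real content left is equivariance, and this is the step I expect to be the main (though mild) obstacle. By the proposition immediately preceding this statement, $\varphi$ extends uniquely to an action $\varphi+\psi_{\mathrm{end}}:G\curvearrowright X+_{f}Ends(X)$ by homeomorphisms; call $k_{g}=(\varphi+\psi_{\mathrm{end}})(g,\_)$. On the target, the given action provides the homeomorphism $h_{g}=(\varphi+\psi)(g,\_)$ on $X+_{g}Y$. Fix $g\in G$ and consider
\[
h_{g}^{-1}\circ(id+\phi)\circ k_{g}: X+_{f}Ends(X)\longrightarrow X+_{g}Y.
\]
This is a continuous map, and on $X$ it restricts to $\varphi(g^{-1},\_)\circ id_{X}\circ\varphi(g,\_)=id_{X}$. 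So it is a continuous map of the form $id+\phi'$. By the uniqueness already established, it must coincide with $id+\phi$. Reading off the $Y$-component gives $\phi\circ\psi_{\mathrm{end}}(g,\_)=\psi(g,\_)\circ\phi$ for every $g\in G$, which is exactly equivariance of $\phi$. Combined with the continuity already in hand, this produces the required equivariant continuous map, and it is unique by the first paragraph.
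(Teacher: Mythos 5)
Your proposal is correct and follows essentially the same route as the paper: existence and uniqueness are pulled directly from Corollary \ref{universalends}, and equivariance is deduced from the fact that the map is the identity (hence equivariant) on the dense subset $X$ of $X+_{f}Ends(X)$ with Hausdorff target. Your conjugation trick $h_{g}^{-1}\circ(id+\phi)\circ k_{g}$ is just a slightly more explicit packaging of the paper's one-line density argument.
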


\begin{proof}We already know, by the \textbf{Corollary \ref{universalends}}, that this map exists and it is unique. Since this map is equivariant in $X$ and $X$ is dense in $X+_{f}Ends(X)$, it follows that it is equivariant in the whole space.
\end{proof}

\begin{prop}$\forall K \subseteq X$ compact, $X+_{f_{K}}\pi^{u}_{K}(X-K)$ has the property that $\forall K' \subseteq X$ compact, $\forall u \in \mathcal{U}_{f_{K}}$, the set $\{g \in G: \varphi(g,K')\notin Small(u)\}$ is finite.
\end{prop}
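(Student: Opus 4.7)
The plan is to prove that $\{g \in G : \varphi(g, K') \notin Small(u)\}$ is finite for every compact $K' \subseteq X$ and every $u \in \mathcal{U}_{f_K}$. First I would use the preliminary result that in a connected, locally connected, locally compact space every compact set is contained in a compact connected one, to replace $K'$ by a compact connected $\tilde{K} \supseteq K' \cup K$; since $\varphi(g, K') \subseteq \varphi(g, \tilde{K})$, it suffices to prove the statement for $\tilde{K}$. The point of this enlargement is that $\varphi(g, \tilde{K})$ becomes a connected compact subset of $X$, so as soon as it avoids $K$ it must lie in a single connected component of $X - K$.

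The key geometric step is to show that $B = K \cup \bigcup\{U : U$ is a bounded component of $X - K\}$ is a compact subset of $X$. I would prove this by checking that $B$ is closed in $Y = X+_{f_K}\pi_0^u(X-K)$ via the sum-of-spaces criterion: the unbounded components $p_1, \dots, p_n$ are open in $X$ (because $X$ is locally connected, so connected components of $X - K$ are open in $X$), hence $B = X \setminus \bigcup_{i=1}^n p_i$ is closed in $X$; and $f_K(B) = \emptyset$ because $p_i \cap B = \emptyset$ for every $i$ (each $p_i$ is disjoint from $K$ and from every other component). Since $Y$ is compact Hausdorff and $B \subseteq X$, the set $B$ is compact in $X$.

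Given $u \in \mathcal{U}_{f_K}$, I would then choose for each of the finitely many points $p_i$ an open $u$-small neighborhood $V_i$ of $p_i$ in $Y$; each $p_i \setminus V_i$ is closed in $Y$ and contained in $X$, hence compact. The set $C = B \cup \bigcup_{i=1}^n (p_i \setminus V_i) \subseteq X$ is therefore compact, and proper discontinuity of $\varphi$ makes $F = \{g \in G : \varphi(g, \tilde{K}) \cap C \neq \emptyset\}$ finite. For $g \notin F$, the connected set $\varphi(g, \tilde{K})$ is disjoint from $B$, so it lies in $\bigcup_{i=1}^n p_i$; by connectedness it lies in a single $p_i$; and since it is also disjoint from $p_i \setminus V_i$, we get $\varphi(g, \tilde{K}) \subseteq V_i$, hence $\varphi(g, K') \subseteq V_i$ is $u$-small.

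I expect the main obstacle to be the compactness of $B$. A priori $X - K$ may admit infinitely many bounded complementary components, so proper discontinuity alone cannot simultaneously handle translates of $\tilde{K}$ landing in each of them. The trick is to recognise the union of $K$ with every bounded component as the complement in $Y$ of the open set $\bigcup p_i$, turning what would otherwise be a delicate exhaustion argument into a one-line application of the sum-of-spaces closure formula; everything else reduces to two clean invocations of proper discontinuity.
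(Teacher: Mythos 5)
Your proof is correct and follows the same overall strategy as the paper's: enlarge $K'$ to a connected compact set, produce a compact $C \subseteq X$ such that every translate of the enlarged set avoiding $C$ is trapped in a $u$-small neighbourhood of a single end, and finish with one application of proper discontinuity. The difference lies in how the (possibly infinite) family of bounded components of $X-K$ is controlled. The paper chooses an entourage $v$ so small that the $v$-ball around each end point $x_U$ meets no component other than $U$, takes $C$ to be the complement of $\mathfrak{B}(\pi_0^u(X-K),v'')$ (so compactness of $C$ is automatic), and extracts smallness from a $v''^2\subseteq v\subseteq u$ computation; the existence of such a $v$ is precisely where the infinitely many bounded components must be handled, and the paper's justification there is terse. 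You instead isolate the lemma that $B = K\cup\bigcup\{\text{bounded components of }X-K\}$ is compact, proved by checking $f_K(B)=\emptyset$ so that $B$ is closed in the compact space $X+_{f_K}\pi_0^u(X-K)$. This is a clean, self-contained use of the sum-of-spaces formalism, and it lets you take the $V_i$ to be arbitrary $u$-small open neighbourhoods of the ends with no entourage bookkeeping. The set $C$ you end up with is essentially the same as the paper's, so this is the same proof with a tidier organization of the compactness step.

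One small imprecision: $p_i\setminus V_i$ need not be closed in $Y$, since its closure can pick up frontier points of $p_i$, which lie in $K$. For instance, with $X=\R$, $K=\{0\}$, $p_1=(0,\infty)$ and $V_1=(1,\infty)\cup\{x_{p_1}\}$, one gets $p_1\setminus V_1=(0,1]$. This is harmless in your argument because those extra limit points lie in $K\subseteq B$, so $C=B\cup\bigcup_i(p_i\setminus V_i)$ is still closed in $Y$ and hence compact; but the compactness claim should be made for $C$ as a whole (or for the closures of the pieces), not for each $p_i\setminus V_i$ individually.
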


\begin{proof}Let $u \in \mathcal{U}_{f_{K}}$ and $K' \subseteq X$ be a compact. We have that there exists $K''$ compact and connected such that $K' \subseteq K''$. Let $v \in \mathcal{U}_{f_{K}}$ such that $v \subseteq u$ and $\forall x_{U} \in \pi^{u}_{K}(X-K)$ (the point of the set corresponding to the component $U$) and $\forall V$ a component different to $U, \ (x_{U},y) \notin v, \ \forall y \in V$ (it exists since $V$ is bounded and $\pi^{u}_{K}(X-K)$ is finite). Take $v' \in \mathcal{U}: \ v'^{2} \subseteq v$ and $v'' = int \ v$. We have that $v''$ is open, which implies that $\mathfrak{B}(\pi^{u}_{K}(X-K), v'')$ is open and then its complement $C$ is compact. By the choice of $v''$, we have that $\forall x_{U} \in \pi^{u}_{K}(X-K), \ \mathfrak{B}(x_{U},v'') \subseteq U$. Since $\varphi$ is properly discontinuous, the set $A = \{g \in G: \varphi(g,K'') \cap C \neq \emptyset\}$ is finite. Since $K''$ is connected, we have that $\forall g \notin A, \ \varphi(g,K'')$ is in a component $U$ of $X-K$, which implies that $\varphi(g,K'') \subseteq  \mathcal{B}(x_{U},v'')$ and then $\varphi(g,K'') \in Small(v''^{2}) \subseteq Small(u)$. So, $\{g \in G: \varphi(g,K'') \notin Small(u)\} \subseteq A$, which implies that it is finite. But $K' \subseteq K''$, which implies that $\{g \in G: \varphi(g,K') \notin Small(u)\} \subseteq$ $\\ \{g \in G: \varphi(g,K'') \notin Small(u)\}$, and then it is also finite.

\end{proof}

\begin{prop}$X+_{f}Ends(X) \in Pers(\varphi)$.
\end{prop}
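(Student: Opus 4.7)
The plan is to combine three facts already established in the excerpt: the Freudenthal compactification $X+_f Ends(X)$ is the inverse limit (in $Sum(X)$) of the spaces $X+_{f_K}\pi_0^u(X-K)$ over the codirected poset $\K$ of compact subsets of $X$; the previous proposition supplies the ``perspectivity estimate'' on each approximating space $X+_{f_K}\pi_0^u(X-K)$; and the Bourbaki result cited early on gives an explicit basis for the uniform structure of an inverse limit of compact spaces.

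First I would verify that $X+_f Ends(X)$ is an object of $Comp(\varphi)$: it is Hausdorff because each $X+_{f_K}\pi_0^u(X-K)$ is Hausdorff and Hausdorffness passes to inverse limits; it is compact by the same argument; and the action $\varphi+\psi: G\curvearrowright X+_f Ends(X)$ is by homeomorphisms, which was established in the proposition just before this one. So it only remains to check the finiteness condition in the definition of perspectivity.

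Next, let $\pi_K: X+_f Ends(X)\to X+_{f_K}\pi_0^u(X-K)$ denote the canonical projection, and note that $\pi_K$ restricts to the identity on $X$. Let $\mathcal{U}_f$ be the unique uniform structure on $X+_f Ends(X)$ and, for each $K\in \K$, let $\mathcal{U}_{f_K}$ denote the uniform structure on $X+_{f_K}\pi_0^u(X-K)$. By the proposition on bases of uniform structures of inverse limits, the sets $(\pi_K^2)^{-1}(u_K)$, with $K\in\K$ and $u_K\in\mathcal{U}_{f_K}$, form a basis of $\mathcal{U}_f$. Fix now a compact $K'\subseteq X$ and $u\in \mathcal{U}_f$; choose $K\in\K$ and $u_K\in\mathcal{U}_{f_K}$ with $(\pi_K^2)^{-1}(u_K)\subseteq u$.

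The key step is a direct translation: since $\pi_K|_X=id_X$, for every $g\in G$ the image $\pi_K(\varphi(g,K'))$ coincides with $\varphi(g,K')$ viewed as a subset of $X\subseteq X+_{f_K}\pi_0^u(X-K)$. Consequently, if $\varphi(g,K')\in Small(u_K)$ in $X+_{f_K}\pi_0^u(X-K)$, then for any two points $x,y\in\varphi(g,K')\subseteq X+_f Ends(X)$ we have $(\pi_K(x),\pi_K(y))=(x,y)\in u_K$, so $(x,y)\in(\pi_K^2)^{-1}(u_K)\subseteq u$, showing $\varphi(g,K')\in Small(u)$. Contrapositively,
\[
\{g\in G:\varphi(g,K')\notin Small(u)\}\subseteq \{g\in G:\varphi(g,K')\notin Small(u_K)\},
\]
and by the previous proposition the right-hand set is finite. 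Combined with Hausdorffness, this is exactly the definition of perspectivity, so $X+_f Ends(X)\in Pers(\varphi)$. The only slightly delicate point is the identification of the inherited uniform structure on $X$ with what the previous proposition already handles, and this is handled by the remark that $\pi_K|_X=id_X$ together with Proposition \ref{small}; no genuine obstacle arises once the basis for $\mathcal{U}_f$ is made explicit.
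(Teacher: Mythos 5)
Your proof is correct and follows essentially the same route as the paper: the paper's proof simply invokes its general proposition from the Limits subsection (that the finiteness condition passes to inverse limits of compact objects in $SUM(X)$), whose proof is exactly the argument you inline here via the Bourbaki basis $(\pi_{K}^{2})^{-1}(u_{K})$ and the observation that $\pi_{K}|_{X}=id_{X}$. The only difference is that you specialize to the codirected case and re-derive the limit step rather than citing it.
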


\begin{proof}We have that $X+_{f}Ends(X) \in Comp(\varphi)$ and it is Hausdorff. Since $X+_{f}Ends(X) = \lim\limits_{\longleftarrow} (X+_{f_{K}}\pi^{u}_{K}(X-K))$, and each one preserves the property that $\forall K' \subseteq X$ compact, $\forall u \in \mathcal{U}_{f_{K}}$, the set $\{g \in G: \varphi(g,K')\notin Small(u)\}$ is finite, we have that $X+_{f}Ends(X) \in Pers(\varphi)$.
\end{proof}

Now it seems natural to generalize the definition of the Freudenthal compactification:

\begin{defi}We say that a space $X+_{f_{F}}Ends(\varphi)$ is the Freudenthal compactification of $X$ with respect to the action $\varphi: G \curvearrowright X$ if it is perspective and for another space $X+_{g}Y \in Pers(\varphi)$, with $Y$ totally disconnected, there exists a unique morphism $id_{X}+\phi: X+_{f_{F}}Ends(\varphi) \rightarrow X+_{g}Y$.
\end{defi}

It is clear, by the proposition above, that, for $X$ connected and locally connected, $X+_{f_{F}}Ends(\varphi)$ is the usual Freudenthal compactification. So, it do not depend of $\varphi$. As the special case of $X = G$ and $\varphi = L$, we denote the Freudenthal compactification of $G$ by $G+_{\partial_{F}}Ends(G)$. The existence of the Freudenthal compactification of a group is a immediate consequence of the representation theorem of perspective compactifications of the group with totally disconnected boundary due to Abels (Satz 1.8, \cite{Ab}).

\begin{prop}\label{universalperspectiveends}Let $X_{1},X_{2}$ be locally compact Hausdorff spaces and let $\varphi_{i}: G \curvearrowright X_{i}$ be properly discontinuous cocompact actions. If the functor $\Pi_{12}: Pers(\varphi_{1}) \rightarrow Pers(\varphi_{2})$ is an isomorphism of the categories that preserve remainders and its maps, then $\Pi_{12}(X_{1}+_{f_{1F}}Ends(\varphi_{1})) = X_{2}+_{f_{2F}}Ends(\varphi_{2})$.
\end{prop}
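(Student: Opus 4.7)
The plan is to identify the Freudenthal compactification as an initial object in an appropriate subcategory and then transfer this initiality property along the isomorphism $\Pi_{12}$. By the defining universal property, $X_i+_{f_{iF}}Ends(\varphi_i)$ is a perspective compactification whose remainder $Ends(\varphi_i)$ is totally disconnected, and from which there exists a unique morphism to any other object of $Pers(\varphi_i)$ whose remainder is totally disconnected. In categorical terms, if I denote by $Pers^{td}(\varphi_i)$ the full subcategory of $Pers(\varphi_i)$ whose objects $X_i+_{g}Y$ have $Y$ totally disconnected, then $X_i+_{f_{iF}}Ends(\varphi_i)$ is an initial object of $Pers^{td}(\varphi_i)$ (and, as with any initial object, it is unique up to unique isomorphism in this subcategory).

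Since $\Pi_{12}$ preserves remainders, it sends an object with remainder $Y$ to an object with remainder $Y$, and in particular preserves the property ``$Y$ is totally disconnected''. Therefore the isomorphism $\Pi_{12}$ restricts to a functor $\Pi_{12}^{td}: Pers^{td}(\varphi_1) \rightarrow Pers^{td}(\varphi_2)$. Its inverse is the corresponding restriction of $\Pi_{12}^{-1}$, which again preserves remainders by hypothesis, so $\Pi_{12}^{td}$ is itself an isomorphism of categories.

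Because isomorphisms of categories preserve initial objects, $\Pi_{12}^{td}(X_1+_{f_{1F}}Ends(\varphi_1)) = \Pi_{12}(X_1+_{f_{1F}}Ends(\varphi_1))$ is initial in $Pers^{td}(\varphi_2)$; but $X_2+_{f_{2F}}Ends(\varphi_2)$ is also initial in $Pers^{td}(\varphi_2)$, and initial objects are unique (up to unique isomorphism), so the two agree. The only subtle point, which should be dispatched quickly, is to make sure the hypothesis ``preserves remainders and its maps'' really yields a well-defined restriction to the totally-disconnected full subcategories and that the restricted functor remains an isomorphism; I do not expect any genuine obstacle here, since the hypothesis is tailored so that the $Y$-coordinate of every object and every morphism is literally carried across by $\Pi_{12}$.
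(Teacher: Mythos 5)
Your argument is essentially the paper's proof in categorical packaging: the paper verifies the defining universal property of $\Pi_{12}(X_{1}+_{f_{1F}}Ends(\varphi_{1}))$ directly, by pulling a target $X_{2}+_{g}Y$ back through $\Pi_{12}^{-1}$ (remainder preservation keeps $Y$ totally disconnected), invoking the universal property of $X_{1}+_{f_{1F}}Ends(\varphi_{1})$ to get the unique morphism, and pushing it forward again --- which is exactly what ``isomorphisms of categories preserve initial objects'' unwinds to. Two small points are worth flagging. First, to regard $X_{1}+_{f_{1F}}Ends(\varphi_{1})$ as an \emph{initial object} of your subcategory $Pers^{td}(\varphi_{1})$ you need $Ends(\varphi_{1})$ itself to be totally disconnected; this is true (the remainder is an inverse limit of finite discrete spaces, or comes from $G+_{\partial_{F}}Ends(G)$ via a remainder-preserving functor), but it is \emph{not} part of the paper's defining universal property, which only asks the source to lie in $Pers(\varphi_{1})$ while quantifying over totally disconnected targets --- so you should justify it rather than attribute it to the definition. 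The paper's direct verification sidesteps this entirely. Second, your closing sentence presupposes that $X_{2}+_{f_{2F}}Ends(\varphi_{2})$ already exists and is initial, whereas the paper uses this very proposition (in the corollary that follows) to \emph{establish} existence; fortunately your first step already shows that the image is initial in $Pers^{td}(\varphi_{2})$ and hence satisfies the defining universal property, so the conclusion stands without that presupposition --- but the final step should be phrased as ``the image satisfies the universal property, hence is a Freudenthal compactification of $X_{2}$'' rather than as a comparison of two initial objects.
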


\begin{obs}The composition of Attractor-Sum functors have the property of the hypothesis.
\end{obs}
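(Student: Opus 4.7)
The plan is to exhibit the Attractor-Sum composition explicitly and then read off the three required properties directly from the definitions of $\Pi$ and $\Lambda$ given in the paper, without any new topological input.

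First, I would fix fundamental domains $K_{i} \subseteq X_{i}$ for each action $\varphi_{i}: G \curvearrowright X_{i}$ and set
\[
\Pi_{12} \;:=\; \Pi_{2} \circ \Lambda_{1} \;:\; Pers(\varphi_{1}) \longrightarrow Pers(G) \longrightarrow Pers(\varphi_{2}),
\]
where $\Lambda_{1}$ is the functor attached to $\varphi_{1}$ and $K_{1}$, and $\Pi_{2}$ is the functor attached to $\varphi_{2}$ and $K_{2}$. By \textbf{Theorem \ref{main}}, each $\Pi_{i}$ and each $\Lambda_{i}$ is an isomorphism between $Pers(G)$ and $Pers(\varphi_{i})$ (with $\Lambda_{i}\circ\Pi_{i} = id$ and $\Pi_{i}\circ\Lambda_{i} = id$), so $\Pi_{12}$ is an isomorphism of categories. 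Moreover, \textbf{Propositions \ref{dependencia1}, \ref{dependencia2}} and \textbf{Corollary \ref{dependencia3}} guarantee that this composition does not depend on the choice of $K_{1}, K_{2}$.

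Second, I would verify preservation of remainders on objects. By the very definition of $\Lambda$, an object $X_{1}+_{f_{1}}Y$ is sent to $G+_{f_{1\Lambda_{K_{1}}}}Y$, so the set $Y$ and its given compact Hausdorff topology (realised as the subspace topology, which coincides with the original topology since $Y$ is closed in both compactifications) are preserved. Applying $\Pi_{2}$ then produces $X_{2}+_{(f_{1\Lambda_{K_{1}}})_{\Pi_{K_{2}}}}Y$ with the same $Y$ as remainder. Thus on objects, $\Pi_{12}$ literally sends a compactification of $X_{1}$ with remainder $Y$ to a compactification of $X_{2}$ with the very same $Y$; in particular the $G$-action $\psi$ on $Y$ is also preserved, since it is the restriction of $\varphi_{i}+\psi$ to $Y$ and both functors keep $\psi$ intact.

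Third, I would check preservation of maps between remainders. A morphism in $Pers(\varphi_{1})$ is by definition of the form $id_{X_{1}}+\phi: X_{1}+_{f_{1a}}Y_{a} \to X_{1}+_{f_{1b}}Y_{b}$, and both $\Lambda_{1}$ and $\Pi_{2}$ are defined on morphisms by $id+\phi \mapsto id+\phi$ (only the space on the $X$-side is replaced, never $\phi$). Chaining,
\[
\Pi_{12}(id_{X_{1}}+\phi) \;=\; id_{X_{2}}+\phi \;:\; X_{2}+_{(f_{1a\Lambda_{K_{1}}})_{\Pi_{K_{2}}}}Y_{a} \longrightarrow X_{2}+_{(f_{1b\Lambda_{K_{1}}})_{\Pi_{K_{2}}}}Y_{b},
\]
so the restriction to the remainder is unchanged.

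The only subtle point — and the one I would present carefully — is the bookkeeping in step two, namely that the inherited topology on $Y \subseteq G+_{f_{1\Lambda_{K_{1}}}}Y$ and on $Y \subseteq X_{2}+_{(f_{1\Lambda_{K_{1}}})_{\Pi_{K_{2}}}}Y$ coincides with the original topology of $Y$ as a subspace of $X_{1}+_{f_{1}}Y$. This follows because $Y$ is closed in every such sum (so the subspace topology is the restriction of the admissible-closed-set structure), and each admissible map used in the construction restricts to the identity on $Closed(Y)$. No further perspectivity or separation argument is needed beyond this observation, so the remark is a direct consequence of the definitions plus \textbf{Theorem \ref{main}}.
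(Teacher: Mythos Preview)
Your proposal is correct and is precisely the routine verification the author has in mind. The paper gives no proof for this remark at all; it is stated as an evident observation, and your argument spells out exactly why: $\Lambda_{1}$ and $\Pi_{2}$ are by definition the identity on the $Y$-component of objects and on the $\phi$-component of morphisms, so their composition preserves remainders and their maps, while \textbf{Theorem \ref{main}} supplies the isomorphism property.
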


\begin{proof}Let $X_{2}+_{g}Y \in Pers(\varphi_{2})$ with $Y$ totally disconnected, $X_{1}+_{g'}Y = \Pi_{12}^{-1}(X_{2}+_{g}Y)$ and $X_{2}+_{f'}Ends(\varphi_{1}) = \Pi_{12}(X_{1}+_{f_{1F}}Ends(\varphi_{1}))$. Since $Y$ is totally disconnected, there exists a continuous equivariant application $id_{X_{1}}+\phi: X_{1}+_{f_{1F}} Ends(\varphi_{1}) \rightarrow X_{1}+_{g'}Y$. Hence, there exists a continuous equivariant map $id_{X_{2}}+\phi = \Pi_{12}(id_{X_{1}}+\phi): X_{2}+_{f'}Ends(\varphi_{1}) \rightarrow X_{2}+_{g}Y$. Let $id_{X_{2}}+\phi': X_{2}+_{f'}Ends(\varphi_{1}) \rightarrow X_{2}+_{g}Y$ be another continuous equivariant map. Then, $id_{X_{1}}+\phi' = \Pi_{12}^{-1}(id_{X_{2}}+\phi'): X_{1}+_{f_{1F}}Ends(\varphi_{1}) \rightarrow X_{1}+_{g'}Y$ is continuous and equivariant. But such a map is unique, which implies that $id_{X_{1}}+\phi' = id_{X_{1}}+\phi$, and then $\phi' = \phi$ and $id_{X_{2}}+\phi' = id_{X_{2}}+\phi$. So, $id_{X_{2}}+\phi$ is unique, which implies that $X_{2}+_{f'}Ends(\varphi_{1})$ is the Freudenthal compactification of $X_{2}$ with respect to $\varphi_{2}$. Thus, $\Pi_{12}(X_{1}+_{f_{1F}}Ends(\varphi_{1})) = X_{2}+_{f_{2F}}Ends(\varphi_{2})$.
\end{proof}

\begin{cor}Let $X$ be a locally compact Hausdorff topological space and let $\varphi: G \curvearrowright X$ be properly discontinuous cocompact actions. Then, the Freudenthal compactification $X+_{f_{F}}Ends(\varphi)$ exists.
\end{cor}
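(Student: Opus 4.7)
The plan is to deduce existence by transferring the Freudenthal compactification from $G$ to $X$ via the isomorphism of categories established in Theorem \ref{main}. First, I would invoke the special case $X = G$, $\varphi = L$: the Freudenthal compactification $G+_{\partial_{F}}Ends(G)$ exists by Abels' representation theorem (Satz 1.8 of \cite{Ab}), as remarked right after the definition.

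Second, I would apply the functor $\Pi: Pers(G) \rightarrow Pers(\varphi)$ from Theorem \ref{main}. By construction $\Pi$ sends $G+_{\partial}Y$ to $X+_{\partial_{\Pi_{K}}}Y$ and sends $id+\phi$ to $id+\phi$, so the space $Y$ and the map $\phi: Y_{1} \rightarrow Y_{2}$ are preserved verbatim; only the attaching data and the open part are modified. In particular, $\Pi$ restricts to a bijection between perspective compactifications with totally disconnected remainder on one side and those on the other, and it preserves all morphisms between such remainders. This is precisely the condition required by Proposition \ref{universalperspectiveends} (with $\varphi_{1} = L$, $\varphi_{2} = \varphi$, $\Pi_{12} = \Pi$).

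Third, applying Proposition \ref{universalperspectiveends} directly gives $\Pi(G+_{\partial_{F}}Ends(G)) = X+_{f_{F}}Ends(\varphi)$, which establishes both the existence and a concrete description of the Freudenthal compactification of $X$ with respect to $\varphi$. No real obstacle arises; the proof is essentially a one-line application of the two stated results, and the only point worth verifying explicitly is that $\Pi$ genuinely preserves remainders (including the totally disconnected property) and maps between them, which is immediate from the definition of $\bar{\Pi}_{K}$.
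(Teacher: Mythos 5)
Your proof is correct and follows essentially the same route as the paper: invoke Abels' theorem for the existence of $G+_{\partial_{F}}Ends(G)$, observe that the Attractor-Sum functor $\Pi$ satisfies the hypotheses of Proposition \ref{universalperspectiveends}, and conclude that $\Pi(G+_{\partial_{F}}Ends(G))$ is the Freudenthal compactification of $X$ with respect to $\varphi$. The extra verification you flag (that $\Pi$ preserves remainders and the maps between them) is exactly what the paper's remark following Proposition \ref{universalperspectiveends} records.
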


\begin{proof}We have that $G+_{\partial_{F}}Ends(G)$ exists and the attractor-sum functor $\Pi: Pers(G) \rightarrow Pers(\varphi)$ satisfies the conditions of the last proposition. Thus, $\Pi(G+_{\partial_{F}}Ends(G))$ is the Freudenthal compactification of $X$ with respect to $\varphi$.
\end{proof}

\begin{cor}Let $X_{1}$ and $X_{2}$ be locally compact Hausdorff spaces and $\varphi_{i}: G \curvearrowright X_{i}$ properly discontinuous cocompact actions. Then, $Ends(\varphi_{1}) \cong Ends(\varphi_{2})$.
\eod\end{cor}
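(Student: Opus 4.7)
The plan is to exploit the chain of categorical isomorphisms
\[ Pers(\varphi_{1}) \xrightarrow{\Lambda_{1}} Pers(G) \xrightarrow{\Pi_{2}} Pers(\varphi_{2}) \]
and apply \textbf{Proposition \ref{universalperspectiveends}} to transport the Freudenthal object from one side to the other. By \textbf{Theorem \ref{main}}, each of $\Pi_{i}$ and $\Lambda_{i}$ is an isomorphism of categories, so the composition $\Pi_{2}\circ\Lambda_{1}$ is itself an isomorphism $Pers(\varphi_{1}) \to Pers(\varphi_{2})$.

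Next I would verify the hypothesis of \textbf{Proposition \ref{universalperspectiveends}}, namely that $\Pi_{2}\circ\Lambda_{1}$ preserves remainders and the maps between them. This is visible directly from the construction of $\bar{\Pi}_{K}$ and $\bar{\Lambda}_{K}$: on objects, $\bar{\Lambda}_{K}(X_{1}+_{f}Y) = G+_{f_{\Lambda_{K}}}Y$ and $\bar{\Pi}_{K'}(G+_{\partial}Y) = X_{2}+_{\partial_{\Pi_{K'}}}Y$, both of which retain the same $Y$; and on morphisms $id+\phi$, neither functor alters $\phi$. Hence the second coordinate of the sum-of-spaces decomposition, together with the restrictions $\phi$ of morphisms, is carried through unchanged by the composition, which is exactly the statement that remainders and their maps are preserved (as also noted in the remark immediately following the proposition).

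Now, the existence corollary preceding this statement gives the Freudenthal compactification $X_{1}+_{f_{1F}}Ends(\varphi_{1})$. Applying \textbf{Proposition \ref{universalperspectiveends}} to the isomorphism $\Pi_{2}\circ\Lambda_{1}$ yields
\[ (\Pi_{2}\circ\Lambda_{1})(X_{1}+_{f_{1F}}Ends(\varphi_{1})) = X_{2}+_{f_{2F}}Ends(\varphi_{2}). \]
Since the functor leaves the remainder unchanged as a topological space, the remainders of the two Freudenthal compactifications are homeomorphic, i.e.\ $Ends(\varphi_{1}) \cong Ends(\varphi_{2})$.

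The main obstacle, such as it is, is the remainder-preservation clause of the hypothesis of \textbf{Proposition \ref{universalperspectiveends}}: one must be careful that in going $X_{1} \leadsto G \leadsto X_{2}$ the topology placed on $Y$ as a subspace of the sum is the same at each stage. This is really just the observation that $Y$ is always a closed subspace of $X_{i}+_{f}Y$ with its own (fixed) topology, and the admissible map $f$ only governs how $Y$ is glued to the base space, not the topology on $Y$ itself; so the identification of remainders needed to invoke the proposition is automatic and no further work is needed beyond citing \textbf{Theorem \ref{main}} and the existence of $X_{1}+_{f_{1F}}Ends(\varphi_{1})$.
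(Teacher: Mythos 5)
Your argument is correct and is exactly the one the paper intends: the corollary is marked as immediate because the composition $\Pi_{2}\circ\Lambda_{1}$ is a remainder-preserving isomorphism (as noted in the remark following Proposition \ref{universalperspectiveends}), so that proposition sends $X_{1}+_{f_{1F}}Ends(\varphi_{1})$ to $X_{2}+_{f_{2F}}Ends(\varphi_{2})$ with the same remainder. Your write-up simply makes explicit the verification the paper leaves implicit.
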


As a special case, we get:

\begin{cor}(Hopf) Let $X_{1},X_{2}$ be connected, locally connected and locally compact Hausdorff spaces and $\varphi_{i}: G \curvearrowright X_{i}$ properly discontinuous cocompact actions. Then, $Ends(X_{1}) \cong Ends(X_{2})$.
\eod\end{cor}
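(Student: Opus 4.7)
The plan is to derive this as an immediate consequence of the preceding corollary, together with the observation already highlighted in the paper that for connected and locally connected locally compact Hausdorff spaces the ``perspective'' Freudenthal compactification $X+_{f_F}Ends(\varphi)$ coincides with the classical (topological) Freudenthal compactification $X+_{f}Ends(X)$ constructed earlier via the codirected limit over compact subsets.

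First, I would invoke the previous corollary to get a homeomorphism $Ends(\varphi_1)\cong Ends(\varphi_2)$, where $Ends(\varphi_i)$ is the remainder in the Freudenthal compactification of $X_i$ with respect to the action $\varphi_i$, as defined in the paper. The content of the present statement is then the identification $Ends(\varphi_i)\cong Ends(X_i)$ for $X_i$ connected and locally connected.

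For this identification, I would argue as follows. Earlier in the paper it was shown that $X_i+_{f_i}Ends(X_i)$, the classical Freudenthal compactification built from the inverse system $\{X_i+_{f_K}\pi_0^u(X_i-K)\}_K$, lies in $Pers(\varphi_i)$ and has totally disconnected remainder. Moreover, by the universality property established for the classical construction (\textbf{Corollary \ref{universalends}}), for any $X_i+_{g}Y\in Pers(\varphi_i)$ with $Y$ totally disconnected there is a unique continuous surjection $id+\phi: X_i+_{f_i}Ends(X_i) \rightarrow X_i+_{g}Y$, and since $X_i$ is dense in both and the map restricts to the identity on $X_i$, this map is automatically equivariant. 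Thus $X_i+_{f_i}Ends(X_i)$ satisfies the defining universal property of the perspective Freudenthal compactification, so by uniqueness $X_i+_{f_F}Ends(\varphi_i)\cong X_i+_{f_i}Ends(X_i)$ and consequently $Ends(\varphi_i)\cong Ends(X_i)$.

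Combining these two identifications yields $Ends(X_1)\cong Ends(\varphi_1)\cong Ends(\varphi_2)\cong Ends(X_2)$, which is the desired conclusion. There is no real obstacle here; the only point requiring a little care is verifying that the unique continuous extension in \textbf{Corollary \ref{universalends}} is indeed $G$-equivariant, but this is automatic from density of $X_i$ and equivariance on $X_i$, exactly as used in the proof of the analogous statement for the Freudenthal compactification relative to $\varphi$ earlier in this subsection.
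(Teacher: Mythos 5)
Your proposal is correct and follows essentially the same route as the paper: the paper marks this corollary as immediate from the preceding corollary ($Ends(\varphi_1)\cong Ends(\varphi_2)$) together with the remark that for connected, locally connected $X$ the perspective Freudenthal compactification $X+_{f_F}Ends(\varphi)$ coincides with the classical one $X+_{f}Ends(X)$, via exactly the universal-property and automatic-equivariance argument you spell out.
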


\begin{obs}The Freudenthal compactification of a group do not need to be initial on the subcategory of $T_{2}Comp(G)$ where the remainders are totally disconnected. On the \textbf{Example \ref{exemplochave}}, $\Z+_{\partial}\{x_{1},x_{2},x_{3}\} \in T_{2}Comp(G)$ with the subspace $\{x_{1},x_{2},x_{3}\}$ totally disconnected, but there is no continuous map $id+\phi: \Z+_{\partial_{F}}Ends(\Z) \rightarrow \Z+_{\partial}\{x_{1},x_{2},x_{3}\}$, since it must be surjective and $\#Ends(\Z) = 2$.
\end{obs}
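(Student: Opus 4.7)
The plan is to instantiate Example \ref{exemplochave}: first verify that $\Z+_{\partial}\{x_{0},x_{1},x_{2}\}$ is a legitimate Hausdorff compact object of $Comp(\Z)$ with totally disconnected remainder, then compute $\#Ends(\Z)=2$, and finally rule out any putative morphism by a surjectivity-plus-cardinality contradiction.

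First I would describe $\partial$ concretely. Setting $A_{i}=(i+3\Z)\cup\{x_{i}\}$, each $A_{i}$ is the one-point compactification of the infinite discrete set $i+3\Z$, and the whole space decomposes as the disjoint union $A_{0}\sqcup A_{1}\sqcup A_{2}$. This forces $\partial(F)=\{x_{i}:|F\cap(i+3\Z)|=\infty\}$ for $F\in Closed(\Z)$. One checks that $\partial$ is admissible, the space is compact Hausdorff, $\Z$ sits as a dense open subset, and the action $L+\psi$ with $\psi(z,x_{i})=x_{i+z\bmod 3}$ is by homeomorphisms. Hence $\Z+_{\partial}\{x_{0},x_{1},x_{2}\}$ belongs to the subcategory of $T_{2}Comp(\Z)$ of objects with totally disconnected remainder.

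Second, the Freudenthal compactification $\Z+_{\partial_{F}}Ends(\Z)$ has a two-point remainder, because removing any compact $K\subset\Z$ leaves exactly two unbounded connected components (the positive and negative tails), so $\pi_{0}^{u}(\Z-K)$ has two elements for all sufficiently large $K$, and $Ends(\Z)$ inherits this cardinality in the inverse limit.

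Finally, assume for contradiction that $id+\phi\colon\Z+_{\partial_{F}}Ends(\Z)\to\Z+_{\partial}\{x_{0},x_{1},x_{2}\}$ is a morphism. By definition, morphisms of $Comp(\Z)$ restrict to the identity on $\Z$, so the image is a compact (hence closed) subset of the target containing $\Z$; since $\Z$ is dense in the target, $id+\phi$ must be surjective. But $\phi$ maps a two-point set into $\{x_{0},x_{1},x_{2}\}$ and so hits at most two points, so $id+\phi$ omits at least one $x_{i}$, a contradiction. The only delicate point is the density of $\Z$ in both compactifications, which follows because each $A_{i}$ is a one-point compactification of an infinite discrete set, and because $\Z$ is dense in its own Freudenthal compactification by construction.
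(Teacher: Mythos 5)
Your proposal is correct and follows essentially the same route as the paper: the map would have to be surjective because $\Z$ is dense in $\Z+_{\partial}\{x_{0},x_{1},x_{2}\}$ and the continuous image of the compact Freudenthal compactification is closed, yet the image can contain at most two of the three boundary points since $\#Ends(\Z)=2$. The extra verifications you include (admissibility of $\partial$, density of $\Z$ in each $A_{i}$ as a one-point compactification) are exactly the facts the paper relies on implicitly via Example \ref{exemplochave}.
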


\begin{prop}Let $G$ be a finitely generated group, $\varphi: G \curvearrowright X$ properly discontinuous and cocompact, and $X+_{g}Y \in Pers(\varphi)$ such that $X$ is dense and $Y$ has more than $2$ connected components. Then $G$ splits as an amalgamated product under a finite subgroup or as a HNN extension under a finite subgroup.
\end{prop}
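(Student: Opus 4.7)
The plan is to reduce to a totally disconnected boundary, apply the universal property of the Freudenthal compactification, and then conclude via Hopf's theorem on ends together with the Stallings proposition from the Preliminaries.

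First I would quotient by connected components. Let $\pi : Y \to Y' := \pi_{0}(Y)$ be the canonical map to the space of components. Since $Y$ is compact Hausdorff, $Y'$ is compact Hausdorff and totally disconnected, and by hypothesis $\#Y' \geqslant 3$. Because $G$ acts on $Y$ by homeomorphisms it permutes connected components, so there is an induced continuous action on $Y'$. Taking $g'$ to be the pushforward of $g$ along $id_{X}$ and $\pi$, the pushforward construction gives a continuous $G$-equivariant surjection $id_{X}+\pi : X+_{g}Y \to X+_{g'}Y'$. The equivalence relation on $X+_{g}Y$ inducing this quotient is the diagonal together with the component relation on $Y$; since components of a compact Hausdorff space are closed, this relation is closed in $(X+_{g}Y)^{2}$, so $X+_{g'}Y'$ is Hausdorff. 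The proposition in the \emph{Subspaces and quotients} subsection then gives $X+_{g'}Y' \in Pers(\varphi)$, and $X$ remains dense in it since its image is dense.

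Next I would invoke the universal property of the Freudenthal compactification $X+_{f_{F}}Ends(\varphi)$ of $X$ with respect to $\varphi$, which exists by the corollary preceding this proposition. As $Y'$ is totally disconnected, there is a unique morphism $id_{X}+\phi : X+_{f_{F}}Ends(\varphi) \to X+_{g'}Y'$ in $Pers(\varphi)$. The target contains $X$ densely and the morphism is the identity on $X$, so its image is a closed set containing $X$, hence equals the whole target; in particular $\phi : Ends(\varphi) \to Y'$ is surjective. Thus $\#Ends(\varphi) \geqslant \#Y' \geqslant 3$. Applying the corollary $Ends(\varphi_{1}) \cong Ends(\varphi_{2})$ to $\varphi$ and the left multiplication $L : G \curvearrowright G$ gives $Ends(G) \cong Ends(\varphi)$, so $\#Ends(G) \geqslant 3$. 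By Hopf's theorem, $\#Ends(G) \in \{0,1,2,\infty\}$ for a finitely generated group, so $Ends(G)$ is infinite; the Stallings proposition from the Preliminaries then yields the required amalgamated product or HNN extension over a finite subgroup.

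The step requiring the most care is the first one: one must piece together the pushforward continuity, the closedness of the component equivalence relation (to ensure Hausdorffness), the pushforward action statement, and the quotient preservation of perspectivity from three different earlier subsections. Once $X+_{g'}Y'$ is established as a Hausdorff perspective compactification with totally disconnected remainder of cardinality at least three, the remaining steps are direct invocations of the universal property of $X+_{f_{F}}Ends(\varphi)$, the invariance corollary $Ends(\varphi)\cong Ends(G)$, Hopf's trichotomy, and Stallings' theorem.
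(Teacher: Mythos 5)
Your proposal is correct and takes essentially the same route as the paper: the paper also quotients by the connected-component relation to obtain a perspective compactification with totally disconnected remainder of cardinality greater than $2$, deduces $\#Ends(G) > 2$ from the universal property of the Freudenthal compactification, and concludes by Stallings. The only cosmetic difference is that the paper first transfers to $G+_{\partial}Y \in Pers(G)$ via the isomorphism $Pers(\varphi) \cong Pers(G)$ and performs the quotient there, whereas you quotient on the $X$-side and then transfer the count of ends via $Ends(\varphi) \cong Ends(G)$; both orderings rely on the same ingredients.
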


\begin{proof}There exists an object $G+_{\partial}Y \in Pers(G)$ with $G$ dense (because of the isomorphism between $Pers(G)$ and $Pers(\varphi)$). The quotient of $G+_{\partial}Y$ by the relation defined by the connected components is a Hausdorff compact space of the for $G+_{\partial'}Y'$ with $Y'$ totaly disconnected and $G$ dense. Since the quotient map is the identity on G and it is equivariant (considering the induced action of $G$ in $G+_{\partial'}Y')$, we have that $G+_{\partial'}Y'$ is perspective. Since $Y$ has more than two connected components, we have that $\#Y' > 2$, which implies that $\#Ends(G) \geqslant \# Y' > 2$. Thus, the result follows from Stallings' Theorem.
\end{proof}

\subsection{Spaces that are boundaries}

Unlike spaces where a group acts with the convergence property, the boundaries of groups with the perspective property have quite less topological restrictions.

First, we present an example of non finitely generated group that admits any compact Hausdorff space with countable basis as its boundary (in a compactification with the perspective property). Stallings shows on his book \cite{St} that this example has infinitely many ends.

Let $\Z_{n^{\infty}} = \{\frac{a}{n^{i}}\in \Q : a\in\Z, i \in \N\}/ \Z$. Let's fix $k \in \N$. Let, for $j \in \{0,...,k-1\}, \ A_{j,k} = \{0\} \cup \{\frac{a}{n^{i}}+\Z \in \Z_{n^{\infty}}: a\in \Z, \ n\nmid a, \ i \equiv j \ mod \ k\}$ and, for $m \in \N, \ A_{j,k}^{m} = \{\frac{a}{n^{i}}+\Z \in A_{j,k}: \ a\in \Z, \ n\nmid a, \ i \geqslant km+j\}$ .

We have that $\Z_{n^{\infty}} = \bigcup_{j}A_{j,k}$ and $\forall i\neq j, A_{i,k} \cap A_{j,k} = \{0\}$.

Let $f_{k}: Closed(\Z_{n^{\infty}}) \rightarrow Closed(Y_{k})$, where $Y_{k} = \{y_{0},...,y_{k-1}\}$ with the discrete topology, defined by $f_{k}(F) = \{y_{j} \in Y_{k}: \#F\cap A_{j,k} = \aleph_{0}\}$.

\begin{prop}$f_{k}$ is admissible.
\end{prop}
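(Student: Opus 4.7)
The plan is a direct verification of the two defining conditions of admissibility, namely $f_k(\emptyset) = \emptyset$ and $f_k(A \cup B) = f_k(A) \cup f_k(B)$. The target $Y_k$ is discrete, so every subset is automatically closed and I never need to worry about whether the image $f_k(F)$ is closed in $Y_k$. Also, since $\Z_{n^\infty}$ is countable, each $A_{j,k}$ is countable, so for any $F$ the intersection $F \cap A_{j,k}$ is at most countable; hence the condition $\#F \cap A_{j,k} = \aleph_0$ appearing in the definition is equivalent to the simpler statement that $F \cap A_{j,k}$ is infinite.

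For the empty-set condition, I would just note that $\emptyset \cap A_{j,k} = \emptyset$ is finite for every $j$, so no $y_j$ lies in $f_k(\emptyset)$ and hence $f_k(\emptyset) = \emptyset$.

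For the union condition, I would treat the two inclusions separately. Fix closed $F_1, F_2 \subseteq \Z_{n^\infty}$ and a $j \in \{0, \dots, k-1\}$, and write $(F_1 \cup F_2) \cap A_{j,k} = (F_1 \cap A_{j,k}) \cup (F_2 \cap A_{j,k})$. For $\supseteq$: if $y_j \in f_k(F_i)$ for some $i$, then $F_i \cap A_{j,k}$ is infinite, and since it sits inside $(F_1 \cup F_2) \cap A_{j,k}$, the latter is also infinite, so $y_j \in f_k(F_1 \cup F_2)$. For $\subseteq$: if $y_j \in f_k(F_1 \cup F_2)$, then $(F_1 \cup F_2) \cap A_{j,k}$ is infinite; since the union of two finite sets is finite, at least one of $F_1 \cap A_{j,k}$ or $F_2 \cap A_{j,k}$ must be infinite, giving $y_j \in f_k(F_1) \cup f_k(F_2)$.

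There is no genuine obstacle here; the proof rests solely on the elementary fact that a union of finitely many finite sets is finite, together with the trivial observation that $Y_k$ carries the discrete topology. The subtler arithmetic structure of the sets $A_{j,k}$ (involving the congruence $i \equiv j \bmod k$) plays no role at this step and will only become relevant when the associated compactification $\Z_{n^\infty} +_{f_k} Y_k$ is analyzed in subsequent propositions.
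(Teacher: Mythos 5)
Your proof is correct and follows essentially the same route as the paper's: verify $f_k(\emptyset)=\emptyset$ directly, get one inclusion from monotonicity of "infinite intersection," and get the other from the fact that a union of two finite sets is finite (the paper phrases this last step contrapositively, but it is the same argument). Your side remarks about discreteness of $Y_k$ and countability of $\Z_{n^\infty}$ are harmless and accurate.
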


\begin{proof}We have that $f_{k}(\emptyset) = \{y_{j}: \# \emptyset \cap A_{j,k}= \aleph_{0}\} = \emptyset$. Let $F_{1},F_{2} \in Closed(\Z_{n^{\infty}})$. If $y_{j} \in f_{k}(F_{i})$, then $F_{i}\cap A_{j,k}$ is infinite, which implies that $(F_{1} \cup F_{2})\cap A_{j,k}$ is infinite and then $y_{j} \in f_{k}(F_{1}\cup F_{2})$. Hence $f_{k}(F_{1})\cup f_{k}(F_{2}) \subseteq f_{k}(F_{1}\cup F_{2})$. Let $y_{j} \notin f_{k}(F_{1})\cup f_{k}(F_{2})$. Then, $F_{1}\cap A_{j,k}$ and $F_{2}\cap A_{j,k}$ are finite, which implies that $(F_{1} \cup F_{2}) \cap A_{j,k}$ is finite, which implies that $y_{j}\notin f_{k}(F_{1}\cup F_{2})$. Hence, $f_{k}(F_{1})\cup f_{k}(F_{2}) = f_{k}(F_{1}\cup F_{2})$. Thus, $f_{k}$ is admissible.
\end{proof}

\begin{prop}$\Z_{n^{\infty}}+_{f_{k}}Y_{k}$ is compact.
\end{prop}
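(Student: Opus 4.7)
The plan is to apply the earlier compactness criterion: since $Y_k$ is compact (it is finite discrete), the space $\Z_{n^\infty}+_{f_k}Y_k$ is compact iff for every non-compact $F\in Closed(\Z_{n^\infty})$ we have $f_k(F)\neq\emptyset$. Because $\Z_{n^\infty}$ carries the discrete topology (as a group acting on itself by left multiplication), a closed subset is compact exactly when it is finite, so the condition to verify is: every infinite $F\subseteq \Z_{n^\infty}$ satisfies $f_k(F)\neq\emptyset$.

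First I would record the decomposition $\Z_{n^\infty}=\bigcup_{j=0}^{k-1}A_{j,k}$ stated just before the proposition; this is a union of finitely many sets. Then, given an infinite $F$, the pigeonhole principle forces at least one index $j_0\in\{0,\dots,k-1\}$ for which $F\cap A_{j_0,k}$ is infinite. By definition of $f_k$, this means $y_{j_0}\in f_k(F)$, so $f_k(F)\neq\emptyset$. Together with the earlier proposition, this yields compactness of $\Z_{n^\infty}+_{f_k}Y_k$.

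There is no real obstacle here — the argument is a one-line pigeonhole once the correct compactness criterion is invoked, and the only detail worth checking is that $\Z_{n^\infty}$ is discrete so that ``non-compact closed'' reduces to ``infinite''. The overlap $A_{i,k}\cap A_{j,k}=\{0\}$ for $i\neq j$ plays no role in this step (it will matter later, for Hausdorffness and for analyzing the action), so I would not invoke it here.
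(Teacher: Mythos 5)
Your proposal is correct and matches the paper's proof: the paper likewise invokes the compactness criterion for $X+_fY$ with $Y$ compact and then argues that an infinite (hence non-compact) closed $F$ must meet some $A_{j,k}$ in an infinite set since $\Z_{n^\infty}=\bigcup_{j}A_{j,k}$ is a finite union, giving $f_k(F)\neq\emptyset$. The only cosmetic difference is that the paper phrases the pigeonhole step contrapositively.
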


\begin{proof}Let $F \in Closed(\Z_{n^{\infty}})$ be non compact. If, $\forall j \in \{0,...,k-1\}, \ F \cap A_{j,k}$ is finite, then $F = (F \cap A_{0,k}) \cup ... \cup (F \cap A_{k-1,k})$ is finite, absurd. So, there exists $j \in \{0,,...,k-1\}: F\cap A_{j,k}$ is infinite, which implies that $y_{j} \in f_{k}(F)$. Thus, $\Z_{n^{\infty}}+_{f_{k}}Y_{k}$ is compact.
\end{proof}

\begin{prop}$\Z_{n^{\infty}}+_{f_{k}}Y_{k}$ is Hausdorff.
\end{prop}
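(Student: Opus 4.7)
The plan is to apply the Hausdorffness criterion for sums established earlier in the paper, which, when $X$ is Hausdorff and locally compact, reduces Hausdorffness of $X+_{f}Y$ to two conditions: $f(K)=\emptyset$ for every compact $K\subseteq X$, and for every pair of distinct points $a,b\in Y$ the existence of $A,B\in Closed(X)$ with $A\cup B=X$, $b\notin f(A)$ and $a\notin f(B)$. Here $\Z_{n^{\infty}}$ is read with the discrete topology (implicit from the two preceding propositions, where a closed subset is treated as compact exactly when it is finite), so it is trivially Hausdorff and locally compact and every subset of it is closed.

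For the first condition I would use only that a compact subset of a discrete space is finite, whence $K\cap A_{j,k}$ is finite for every $j$ and $f_{k}(K)=\emptyset$.

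For the separation condition, the key structural input is the almost-disjointness noted just before the definition of $f_{k}$: $A_{i,k}\cap A_{j,k}=\{0\}$ whenever $i\neq j$. Given distinct $y_{i},y_{j}\in Y_{k}$, I would take $B=A_{j,k}$ and $A=\Z_{n^{\infty}}\setminus(A_{j,k}\setminus\{0\})$; both are closed in the discrete topology, their union is all of $\Z_{n^{\infty}}$, and the two relevant intersections $A\cap A_{j,k}$ and $B\cap A_{i,k}$ both collapse to $\{0\}$, so they are finite and deliver $y_{j}\notin f_{k}(A)$ and $y_{i}\notin f_{k}(B)$.

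I do not anticipate any serious obstacle: once the separating pair $A,B$ is written down, the verification is immediate, and the two genuine ingredients — the Hausdorffness criterion for $X+_{f}Y$ and the pairwise-trivial intersection of the $A_{j,k}$ — are already in hand.
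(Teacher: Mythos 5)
Your proposal is correct and follows essentially the same route as the paper: both apply the Hausdorffness criterion for $X+_{f}Y$ and exploit the fact that $A_{i,k}\cap A_{j,k}=\{0\}$ for $i\neq j$, your set $A$ being literally the paper's $\bigcup_{j'\neq j}A_{j',k}$ and your $B=A_{j,k}$ an inessential shrinking of the paper's $\bigcup_{i'\neq i}A_{i',k}$. You also spell out the compactness condition ($f_{k}(K)=\emptyset$ for finite $K$), which the paper leaves implicit.
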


\begin{proof}Let $i \neq j \in \{0,...,k-1\}$. Let $B_{i} = \bigcup_{i' \neq i}A_{i',k}$ and $B_{j} = \bigcup_{j' \neq j}A_{j',k}$. We have that $B_{i},B_{j} \in Closed(\Z_{n^{\infty}})$, $B_{i} \cup B_{j} = \Z_{n^{\infty}}$, $f_{k}(B_{i}) = Y - \{y_{i}\}$ and $f_{k}(B_{j}) = Y - \{y_{j}\}$. Hence, $y_{i} \notin f_{k}(B_{i})$ and $y_{j} \notin f_{k}(B_{j})$. Thus, $\Z_{n^{\infty}}+_{f_{k}}Y_{k}$ is Hausdorff.
\end{proof}

\begin{lema}Let $a \in \Z_{n^{\infty}}$, with $a = \frac{a'}{n^{i}}+\Z$ and $n\nmid a'$. Then, $\forall j \in\{0,...,k-1\}$, $\forall m \in \N:$ $km+j \geqslant i$, we have that $a + A_{j,k}^{m} = A_{j,k}^{m}$.
\end{lema}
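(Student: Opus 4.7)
The plan is to prove both inclusions $a+A_{j,k}^{m}\subseteq A_{j,k}^{m}$ and $A_{j,k}^{m}\subseteq a+A_{j,k}^{m}$ by a direct computation on the canonical representatives with $n$-free numerators. The reverse inclusion will follow from the forward one applied to $-a$, since $-a=\frac{-a'}{n^{i}}+\Z$ satisfies the same hypothesis ($n\nmid -a'$, same denominator exponent $i$).

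For the forward inclusion, I would take an arbitrary $b\in A_{j,k}^{m}$ and write it in canonical form $b=\frac{b'}{n^{\ell}}+\Z$ with $n\nmid b'$, $\ell\equiv j\pmod{k}$, and $\ell\geqslant km+j$. Using the hypothesis $km+j\geqslant i$, we have $\ell\geqslant i$, so we may bring the two fractions to the common denominator $n^{\ell}$:
$$a+b\;=\;\frac{a'}{n^{i}}+\frac{b'}{n^{\ell}}+\Z\;=\;\frac{a'n^{\ell-i}+b'}{n^{\ell}}+\Z.$$
The key observation is that $n\mid a'n^{\ell-i}$ (since $\ell-i\geqslant 1$) while $n\nmid b'$, so $n\nmid(a'n^{\ell-i}+b')$. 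Hence this displayed expression is already the canonical representation of $a+b$: its denominator exponent is exactly $\ell$, and its numerator is coprime to $n$. Since $\ell\equiv j\pmod{k}$ and $\ell\geqslant km+j$ are preserved, we conclude $a+b\in A_{j,k}^{m}$.

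For the reverse inclusion, the same computation applied with $-a$ in place of $a$ yields $-a+A_{j,k}^{m}\subseteq A_{j,k}^{m}$. Then for any $b\in A_{j,k}^{m}$, the element $c:=-a+b$ lies in $A_{j,k}^{m}$ and satisfies $a+c=b$, so $b\in a+A_{j,k}^{m}$.

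The proof is essentially a one-line algebraic check, so there is no serious obstacle; the only point requiring care is making sure that adding $a$ cannot force a cancellation that drops the denominator exponent below $km+j$, and this is exactly what the divisibility observation $n\mid a'n^{\ell-i}$ together with $n\nmid b'$ guarantees.
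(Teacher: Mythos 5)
Your argument follows the same route as the paper's own proof: compute $a+b$ over the common denominator $n^{\ell}$ and check that the numerator stays coprime to $n$, then deduce the reverse inclusion by applying the forward one to $-a$. However, there is a genuine gap at the step ``$n\mid a'n^{\ell-i}$ (since $\ell-i\geqslant 1$)''. The hypothesis only gives $\ell\geqslant km+j\geqslant i$, hence $\ell-i\geqslant 0$, not $\geqslant 1$. When $\ell=i$ the term $a'n^{\ell-i}=a'$ is \emph{not} divisible by $n$, and $n\nmid a'$ together with $n\nmid b'$ does not prevent $n\mid a'+b'$, so cancellation can drop the denominator exponent. Concretely: take $n=3$, $k=2$, $j=0$, $m=1$, $a=\frac{1}{9}+\Z$ (so $a'=1$, $i=2=km+j$) and $b=\frac{2}{9}+\Z\in A_{0,2}^{1}$; then $a+b=\frac{1}{3}+\Z$, whose coprime representation has exponent $1\not\equiv 0 \bmod 2$, so $a+b\notin A_{0,2}^{1}$. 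Thus the boundary case $km+j=i$ is an outright counterexample to the lemma as stated, and your proof passes over it precisely by asserting $\ell-i\geqslant 1$ without justification.

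To be fair, this defect is inherited from the statement itself, and the paper's own proof makes the identical unjustified claim ($n\nmid a'n^{km+j-i}+b'$ ``because $n\nmid b'$'', which fails when $km+j=i$). Everything is repaired by strengthening the hypothesis to $km+j>i$ (equivalently $km+j\geqslant i+1$), which forces $\ell-i\geqslant 1$ and makes your computation airtight; this costs nothing in the application, where one only needs \emph{some} sufficiently large $m$ with $-a+A_{j,k}^{m}=A_{j,k}^{m}$. So either flag the needed correction to the hypothesis or explicitly restrict to $\ell>i$; as written, the divisibility step is exactly where both your proof and the lemma break.
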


\begin{proof}Let $b \in a+ A_{j,k}^{m}$. Then, $b = \frac{b'}{n^{km+j}}$, with $n\nmid b'$. We have that $a+b = \frac{a'n^{km+j-i}+b'}{n^{km+j}} \in A_{j,k}^{m}$, since $n\nmid a'n^{km+j-i}+b'$ (because $n \nmid b'$). So, $a+ A_{j,k}^{m} \subseteq A_{j,k}^{m}$. Then, $A_{j,k}^{m} = a - a+ A_{j,k}^{m} \subseteq a+A_{j,k}^{m}$. Thus, $a+ A_{j,k}^{m} = A_{j,k}^{m}$.
\end{proof}

\begin{prop}$L+id: \Z_{n^{\infty}} \curvearrowright \Z_{n^{\infty}}+_{f_{k}}Y_{k}$ is an action by homeomorphisms.
\end{prop}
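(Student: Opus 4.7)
The plan is to reduce the statement to a direct computation using the earlier proposition that characterizes when a map of the form $\psi+\phi$ between sums of spaces is continuous. Since the action on $Y_{k}$ is the identity, it suffices to verify that for every fixed $a \in \Z_{n^{\infty}}$, the map $L(a,\_)+id:\Z_{n^{\infty}}+_{f_{k}}Y_{k}\to \Z_{n^{\infty}}+_{f_{k}}Y_{k}$ is continuous; that proposition then reduces continuity to the inclusion $f_{k}(L(a,\_)^{-1}(A))\subseteq id^{-1}(f_{k}(A))=f_{k}(A)$ for every $A\in Closed(\Z_{n^{\infty}})$. Since $L(a,\_)^{-1}(A)=A-a$, the task becomes showing $f_{k}(A-a)\subseteq f_{k}(A)$; in fact I will prove equality, which simultaneously gives continuity of the inverse $L(-a,\_)+id$ and hence that $L+id$ is an action by homeomorphisms.

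Unraveling the definition of $f_{k}$, one has $y_{j}\in f_{k}(A-a)$ if and only if $(A-a)\cap A_{j,k}$ is infinite, which (translating by $a$) is equivalent to $A\cap (A_{j,k}+a)$ being infinite. So the whole argument reduces to comparing $A\cap A_{j,k}$ with $A\cap(A_{j,k}+a)$, and the key point is that these sets differ by only finitely many elements.

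This is precisely where the preceding lemma does the work. Write $a=\frac{a'}{n^{i_{0}}}+\Z$ with $n\nmid a'$. For each $j\in\{0,\dots,k-1\}$, pick $m\in\N$ with $km+j\geqslant i_{0}$; the lemma yields $a+A_{j,k}^{m}=A_{j,k}^{m}$, so $A_{j,k}^{m}+a=A_{j,k}^{m}$. Decomposing $A_{j,k}=(A_{j,k}\setminus A_{j,k}^{m})\cup A_{j,k}^{m}$ and translating gives
\[
A_{j,k}+a \;=\; ((A_{j,k}\setminus A_{j,k}^{m})+a)\cup A_{j,k}^{m},
\]
so the symmetric difference $A_{j,k}\bigtriangleup (A_{j,k}+a)$ is contained in $(A_{j,k}\setminus A_{j,k}^{m})\cup ((A_{j,k}\setminus A_{j,k}^{m})+a)$. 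Because $A_{j,k}\setminus A_{j,k}^{m}$ only contains elements whose denominator (in lowest terms) is $n^{i}$ with $i<km+j$, and there are only finitely many such elements in $\Z_{n^{\infty}}$, this symmetric difference is finite.

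Therefore $A\cap A_{j,k}$ is infinite if and only if $A\cap (A_{j,k}+a)$ is infinite, giving $f_{k}(A-a)=f_{k}(A)$ for all $a$ and $A$. Applying this equality and the continuity criterion for $\psi+\phi$ maps, every $L(a,\_)+id$ is a homeomorphism, so $L+id$ is an action of $\Z_{n^{\infty}}$ by homeomorphisms on $\Z_{n^{\infty}}+_{f_{k}}Y_{k}$. There is no serious obstacle here: once the lemma is in hand, the only care needed is the bookkeeping that $A_{j,k}\setminus A_{j,k}^{m}$ is finite, which is a direct cardinality count on the denominators appearing in $\Z_{n^{\infty}}$.
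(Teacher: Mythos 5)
Your proof is correct and follows essentially the same route as the paper: both arguments rest on the preceding lemma giving $a+A_{j,k}^{m}=A_{j,k}^{m}$ for $m$ large together with the finiteness of $A_{j,k}\setminus A_{j,k}^{m}$, so that translating by $a$ does not change which intersections $F\cap A_{j,k}$ are infinite. The only cosmetic difference is that you prove $f_{k}(A-a)=f_{k}(A)$ in one pass via a symmetric-difference bound, whereas the paper proves the inclusion $f_{k}(F)\subseteq f_{k}(a+F)$ and then obtains equality by applying it to $-a$.
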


\begin{proof}Let $a \in \Z_{n^{\infty}}, \ F \in Closed(\Z_{n^{\infty}})$ and $y_{j}\in f_{k}(F)$. Then, $F \cap A_{j,k}$ is infinite. We have that there is $m \in \N$ such that $-a+A_{j,k}^{m} = A_{j,k}^{m}$ and $A_{j,k} - A_{j,k}^{m}$ is finite, which implies that $F \cap A_{j,k}^{m} =  F \cap (-a+A_{j,k}^{m})$ is infinite and then $(a+F) \cap A_{j,k}^{m}$ is infinite. So, $y_{j} \in f_{k}(a+F)$ and then $f_{k}(F) \subseteq f_{k}(a+F)$. But $f_{k}(a+F) \subseteq f_{k}(-a+a+F) = f_{k}(F)$, which implies that $f_{k}(F) = f_{k}(a+F)$. Thus, $L(a,\_)+id: \Z_{n^{\infty}}+_{f_{k}}Y_{k} \rightarrow \Z_{n^{\infty}}+_{f_{k}}Y_{k}$ is a homeomorphism.
\end{proof}

Thus, $\Z_{n^{\infty}}+_{f_{k}}Y_{k}$ is perspective.

\begin{prop}If $k_{2} \mid k_{1}$, then $id+\pi: \Z_{n^{\infty}}+_{f_{k_{1}}}Y_{k_{1}} \rightarrow \Z_{n^{\infty}}+_{f_{k_{2}}}Y_{k_{2}}$ is continuous and $\Z_{n^{\infty}}$-equivariant, where $\pi: Y_{k_{1}} \rightarrow Y_{k_{2}}$ send $y_{j}$ to $y_{i}$ if $j \equiv i \ mod \ k_{2}$.
\end{prop}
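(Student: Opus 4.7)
The plan is to apply the continuity criterion for maps between sums of spaces established earlier in the paper: the map $id+\pi$ is continuous if and only if for every $F \in Closed(\Z_{n^\infty})$, one has $f_{k_1}(id^{-1}(F)) \subseteq \pi^{-1}(f_{k_2}(F))$, i.e., $f_{k_1}(F) \subseteq \pi^{-1}(f_{k_2}(F))$. So the whole problem reduces to a bookkeeping statement about the sets $A_{j,k}$.

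The key observation I would establish first is the inclusion $A_{j,k_1} \subseteq A_{j \bmod k_2, k_2}$ whenever $k_2 \mid k_1$. This is immediate from the definitions: if $\frac{a}{n^i}+\Z \in A_{j,k_1}$ with $n \nmid a$, then $i \equiv j \pmod{k_1}$, and since $k_2 \mid k_1$, also $i \equiv j \pmod{k_2}$, hence $i \equiv (j \bmod k_2) \pmod{k_2}$, so the element lies in $A_{j \bmod k_2, k_2}$. The element $0$ lies in every $A_{j,k}$, so this case is trivial.

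With this inclusion in hand, the verification of continuity is a one-line consequence: if $y_j \in f_{k_1}(F)$, then $F \cap A_{j,k_1}$ is infinite, hence $F \cap A_{j \bmod k_2, k_2}$ is infinite (since the first set is contained in the second), so $y_{j \bmod k_2} = \pi(y_j) \in f_{k_2}(F)$, which means $y_j \in \pi^{-1}(f_{k_2}(F))$. This gives $f_{k_1}(F) \subseteq \pi^{-1}(f_{k_2}(F))$, and applying the continuity criterion yields that $id+\pi$ is continuous.

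For equivariance, I would simply note that the action of $\Z_{n^\infty}$ on both boundaries $Y_{k_1}$ and $Y_{k_2}$ is trivial (the action on each $\Z_{n^\infty}+_{f_k}Y_k$ is of the form $L+id$), so any map between the boundaries is automatically equivariant, and thus $id+\pi$ is $\Z_{n^\infty}$-equivariant. There is no real obstacle here; the only point requiring any care is the divisibility bookkeeping identifying $\pi$ correctly so that the inclusion $A_{j,k_1} \subseteq A_{\pi(j),k_2}$ holds, which is exactly why the hypothesis $k_2 \mid k_1$ is needed.
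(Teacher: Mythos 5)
Your proof is correct and follows essentially the same route as the paper: both reduce continuity to the criterion $f_{k_1}(F)\subseteq\pi^{-1}(f_{k_2}(F))$, both establish it via the key inclusion $A_{j,k_1}\subseteq A_{\pi(j),k_2}$ coming from $k_2\mid k_1$, and both dispose of equivariance by noting the boundary actions are trivial. The only cosmetic difference is that you phrase the divisibility step directly in modular arithmetic where the paper writes out explicit integer coefficients.
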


\begin{proof}The map is equivariant since  the actions are both trivial on the boundary.

Let $F \in Closed(\Z_{n^{\infty}})$, $y_{j} \in f_{k_{1}}(F)$ and $y_{i} = \pi(y_{j})$. Then, $F\cap A_{j,k_{1}}$ is infinite. If $\frac{a}{n^{j'}}+\Z \in A_{j,k_{1}}$, then $j' \equiv j \ mod \ k_{1}$, which implies that $j' = n_{1}k_{1}+j$, with $n_{1} \in \N$. By hypothesis, we have that $k_{1} = n_{2} k_{2}$ and $j = n_{3}k_{2}+i$, with $n_{2},n_{3} \in \N$. So, $j' = n_{1}n_{2}k_{2}+n_{3}k_{2}+i$, which implies that $\frac{a}{n^{j'}}+\Z \in A_{i,k_{2}}$. Hence, $A_{j,k_{1}} \subseteq A_{i,k_{2}}$, which implies that $F\cap A_{i}$ is infinite, and then $\pi(y_{j}) \in f_{k_{2}}(F)$. Then, $\pi(f_{k_{1}}(F)) \subseteq f_{k_{2}}(F)$, which implies that $f_{k_{1}}(F) \subseteq \pi^{-1}(\pi(f_{k_{1}}(F))) \subseteq \pi^{-1}(f_{k_{2}}(F))$. So, we have the diagram:

$$ \xymatrix{ Closed(\Z _{n^{\infty}}) \ar[r]^{f_{k_{1}}} & Closed(Y_{k_{1}}) \\
            Closed(\Z_{n^{\infty}}) \ar[r]^{f_{k_{2}}} \ar[u]^{id} \ar@{}[ur]|{\subseteq} & Closed(Y_{k_{2}}) \ar[u]^{\pi^{-1}} } $$

Thus, the map $id+\pi$ is continuous.

\end{proof}

Let $\xi_{k_{1}k_{2}}: \Z_{n^{\infty}}+_{f_{k_{1}}}Y_{k_{1}} \rightarrow \Z_{n^{\infty}}+_{f_{k_{2}}}Y_{k_{2}}$ be the map defined on the last proposition.  We have that $\{\Z_{n^{\infty}}+_{f_{k}}Y_{k},\xi_{k_{1}k_{2}}\}$ is a directed set in $Pers(\Z_{n^{\infty}})$, which implies that it does have a limit $\Z_{n^{\infty}}+_{f}K$ and, since it is a directed set, $\Z_{n^{\infty}}$ is dense and $K \cong \lim\limits_{\longleftarrow} Y_{k}$, which is a Cantor set. Since $\Z_{n^{\infty}}$ is abelian, it follows that the action of $\Z_{n^{\infty}}$ on $\Z_{n^{\infty}}+_{f}K$ must be of the form $L+id$.

\begin{prop}Let $M$ be a compact Hausdorff space with countable basis. Then, there exists a perspective compact $\Z_{n^{\infty}}+_{f_{M}}M$ with $f_{M}(\Z_{n^{\infty}}) = M$.
\end{prop}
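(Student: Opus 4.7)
The plan is to obtain $\Z_{n^{\infty}}+_{f_{M}}M$ as an equivariant quotient of the already constructed perspective space $\Z_{n^{\infty}}+_{f}K$, using the fact from Kechris (stated earlier in the Preliminaries) that every compact Hausdorff space with countable basis is a quotient of the Cantor set. First I would fix a quotient map $\pi: K \rightarrow M$ and define an equivalence relation $\sim$ on $\Z_{n^{\infty}}+_{f}K$ by declaring $x \sim y$ iff $x = y$ or $x, y \in K$ with $\pi(x) = \pi(y)$.

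Next I would check that $\sim$ is a closed equivalence relation compatible with the action $L+id: \Z_{n^{\infty}} \curvearrowright \Z_{n^{\infty}}+_{f}K$. The relation is closed because $\{(x,y) \in K^2: \pi(x) = \pi(y)\}$ is closed in $K \times K$ (as $\pi$ is continuous into a Hausdorff space), $\Delta(\Z_{n^{\infty}})$ is closed in $\Z_{n^{\infty}} \times \Z_{n^{\infty}}$, and $K$ itself is closed in $\Z_{n^{\infty}}+_{f}K$. Compatibility with the group action is immediate because the action is trivial on $K$, so the equivalence classes on the boundary are fixed setwise. By Proposition \ref{quotientaction} the induced action $\tilde{L}+id: \Z_{n^{\infty}} \curvearrowright (\Z_{n^{\infty}}+_{f}K)/\sim$ is continuous, hence by homeomorphisms.

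Then I would identify the underlying set of the quotient with $\Z_{n^{\infty}} \dot{\cup} M$ and observe that $\Z_{n^{\infty}}$ is open in the quotient (since it is open in $\Z_{n^{\infty}}+_{f}K$ and saturated under $\sim$). By Proposition \ref{bomcomp} the quotient topology agrees with $\Z_{n^{\infty}}+_{f_{M}}M$ for a (unique) admissible map $f_{M}$, and the quotient space is compact Hausdorff. Density of $\Z_{n^{\infty}}$ in the quotient (equivalently, $f_{M}(\Z_{n^{\infty}}) = M$) is automatic: $\Z_{n^{\infty}}$ is dense in $\Z_{n^{\infty}}+_{f}K$, and a continuous surjection preserves density.

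Finally, perspectivity of $\Z_{n^{\infty}}+_{f_{M}}M$ follows from the subsection on subspaces and quotients: the quotient map $\Z_{n^{\infty}}+_{f}K \rightarrow \Z_{n^{\infty}}+_{f_{M}}M$ is of the form $id+\pi$ (in particular equivariant on $\Z_{n^{\infty}}$) and surjective, so by the proposition stating that continuous surjective equivariant images of perspectives are perspective, $\Z_{n^{\infty}}+_{f_{M}}M \in Pers(\Z_{n^{\infty}})$. There is no serious obstacle here; the only point that requires care is verifying that $\sim$ is closed and saturated so that Propositions \ref{quotientaction} and \ref{bomcomp} apply cleanly, after which the perspective property transfers for free from the already established Cantor-set case.
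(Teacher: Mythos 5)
Your proposal is correct and takes essentially the same route as the paper: both pass to the quotient of the perspective space $\Z_{n^{\infty}}+_{f}K$ by the closed equivalence relation $\Delta(\Z_{n^{\infty}}+_{f}K)\cup\sim$ induced by a quotient map $\pi: K \rightarrow M$, and both invoke Proposition \ref{quotientaction} for continuity of the induced action. The only difference is cosmetic: the paper concludes perspectivity directly from continuity of $L+id$ on the Hausdorff quotient, while you route it through the quotient-preservation proposition of the subsection on subspaces and quotients and additionally spell out the openness/density bookkeeping that the paper leaves implicit.
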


\begin{proof}There exists a quotient map $\pi: K \rightarrow M$. If $\sim$ is the equivalence relation in $K$ induced by $\pi$, we have that $\sim$ is closed in $K^{2}$, which implies that it is closed in $(\Z_{n^{\infty}}+_{f}K)^{2}$, which implies that $\sim' = \Delta (\Z_{n^{\infty}}+_{f}K) \cup \sim$ is a closed equivalence relation in $\Z_{n^{\infty}}+_{f}K$. So, it has a quotient of the form $\Z_{n^{\infty}}+_{f_{M}}M$ that is Hausdorff, since it is closed. By the \textbf{Proposition \ref{quotientaction}}, the map $L+id: G \curvearrowright \Z_{n^{\infty}}+_{f_{M}}M$ is continuous, which implies that it is perspective.
\end{proof}

We have also an example of finitely generated group with many compactifications with the perspective property:

\begin{ex}We have a space $\Z^{n}+_{\partial}S^{n-1}$, for $n > 1$, with the perspective property and $\partial(\Z^{n}) = S^{n-1}$ (that comes from the visual boundary of the $CAT(0)$ space $\R^{n}$). Let $X$ be any Peano space. Then, by the Hahn-Mazurkiewicz Theorem, there exists a quotient map $\pi: S^{n} \rightarrow X$. So, a similar argument of the last proposition says that there is a compactification $\Z^{n}+_{\partial'}X$ with the perspective property.
\end{ex}

\end{document}